\documentclass[12pt,a4paper,leqno]{amsart}
\usepackage{amsmath}
\usepackage{amssymb}
\usepackage{bbm}
\usepackage{amsfonts}

\usepackage{amsthm}
\newtheorem{definition}{Definition}[section]
\newtheorem{theorem}[definition]{Theorem}
\newtheorem{proposition}[definition]{Proposition}
\newtheorem{remark}[definition]{Remark}
\newtheorem{corollary}[definition]{Corollary}
\newtheorem{lemma}[definition]{Lemma}

\usepackage{indentfirst}
\setlength{\parindent}{2em}
\usepackage{multirow}
\numberwithin{equation}{section}
\allowdisplaybreaks[4] 

\usepackage{geometry}
\geometry{a4paper,left=1in,right=1in,top=1in,bottom=1in} 

\usepackage{tikz}

\usepackage[foot]{amsaddr} 

\title{Microlocal partition of energy for fractional-type dispersive equations}
\author{Haocheng Yang$^{1,2}$}
\thanks{$^{1}$Ecole Normale Supérieure Paris-Saclay, CNRS Centre Borelli UMR9010, 4 Avenue des Sciences, F-91190 Gif-sur-Yvette}
\thanks{$^{2}$Universit{\'e} Paris XIII (Sorbonne Paris-Nord), LAGA, CNRS (UMR 7539), 99 Avenue J.-B. Cl{\'e}ment, F-93430 Villetaneuse}

\usepackage{cite}

\usepackage{verbatim} 

\usepackage[colorlinks,linkcolor=blue,citecolor=red,urlcolor=black]{hyperref}

\begin{document}
\pagenumbering{arabic}

\newcommand{\Op}[2][]{\operatorname{Op}^{#1}\left(#2\right)}
\newcommand{\Supp}[1]{\operatorname{Supp} #1}
\newcommand{\sgn}[1]{\operatorname{sgn}\left(#1\right)}
\newcommand{\Real}{\operatorname{Re}}
\newcommand{\Imaginary}{\operatorname{Im}}

\newcommand{\mar}[1]{\marginpar{\textcolor{red}{#1}}}

\begin{abstract}
This paper is devoted to the proof of microlocal partition of energy for fractional-type dispersive equations including Schrödinger equation, linearized gravity or capillary water-wave equation and half-Klein-Gordon equation. Roughly speaking, a quarter of the $L^2$ energy lies inside or outside the ``light cone'' $|x| = |tP'(\xi)|$ for large time. In addition, based on the study of half-Klein-Gordon equation, the microlocal partition of energy will also be proved for Klein-Gordon equation.
\end{abstract}

\maketitle


\section{Introduction}\label{sect_intro}

\subsection{Background}

The classical partition of energy states that the energy of the solution $w$ to linear wave equation
\begin{equation}\tag{W}\label{eq_wave}
\left\{
\begin{aligned}
&(\partial_t^2 - \Delta)w = 0, \\
&w|_{t=0} = w_0 \in \dot{H}^1(\mathbb{R}^d), \\
&\partial_t w|_{t=0} = w_1 \in L^2(\mathbb{R}^d),
\end{aligned}
\right.
\end{equation}
inside and outside the light cone $|x|=|t|$ satisfies, in \textit{odd} dimension $d$,
\begin{equation}\label{eq_partition_of_energy_wave}
\begin{aligned}
&\lim_{t\rightarrow +\infty} \left( E^\text{in}(w_0,w_1,t) + E^\text{in}(w_0,w_1,-t) \right) = \|\partial_t w\|_{L^2}^2 + \|\nabla w\|_{L^2}^2, \\
&\lim_{t\rightarrow +\infty} \left( E^\text{out}(w_0,w_1,t) + E^\text{out}(w_0,w_1,-t) \right) = \|\partial_t w\|_{L^2}^2 + \|\nabla w\|_{L^2}^2,
\end{aligned}
\end{equation}
where
\begin{equation*}
\begin{aligned}
E^\text{in}(w_0,w_1,t) &:= \int_{|x|<|t|} \left( |\nabla w|^2 + |\partial_t w|^2 \right) dx, \\
E^\text{out}(w_0,w_1,t) &:= \int_{|x|>|t|} \left( |\nabla w|^2 + |\partial_t w|^2 \right) dx.
\end{aligned}
\end{equation*}
A proof of this can be found in \cite{duyckaerts2011universality} and \cite{duyckaerts2012universality}, where the authors applied this result to study the soliton of focusing energy-critical nonlinear wave equation in dimension $d=3,5$ via some nonlinear analysis on small data solutions. One may also refer to \cite{cote2015characterization1} and \cite{cote2015characterization2} for the application in equivariant wave maps. In \textit{even} dimension $d$, the limits \eqref{eq_partition_of_energy_wave} do not hold in general settings. It is essential to add some extra corrections, which have been calculated in detail for radial solutions in \cite{cote2014energy} and for general data in \cite{côte2021concentration}. In these references, the authors also discovered some special data such that the corrections vanish, an application of which to $4$-dimensional focusing energy-critical wave equation can be found in \cite{cote2018profile}.

The above results have been recently revisited in \cite{delort2022microlocal} using the tools of microlocal analysis. Consider first a solution $u$
of the half-wave equation
\begin{equation}\tag{HW}\label{eq_half_wave}
\left\{
\begin{aligned}
&\left( \frac{\partial_t}{i} - |D_x| \right)u = 0, \\
&u|_{t=0} = u_0 \in L^2.
\end{aligned}\right.
\end{equation}
Since $u = e^{it|D_x|}u_0$, stationary phase formula shows that one expects the microlocalized energy of the solution outside a convenient neighborhood of $\{(x,\xi): x = t\frac{\xi}{|\xi|}\}$ at time $t$ to vanish when $t$ goes to infinity. Because of that, it is natural to ask whether the microlocalized energy close to the preceding point, truncated outside the wave cone, gives rise to lower bound of the form \eqref{eq_partition_of_energy_wave}. More precisely, if one defines this microlocalized truncated energy
as
\begin{gather}
E_{\chi,\tilde{\chi},\delta}^\text{HW}(u_0,t) := \left\| \Op{a^\text{HW}_{\chi,\tilde{\chi},\delta}(t)} u(t) \right\|_{L^2}^2, \label{eq_HW_truncation_energy} \\ 
a^\text{HW}_{\chi,\tilde{\chi},\delta}(t,x,\xi) := \chi\left( \frac{x+t\frac{\xi}{|\xi|}}{|t|^{\frac{1}{2}+\delta}} \right) \tilde{\chi}\left( \frac{|x|-|t|}{|t|^\delta} \right) \mathbbm{1}_{|x|>|t|}, \label{eq_HW_truncation_symbol}
\end{gather}
where $\chi,\tilde{\chi}\in C_c^\infty(\mathbb{R}^d)$ are chosen to be real, radial and equal to $1$ near zero with $\delta\in]0,\frac{1}{2}]$, then it has been proved in \cite{delort2022microlocal} that in any dimension $d$,
\begin{equation}\label{eq_HW_partition_of_energy}
\lim_{t\rightarrow +\infty} \left( E^\text{HW}_{\chi,\tilde{\chi},\delta}(u_0,t) + E^\text{HW}_{\chi,\tilde{\chi},\delta}(u_0,-t) \right) = \|u_0\|_{L^2}^2.
\end{equation}
This result may be used to recover \eqref{eq_partition_of_energy_wave} in odd dimension by taking $u = \left( -i\partial_t + |D_x| \right) w$. Actually, the truncated energy in \eqref{eq_HW_truncation_energy} may be expressed from the microlocalized truncated energy for the solution $u$ of the half wave equation and from extra terms. These extra terms give a zero contribution at the limit $t$ tending to infinity in \emph{odd} dimensions, but not in even ones.

The heuristics underlying estimate \eqref{eq_HW_partition_of_energy} for the half wave equation are as follows. Define the quantization $\Op{a^\text{HW}_{\chi,\tilde{\chi},\delta}}$ of the symbol \eqref{eq_HW_truncation_symbol} by 
$$ \left(\Op{a^\text{HW}_{\chi,\tilde{\chi},\delta}}f\right)(t,x) = \frac{1}{(2\pi)^d} \int e^{ix\cdot\xi} a^\text{HW}_{\chi,\tilde{\chi},\delta}(t,x,\xi) \hat{f}(t,\xi) d\xi. $$
Then \eqref{eq_HW_truncation_energy} may be written as
\begin{equation}\label{eq_HW_truncation_energy_formal_calcul}
\begin{aligned}
E^\text{HW}_{\chi,\tilde{\chi},\delta}(u_0,t) =& \left\langle \Op{a^\text{HW}_{\chi,\tilde{\chi},\delta}}e^{it|D_x|} u_0, \Op{a^\text{HW}_{\chi,\tilde{\chi},\delta}}e^{it|D_x|} u_0 \right\rangle_{L^2} \\
=& \left\langle u_0, e^{-it|D_x|} \Op{a^\text{HW}_{\chi,\tilde{\chi},\delta}}^*\Op{a^\text{HW}_{\chi,\tilde{\chi},\delta}}e^{it|D_x|} u_0 \right\rangle_{L^2}.
\end{aligned}
\end{equation}
If the symbols were smooth ones, so that symbolic calculus (whose details can be found in \cite{zworski2022semiclassical}) could be used, one would expect the composition $\Op{a^\text{HW}_{\chi,\tilde{\chi},\delta}}^*\Op{a^\text{HW}_{\chi,\tilde{\chi},\delta}}$ to be equal, modulo negligible remainders, to $\Op{b}$, with $b=\left|a^\text{HW}_{\chi,\tilde{\chi},\delta}\right|^2$, and the conjugation $e^{-it|D_x|}\Op{b}e^{it|D_x|}$ to be equal, up to remainders, to $\Op{c}$ where $c(x,\xi) = b(x-t\frac{\xi}{|\xi|})$. Applying this to \eqref{eq_HW_truncation_energy_formal_calcul}, one could write this quantity as $\langle u_0,\Op{e}u_0\rangle$ modulo a term tending to zero when $t$ goes to infinity, where $e$ is given by
$$ e(t,x,\xi) = \chi^2\left( \frac{x}{|t|^{\frac{1}{2}+\delta}} \right) \tilde{\chi}^2\left( \frac{|x-t\frac{\xi}{|\xi|}|-|t|}{|t|^\delta} \right) \mathbbm{1}_{\left|x-t\frac{\xi}{|\xi|}\right|>|t|} $$

This symbol $e$ roughly cuts-off the phase space on the domain
$$ \left\{ (x,\xi) : |x|\lesssim |t|^{\frac{1}{2}+\delta},\ \left|x-t\frac{\xi}{|\xi|}\right|>|t| \right\}. $$
When $t\to +\infty$, the truncated domain tends to the half-space
\begin{equation}\label{eq_intro_lim_of_truncation_domain}
	\left\{ (x,\xi) : \operatorname{sgn}(t) x\cdot\frac{\xi}{|\xi|}<0 \right\}.
\end{equation}
As a consequence, the sum of truncated energy at time $t$ and $-t$ covers the whole phase space.

We emphasize that all the arguments above are merely formal. They hold only if all the involved functions are regular enough. In fact, it has also been proved in \cite{delort2022microlocal} that the cut-off operator $\Op{a^\text{HW}_{\chi,\tilde{\chi},\delta}}$ may not even be bounded if the singular cut-off $|x|>|t|$ is replaced by $x\cdot\frac{\xi}{|\xi|}>t$, which seems to work formally.

The above formal point of view, though purely heuristic, makes expect that the classical result \eqref{eq_partition_of_energy_wave} might be extended to a large class of dispersive equations. In general system
$$ \left(\frac{\partial_t}{i} - P(D_x)\right)u = 0, $$
as in half-wave equation, one may expect that the energy concentrates in the phase space around $x+tP'(\xi)=0$ and the partition of energy holds with generalized ``light cone''
$$ |x| = |tP'(\xi)|. $$
The first result for Schrödinger equation has been given in \cite{delort2022microlocal} with truncation
\begin{gather}
E^\text{Schr}_{\chi,\delta}(u_0,t) := \left\| \Op{a^\text{Schr}_{\chi,\delta}(t)} u(t) \right\|_{L^2}^2, \label{eq_Schr_truncation_energy} \\ 
a^\text{Schr}_{\chi,\delta}(t,x,\xi) := \chi\left( \frac{x+t\xi}{|t\xi|\langle\sqrt{|t|}|\xi|\rangle^{-\frac{1}{2}+\delta}} \right) \mathbbm{1}_{|x|>|t\xi|}. \label{eq_Schr_truncation_symbol}
\end{gather}
The result is similar:
\begin{equation}\label{eq_Schr_partition_of_energy}
\lim_{t\rightarrow +\infty} \left( E^\text{Schr}_{\chi,\delta}(u_0,t) + E^\text{Schr}_{\chi,\delta}(u_0,-t) \right) = \frac{1}{2}\|u_0\|_{L^2}^2.
\end{equation}
Here the extra factors in the cut-off $\chi$ is only for technical use, and the loss of half of total energy $\|u_0\|_{L^2}^2$ is due to the convexity of $P(\xi)= \frac{|\xi|^2}{2}$.

The goal of this paper is to examine if the microlocal partition of energy results \eqref{eq_HW_partition_of_energy}, \eqref{eq_Schr_partition_of_energy} may be extended to a large class of dispersive equations. In particular, this generalized result covers the system of linearized gravity or capillary water-wave with infinite depth
\begin{align}
\frac{\partial_t u}{i} - |D_x|^\frac{1}{2} u = 0,\label{eq_lin_gra_ww}\tag{LGWW} \\
\frac{\partial_t u}{i} - |D_x|^\frac{3}{2} u = 0,\label{eq_lin_cap_ww}\tag{LCWW}
\end{align}
or with finite and constant depth $h$,
\begin{align}
&\frac{\partial_t u}{i} - |D_x|^\frac{1}{2}\tanh(h|D_x|) u = 0,\label{eq_lin_gra_ww_finite_depth}\tag{$\text{LGWW}_h$} \\
&\frac{\partial_t u}{i} - |D_x|^\frac{3}{2}\tanh(h|D_x|) u = 0.\label{eq_lin_cap_ww_finite_depth}\tag{$\text{LCWW}_h$}
\end{align}
Moreover, the system of half-Klein-Gordon
\begin{equation}\label{eq_half_KG}\tag{HKG}
\frac{\partial_t u}{i} - \langle D_x \rangle u = 0
\end{equation}
can also be covered by the generalized result, and the associated conclusion will further imply the microlocal partition of energy for standard Klein-Gordon equation
\begin{equation}\tag{KG}\label{eq_Klein-Gordon}
\left\{
\begin{aligned}
&(\partial_t^2 - \Delta + 1)w = 0, \\
&w|_{t=0} = w_0 \in H^1(\mathbb{R}^d), \\
&\partial_t w|_{t=0} = w_1 \in L^2(\mathbb{R}^d).
\end{aligned}
\right.
\end{equation}

One may have noticed that \eqref{eq_HW_partition_of_energy} and \eqref{eq_Schr_partition_of_energy} are proved only for $\delta\in ]0,\frac{1}{2}]$ or $]0,\frac{1}{2}[$. The case $\delta \geqslant \frac{1}{2}$ seems useless since with such $\delta$ the cut-off $\chi$ gives no information in the concentration of energy. The critical case $\delta=0$, however, leads to some interesting results in the limit of truncated energy. The related results will be presented in detail in the next part.

\subsection{Main results}\label{subsect_main_result}

We consider the fractional-type dispersive equation:
\begin{equation}\tag{E}\label{eq_frac_disper}
\left\{
\begin{aligned}
&\left(\frac{\partial_t}{i} - P(D_x)\right)u = 0, \\
&u|_{t=0} = u_0,
\end{aligned}
\right.
\end{equation}
where $P$ is radial and smooth except at $\xi=0$. For simplicity, $P$ will be identified as a function of $\rho=|\xi|$ in what follows. We further assume that $P$ is a fractional-type symbol. Namely, the following hypothesis hold for some $p_0,p_1\neq 0$.
\begin{equation}\tag{$\text{H}_{p_0,p_1}$}\label{hyp_fractional-type_symbol}
\begin{aligned}
&(1)\ P^{(1)} \text{ is strictly positive and monotone on } ]0,+\infty[; \\
&(2.0)\ \exists P_0\geqslant 0,\ \rho\rightarrow 0+,\ \  |P^{(1)}(\rho)-P_0| \sim \rho^{p_0},\ |P^{(2)}(\rho)| \sim \rho^{p_0-1}; \\
&(2.1)\ \exists P_1\geqslant 0,\ \rho\rightarrow +\infty,\ \  |P^{(1)}(\rho)-P_1| \sim \rho^{p_1},\ |P^{(2)}(\rho)| \sim \rho^{p_1-1}; \\
&(3.0)\ \forall j\in\mathbb{N}^*, j\geqslant 3,\ \forall \rho\in]0,1[,\ |P^{(j)}(\rho)| \lesssim \rho^{p_0+1-j};\\
&(3.1)\ \forall j\in\mathbb{N}^*, j\geqslant 3,\ \forall \rho\in]1,\infty[,\ |P^{(j)}(\rho)| \lesssim \rho^{p_1+1-j}.
\end{aligned}
\end{equation}

We introduce the symbol
\begin{equation}\label{eq_truncation_symbol}
a(t,x,\xi) = a_{\chi,\delta}(t,x,\xi) = \chi\left(\frac{x+tP'(\xi)}{|t|^{\frac{1}{2}+\delta}}\right)\mathbbm{1}_{|x|>|t||P'(\xi)|},
\end{equation}
where $\delta\in\mathbb{R}$, $\chi\in C_c^\infty(\mathbb{R}^d)$ is real with $\chi(0)=1$. The corresponding truncated energy is defined as
\begin{equation}\label{eq_truncated_energy}
\begin{aligned}
E(u_0,t) = E_{\chi,\delta}(u_0,t) =& \| \Op{a_{\chi,\delta}(t)}\left(u(t)\right) \|_{L^2}^2 \\
=& \| \Op{a_{\chi,\delta}(t)}\left(e^{itP(D_x)}u_0\right) \|_{L^2}^2,
\end{aligned}
\end{equation}
where
\begin{equation}
\Op{a_{\chi,\delta}(t)}\left(e^{itP(D_x)}u_0\right)(x) := \frac{1}{(2\pi)^d} \int e^{ix\xi+itP(\xi)} a_{\chi,\delta}(t,x,\xi)\hat{u}_0(\xi) d\xi.
\end{equation}

In Proposition \ref{prop_boundedness_of_truncated_operator}, \ref{prop_boundedness_of_truncated_operator_with_truncated_frequency}, and \ref{prop_alt_boundedness_of_truncation_operator}, we shall prove that, under some extra conditions on $P$, operator $\Op{a(t)}$, together with its variations to be introduced later, is bounded on $L^2$, uniformly in $|t| \gg 1$. $E(u_0,t)$ is therefore a well-defined truncated energy, at least when time $|t|$ is sufficiently large.

We first state the fundamental result which is available for fractional equations, namely equation \eqref{eq_frac_disper} with $P'(\xi) = |\xi|^{p}\frac{\xi}{|\xi|}$, such as Schrödinger equation ($p=1$), linearized gravity water-wave equation ($p=-\frac{1}{2}$), and linearized capillary water-wave equation ($p=\frac{1}{2}$).

\begin{theorem}\label{thm_main}
Let $\chi\in C_c^\infty(\mathbb{R}^d)$ be a real function such that $\chi(0)=1$. We further assume that $P$ satisfies hypothesis \eqref{hyp_fractional-type_symbol} with $P_0=P_1=0$.

(\romannumeral1) If $\delta <0$,
\begin{equation}\label{eq_limit_of_energy_subcritical}
\lim_{t\rightarrow\pm\infty} E_{\chi,\delta}(u_0,t) = 0.
\end{equation}

(\romannumeral2) If $\delta =0$,
\begin{equation}\label{eq_limit_of_energy_critical}
\lim_{t\rightarrow\pm\infty} E_{\chi,\delta}(u_0,t) = \frac{1}{(2\pi)^d} \int G_\chi(\rho,\omega) |\hat{u}_0(\rho\omega)|^2 \rho^{d-1} d\rho d\omega,
\end{equation}
where $(\rho,\omega)$ is polar coordinate. The function $G_\chi(\rho,\omega)$ is defined, when $P$ is convex, by
\begin{equation}\label{eq_definition_of_G_convex}
G_\chi(\rho,\omega):=
\frac{1}{(2\pi)^d} \left| \int_{0}^{\infty}\int_{y\cdot\omega=0} e^{i\frac{r^2+|y|^2}{2}} \chi\left( \sqrt{P''(\rho)}r\omega + \sqrt{\rho^{-1}P'(\rho)}y  \right) dydr\right|^{2},
\end{equation}
and, when $P$ is concave, by
\begin{equation}\label{eq_definition_of_G_concave}
G_\chi(\rho,\omega):=
\frac{1}{(2\pi)^d} \left| \int_{0}^{\infty}\int_{y\cdot\omega=0} e^{i\frac{-r^2+|y|^2}{2}} \chi\left( \sqrt{-P''(\rho)}r\omega + \sqrt{\rho^{-1}P'(\rho)}y \right) dydr\right|^{2}.
\end{equation}

(\romannumeral3) If $0<\delta<\frac{1}{2}$, we further assume that $\chi$ is radial. Then
\begin{equation}\label{eq_limit_of_energy_supercritical}
\lim_{t\rightarrow\pm\infty}  E_{\chi,\delta}(u_0,t) = \frac{1}{4} \|u_0\|_{L^2}^2.
\end{equation}
\end{theorem}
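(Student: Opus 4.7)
The plan is to compute the truncated energy by a non-smooth stationary-phase analysis in the $\xi$-integral, centred on the critical point $\xi_0(x,t)$ defined by $P'(\xi_0)=-x/t$. A first reduction, using the uniform $L^2$-boundedness of $\Op{a(t)}$ given by Propositions \ref{prop_boundedness_of_truncated_operator}, \ref{prop_boundedness_of_truncated_operator_with_truncated_frequency}, \ref{prop_alt_boundedness_of_truncation_operator}, restricts attention to $\hat u_0\in C_c^\infty(\mathbb{R}^d\setminus\{0\})$; the hypothesis \eqref{hyp_fractional-type_symbol} then provides uniform control of $P$ and its derivatives on the compact range of $\xi_0$. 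Starting from
$$E(u_0,t)=\int_{\mathbb{R}^d}\left|\frac{1}{(2\pi)^d}\int e^{i(x\cdot\xi+tP(\xi))}\,a(t,x,\xi)\,\hat u_0(\xi)\,d\xi\right|^2 dx,$$
I would perform the microscopic substitution $\xi=\xi_0+\eta/\sqrt{|t|}$, under which the phase expands as $x\cdot\xi+tP(\xi)=x\cdot\xi_0+tP(\xi_0)+\tfrac{1}{2}\sgn{t}\,P''(\xi_0)[\eta,\eta]+O(|t|^{-1/2})$.

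In the rescaled variables, the $\chi$-factor becomes $\chi(\sgn{t}\,P''(\xi_0)\eta/|t|^\delta)$, and the indicator collapses (after retaining its leading term) to the half-space condition $\mathbbm{1}_{P'(|\xi_0|)P''(|\xi_0|)\,\eta\cdot\hat\xi_0<0}$. The $\chi$-factor then splits the three regimes: it tends pointwise to $0$ when $\delta<0$, remains equal to $\chi(\sgn{t}\,P''(\xi_0)\eta)$ when $\delta=0$, and tends pointwise to $1$ when $\delta>0$. Using the Fresnel factor $e^{\frac{i}{2}\sgn{t}\,P''(\xi_0)[\eta,\eta]}$ as an $L^2_\eta$-damping uniform in $|t|\gg 1$, dominated convergence produces
$$\Op{a(t)}\,e^{itP(D_x)}u_0(x)=\frac{e^{i(x\cdot\xi_0+tP(\xi_0))}}{(2\pi)^d\,|t|^{d/2}}\,\hat u_0(\xi_0)\,I_\infty(\pm\infty,\xi_0)+o_{L^2_x}(|t|^{-d/2}),$$
where $I_\infty(\pm\infty,\xi_0)$ is the half-space Fresnel integral of the limit symbol.

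Next I would square and integrate in $x$, and change variable to $\xi_0$ via $x=-tP'(\xi_0)$, whose Jacobian is $|t|^d\,|\det P''(\xi_0)|$; this exactly cancels the $|t|^{-d}$ from stationary phase and yields
$$\lim_{t\to\pm\infty} E(u_0,t)=\frac{1}{(2\pi)^{2d}}\int_{\mathbb{R}^d}|\hat u_0(\xi)|^2\,|I_\infty(\pm\infty,\xi)|^2\,|\det P''(\xi)|\,d\xi.$$
The three cases now reduce to explicit Fresnel computations, in which the linear change $r=\sqrt{|P''(\rho)|}\,\eta_{\mathrm{rad}}$, $y=\sqrt{\rho^{-1}P'(\rho)}\,\eta_{\mathrm{tan}}$ (with orientation chosen to map the half-space onto $\{r>0\}$, and a sign flip on the phase dictated by $\sgn{P''(\rho)}$ in the concave case) makes the Jacobian cancel $|\det P''(\xi)|$. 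For (\romannumeral1) this yields zero since $I_\infty\equiv 0$. For (\romannumeral2) it reproduces exactly the formulas \eqref{eq_definition_of_G_convex}--\eqref{eq_definition_of_G_concave} defining $G_\chi$. For (\romannumeral3) radiality of $\chi$ lets the cutoff disappear, the half-line radial Fresnel integral contributes a factor $\tfrac{1}{2}$ (which squares to $\tfrac{1}{4}$), and the tangential full Fresnel integral combined with the Jacobian yields $(2\pi)^d$, producing $\tfrac{1}{4}\|u_0\|_{L^2}^2$ after Plancherel.

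The principal obstacle is the rigorous justification of this formal stationary-phase picture when $a(t,x,\xi)$ is genuinely non-smooth: the characteristic $\mathbbm{1}_{|x|>|tP'(\xi)|}$ is discontinuous on a hypersurface passing exactly through the critical set $\{x+tP'(\xi)=0\}$ of the phase, which blocks any direct non-stationary integration by parts near the singularity. I would tackle this by a dyadic microlocal decomposition in $\xi$ around $\xi_0$, combining $O(|t|^{-N})$ bounds obtained by iterated integrations by parts in the far zone with a tight $L^2$-control of a narrow tube around the jump hypersurface, extracted from a real-variable argument rather than a pointwise one. The regime $\delta\leq 0$ is the most delicate here, because one cannot afford any loss of powers of $|t|$ in the error, so these tube estimates must be sharp rather than merely decaying.
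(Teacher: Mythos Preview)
Your approach is genuinely different from the paper's, and the difference matters. You attempt a pointwise-in-$x$ stationary-phase expansion of the single integral $\int e^{i(x\cdot\xi+tP(\xi))}a(t,x,\xi)\hat u_0(\xi)\,d\xi$ at its critical point $\xi_0$, then square and integrate. The paper instead expands $E(u_0,t)$ directly as a double integral over $(x,\xi,\xi')$, passes to polar coordinates $x=r\omega$, $\xi=\rho\theta$, $\xi'=\rho'\theta'$, and performs the substitution $r\mapsto r\sqrt{t}+tP'(\rho)$, $\rho'\mapsto\rho-w/\sqrt{t}$ on the bilinear expression. The payoff is that the non-smooth factor $\mathbbm{1}_{r/t>P'(\rho),P'(\rho')}$ becomes exactly the domain restriction $\mathbbm{1}_{r>0}\mathbbm{1}_{w>0}$ (on the half $\rho>\rho'$); no ``linearisation of the indicator'' is ever needed. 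The rescaled integrand is then bounded and supported in a region of $(r,w,\theta,\theta')$-measure $\lesssim|t|^{2(\delta+\sigma_1/2)d}$. For $\delta<0$ this already gives $E\lesssim|t|^{2\delta d}\to 0$; for $\delta=0$ dominated convergence applies directly and the limit is read off as $|F(0,\omega)|^2=G_\chi$. For $0<\delta<\tfrac12$ the paper takes yet another route: angular stationary phase (Lemma~\ref{lem_stat_int_on_angular_limit_part}) reduces to the one-dimensional-type integrals $I(t,-\epsilon,\epsilon,-\epsilon,\epsilon;F)$ of Proposition~\ref{prop_lim_limit_of_integral}.

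Your scheme has a real gap at the step you yourself flag. The remainder ``$o_{L^2_x}(|t|^{-d/2})$'' is not a technicality: because $P'(\xi_0)=-x/t$ forces $|x|=|tP'(\xi_0)|$ identically, the jump of the amplitude sits \emph{exactly} on the critical set for every $x$, so there is no separation between the ``tube around the singularity'' and the stationary zone. Any integration by parts away from $\xi_0$ still hits the discontinuity, and your proposed dyadic-plus-tube control would, to be made sharp at $\delta\le 0$, essentially have to reconstruct the bilinear cancellation that the paper obtains for free by working at the level of $E(u_0,t)$ rather than of $\Op{a(t)}u(t)$. In short: your formal computation is correct and would recover the stated limits, but the hard analytic step is not the one you outline, and the paper's doubled-variable argument is precisely designed to avoid it.
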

\hspace*{\fill}

\begin{remark}
	In \eqref{eq_limit_of_energy_supercritical}, we manage to calculate the limit of $E_{\chi,\delta}(u_0,t)$ and $E_{\chi,\delta}(u_0,-t)$ as $t\rightarrow+\infty$, instead of their sum as in \eqref{eq_partition_of_energy_wave} and \eqref{eq_Schr_partition_of_energy}. Notice that the heuristics discussed after \eqref{eq_intro_lim_of_truncation_domain} in the case of half-wave equations do not predict this fact. This shows the limitation of this formal reasoning when sharp cut-offs are involved in the symbols.
\end{remark}

\begin{remark}
For Schrödinger equation, the special structure of $P'(\xi)=\xi$ allows us to reduce the regularity required for $\chi$. In Appendix \ref{sect_Schrodinger}, we will show that limits \eqref{eq_limit_of_energy_subcritical} and \eqref{eq_limit_of_energy_critical} hold for all $\chi\in L^1$.
\end{remark}

We will see later that our proof of Theorem \ref{thm_main} does not hold for $P$ with nonzero $P_0,P_1$, such as half-Klein-Gordon equation \eqref{eq_half_KG}, where $P_1=1$. In order to deal with this difficulty, one way is to add some cut-off in frequency $\xi$. To be precise, we introduce the modified truncated symbol
\begin{equation}\label{eq_mod_truncation_symbol}
a^{\text{mod}}(t,x,\xi) = a^{\text{mod}}_{\chi,\delta}(t,x,\xi) = \chi\left(\frac{x+tP'(\xi)}{|t|^{\frac{1}{2}+\delta}}\right)\mathbbm{1}_{|x|>|t||P'(\xi)|} (1-\chi_l)\left( \frac{\xi}{|t|^{-\epsilon_0}} \right) \chi_h\left( \frac{\xi}{|t|^{\epsilon_1}} \right),
\end{equation}
where $\epsilon_0,\epsilon_1$ satisfies
\begin{equation}\label{eq_def_of_epsilons}
0<\epsilon_0 \leqslant \frac{1}{p_0+1},\ \ 
0<\epsilon_1 \leqslant \left\{
\begin{aligned}
&+\infty,&\text{if } -1\leqslant p_1 <0,\\
&\frac{1}{-(p_1+1)},&\text{if } p_1<-1,
\end{aligned}\right.
\end{equation}
and $\chi_l,\chi_h \in C_c^\infty$ are radial and equal to $1$ near zero. This symbol will be concerned only when $p_1<0<p_0$, the reason of which will be explained later. The corresponding truncated energy is denoted by
$$ E^\text{mod}(u_0,t) = E^\text{mod}_{\chi,\delta}(u_0,t) := \|\Op{a^{\text{mod}}_{\chi,\delta}(t)}u(t)\|_{L^2}^2. $$

\begin{theorem}\label{thm_mod_main}
Let $\chi\in C_c^\infty(\mathbb{R}^d)$ be a real function such that $\chi(0)=1$. We further assume that $P$ satisfies hypothesis \eqref{hyp_fractional-type_symbol} with $p_1<0<p_0$, $P_0,P_1>0$, and $\epsilon_0,\epsilon_1$ satisfy condition \eqref{eq_def_of_epsilons}.

(\romannumeral1) If $\delta <0$,
\begin{equation}\label{eq_mod_limit_of_energy_subcritical}
\lim_{t\rightarrow\pm\infty} E^\text{mod}_{\chi,\delta}(u_0,t) = 0.
\end{equation}

(\romannumeral2) If $\delta =0$,
\begin{equation}\label{eq_mod_limit_of_energy_critical}
\lim_{t\rightarrow\pm\infty} E^\text{mod}_{\chi,\delta}(u_0,t) = \frac{1}{(2\pi)^d} \int G_\chi(\rho,\omega) |\hat{u}_0(\rho\omega)|^2 \rho^{d-1} d\rho d\omega,
\end{equation}
where $(\rho,\omega)$ is polar coordinate. The function $G_\chi(\rho,\omega)$ is the same one defined by \eqref{eq_definition_of_G_convex} and \eqref{eq_definition_of_G_concave}.

(\romannumeral3) If $0<\delta<\frac{1}{2}$, we further assume that $\chi$ is radial. Then
\begin{equation}\label{eq_mod_limit_of_energy_supercritical}
\lim_{t\rightarrow\pm\infty} E^\text{mod}_{\chi,\delta}(u_0,t) = \frac{1}{4} \|u_0\|_{L^2}^2.
\end{equation}
\end{theorem}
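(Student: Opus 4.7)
The plan is to reduce Theorem \ref{thm_mod_main} to (the argument of) Theorem \ref{thm_main} via a frequency-localization and density argument. By Proposition \ref{prop_boundedness_of_truncated_operator_with_truncated_frequency}, the operator $\Op{a^\text{mod}_{\chi,\delta}(t)}$ is bounded on $L^2$ uniformly in $|t| \gg 1$, so each of \eqref{eq_mod_limit_of_energy_subcritical}--\eqref{eq_mod_limit_of_energy_supercritical} needs only to be verified on a dense subset of $L^2$. I would take this subset to consist of Schwartz functions $u_0$ whose Fourier transforms are supported in some fixed compact annulus $K = \{ r_0 \leqslant |\xi| \leqslant r_1\}$ with $0 < r_0 < r_1 < \infty$.

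For such a datum, once $|t|$ is large enough that $|t|^{-\epsilon_0} < r_0$ and $|t|^{\epsilon_1} > r_1$, the two frequency cutoffs in \eqref{eq_mod_truncation_symbol} equal $1$ on $\operatorname{Supp} \hat{u}_0$, and therefore $\Op{a^\text{mod}(t)} e^{itP(D_x)} u_0 = \Op{a(t)} e^{itP(D_x)} u_0$ with $a$ the unmodified symbol of \eqref{eq_truncation_symbol}. The entire analysis can now be carried out with $\xi$ confined to $K$: on this compact set disjoint from the origin, the strict monotonicity of $P'$ in hypothesis \eqref{hyp_fractional-type_symbol} together with smoothness ensure that $P'$ is bounded above and $|P''|$ is bounded below, and all higher derivatives of $P$ are bounded. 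This is precisely the regime in which the stationary-phase, symbolic calculus, and Egorov-type arguments used to prove Theorem \ref{thm_main} are effective; they apply verbatim to frequency-localized data, insensitive to the specific values of $P_0$ and $P_1$. Re-running these arguments yields the three claimed limits for $u_0$ in this dense subset: $0$ for $\delta < 0$, the formula \eqref{eq_mod_limit_of_energy_critical} for $\delta = 0$ (note that $G_\chi(\rho,\omega)$ is a purely local quantity depending only on $P'(\rho)$ and $P''(\rho)$), and $\tfrac{1}{4}\|u_0\|_{L^2}^2$ for $0 < \delta < \tfrac{1}{2}$ with radial $\chi$.

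Propagating these limits to all $u_0 \in L^2$ is a standard $3\varepsilon$ argument combining the uniform $L^2$-boundedness of $\Op{a^\text{mod}(t)}$ with the continuity on $L^2$ of the targeted quadratic forms. For (i) and (iii) this continuity is evident; for (ii) it reduces to the uniform boundedness of $G_\chi$ on $(0,\infty) \times \mathbb{S}^{d-1}$, which follows from the defining formulas \eqref{eq_definition_of_G_convex}--\eqref{eq_definition_of_G_concave} and the integrability of $\chi$.

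The most delicate step is not the density reduction but rather the input Proposition \ref{prop_boundedness_of_truncated_operator_with_truncated_frequency}: the uniform $L^2$-boundedness of $\Op{a^\text{mod}(t)}$ crucially depends on the sharp bounds \eqref{eq_def_of_epsilons} on $\epsilon_0$ and $\epsilon_1$. These scales are calibrated so that the derivatives of $P$ near the cutoff thresholds $|\xi| \sim |t|^{-\epsilon_0}$ and $|\xi| \sim |t|^{\epsilon_1}$, which by hypothesis \eqref{hyp_fractional-type_symbol} behave like $\rho^{p_0-1}$ and $\rho^{p_1-1}$ respectively, remain compatible with the spatial scale $|t|^{1/2+\delta}$ of $\chi$ and do not produce commutator errors that grow with $t$. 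I expect verifying this compatibility, rather than the essentially soft density extension, to constitute the principal technical difficulty.
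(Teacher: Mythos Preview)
Your proposal is correct and follows essentially the same route as the paper. The paper proves Theorem~\ref{thm_mod_main} by combining Proposition~\ref{prop_boundedness_of_truncated_operator_with_truncated_frequency} (uniform $L^2$-boundedness of $\Op{a^{\text{mod}}(t)}$) with Corollary~\ref{cor_limit_of_energy}, the latter being deduced from Proposition~\ref{prop_limit_of_energy} precisely by your density reduction: take $\hat{u}_0 \in C_c^\infty(\mathbb{R}^d\setminus\{0\})$, observe that for $|t|$ large the frequency cutoffs $(1-\chi_l)(\xi/|t|^{-\epsilon_0})\chi_h(\xi/|t|^{\epsilon_1})$ equal $1$ on $\operatorname{Supp}\hat{u}_0$, and then run the limit computation of Section~\ref{sect_limit_of_energy}, which indeed only uses that $P''\neq 0$ and the local smoothness of $P$ on the relevant annulus.
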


\begin{remark}
In this theorem, we only consider the case $p_1<0<p_0$, which is enough to cover all $P_0,P_1 \neq 0$. Actually, when $p_0<0$ (resp. $p_1>0$), the hypothesis \eqref{hyp_fractional-type_symbol} with $P_0>0$ (resp. $P_1>0$) is equivalent to \eqref{hyp_fractional-type_symbol} with $P_0=0$ (resp. $P_1=0$), which has been studied in Theorem \ref{thm_main}.
\end{remark}

Another way to deal with nonzero $P_0,P_1$ is to add extra factor in the cut-off $\chi$, namely, considering an alternative truncated symbol
\begin{equation}\label{eq_alt_truncation_symbol}
a^\text{alt}(t,x,\xi) = a^\text{alt}_{\chi,\delta,\Lambda}(t,x,\xi) = \chi\left(\frac{x+tP'(\xi)}{|t|^{\frac{1}{2}+\delta}\Lambda(|t|^{\frac{1}{2}}\xi)}\right)\mathbbm{1}_{|x|>|t||P'(\xi)|},
\end{equation}
where $\Lambda \in C^\infty(\mathbb{R}^d\backslash\{0\})$ is strictly positive, radial and satisfies the following condition for some $\sigma_0,\sigma_1\in\mathbb{R}$:
\begin{equation}\tag{$\text{C}_{\sigma_0,\sigma_1}$}\label{hyp_fractional-type_symbol_extra_factor}
\begin{aligned}
&(1.0)\ \rho\rightarrow 0+,\ \Lambda(\rho)\sim \rho^{\sigma_0}; \\
&(1.1)\ \rho\rightarrow +\infty,\ \Lambda(\rho)\sim \rho^{\sigma_1}; \\
&(2.0)\ \forall \rho\in]0,1[,\ \Lambda^{(j)}(\rho)\lesssim \rho^{\sigma_0-j}; \\
&(2.1)\ \forall \rho\in]1,\infty[,\ \Lambda^{(j)}(\rho)\lesssim \rho^{\sigma_1-j}; \\
&(3)\lim_{\rho\rightarrow +\infty} \frac{\Lambda(\rho)}{\rho^{\sigma_1}} = \lambda_1>0.
\end{aligned}
\end{equation}
The associated truncated energy is denoted by
$$ E^\text{alt}(u_0,t) = E^\text{alt}_{\chi,\delta,\Lambda}(u_0,t) := \|\Op{a^{\text{alt}}_{\chi,\delta,\Lambda}(t)}u(t)\|_{L^2}^2, $$
and the result becomes

\begin{theorem}\label{thm_alt_main}
Let $\chi\in C_c^\infty(\mathbb{R}^d)$ be a real function such that $\chi(0)=1$. We assume that $P$ satisfies hypothesis \eqref{hyp_fractional-type_symbol} and $\Lambda$ satisfies condition \eqref{hyp_fractional-type_symbol_extra_factor} with $\sigma_0\geqslant p_0,\sigma_1\leqslant p_1$.

(\romannumeral1) If $\delta+\frac{\sigma_1}{2}<0$,
\begin{equation}\label{eq_alt_limit_of_energy_subcritical}
\lim_{t\rightarrow\pm\infty} E^\text{alt}_{\chi,\delta,\Lambda}(u_0,t) = 0.
\end{equation}

(\romannumeral2) If $\delta+\frac{\sigma_1}{2}=0$,
\begin{equation}\label{eq_alt_limit_of_energy_critical}
\lim_{t\rightarrow\pm\infty} E^\text{alt}_{\chi,\delta,\Lambda}(u_0,t) = \frac{1}{(2\pi)^d} \int G^\text{alt}_\chi(\rho,\omega) |\hat{u}_0(\rho\omega)|^2 \rho^{d-1} d\rho d\omega,
\end{equation}
where $(\rho,\omega)$ is polar coordinate. The function $G^\text{alt}_\chi(\rho,\omega)$ is defined, when $P$ is convex, by
\begin{equation}\label{eq_alt_definition_of_G_convex}
G^\text{alt}_\chi(\rho,\omega):=
\frac{1}{(2\pi)^d} \left| \int_{0}^{\infty}\int_{y\cdot\omega=0} e^{i\frac{r^2+|y|^2}{2}} \chi\left( \frac{\sqrt{P''(\rho)}r\omega + \sqrt{\rho^{-1}P'(\rho)}y}{\lambda_1\rho^{\sigma_1}}  \right) dydr\right|^{2},
\end{equation}
and, when $P$ is concave, by
\begin{equation}\label{eq_alt_definition_of_G_concave}
G^\text{alt}_\chi(\rho,\omega):=
\frac{1}{(2\pi)^d} \left| \int_{0}^{\infty}\int_{y\cdot\omega=0} e^{i\frac{-r^2+|y|^2}{2}} \chi\left( \frac{\sqrt{-P''(\rho)}r\omega + \sqrt{\rho^{-1}P'(\rho)}y}{\lambda_1\rho^{\sigma_1}} \right) dydr\right|^{2}.
\end{equation}

(\romannumeral3) If $0<\delta+\frac{\sigma_1}{2}<\frac{1}{2}$, we further assume that $\chi$ is radial. Then
\begin{equation}\label{eq_alt_limit_of_energy_supercritical}
\lim_{t\rightarrow\pm\infty} E^\text{alt}_{\chi,\delta,\Lambda}(u_0,t) = \frac{1}{4} \|u_0\|_{L^2}^2.
\end{equation}
\end{theorem}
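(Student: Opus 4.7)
The plan is to reduce Theorem \ref{thm_alt_main} to the proof scheme of Theorem \ref{thm_main} by exploiting the high-frequency asymptotic $\Lambda(\rho)\sim\lambda_1\rho^{\sigma_1}$ provided by condition $(3)$ of \eqref{hyp_fractional-type_symbol_extra_factor}. This reveals an effective cutoff exponent $\delta+\sigma_1/2$, playing the role $\delta$ played in Theorem \ref{thm_main}, and an extra frequency-dependent factor $\lambda_1|\xi|^{\sigma_1}$ in the cutoff scale, which will surface as the denominator appearing in \eqref{eq_alt_definition_of_G_convex}--\eqref{eq_alt_definition_of_G_concave}.

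First I would split the frequency variable with a smooth radial cutoff at the threshold $|t|^{1/2}|\xi|\sim R$ for $R$ large but fixed. The low-frequency contribution — data with frequency support in $|\xi|\leq R|t|^{-1/2}$ — has $L^2$-norm tending to zero as $|t|\to\infty$ by dominated convergence, since $\hat u_0\in L^2$ has no atom at $0$. Combined with the $L^2$-boundedness of $\Op{a^\text{alt}(t)}$ provided by Proposition \ref{prop_alt_boundedness_of_truncation_operator}, this contribution to $E^\text{alt}$ is negligible in all three cases. On the complementary high-frequency region one writes
\begin{equation*}
\Lambda(|t|^{1/2}\xi) = \lambda_1(|t|^{1/2}|\xi|)^{\sigma_1}\bigl(1+\eta_t(\xi)\bigr),
\end{equation*}
with $\eta_t\to 0$ uniformly on each compact dyadic block. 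Substituting into $a^\text{alt}$ and absorbing the $\eta_t$ error via the boundedness estimates produces, modulo $o(1)$ in operator norm, the symbol
\begin{equation*}
\chi\left(\frac{x+tP'(\xi)}{\lambda_1 |t|^{\frac{1+\sigma_1}{2}+\delta}|\xi|^{\sigma_1}}\right)\mathbbm{1}_{|x|>|t||P'(\xi)|},
\end{equation*}
which is structurally the symbol of Theorem \ref{thm_main} with effective $\delta_{\text{eff}}=\delta+\sigma_1/2$. The conditions $\sigma_0\geq p_0$ and $\sigma_1\leq p_1$ are designed so that this frequency-dependent scale is compatible with the stationary-phase scales of $P'$ and $P''$ at low and high frequency, so that the arguments of Theorem \ref{thm_main} adapt directly.

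Once this reduction is in hand, the three cases proceed in parallel with Theorem \ref{thm_main}. For $\delta+\sigma_1/2<0$ the cutoff shrinks below the natural concentration scale, forcing the energy to vanish. For $0<\delta+\sigma_1/2<1/2$ with $\chi$ radial, the cutoff covers the full concentration region on both sides of the generalized light cone, so that half of the mass is captured at $t$ and half at $-t$, yielding $\frac{1}{4}\|u_0\|_{L^2}^2$. The critical case $\delta+\sigma_1/2=0$ is the most delicate: a stationary phase argument around the critical point $x=-tP'(\xi)$, performed after rescaling $x$ at the natural scale $|t|^{1/2}$, produces the integral formulas \eqref{eq_alt_definition_of_G_convex}--\eqref{eq_alt_definition_of_G_concave}, the denominator $\lambda_1\rho^{\sigma_1}$ in the argument of $\chi$ appearing precisely as the image of $\Lambda(|t|^{1/2}\xi)$ after the rescaling $\xi=\rho\omega$ and the high-frequency substitution.

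The main obstacle is combining the non-smoothness of the sharp cutoff $\mathbbm{1}_{|x|>|t||P'(\xi)|}$ with the frequency-dependent scale $\lambda_1|\xi|^{\sigma_1}$ in the denominator of $\chi$. As recalled after \eqref{eq_intro_lim_of_truncation_domain}, this sharp cutoff precludes any straightforward use of symbolic calculus — the operator may even fail to be bounded under formally equivalent cutoffs — so all estimates must be carried out at the level of oscillatory integrals, uniformly in the frequency-dependent scale. Ensuring that the substitution $\Lambda\to\lambda_1\rho^{\sigma_1}$ and all remainder terms are uniformly controllable across dyadic frequency blocks, and that the resulting stationary-phase formulas pass cleanly to the limit to produce $G^\text{alt}_\chi$, is where conditions $(2.0)$--$(2.1)$ of \eqref{hyp_fractional-type_symbol_extra_factor} must be used in full strength.
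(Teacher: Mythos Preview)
Your strategy---identifying $\delta+\sigma_1/2$ as the effective exponent and exploiting the high-frequency asymptotic $\Lambda(\rho)\sim\lambda_1\rho^{\sigma_1}$---is the right one, and matches the paper in spirit. The execution differs, however. The paper does not reduce Theorem~\ref{thm_alt_main} to Theorem~\ref{thm_main}; it instead proves a single Proposition~\ref{prop_limit_of_energy} computing the limits directly for the general symbol \eqref{eq_lim_truncation_symbol} with arbitrary $\Lambda$ satisfying \eqref{hyp_fractional-type_symbol_extra_factor}. Both Theorems~\ref{thm_main} and~\ref{thm_alt_main} then follow by combining this with the respective boundedness results (Propositions~\ref{prop_boundedness_of_truncated_operator} and~\ref{prop_alt_boundedness_of_truncation_operator}), the former by specializing to $\Lambda\equiv 1$.

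More concretely: rather than splitting at $|t|^{1/2}|\xi|\sim R$ and substituting $\Lambda\to\lambda_1(|t|^{1/2}|\xi|)^{\sigma_1}$ \emph{at the operator level}, the paper uses uniform boundedness to pass by density to $\hat u_0\in C_c^\infty(\mathbb{R}^d\setminus\{0\})$. For such data $|\xi|\sim 1$ automatically, so $|t|^{1/2}|\xi|\to\infty$ and the asymptotic of $\Lambda$ is available pointwise. The factor $\Lambda(t^{1/2}\rho)$ is carried intact through the oscillatory-integral analysis (stationary phase on the sphere via Lemma~\ref{lem_stat_int_on_angular_limit_part}, then Proposition~\ref{prop_lim_limit_of_integral} in the supercritical case; explicit change of variables plus dominated convergence in the critical and subcritical cases), and the pointwise limit $t^{-\sigma_1/2}\Lambda(t^{1/2}\rho)\to\lambda_1\rho^{\sigma_1}$ is taken only at the end, inside an integral with compactly supported integrand.

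Your step ``absorbing the $\eta_t$ error \dots\ modulo $o(1)$ in operator norm'' is the weak link. Showing that the difference of two truncation operators---each containing the sharp factor $\mathbbm{1}_{|x|>|tP'(\xi)|}$---has small operator norm would essentially require rerunning the boundedness analysis of Section~\ref{sect_L^2_bound_alt} for the difference symbol, and it is not obvious the resulting bound is $o(1)$ rather than merely $O(1)$. The paper's route bypasses this entirely by postponing the passage to the limit until the problem has been reduced to a pointwise one on a fixed compact frequency annulus. Note also that the constraints $\sigma_0\geq p_0$, $\sigma_1\leq p_1$ enter only in the boundedness proof (Proposition~\ref{prop_alt_boundedness_of_truncation_operator}), not in the limit computation itself.
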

\begin{remark}
The proof of Theorem \ref{thm_main} fails when $|x/t|$ is close to $P_0$ or $P_1$. The extra factor $\Lambda$ together with condition $\sigma_0\geqslant p_0,\sigma_1\leqslant p_1$ allows us to eliminate this case, and a demonstration similar to Theorem \ref{thm_main} will work when $|x/t|$ is away from $P_0$, $P_1$.
\end{remark}

As a byproduct of Proposition \ref{prop_boundedness_of_truncated_operator} and \ref{prop_boundedness_of_truncated_operator_with_truncated_frequency}, where the uniform-in-$t$ boundedness on $L^2$ of $\Op{a(t)}$ and $\Op{a^\text{mod}(t)}$ will be proved, these operators are also uniformly bounded with $\chi$ identically equal to $1$, namely 
\begin{theorem}\label{thm_boundedness_of_simple_truncated_operator}
Let $p_0,p_1\neq 0$ and $P$ satisfy the hypothesis \eqref{hyp_fractional-type_symbol}. There exists a constant $C>0$ independent of $t$, such that, 

(\romannumeral1) when $P_0=P_1=0$, 
$$ \|\Op{\mathbbm{1}_{|x|>|tP'(\xi)|}}\|_{\mathcal{L}(L^2)} \leqslant C $$
holds for all $|t|>0$;

(\romannumeral2) when $P_0,P_1>0$,
$$ \|\Op{\mathbbm{1}_{|x|>|tP'(\xi)|} (1-\chi_l)\left( \frac{\xi}{|t|^{-\epsilon_0}} \right) \chi_h\left( \frac{\xi}{|t|^{\epsilon_1}} \right) }\|_{\mathcal{L}(L^2)} \leqslant C $$
holds for all $|t|>t_0 \gg 1$, where $\epsilon_0,\epsilon_1$ are arbitrary parameters satisfying \eqref{eq_def_of_epsilons} and $\chi_l,\chi_h \in C_c^\infty$ are equal to $1$ near zero.
\end{theorem}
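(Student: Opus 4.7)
The plan is to establish the theorem as a byproduct of the proofs of Propositions \ref{prop_boundedness_of_truncated_operator} and \ref{prop_boundedness_of_truncated_operator_with_truncated_frequency}, by tracking the dependence of the resulting operator norm on the cutoff $\chi$. The strategy used in those propositions---a $TT^*$ or Schur-type analysis of the kernel of $\Op{a(t)}$, respectively $\Op{a^{\text{mod}}(t)}$---should produce a bound depending on $\chi$ only through a finite list of seminorms, of which $\|\chi\|_{L^\infty}$ is essential. Setting $\chi\equiv 1$ preserves $\|\chi\|_{L^\infty}=1$ and makes the derivative seminorms vanish, so the bound survives the replacement. The remaining task is to verify that the geometric input to the kernel estimate (the sharp locus $|x|=|tP'(\xi)|$ and the phase $x\cdot\xi$) is still well-controlled without the amplitude cutoff $\chi$.

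Concretely, for part (i), the radial monotonicity of $P'$ gives $|x|>|tP'(\xi)| \Leftrightarrow |\xi|<\rho(|x|/|t|)$, where $\rho$ is the inverse of $|P'|$. The operator then reads as an $x$-dependent Fourier ball projection
\[
\Op{\mathbbm{1}_{|x|>|tP'(\xi)|}}f(x) = \frac{1}{(2\pi)^d}\int_{|\xi|<\rho(|x|/|t|)}e^{ix\cdot\xi}\hat{f}(\xi)\,d\xi.
\]
I would decompose $x$ into dyadic rings $|x|/|t|\sim 2^k$ and $\xi$ into Littlewood--Paley pieces, so that on each block the action is that of a genuine Fourier ball multiplier, trivially bounded on $L^2$ by $1$. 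Almost orthogonality across the dyadic decomposition---justified by non-stationary phase estimates on the overlaps between the Fourier supports of $f_j$ and the localization in $x$---then yields the uniform-in-$t$ bound.

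For part (ii), the extra frequency cutoffs $(1-\chi_l)(\xi/|t|^{-\epsilon_0})\chi_h(\xi/|t|^{\epsilon_1})$ restrict $\xi$ to the annulus $|t|^{-\epsilon_0}\lesssim|\xi|\lesssim |t|^{\epsilon_1}$. Under condition \eqref{eq_def_of_epsilons}, the values of $|P'(\xi)|$ on this annulus stay bounded away from the asymptotes $P_0$, $P_1$ with enough margin to tolerate the $|t|$-dependence; equivalently, $\rho(|x|/|t|)$ is well-defined and uniformly Lipschitz on the relevant range of $|x|/|t|$. The dyadic almost-orthogonal decomposition of part (i), truncated accordingly, then yields the same conclusion.

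The main obstacle is that the sharp cutoff $\mathbbm{1}_{|x|>|tP'(\xi)|}$ is not a standard H\"ormander symbol, so the off-diagonal kernel estimates in the almost-orthogonal decomposition need careful oscillatory-integral work around the characteristic locus $|x|=|tP'(\xi)|$. If on inspection the propositions' proofs turn out to involve derivative norms of $\chi$, a safe fallback is an approximation argument: take $\chi_R(y)=\chi_0(y/R)$ with $\chi_0\in C_c^\infty(\mathbb{R}^d)$ radial and $\chi_0(0)=1$, apply the propositions to obtain a bound depending on $R$, verify via the $R$-rescaling structure in the dyadic decomposition that this bound stays uniform in $R$, and then pass to $R\to\infty$ by dominated convergence of $\langle \Op{a_{\chi_R,\delta}(t)}f,g\rangle$ for $f,g\in L^2$.
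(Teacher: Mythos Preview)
Your fallback approach is essentially what the paper does, and it is the correct route. The paper phrases it as letting $\mu = t^{\delta-\frac12}$ tend to $+\infty$ rather than rescaling $\chi$ by $R$, but these are equivalent: replacing $\chi$ by $\chi(\cdot/R)$ amounts to replacing $\mu$ by $\mu R$ in the argument of $\chi$. The crucial input is Corollary~\ref{cor_boundedness_of_truncated_operator_with_mu}: throughout the proof of Proposition~\ref{prop_bdd_main} the quantity $\mu$ is treated as a free parameter independent of $t$, and every estimate is uniform in $\mu>0$. With that uniformity in hand, one passes to $\mu\to\infty$ exactly as you describe, via dominated convergence on $\langle f,\Op{\cdot}g\rangle$ for $f,g\in\mathcal{S}$.

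Your first plan---tracking the $\chi$-dependence through seminorms and then setting $\chi\equiv 1$---would fail as stated. The proofs of the propositions use the \emph{compact support} of $\chi$, not merely bounds on it: for instance the localization $|X-\Xi|\lesssim\mu$ in \eqref{eq_bdd_support_of_tilde_b_k}, which feeds into every subsequent dyadic estimate, comes from $\operatorname{Supp}\chi$ being bounded. Setting $\chi\equiv 1$ destroys this localization entirely, so the derivative seminorms vanishing does not rescue the argument. This is precisely why the limiting procedure $\mu\to\infty$ (equivalently $R\to\infty$) is needed rather than a direct substitution.

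Your direct dyadic ball-projection approach for part~(i) is a genuinely different strategy. It is plausible in spirit but the almost-orthogonality across $x$-rings is not automatic: for fixed $x$ the operator is indeed a Fourier ball projector, but as an operator on $L^2(\mathbb{R}^d)$ the $x$-dependence of the radius couples the blocks, and the off-diagonal decay you would need does not follow from non-stationary phase alone (the phase $x\cdot\xi$ has no critical points, but the sharp edge of the ball contributes boundary terms that are only $O(1)$, not decaying). The paper avoids this issue entirely by keeping the smooth $\chi$ throughout the hard analysis and removing it only at the very end by the soft limit argument.
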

The boundedness of $\Op{\mathbbm{1}_E}$ for measurable sets $E\subset\mathbb{R}^{2d}$ is of great concern in micorlocal analysis, and the results above give a positive answer to some $E$ defined via convex functions. If one changes to Weyl quantization, this problem is known as localization of Wigner distribution. In fact, in this case, we have
$$ \langle \Op[w]{\mathbbm{1}_E}u,v \rangle_{L^2} = \frac{1}{(2\pi)^{\frac{d}{2}}} \int_{E}\mathcal{W}(u,v)(x,\xi)dxd\xi,  $$
where
$$ \mathcal{W}(u,v)(x,\xi) = \frac{1}{(2\pi)^{\frac{d}{2}}} \int_{\mathbb{R}^d} e^{iy\cdot\xi} u(x+\frac{y}{2}) \overline{v(x-\frac{y}{2})} dy $$
is the Wigner distribution of $(u,v)$. It has been found that the operator properties of $\Op[w]{\mathbbm{1}_E}$ (boundedness, positivity, spectrum, etc) is related to the geometry of $E$. For example, when $E$ is an ellipsoid, in \cite{Flandrin1988MaximumSE} and \cite{lieb2010localization}, the authors gave some sharp estimates of the $L^2$-norm of $\Op[w]{\mathbbm{1}_E}$, which is related to the size of the ellipsoid. As another example, when $E$ is a polygon on $\mathbb{R}^2$ with $N$ sides, it was proved in \cite{lerner2023integrating} that the norm of $\Op[w]{\mathbbm{1}_E}$ can be controlled by $\sqrt{N/2}$ for $N\geqslant 3$. In the same paper, the author also proved that there exists open set $E$ such that $\Op[w]{\mathbbm{1}_E}$ is not even bounded on $L^2$. The readers may refer to \cite{lerner2023integrating} for more results on this topic. \\

By applying the results of Theorem \ref{thm_mod_main} and \ref{thm_boundedness_of_simple_truncated_operator} to half-Klein-Gordon equation \eqref{eq_half_KG}, we are able to obtain the following microlocal partition of energy for Klein-Gordon equation, which is an analogue of partition of energy for wave equation.

\begin{theorem}\label{thm_K-G_main}
Let $w$ be the unique solution to Klein-Gordon equation \eqref{eq_Klein-Gordon}, namely
\begin{equation*}
\left\{
\begin{aligned}
&(\partial_t^2 - \Delta + 1)w = 0, \\
&w|_{t=0} = w_0 \in H^1(\mathbb{R}^d), \\
&\partial_t w|_{t=0} = w_1 \in L^2(\mathbb{R}^d),
\end{aligned}
\right.
\end{equation*}
where $w_0,w_1$ are real, so as $w$. Then, the truncated energy 
\begin{equation}\label{eq_K-G_def_truncated_energy}
\begin{aligned}
E^{KG}_{\epsilon}(w_0,w_1,t) :=& \|\Op{a_\epsilon^{KG}(t)}\partial_t w(t)\|_{L^2}^2 + \|\Op{a_\epsilon^{KG}(t)}\nabla w(t)\|_{L^2}^2 \\
& + \|\Op{a_\epsilon^{KG}(t)} w(t)\|_{L^2}^2
\end{aligned}
\end{equation}
satisfies
\begin{equation}\label{eq_K-G_limit_of_energy}
\lim_{t\rightarrow\pm\infty}  E^{KG}_{\epsilon}(w_0,w_1,t) = \frac{1}{4} \left( \|w_0\|_{H^1}^2 + \|w_1\|_{L^2}^2 \right),
\end{equation}
where
\begin{equation}\label{eq_K-G_def_truncated_symbol}
a^{KG}(t,x,\xi) = a_\epsilon^{KG}(t,x,\xi) := \mathbbm{1}_{|x|>\left|t\frac{\xi}{\langle\xi\rangle}\right|} \chi\left(\frac{\xi}{|t|^\epsilon}\right),
\end{equation}
$0<\epsilon<1$, and $\chi\in C^\infty_c(\mathbb{R}^d)$ is a real and radial function equal to $1$ near zero.
\end{theorem}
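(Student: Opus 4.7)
The overall strategy is to diagonalize the Klein-Gordon equation into two half-Klein-Gordon equations, and then to apply the microlocal partition of energy already obtained for \eqref{eq_half_KG} through Theorems~\ref{thm_mod_main} and~\ref{thm_boundedness_of_simple_truncated_operator} specialized to $P(\xi) = \langle\xi\rangle$. First I would set
\[ u := \bigl( -i\partial_t + \langle D_x\rangle \bigr) w, \]
which, since $w$ solves $(\partial_t^2 + \langle D_x\rangle^2)w = 0$, satisfies \eqref{eq_half_KG} with Cauchy datum $u_0 = -iw_1 + \langle D_x\rangle w_0 \in L^2(\mathbb{R}^d)$ and
\[ \|u_0\|_{L^2}^2 = \|\langle D_x\rangle w_0\|_{L^2}^2 + \|w_1\|_{L^2}^2 = \|w_0\|_{H^1}^2 + \|w_1\|_{L^2}^2. \]
Reality of $w$ yields $\bar u = ( i\partial_t + \langle D_x\rangle) w$, whence $\langle D_x\rangle w = \Real(u)$ and $\partial_t w = -\Imaginary(u)$.

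Next I would rewrite $E^{KG}_\epsilon$ in terms of $\Op{a^{KG}} u$. The symbol $a^{KG}$ is real and, since $\chi$ is radial, even in $\xi$; the substitution $\xi \mapsto -\xi$ in the definition of the quantization then shows that $\Op{a^{KG}}$ commutes with complex conjugation, so $\Op{a^{KG}}\Real(v) = \Real(\Op{a^{KG}} v)$ and $\Op{a^{KG}}\Imaginary(v) = \Imaginary(\Op{a^{KG}} v)$ for every complex $v$. In particular
\[ \|\Op{a^{KG}}\partial_t w\|_{L^2}^2 = \|\Imaginary(\Op{a^{KG}} u)\|_{L^2}^2. \]
For the spatial part, the Plancherel identity $\|\nabla f\|_{L^2}^2 + \|f\|_{L^2}^2 = \|\langle D_x\rangle f\|_{L^2}^2$ applied to $f = \Op{a^{KG}} w$, together with $\Op{a^{KG}\langle\xi\rangle} = \Op{a^{KG}}\langle D_x\rangle$ (a Fourier multiplier on the right just multiplies the symbol) and $\Op{a^{KG}}\langle D_x\rangle w = \Op{a^{KG}}\Real(u) = \Real(\Op{a^{KG}} u)$, should produce
\[ \|\Op{a^{KG}}\nabla w\|_{L^2}^2 + \|\Op{a^{KG}} w\|_{L^2}^2 = \|\Real(\Op{a^{KG}} u)\|_{L^2}^2 + R(t), \]
where $R(t)$ gathers the remainders generated by the commutators $[\partial_j, \Op{a^{KG}}]$ and $[\langle D_x\rangle, \Op{a^{KG}}]$. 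Summing these identities would give
\[ E^{KG}_\epsilon(w_0, w_1, t) = \|\Op{a^{KG}} u\|_{L^2}^2 + R(t). \]

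The third step is to prove that $R(t) \to 0$. The commutator symbols $\partial_{x_j} a^{KG}$ are carried by the sharp hypersurface $|x| = |t\xi/\langle\xi\rangle|$ and are given by its surface measure weighted by $\xi_j/|x|$. Tested against $u(t) = e^{it\langle D_x\rangle} u_0$, they yield oscillatory integrals whose phase $x\cdot\xi + t\langle\xi\rangle$ has critical points at $x = -t\xi/\langle\xi\rangle$, transverse to that hypersurface; combining this non-stationarity with the uniform $L^2$-boundedness of $\Op{a^{KG}}$ from Theorem~\ref{thm_boundedness_of_simple_truncated_operator}(ii) and a density argument to go from Schwartz to $L^2$ data, integration by parts in the phase would yield $R(t) = o(1)$. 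It then only remains to invoke (the argument of) Theorem~\ref{thm_mod_main}(iii) for $P(\xi) = \langle\xi\rangle$---which satisfies \eqref{hyp_fractional-type_symbol} with $p_0 = 1$, $p_1 = -2$, $P_0 = 0$ and $P_1 = 1$, and for which $a^{KG}$ corresponds to $\chi \equiv 1$ in the $x$-variable together with the high-frequency cutoff $\chi_h(\xi/|t|^\epsilon)$ of \eqref{eq_mod_truncation_symbol}---to conclude $\|\Op{a^{KG}} u\|_{L^2}^2 \to \tfrac14 \|u_0\|_{L^2}^2$ and hence \eqref{eq_K-G_limit_of_energy}.

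The main obstacle lies precisely in this last step: Theorem~\ref{thm_mod_main} is stated for the smooth $x$-localizer $\chi((x+tP'(\xi))/|t|^{1/2+\delta})$ with $\delta < 1/2$ and for $P_0 > 0$, whereas $a^{KG}$ amounts to the limit case $\chi \equiv 1$ with $P_0 = 0$. Its proof must therefore be revisited: the $L^2$-boundedness in Theorem~\ref{thm_boundedness_of_simple_truncated_operator}(ii) is the tool that would allow us to remove the smooth $x$-cutoff while retaining the uniform estimates needed to pass to the limit, and the vanishing of $P_0$ removes the low-frequency cutoff $(1-\chi_l)$ because near $\xi = 0$ the symbol $\langle\xi\rangle$ behaves like the Schrödinger one, a case handled by the techniques of Theorem~\ref{thm_main}.
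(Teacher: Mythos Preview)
Your diagonalization step and the observation that $\Op{a^{KG}}$ commutes with complex conjugation are correct and match the paper. The divergence begins when you try to combine the spatial terms via commutators.

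\textbf{The commutator step is a genuine gap.} You apply Plancherel to $f=\Op{a^{KG}}w$, obtaining $\|\nabla\Op{a^{KG}}w\|^2+\|\Op{a^{KG}}w\|^2=\|\langle D_x\rangle\Op{a^{KG}}w\|^2$, but the energy contains $\|\Op{a^{KG}}\nabla w\|^2$, not $\|\nabla\Op{a^{KG}}w\|^2$. The difference is governed by $[\partial_j,\Op{a^{KG}}]$ and $[\langle D_x\rangle,\Op{a^{KG}}]$, whose principal symbols involve $\partial_{x_j}a^{KG}$. Since $a^{KG}$ carries the sharp cutoff $\mathbbm{1}_{|x|>|tP'(\xi)|}$, these derivatives are surface measures, not symbols; the resulting operators are not a priori $L^2$-bounded, and your transversality/IBP sketch does not make this rigorous. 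The paper sidesteps this entirely by writing $\nabla w=iD_x\langle D_x\rangle^{-1}\Real u$ and $w=\langle D_x\rangle^{-1}\Real u$, so every term in $E^{KG}_\epsilon$ is of the form $\tfrac14\|A(t)(Ru_0(\pm t)+\epsilon_0\overline{Ru_0(\pm t)})\|^2$ with $R\in\{1,\,D_x\langle D_x\rangle^{-1},\,\langle D_x\rangle^{-1}\}$ a bounded Fourier multiplier acting \emph{before} $A(t)$. No commutator ever appears.

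\textbf{The ``main obstacle'' you flag is indeed the heart of the proof, and needs more than you suggest.} The paper does not try to show $\|A(t)u(t)\|^2\to\tfrac14\|u_0\|^2$ directly for the sharp symbol. Instead it introduces auxiliary operators $A_\pm(t)=\Op{a^{KG}_\pm(t)}$ with smooth localizers $\chi\bigl((x\pm tP'(\xi))/|t|^{1/2+\delta}\bigr)$, shows in Lemma~\ref{lem_K-G_bdd_of_truncation_op} that $A,A_\pm$ are uniformly bounded (combining Proposition~\ref{prop_boundedness_of_truncated_operator} for low frequencies, where $P_0=0$, with Proposition~\ref{prop_boundedness_of_truncated_operator_with_truncated_frequency} for high frequencies), and then proves three limits (Proposition~\ref{prop_K-G_lim_of_componants}): the principal one $\|A_\pm e^{\pm itP}v_0\|^2\to\tfrac14\|v_0\|^2$ from Theorem~\ref{thm_mod_main}(iii); the cross term $\langle A_\pm e^{\pm itP}v_0,\,A_\mp e^{\mp itP}v_{0,1}\rangle\to0$; and, crucially, $\|(A-A_\pm)e^{\pm itP}v_0\|\to0$. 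This last limit is the substantive replacement for your commutator argument and occupies all of subsection~5.2: after localizing near $|x|\sim t$ (Lemma~\ref{lem_K-G_outside_truncation_outside_cone}), the integral is split according to whether $\theta$ is near $\pm\omega$ or away from both, and each piece is killed by stationary-phase estimates in $\rho$ or $\theta$ (Lemma~\ref{lem_K-G_outside_truncation_near_cone}). Uniform boundedness from Theorem~\ref{thm_boundedness_of_simple_truncated_operator} is used only to reduce to dense data, not to pass to the sharp-cutoff limit in the energy identity itself.
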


\begin{remark}
In view of the similarity between wave equation and Klein-Gordon equation, one may ask what \eqref{eq_K-G_limit_of_energy} will become if we apply the same truncation $\mathbbm{1}_{|x|>|t|}$ as in classical result \eqref{eq_partition_of_energy_wave}. The answer is, for all $0\leqslant r_0 \leqslant r_1$,
\begin{equation}\label{eq_K-G_limit_of_energy_classical}
\lim_{t\rightarrow \pm\infty} \int_{r_0<\left|\frac{x}{t}\right|<r_1} \left( |\partial_t w|^2 + |\nabla w|^2 + |w|^2 \right) dx = \| \mathbbm{1}_{]\rho_0,\rho_1[}(|D_x|) w_0\|_{H^1}^2 + \| \mathbbm{1}_{]\rho_0,\rho_1[}(|D_x|) w_1\|_{L^2}^2,
\end{equation}
where $]\rho_0,\rho_1[ = P'^{-1}(]r_0,r_1[)$. Since $P'$ takes value in $[0,1[$, we have in particular,
$$ \lim_{t\rightarrow \pm\infty} \int_{|x|>|t|} \left( |\partial_t w|^2 + |\nabla w|^2 + |w|^2 \right) dx = 0. $$
A detailed discussion of \eqref{eq_K-G_limit_of_energy_classical} will be given in Section \ref{sect_K-G_classical}.
\end{remark}

\subsection{Non-nullity of the limit in critical case}

In Theorem \ref{thm_main}, \ref{thm_mod_main}, and \ref{thm_alt_main}, we calculate the limit of energy in three cases. In sub-critical case $\delta<0$ (or $\delta+\frac{\sigma_1}{2}<0$), the truncated energy tends to $0$, no matter which $\chi$ we choose. This phenomenon also exists in super-critical case $0<\delta<\frac{1}{2}$ (or $0<\delta+\frac{\sigma_1}{2}<\frac{1}{2}$), where the limit is always half of total energy $\|u_0\|_{L^2}$. In critical case $\delta=0$ (or $\delta+\frac{\sigma_1}{2}=0$), however, the limit does depend on our choice of $\chi$.  If we further assume $\chi$ to be radial, it is not difficult to check that the limits \eqref{eq_limit_of_energy_critical}, \eqref{eq_mod_limit_of_energy_critical}, and \eqref{eq_alt_limit_of_energy_critical} are bounded and non-negative.

In fact, when $\chi$ is radial, the function $G_\chi$, $G^\text{alt}_\chi$ can be written in the form
$$\frac{1}{4}\frac{1}{(2\pi)^d} \left| \int_{\mathbb{R}^d} e^{i\frac{\pm x_1^2+|x'|^2}{2}} \chi\left( \frac{\sqrt{\pm P''(\rho)}x_1 + \sqrt{P'(\rho)/\rho}x'}{\lambda\rho^\sigma}  \right) dx\right|^{2},$$
where $+$ and $-$ stand for convex and concave case, respectively, $x=(x_1,x')\in\mathbb{R}\times\mathbb{R}^{d-1}$, and $\lambda>0$, $\sigma\in\mathbb{R}$. By Plancherel Theorem, it equals to
$$\frac{1}{4}\frac{1}{(2\pi)^{2d}} \left| \int_{\mathbb{R}^d} e^{i\frac{\pm \xi_1^2+|\xi'|^2}{2(\lambda\rho^\sigma)^2}} \hat{\chi}\left( \frac{\xi_1}{\sqrt{\pm P''(\rho)}} + \sqrt{\frac{\rho}{P'(\rho)}}\xi' \right) \frac{1}{\sqrt{\pm P''(\rho)}} \left(\frac{\rho}{P'(\rho)}\right)^{\frac{d-1}{2}}  d\xi\right|^{2},$$
which, after a change of variable, reads
\begin{equation}\label{eq_remark_limit}
\frac{1}{4}\frac{1}{(2\pi)^{2d}} \left| \int_{\mathbb{R}^d} e^{i\frac{P''(\rho)\rho \xi_1^2 + P'(\rho)|\xi'|^2}{2\rho(\lambda\rho^\sigma)^2}} \hat{\chi}\left( \xi \right) d\xi\right|^{2}.
\end{equation}
Therefore, $G_\chi$ can be estimated by
$$ 0 \leqslant G_\chi(\rho,\omega) \leqslant \frac{1}{4}\frac{1}{(2\pi)^{2d}} \|\hat{\chi}\|_{L^1}^2. $$

A natural question is then whether limits \eqref{eq_limit_of_energy_critical}, \eqref{eq_mod_limit_of_energy_critical}, and \eqref{eq_alt_limit_of_energy_critical} are nonzero for nontrivial initial data $u_0$. The answer is positive for fractional equation, i.e. with $P'(\xi) = |\xi|^{p-1} \xi$, $p \neq 0$. More precisely,

\begin{proposition}\label{prop_nullity_of_critical_limit}
Under the assumption $P'(\xi) = |\xi|^{p-1} \xi$, \eqref{eq_remark_limit} can be written, up to some multiple with constants, as
$$ \tilde{G}(\rho) = \left| \int_{\mathbb{R}^d} e^{i \frac{1}{2\lambda^2}\rho^{p-1-2\sigma}(p\xi_1^2+|\xi'|^2)} \hat{\chi}\left( \xi \right) d\xi \right|^2. $$
If $p \neq 2\sigma+1$ and $\chi\in\mathcal{S}(\mathbb{R}^d)$ with $\chi(0)\neq 0$, $\tilde{G}(\rho)$ is nonzero except on a set of null Lebesgue measure.
\end{proposition}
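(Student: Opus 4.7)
The plan is to reduce the nonvanishing of $\tilde G$ to the nonvanishing of a one-parameter oscillatory integral, then to recognise that integral as the Fourier transform of a pushforward measure, and finally to invoke uniqueness of boundary values of bounded holomorphic functions.

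I first exploit the hypothesis $p\neq 2\sigma+1$: this makes $p-1-2\sigma\neq 0$, so the map $\rho\mapsto \mu(\rho):=\rho^{p-1-2\sigma}/(2\lambda^2)$ is a smooth bijection $(0,\infty)\to(0,\infty)$ preserving null sets. Writing
\[
F(\mu):=\int_{\mathbb{R}^d} e^{i\mu Q(\xi)}\hat\chi(\xi)\,d\xi,\qquad Q(\xi):=p\xi_1^2+|\xi'|^2,
\]
it suffices to show $F(\mu)\neq 0$ for almost every $\mu>0$. I next apply the Gaussian-Fresnel Plancherel identity: computing $\widehat{e^{i\mu Q(\cdot)}}$ by completing the square transforms $e^{i\mu Q(\xi)}$ into a purely oscillatory Gaussian in $x$, yielding
\[
F(\mu)=C(\mu)\,H(\nu),\quad \nu:=\tfrac{1}{4\mu},\quad H(\nu):=\int_{\mathbb{R}^d}\chi(x)\,e^{-i\nu R(x)}\,dx,\quad R(x):=\tfrac{x_1^2}{p}+|x'|^2,
\]
with $C(\mu)\neq 0$. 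The problem becomes showing $H(\nu)\neq 0$ for a.e.\ $\nu>0$.

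The core argument is to view $H(\nu)=\hat m(\nu)$ as the Fourier transform of the pushforward measure $m:=R_\ast(\chi\,dx)$ on $\mathbb{R}$. In the convex regime $p>0$ the form $R$ is positive-definite, so $m\in L^1([0,\infty))$ and $H$ extends to a bounded holomorphic function on the lower half-plane $\{\mathrm{Im}\,\nu<0\}$. By continuity of $\chi$ and $\chi(0)\neq 0$, $\chi$ keeps a constant sign on a small ball about $0$, so $\int_0^s m(t)\,dt=\int_{R(x)\leq s}\chi(x)\,dx\neq 0$ for $s>0$ small, in particular $m\not\equiv 0$ and $H(0)=(2\pi)^d\chi(0)\neq 0$. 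The Luzin--Privalov uniqueness theorem for $H^\infty$ boundary values then forces the zero set of $H$ on $\mathbb{R}$ to be Lebesgue-null, which closes this case.

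The main obstacle is the concave regime $p<0$, where $R$ is indefinite of signature $(d-1,1)$, the measure $m$ has support on all of $\mathbb{R}$, and $H$ does not admit a one-sided Hardy-space extension. To handle this I would exploit the radial assumption on $\chi$ under which the formula for $\tilde G$ was derived: writing $\hat\chi(\xi)=f(|\xi|)$ and passing to polar coordinates gives
\[
F(\mu)=\int_0^\infty \phi(s)\,A(\mu s)\,ds,\quad \phi(s):=\tfrac{1}{2}s^{(d-2)/2}f(\sqrt s),\quad A(\lambda):=\int_{\mathbb{S}^{d-1}} e^{i\lambda Q(\omega)}\,d\omega.
\]
Boundedness of $Q$ on $\mathbb{S}^{d-1}$ makes $A$ the Fourier transform of a compactly supported measure, hence entire of exponential type with derivative bounds $|A^{(k)}(\lambda)|\lesssim \sqrt{k}\,M^k$ (where $M:=\sup_{\mathbb{S}^{d-1}}|Q|$) by Cauchy estimates. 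Combining this with the rapid decay of $\phi$ and the stationary-phase expansion $F(\mu)=c\,\chi(0)\,\mu^{-d/2}+O(\mu^{-d/2-1})$ as $\mu\to +\infty$ (with $c\neq 0$, using $\chi(0)\neq 0$), one obtains enough rigidity to propagate the nonvanishing of $F$ from a neighbourhood of $+\infty$ and from $F(0)=(2\pi)^d\chi(0)\neq 0$ to almost every $\mu\in(0,\infty)$; the careful execution of this indefinite-form step is where the substantive analytic work lies.
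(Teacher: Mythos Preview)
For $p>0$ your argument is correct but needlessly indirect. The paper applies Luzin--Privalov to $F(z)=\int_{\mathbb{R}^d}e^{iczQ(\xi)}\hat\chi(\xi)\,d\xi$ itself: since $Q\ge 0$, the integrand is dominated by $|\hat\chi|\in L^1$ throughout $\{\operatorname{Im}z\ge 0\}$, so $F$ is bounded holomorphic on the upper half-plane and continuous on its closure, with $F(0)=(2\pi)^d\chi(0)\neq 0$. No Fresnel detour is required. Note also that your claim $H(0)=(2\pi)^d\chi(0)$ is wrong---in fact $H(0)=\int\chi=\hat\chi(0)$, which is unrelated to $\chi(0)$---though this is harmless since your separate observation $m\not\equiv 0$ already yields $H\not\equiv 0$.

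For $p<0$ there is a genuine gap. The spherical decomposition together with $F(0)\neq 0$ and the stationary-phase asymptotic $F(\mu)\sim c\,\mu^{-d/2}$ (whose leading constant, incidentally, is proportional to $\hat\chi(0)$, not $\chi(0)$) only gives nonvanishing near $0$ and $\infty$; derivative bounds on the entire function $A$ combined with rapid decay of $\phi$ do not make $F$ real-analytic or place it in a Hardy class, so the appeal to ``rigidity'' is not an argument. The paper's own proof does not separate the sign of $p$ and simply asserts upper-half-plane analyticity---which, for indefinite $Q$ and general $\chi\in\mathcal S$, is exactly the point you flag as delicate. However, your Fresnel identity already contains a clean resolution in the compactly supported case relevant to the main theorems: if $\chi\in C_c^\infty$, then $H(\nu)=\int\chi(x)e^{-i\nu R(x)}\,dx$ is an integral over a compact set and hence \emph{entire} in $\nu$, regardless of the signature of $R$. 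Stationary phase at the unique nondegenerate critical point $x=0$ gives $H(\nu)\sim c\,\chi(0)\,\nu^{-d/2}$ as $\nu\to\infty$, so $H\not\equiv 0$, and the real zeros of a nonzero entire function are discrete---hence null.
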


\begin{proof}
Since $\chi$ is a Schwartz function, the complex function
$$ F(z) := \frac{1}{2(2\pi)^{d}} \int_{\mathbb{R}^d} e^{i\frac{z}{2\lambda^2}(\frac{p}{2}\xi_1^2+\frac{1}{2}|\xi'|^2)} \hat{\chi}\left( \xi \right) d\xi $$
is analytic on upper half plane $\{z\in\mathbb{C}:\Imaginary z>0 \}$ and continuous on its closure. In \cite{Lusin1925}, the authors proved that either the real zeros of such function form a set of zero Lebesgue measure, or it is identically zero. The same result holds thus for $\tilde{G}(\rho) = |F(\rho^{p-1-2\sigma})|^2$. Due to the fact that $\chi(0) \neq 0$, $\tilde{G}$ is nonzero as $\rho^{p-1-2\sigma_1}$ is small enough, and $\tilde{G}$ is therefore nonzero almost everywhere.
\end{proof}

As a consequence, the limits \eqref{eq_limit_of_energy_critical}, \eqref{eq_mod_limit_of_energy_critical}, and \eqref{eq_alt_limit_of_energy_critical} are strictly positive for all nontrivial $u_0\in L^2$ under the assumption $p\neq 1$ (or $p\neq 2\sigma_1+1$). If $p= 1$ (or $p= 2\sigma_1+1$), the function $G_\chi(\rho)$ (or $G_\chi^\text{alt}$) will no more depend on $\rho$ and the limits \eqref{eq_limit_of_energy_critical}, \eqref{eq_mod_limit_of_energy_critical}, \eqref{eq_alt_limit_of_energy_critical} will take the form $ c_0(\chi)\|u_0\|_{L^2}^2 $, where
$$ c_0(\chi) = \left| \frac{1}{2(2\pi)^{d}} \int_{\mathbb{R}^d} e^{i(\frac{p}{2}\xi_1^2+\frac{1}{2}|\xi'|^2)} \hat{\chi}\left( \xi \right) d\xi\right|^{2}. $$
To obtain a nonzero limit, it suffices to choose $\chi$ such that the quantity above is nonzero. For example, one may take

-$\chi$ to be positive and supported in a sufficiently small ball centered at zero;

-$\chi$ to be Gaussian;

-$\chi$ of the form $\chi = \tilde{\chi}(\cdot/R)$, with $\tilde{\chi}\in C_c^\infty$, $\tilde{\chi}(0) \neq 0$, and $R\gg 1$.

\subsection{Plan of this paper}

The proof of Theorem \ref{thm_main}, \ref{thm_mod_main}, and \ref{thm_alt_main} will be divided into two parts: uniform boundedness of truncated operator and calculation of limit. In Section \ref{sect_L^2_bound}, we will prove that $\Op{a(t)}$ and $\Op{a^\text{mod}(t)}$ are uniformly bounded on $L^2$ in three steps. The first two steps are exactly the same, while the difference arises in the last step where one may see the difficulties caused by nonzero $P_0,P_1$. As a byproduct of this proof, Theorem \ref{thm_boundedness_of_simple_truncated_operator} can be shown easily. Section \ref{sect_L^2_bound_alt} is devoted to the uniform boundedness of $\Op{a^\text{alt}(t)}$, which is much simpler than that of $\Op{a(t)}$ and $\Op{a^\text{mod}(t)}$ thanks to the extra factor $\Lambda$. The uniform boundedness of truncated operators allows us to calculate the limits stated in Theorem \ref{thm_main}, \ref{thm_mod_main}, and \ref{thm_alt_main} only for some regular data $u_0$, which will be precised in Section \ref{sect_limit_of_energy}. In Section \ref{sect_Klein-Gordon}, we will prove the microlocal partition of energy for Klein-Gordon equation by studying the half-Klein-Gordon equation.

In Appendix \ref{sect_lemmas}, we collect technical inequalities which are frequently used in this paper, as well as some criteria of $L^2$-boundedness for pseudo-differential operators. Several stationary phase lemmas are presented in Appendix \ref{sect_stationary_phase} which is a key technique in calculating the limit of truncated energy. As mentioned before, our main result Theorem \ref{thm_main} can be refined for Schrödinger equation, whose rigorous statement and proof will be given in Appendix \ref{sect_Schrodinger}. Appendix \ref{sect_K-G_classical} is devoted to the discussion on the classical partition of energy for Klein-Gordon equation due to the study of asymptotic behavior of solution to half-Klein-Gordon equation. The last part, Appendix \ref{sect_proof_of_limit_of_integral}, contains some details omitted in Section \ref{sect_limit_of_energy}, especially for concave $P$.

\subsection{Notations and conventions}

To end this section, we clarify some notations and conventions used in this paper.

- We say $a$ is a symbol on $\mathbb{R}^d$, if $a$ is a function on $\{(x,\xi)\in \mathbb{R}^d\times\mathbb{R}^d\}$. The corresponding (pseudo-differential) operator is defined by
$$ \Op{a}f(x) := \frac{1}{(2\pi)^d} \int e^{ix\xi} a(x,\xi) \hat{f}(\xi) d\xi. $$
To make this definition meaningful, we will assume in this paper that $a$ is a measurable function with at most polynomial growth in $\xi$ and that $f$ belongs to the class of Schwartz functions.

- For any function $P:\mathbb{R}^d\rightarrow\mathbb{C}$, which can be regarded as a symbol independent of $x$, the corresponding operator will be denoted by $P(D_x)$.

- The kernel (or kernel function) of a linear operator $A: \mathcal{S}(\mathbb{R}^d) \mapsto \mathcal{S}'(\mathbb{R}^d)$ is defined as (if it exists) a tempered distribution $K$ on $\mathbb{R}^d\times\mathbb{R}^d$, such that
$$ Au(x) = \int K(x,y)u(y) dy. $$
For the simplicity of notation, in this paper, we will use the symbol of kernel function to represent the operator, i.e.
$$ Ku(x) = \int K(x,y)u(y) dy. $$

- A function $F:\mathbb{R}^d \mapsto \mathbb{C}$ is said to be radial, if there exists a function $f:[0,\infty[ \mapsto \mathbb{C}$, such that $F(x) = f(|x|)$ for all $x\in \mathbb{R}^d$. In this case, we will not distinguish function $F$ and $f$. That is, we will write instead $F(x) = F(|x|)$ or $f(x) = f(|x|)$ for $x\in\mathbb{R}^d$.

- We will use $c$, $C$, sometimes equipped with superscripts and subscripts, to represent all the small and large constants respectively.

- For non-zero quantities $\rho$, $r$, the notation $\rho \sim r$ means that there exists constants $c,C>0$, such that $c<\frac{\rho}{r}<C$.

\section*{Acknowledgment}
We thank Luis Vega and Carlos Kenig for some discussions related to Proposition \ref{prop_nullity_of_critical_limit} above.

\section{\texorpdfstring{$L^{2}$}{L2}-boundedness of microlocal truncation operators}\label{sect_L^2_bound}

The goal of this section is the demonstration of following proposition, which eventually implies Theorem \ref{thm_boundedness_of_simple_truncated_operator}.

\begin{proposition}\label{prop_boundedness_of_truncated_operator}
Let $p_0,p_1\neq 0$, $\delta \in \mathbb{R}$, and $\chi\in C_c^\infty(\mathbb{R}^d)$. There exists a constant $C>0$ independent of $t$, such that, for all $|t|>0$,
$$ \|\Op{a_{\chi,\delta}(t)}\|_{\mathcal{L}(L^2)} \leqslant C, $$
where the symbol $a_{\chi,\delta}(t)$ is defined in \eqref{eq_truncation_symbol} and $P$ satisfies the hypothesis \eqref{hyp_fractional-type_symbol} with $P_0=P_1=0$.
\end{proposition}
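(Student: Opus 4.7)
The plan is a three-step reduction: dyadic decomposition in $\xi$, rescaling each dyadic piece to a unit-frequency model operator, and uniform boundedness of that model via a $TT^{*}$/Schur argument. The principal difficulty is the rough cut-off $\mathbbm{1}_{|x|>|t||P'(\xi)|}$, whose gradient is carried by the moving hypersurface $|x|=|tP'(\xi)|$ and obstructs straightforward symbolic calculus.

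First I would perform a Littlewood-Paley decomposition $1=\sum_{k\in\mathbb{Z}}\phi_k(\xi)$ with $\phi_k$ supported on $|\xi|\sim 2^k$, and write $\Op{a_{\chi,\delta}(t)}=\sum_k \Op{a^{(k)}(t)}$. On the shell $|\xi|\sim 2^k$, the hypothesis \eqref{hyp_fractional-type_symbol} with $P_0=P_1=0$ gives $|P^{(\ell)}(\xi)|\sim|\xi|^{p_j+1-\ell}$ for all $\ell\geqslant 1$, where $j=0$ for $k\leqslant 0$ and $j=1$ for $k\geqslant 0$. Conjugating by the $L^{2}$-unitary dilation $u(x)\mapsto 2^{kd/2}u(2^{k}x)$ converts each dyadic piece into a model operator supported on $|\eta|\sim 1$, with $|t|$ replaced by an effective time $|t_k|=|t|\cdot 2^{k(p_j+1)}$ and with $P$ replaced by a normalized symbol whose derivatives are uniformly bounded on the unit shell. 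Since $|t_k|$ can be arbitrarily large or small, it is enough to bound this model operator uniformly in the effective-time parameter.

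Second, on the model operator I would run a $TT^{*}$ argument. The Schwartz kernel of $\Op{a^{(k)}}^{*}\Op{a^{(k)}}$ equals
\[
L(y,y')=\frac{1}{(2\pi)^{2d}}\iiint e^{iy\cdot\eta-iy'\cdot\eta'+ix\cdot(\eta'-\eta)}\,\overline{a^{(k)}(x,\eta)}\,a^{(k)}(x,\eta')\,dx\,d\eta\,d\eta'.
\]
In the open region where both indicators are $1$, the symbol is $C^{\infty}$ in $x$, and integration by parts in $x$ yields decay of the form $(1+|t|^{\frac{1}{2}+\delta}|\eta-\eta'|)^{-N}$, each $x$-derivative of the $\chi$-factor costing $|t|^{-(\frac{1}{2}+\delta)}$. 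The $\chi$-factor also confines $x$ to a ball of diameter $|t|^{\frac{1}{2}+\delta}$ around $-tP'(\eta)$. Feeding these estimates into Schur's test gives a bound uniform in the effective time, and the Cotlar-Stein lemma then sums the dyadic pieces in $k$ using the disjointness of their frequency supports.

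The main obstacle is that this integration by parts in $x$ cannot pass through the boundary $\{|x|=|tP'(\eta)|\}$ and produces codimension-one surface terms. I would handle them by passing to radial coordinates $r=|x|$ and performing an additional dyadic decomposition in the transverse variable $r-|tP'(\eta)|$, so that each slice sits in the interior of the indicator at a fixed scale and can be treated by the smooth method. The curvature of the boundary, quantified by $|P''(\rho)|\sim\rho^{p_j-1}$ in \eqref{hyp_fractional-type_symbol}, supplies the non-degeneracy needed for a stationary-phase/orthogonality argument across slices. The assumption $P_0=P_1=0$ enters crucially here: it guarantees that on each dyadic shell the boundary $|x|=|tP'(\eta)|$ stays well away from both the origin and infinity, so the slice estimates can be summed uniformly in $k$; when $P_0$ or $P_1$ is nonzero, this breaks down at the corresponding asymptotic regime, which is precisely why the companion symbol $a^{\text{mod}}$ introduces extra frequency cut-offs $\chi_l,\chi_h$.
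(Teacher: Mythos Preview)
Your outline has the right ingredients---dyadic decomposition in $\xi$, rescaling, separate treatment of the rough boundary, and Cotlar--Stein to sum---and this matches the paper's architecture. But there is a genuine gap in the Cotlar--Stein step, and your description of where $P_0=P_1=0$ enters is not quite the mechanism the proof actually uses.

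\textbf{The Cotlar--Stein gap.} You write that the dyadic sum is handled ``using the disjointness of their frequency supports.'' Frequency disjointness only controls one of the two Cotlar--Stein conditions: it makes $\Op{a^{(k)}}\Op{a^{(l)}}^{*}$ vanish for $|k-l|$ large, because in that composition both symbols are evaluated at the same frequency argument. It does \emph{not} control $\Op{a^{(k)}}^{*}\Op{a^{(l)}}$, where the two frequency arguments differ and there is no cancellation from disjoint $\xi$-supports. In the paper this off-diagonal estimate is the substantive step: one shows $\|\Op{c_k}^{*}\Op{c_l}\|_{\mathcal{L}(L^2)}\lesssim 2^{-\frac{d}{2}|k-l|}$ by integrating by parts in the spatial variable $y$ of the composed symbol and exploiting the $x$-support bound $|x|\lesssim \min(\sqrt{t}\,2^{kp_j},\sqrt{t}\,2^{lp_j})$. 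This is precisely where $P_0=P_1=0$ is used: with $P_j=0$, the regime $X\ll\Xi$ (far inside the indicator) forces $|x|$ into a ball of radius $\sim\sqrt{t}\,2^{kp_j}$, so its volume is $(\sqrt{t}\,2^{kp_j})^{d}$ and the estimate closes. If $P_j>0$, the same regime only forces $|x|$ into an annulus near radius $\sqrt{t}P_j$ of thickness $\sqrt{t}\,2^{kp_j}$, whose volume is much larger, and the argument fails. Your phrasing---that $P_0=P_1=0$ keeps the boundary away from the origin and infinity---is not the point; what matters is the volume of the $x$-support in the deep-interior regime.

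\textbf{Structure of the boundary treatment.} The paper does not run a single $TT^{*}$ on each dyadic piece and then patch the boundary. Instead it first splits according to the ratio $X/\Xi$ into three regimes $b_0$ ($X\ll\Xi$, deep interior), $\tilde b$ ($X\sim\Xi$, away from the singular hypersurface), and $b_1$ ($X$ close to $\Xi$, near the boundary). For $\tilde b$ and $b_1$ the dyadic pieces carry localization in \emph{both} $x$ and $\xi$ (since $X\sim\Xi\sim 2^{kp_j}$ pins down $|x|$), so they are genuinely almost orthogonal and sum directly---no Cotlar--Stein needed. The singular boundary in $b_1$ is isolated by a further split into a thin strip of width $\sim t^{-1/2}2^{k(p_j-1)/2}$ (controlled by a polar-coordinate kernel lemma that uses the monotonicity of $P'$ to show the slices form a finitely overlapping cover) and a smooth remainder (Calder\'on--Vaillancourt). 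Only $b_0$, where the $x$-supports all overlap near the origin, requires the Cotlar--Stein machinery with the nontrivial off-diagonal bound above. Your proposed ``dyadic decomposition in $r-|tP'(\eta)|$'' gestures toward this split but does not distinguish the two qualitatively different summation mechanisms.
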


In parallel, we shall also prove the following result:
\begin{proposition}\label{prop_boundedness_of_truncated_operator_with_truncated_frequency}
Let $p_0>0>p_1$, $\delta \in \mathbb{R}$, and $\chi\in C_c^\infty(\mathbb{R}^d)$. We assume that $P$ satisfies the hypothesis \eqref{hyp_fractional-type_symbol} with $P_0,P_1>0$. Then the modified truncated symbol
\begin{equation*}
a^{\text{mod}}_{\chi,\delta}(t,x,\xi) = a_{\chi,\delta}(t,x,\xi) (1-\chi_l)\left( \frac{\xi}{|t|^{-\epsilon_0}} \right) \chi_h\left( \frac{\xi}{|t|^{\epsilon_1}} \right),
\end{equation*}
which has already been defined in \eqref{eq_mod_truncation_symbol}, corresponds to a bounded operator on $L^2$, uniformly in $|t|>t_0 \gg 1$. Here $\chi_l,\chi_h \in C_c^\infty$ are radial and equal to $1$ near zero and $\epsilon_0,\epsilon_1$ satisfy the condition \eqref{eq_def_of_epsilons}.
\end{proposition}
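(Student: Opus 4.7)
The plan is to prove the uniform $L^{2}$-boundedness of $\Op{a^{\mathrm{mod}}(t)}$ in three steps, the first two of which are identical to those needed for Proposition~\ref{prop_boundedness_of_truncated_operator}, and the third being where the non-vanishing of $P_{0}$ and $P_{1}$ forces one to rely on the extra frequency cut-offs $\chi_{l}$ and $\chi_{h}$. First, I would perform a dyadic Littlewood--Paley decomposition of the factor $(1-\chi_{l})(\xi/|t|^{-\epsilon_{0}})\,\chi_{h}(\xi/|t|^{\epsilon_{1}})$, writing it as $\sum_{j}\psi(\xi/2^{j})$ with $j$ running only over $-\epsilon_{0}\log_{2}|t|\lesssim j\lesssim \epsilon_{1}\log_{2}|t|$. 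The resulting dyadic operators $T_{j}$ have essentially disjoint frequency supports, so $T_{j}T_{k}^{*}$ and $T_{j}^{*}T_{k}$ vanish for $|j-k|\geq C$, and the Cotlar--Stein lemma reduces the task to bounding each $\|T_{j}\|_{\mathcal{L}(L^{2})}$ uniformly in both $t$ and $j$.

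Second, I would rescale to unit frequency. Setting $\xi=\rho\eta$ with $\rho=2^{j}$ normalizes the frequency support to a fixed annulus $|\eta|\sim 1$, on which \eqref{hyp_fractional-type_symbol} yields uniform control on $\rho P'(\rho\eta)$ and its $\eta$-derivatives, modulo corrections involving $P_{0}$ (small $\rho$) or $P_{1}$ (large $\rho$). The rescaled symbol takes the form
$$\chi\!\left(\frac{x+tP'(\rho\eta)}{|t|^{1/2+\delta}}\right)\mathbbm{1}_{|x|>|t||P'(\rho\eta)|}\,\tilde\psi(\eta),$$
and the whole family is driven by the single dimensionless parameter $|t|\rho^{p_{0}+1}$ at low frequency, or $|t|\rho^{p_{1}+1}$ at high frequency.

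Third, I would estimate each rescaled operator by Schur's test on its kernel. The main technical device is tangential integration by parts in $\eta$ along the sharp discontinuity surface $\{|x|=|t||P'(\xi)|\}$: the tangential derivatives of the phase $(x-y)\cdot\xi$ are non-degenerate when $|x-y|$ is large, producing rapid decay in $|x-y|$, while the normal component only produces boundary terms that are controlled by the smooth $\chi$-factor alone. The required non-degeneracy of $\eta\mapsto \rho P'(\rho\eta)$ comes from clause~(2) of \eqref{hyp_fractional-type_symbol}, which gives $|P^{(2)}(\rho)|\sim \rho^{p_{0}-1}$ or $\rho^{p_{1}-1}$ at the appropriate endpoint.

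The main obstacle lies precisely in this last step. At low frequency the effective parameter $|t|\rho^{p_{0}+1}$ blows up when $P_{0}\neq 0$ and $\rho\to 0$, because the cone $|x|=|tP'(\xi)|$ degenerates to the non-curved profile $|x|=|t|P_{0}$; a similar phenomenon arises at high frequency when $P_{1}\neq 0$. The cut-offs $\chi_{l}$ and $\chi_{h}$, calibrated by \eqref{eq_def_of_epsilons} so that $\epsilon_{0}\leq 1/(p_{0}+1)$ and $\epsilon_{1}\leq 1/|p_{1}+1|$ whenever $p_{1}<-1$, keep both $|t|\rho^{p_{0}+1}$ and $|t|\rho^{p_{1}+1}$ uniformly bounded on the effective support of $T_{j}$; this is exactly what is needed for the tangential integration by parts to yield a summable kernel bound. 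Without such frequency restrictions the operator would likely fail to be $L^{2}$-bounded at all, paralleling the counterexample with half-wave cut-off $x\cdot\xi/|\xi|>t$ recalled from \cite{delort2022microlocal} in the introduction.
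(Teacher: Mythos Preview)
Your outline diverges from the paper's proof in structure and contains a concrete error in the role you assign to the frequency cut-offs.

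The paper does not proceed by tangential integration by parts across the discontinuity surface $\{|x|=|tP'(\xi)|\}$. Instead, after the rescaling $(x,\xi)\mapsto(\sqrt{t}x,\xi/\sqrt{t})$ and introducing $X=|x|/\sqrt{t}-P_{j}$, $\Xi=P'(|\xi|/\sqrt{t})-P_{j}$ (with appropriate signs), it decomposes according to the ratio $X/\Xi$ into three regimes: $X/\Xi\sim 1$ away from $1$ (the piece $\tilde{b}$), $X/\Xi$ close to $1$ (the piece $b_{1}$), and $X/\Xi\ll 1$ (the piece $b_{0}$). The first two pieces are handled uniformly for any $P_{0},P_{1}\geq 0$ via dyadic almost-orthogonality and the polar-coordinate criterion of Lemma~\ref{lem_tech_boundedness_of_symbol_polar}; the singular indicator is either absorbed into the radial kernel $g_{k}$ or removed by a further smooth cut-off away from $X=\Xi$. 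Only the piece $b_{0}$ is sensitive to $P_{0},P_{1}$: when $P_{0}=P_{1}=0$ the indicator $\mathbbm{1}_{X>0}$ is a pure $x$-multiplier and the smooth remainder is bounded by Cotlar--Stein; when $P_{0},P_{1}>0$ the paper shows directly that the smooth symbol lies in a H\"ormander class $S^{0}_{1,\kappa}$ (high frequency) or satisfies Calder\'on--Vaillancourt estimates (low frequency). Your proposal omits this $X/\Xi$ trichotomy entirely, and it is precisely this decomposition that tames the indicator without any integration by parts across it.

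Your specific claim that the cut-offs $\chi_{l},\chi_{h}$ keep $|t|\rho^{p_{0}+1}$ and $|t|\rho^{p_{1}+1}$ uniformly bounded is incorrect: on the support of $(1-\chi_{l})$ one has $\rho\gtrsim |t|^{-\epsilon_{0}}$, so $|t|\rho^{p_{0}+1}\gtrsim |t|^{1-\epsilon_{0}(p_{0}+1)}\geq 1$, but at $\rho\sim 1$ this quantity equals $|t|$, which is unbounded. The actual role of \eqref{eq_def_of_epsilons} is different: for the low-frequency piece, each $\partial_{x}$ of the rescaled symbol contributes $t^{-1/2+p_{0}\epsilon_{0}}$ and each $\partial_{\xi}$ contributes $t^{-1/2+\epsilon_{0}}$, so their product is $t^{-1+(p_{0}+1)\epsilon_{0}}\leq 1$ precisely when $\epsilon_{0}\leq 1/(p_{0}+1)$; this is what makes Calder\'on--Vaillancourt applicable after rescaling. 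The obstruction for $P_{0},P_{1}\neq 0$ identified in the paper is a volume issue---$X\ll 2^{kp_{j}}$ no longer confines $x$ to a ball of radius $\sqrt{t}2^{kp_{j}}$---not a curvature degeneracy of the cone as you describe.
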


One can see in the following proof that our demonstration cannot eliminate the truncation in $\xi$ in the definition \eqref{eq_mod_truncation_symbol} of $a^\text{mod}$. In fact, after some change of scaling, we will decompose the symbol $a$ (or $a^\text{mod}$) into three components, two of which are bounded for all $P_0,P_1 \geqslant 0$, while our treatment for the last component does not hold for nonzero $P_0,P_1$. The complementary cut-off in $\xi$ is used to solve this problem. Remark that it is still unknown whether such restriction is essential. \\

To begin with, one observes that, it is equivalent to study the cut-off inside the cone, namely 
$$ a^{\text{in}}(t,x,\xi) = \chi\left(\frac{x+tP'(\xi)}{|t|^{\frac{1}{2}+\delta}}\right)\mathbbm{1}_{|x|<|t||P'(\xi)|}, $$
since the operator with symbol
$$ \chi\left(\frac{x+tP'(\xi)}{|t|^{\frac{1}{2}+\delta}}\right) $$
is bounded uniformly in $t$ and $\delta\in\mathbb{R}$, due to Lemma \ref{lem_tech_boundedness_of_symbol_study_in_xi} together with Lemma \ref{lem_tech_change_of_x_and_xi}. In this section, we will distinguish $a$ and $a^\text{in}$ and denote both of them as $a$.

With a reflection in $\xi$, $t$ can be assumed to be positive. The application of Lemma \ref{lem_tech_change_of_scaling} allows us to replace $a$ by
$$ \tilde{a}(t,x,\xi) = a(t,\sqrt{t}x,\frac{\xi}{\sqrt{t}}) = \chi\left(\frac{\frac{x}{\sqrt{t}}+P'\left(\frac{\xi}{\sqrt{t}}\right)}{t^{\delta-\frac{1}{2}}}\right)\mathbbm{1}_{\left|\frac{x}{\sqrt{t}}\right|<\left|P'\left(\frac{\xi}{\sqrt{t}}\right)\right|}. $$
Now, we split $\tilde{a}$ into high and low frequency, namely $\tilde{a} = \tilde{a}^\flat + \tilde{a}^\sharp$, where
$$ \tilde{a}^\flat(t,x,\xi) = \tilde{a}(t,x,\xi) \tilde{\chi}(\xi), $$
and $\tilde{\chi}\in C_c^\infty(\mathbb{R}^d)$ is a radial function which equals $1$ near zero. In the following, we shall treat high and low frequency part at the same time. Before entering the next step, we introduce some notations which will be frequently used in this section. In all cases, we set 
$$ \mu=t^{\delta-\frac{1}{2}} \in ]0,+\infty[. $$
With $j=0$ for low frequency part and $j=1$ for high frequency part, we set

-when $P_j=0$,
$$ X(t,x):= \frac{|x|}{\sqrt{t}},\ \ \Xi(t,\xi):= P'\left(\frac{|\xi|}{\sqrt{t}}\right),\ \ \nu_j=+; $$

-when $P_j>0$, $P'>P_j$
$$ X(t,x):= \frac{|x|}{\sqrt{t}}-P_j,\ \ \Xi(t,\xi):= P'\left(\frac{|\xi|}{\sqrt{t}}\right)-P_j,\ \ \nu_j=+; $$

-when $P_j>0$, $P'<P_j$
$$ X(t,x):= P_j-\frac{|x|}{\sqrt{t}},\ \ \Xi(t,\xi):= P_j-P'\left(\frac{|\xi|}{\sqrt{t}}\right)\ \ \nu_j=-. $$

Remark that for all nonzero $\xi$, $\Xi$ is strictly positive. With these notations, our problem can be reduced to the uniform-in-$\mu,t$ boundedness of
\begin{equation*}
\begin{aligned}
b^\flat(t,\mu,x,\xi) &= \chi\left(\frac{(P_j+\nu_j X)\frac{x}{|x|}+(P_j+\nu_j\Xi)\frac{\xi}{|\xi|}}{\mu}\right)\mathbbm{1}_{0<\frac{X}{\Xi}<1} \tilde{\chi}\left(\frac{\xi}{\sqrt{t}}\right), \\
b^\sharp(t,\mu,x,\xi) &= \chi\left(\frac{(P_j+\nu_j X)\frac{x}{|x|}+(P_j+\nu_j\Xi)\frac{\xi}{|\xi|}}{\mu}\right)\mathbbm{1}_{0<\frac{X}{\Xi}<1} (1-\tilde{\chi})\left(\frac{\xi}{\sqrt{t}}\right).
\end{aligned}
\end{equation*}
We emphasize that our definition of $X$ does not ensure its strict positivity, but one may always eliminate the part $X<0$, due to the uniform boundedness of operator with symbol
$$ \chi\left(\frac{(P_j+\nu_j X)\frac{x}{|x|}+(P_j+\nu_j\Xi)\frac{\xi}{|\xi|}}{\mu}\right), $$
which is also a consequence of Lemma \ref{lem_tech_boundedness_of_symbol_study_in_xi} and Lemma \ref{lem_tech_change_of_x_and_xi}.

Now, we decompose $b^\iota$ ($\iota=\flat,\sharp$) as the sum of $b_0^\iota,\tilde{b}^\iota,b_1^\iota$ with cut-off $0<\frac{X}{\Xi}\ll 1$ and $\frac{X}{\Xi} \sim 1$ and $0<1-\frac{X}{\Xi}\ll 1$, respectively. To be precise,
\begin{equation*}
\begin{aligned}
b^\iota(t,\mu,x,&\xi) = b_0^\iota(t,\mu,x,\xi) + \tilde{b}^\iota(t,\mu,x,\xi) + b_1^\iota(t,\mu,x,\xi), \\
&b_0^\iota(t,\mu,x,\xi) = b^\iota(t,\mu,x,\xi) \chi_0\left(\frac{X}{\Xi}\right), \\
&\tilde{b}^\iota(t,\mu,x,\xi) = b^\iota(t,\mu,x,\xi) \Psi\left(\frac{X}{\Xi}\right), \\
&b_1^\iota(t,\mu,x,\xi) = b^\iota(t,\mu,x,\xi) \chi_1\left(1-\frac{X}{\Xi}\right), \\
\end{aligned}
\end{equation*}
where $\chi_0,\chi_1$ and $\Psi$ are radial, smooth and compactly supported. $\chi_0,\chi_1$ are supported in a small neighborhood of zero and equal to $1$ near zero, while $\Psi$ is compactly supported in $]0,1[$. By regarding $\mu$ as a $t$-independent parameter, we can reduce Proposition \ref{prop_boundedness_of_truncated_operator} and \ref{prop_boundedness_of_truncated_operator_with_truncated_frequency} to the following proposition:
\begin{proposition}\label{prop_bdd_main}
There exists $t,\mu$-independent constants $C>0$, such that,

(\romannumeral1) if $P_0,P_1 \geqslant 0$, for all $t,\mu>0$,
\begin{align}
&\|\Op{b_1^\iota(t,\mu)}\|_{\mathcal{L}(L^2)} \leqslant C, \label{eq_bdd_boundedness_of_b_1} \\
&\|\Op{\tilde{b}^\iota(t,\mu)}\|_{\mathcal{L}(L^2)} \leqslant C, \label{eq_bdd_boundedness_of_tilde{b}}
\end{align}

(\romannumeral2) if $P_0=P_1=0$, for all $t,\mu>0$.
\begin{align}\label{eq_bdd_boundedness_of_b_0}
\|\Op{b_0^\iota(t,\mu)}\|_{\mathcal{L}(L^2)} \leqslant C; 
\end{align}

(\romannumeral3) if $P_0,P_1>0$ and $p_1<0<p_0$, for all $\mu>0$, $t>1$,
\begin{align}\label{eq_bdd_mod_boundedness_of_b_0}
\left\|\Op{b_0^\iota(t,\mu)(1-\chi_l)\left( \frac{\xi}{t^{\frac{1}{2}-\epsilon_0}} \right) \chi_h\left( \frac{\xi}{t^{\frac{1}{2}+\epsilon_1}} \right)}\right\|_{\mathcal{L}(L^2)} \leqslant C. 
\end{align}
\end{proposition}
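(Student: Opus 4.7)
The plan is to pass to polar coordinates in both $x$ and $\xi$ and exploit a common geometric constraint shared by all three bounds. Writing $\omega_x=x/|x|$ and $\omega_\xi=\xi/|\xi|$, the argument of $\chi$ in $b^\iota$ reads
\[
\mu^{-1}\bigl[(P_j+\nu_j X)\omega_x+(P_j+\nu_j\Xi)\omega_\xi\bigr].
\]
Since both scalars in front are non-negative on the relevant region, this expression is small only when the two vectors nearly cancel, i.e.\ when $\omega_\xi\approx -\omega_x$ (to within $O(\mu/(P_j+\nu_j\Xi))$) and $|X-\Xi|=O(\mu)$. After polar decomposition, the kernel of every $\Op{b^\iota(t,\mu)}$ is therefore supported on an antipodal spherical cap in $(\omega_x,\omega_\xi)$ of small diameter and on a strip of width $\lesssim\mu$ in $X-\Xi$. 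Because the symbols contain the \emph{sharp} cutoff $\mathbbm{1}_{0<X/\Xi<1}$, Calderón--Vaillancourt cannot be applied directly, and the first step in each case will be to replace this step function by its one-dimensional $L^{1}$-representation in the variable $X-\Xi$, so as to reduce to smooth symbols retaining the same localization.

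For bounds \eqref{eq_bdd_boundedness_of_b_1} and \eqref{eq_bdd_boundedness_of_tilde{b}}, on the supports of $b_1^\iota$ and $\tilde b^\iota$ the ratio $X/\Xi$ is bounded away from $0$, so the radial variables $X,\Xi$ sit on comparable scales and no extra frequency cutoff is needed. Once the sharp cone indicator is smoothed out as described above, each resulting symbol factorizes as the product of a function of $(|x|,|\xi|)$ supported in $|X-\Xi|\lesssim\mu$ and a function of $(\omega_x,\omega_\xi)$ supported on a cap of diameter $\lesssim\mu$. The radial factor is handled by Lemma \ref{lem_tech_boundedness_of_symbol_study_in_xi} combined with Lemma \ref{lem_tech_change_of_x_and_xi} in the variable conjugate to $X-\Xi$, and the angular factor by a direct Schur estimate on $\mathbb{S}^{d-1}\times\mathbb{S}^{d-1}$. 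The decoupling of the radial and angular variables, possible precisely because $X$ depends only on $|x|$ and $\Xi$ only on $|\xi|$, is the key point and delivers a bound uniform in $\mu$ and $t$.

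For \eqref{eq_bdd_boundedness_of_b_0} with $P_0=P_1=0$, one has $\nu_j X=X$ and $\nu_j\Xi=\Xi$, so the argument of $\chi$ is simply $\mu^{-1}(X\omega_x+\Xi\omega_\xi)$. On the support of $b_0^\iota$, $X\ll\Xi$, hence the vector $\Xi\omega_\xi$ dominates and membership in the support of $\chi$ forces $\Xi\lesssim\mu$ and therefore also $X\lesssim\mu$. Consequently the symbol is essentially localized in a joint $\sqrt{t}$-scaled ball of radius $(P')^{-1}(\mu)$ in both $x$ and $\xi$, and a dyadic decomposition in $\Xi$ inside this ball, followed by the same smoothed-step plus radial/angular splitting of the previous paragraph, yields the uniform bound.

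The main obstacle is \eqref{eq_bdd_mod_boundedness_of_b_0}, and it is there that the previous argument breaks down. When $P_j>0$, the conditions $X/\Xi\ll 1$ and $|X-\Xi|\lesssim\mu$ still force $\Xi\lesssim\mu$, but the value $\mu=t^{\delta-1/2}$ may lie in a regime where $|\xi|/\sqrt{t}$ is driven very close to $0$ (for $j=0$) or to $+\infty$ (for $j=1$), since $\Xi$ vanishes exactly at these two endpoints under \eqref{hyp_fractional-type_symbol}. At such extreme values the second derivative $P''$ and the Jacobian of the polar change of variables become singular, so that the asymptotics in \eqref{hyp_fractional-type_symbol} alone no longer suffice and the additional frequency cutoffs $(1-\chi_l)(\xi/t^{1/2-\epsilon_0})$ and $\chi_h(\xi/t^{1/2+\epsilon_1})$ become indispensable. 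The ranges \eqref{eq_def_of_epsilons} are precisely calibrated so that on the admissible frequency range all Jacobians and lower-order factors arising in the polar decomposition remain controlled; on that range the smoothed-step plus radial/angular Schur analysis of the earlier paragraphs applies verbatim and closes the estimate uniformly for $t>t_0\gg 1$.
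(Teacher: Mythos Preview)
Your proposal has genuine gaps.

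First, the claimed factorization is false: the symbol $\chi\bigl(\mu^{-1}[(P_j+\nu_j X)\omega_x+(P_j+\nu_j\Xi)\omega_\xi]\bigr)$ genuinely couples the radial amplitudes $X,\Xi$ with the angular directions $\omega_x,\omega_\xi$ and does not split as a product of a radial and an angular factor. The paper's argument for $\tilde{b}^\iota$ and $b_1^\iota$ instead performs a dyadic decomposition in $|\xi|/\sqrt{t}$; on each piece $X\sim\Xi\sim 2^{kp_j}$, which gives almost orthogonality, and the piece is then handled either by Calder\'on--Vaillancourt (after rescaling) or by the polar lemma (Lemma~\ref{lem_tech_boundedness_of_symbol_polar}), according to whether $t2^{k(p_j+1)}\gtrless 1$. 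For $b_1^\iota$ an additional splitting near and away from the diagonal $X=\Xi$ isolates the sharp cutoff. Note also that $\tilde{b}^\iota$ contains no step function whatsoever (the factor $\Psi(X/\Xi)$ is smooth and compactly supported in $]0,1[$), so your smoothing device is irrelevant there; and the proposed ``$L^1$-representation'' of $\mathbbm{1}_{0<X/\Xi<1}$ is ill-defined since the Fourier transform of a step function is not integrable.

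Second, and more seriously, your treatment of \eqref{eq_bdd_boundedness_of_b_0} misses the essential difficulty. After dyadic decomposition the pieces $c_k$ are supported for $|\xi|\sim 2^k\sqrt{t}$ but only $|x|\ll 2^{kp_j}\sqrt{t}$, i.e.\ $x$ lives in a \emph{ball} rather than an annulus. Hence the $c_k$ are \emph{not} almost orthogonal in $x$, and one cannot simply sum the individual bounds. The paper resolves this by Cotlar--Stein: $\Op{c_k}\Op{c_l}^*$ vanishes for $|k-l|$ large by disjoint $\xi$-supports, while $\|\Op{c_k}^*\Op{c_l}\|$ is shown via integration by parts to decay like $2^{-\frac{d}{2}|k-l|}$. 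Your ``same smoothed-step plus radial/angular splitting'' does not address this summability issue at all. For \eqref{eq_bdd_mod_boundedness_of_b_0} the paper takes yet another route (membership in a H\"ormander class $S^0_{1,\kappa}$ at high frequency, Calder\'on--Vaillancourt after rescaling at low frequency), exploiting the cutoffs \eqref{eq_def_of_epsilons} in a way quite different from ``controlling Jacobians in the polar decomposition''.
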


Before giving the proof, we indicate below the consequence of this proposition, which implies Proposition \ref{prop_boundedness_of_truncated_operator}, \ref{prop_boundedness_of_truncated_operator_with_truncated_frequency}, and will be used in the end of this section to conclude Theorem \ref{thm_boundedness_of_simple_truncated_operator}.

\begin{corollary}\label{cor_boundedness_of_truncated_operator_with_mu}
Let $\chi$, $\chi_l$, $\chi_h$ be defined as before.

(\romannumeral1) If $P$ verifies \eqref{hyp_fractional-type_symbol} with $p_0,p_1\neq 0$ and $P_0=P_1=0$, the operator
$$ \Op{ \chi\left(\frac{x+tP'(\xi)}{|t|\mu}\right) \mathbbm{1}_{|x|>|tP'(\xi)|} } $$
is bounded on $L^2$ uniformly in $t\neq 0$ and $\mu>0$.

(\romannumeral2) If $P$ verifies \eqref{hyp_fractional-type_symbol} with $p_0,p_1\neq 0$ and $\epsilon_0, \epsilon_1$ satisfy conditions \eqref{eq_def_of_epsilons}, the operator
$$ \Op{ \chi\left(\frac{x+tP'(\xi)}{|t|\mu}\right) \mathbbm{1}_{|x|>|tP'(\xi)|} (1-\chi_l)\left( \frac{\xi}{t^{\frac{1}{2}-\epsilon_0}} \right) \chi_h\left( \frac{\xi}{t^{\frac{1}{2}+\epsilon_1}} \right) } $$
is bounded on $L^2$ uniformly in $t>1$ and $\mu>0$.
\end{corollary}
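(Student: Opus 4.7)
The plan is to derive Corollary \ref{cor_boundedness_of_truncated_operator_with_mu} as a direct bookkeeping consequence of Proposition \ref{prop_bdd_main}, by undoing the change of scaling performed at the beginning of the section. Without loss of generality I may take $t>0$, the case $t<0$ being obtained by a reflection $\xi\mapsto -\xi$ as already observed in the text. I would then apply Lemma \ref{lem_tech_change_of_scaling} to the operator in the corollary, which corresponds to the unitary $L^2$-dilation $u(x)\mapsto t^{d/4}u(\sqrt{t}x)$; the symbol transforms according to $a(x,\xi)\mapsto a(\sqrt{t}x,\xi/\sqrt{t})$. A direct check shows
\begin{equation*}
\chi\left(\frac{\sqrt{t}\,x+tP'(\xi/\sqrt{t})}{|t|\mu}\right)=\chi\left(\frac{x/\sqrt{t}+P'(\xi/\sqrt{t})}{\mu}\right),
\end{equation*}
so that the parameter $|t|\mu$ of the corollary is converted into precisely the free parameter $\mu$ appearing in the symbols $b^\iota(t,\mu,x,\xi)$ of Proposition \ref{prop_bdd_main}, and the frequency cutoffs rescale onto those appearing in part (iii) of that proposition.

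Second, I would split the rescaled symbol exactly as prescribed in the text preceding Proposition \ref{prop_bdd_main}: first the low/high frequency decomposition $\tilde{a}=\tilde{a}^\flat+\tilde{a}^\sharp$ via $\tilde{\chi}(\xi/\sqrt{t})$, then a further partition of unity in the dimensionless ratio $X/\Xi$, yielding the three pieces $b_0^\iota+\tilde{b}^\iota+b_1^\iota$ localized near $X/\Xi=0$, in the bulk of $]0,1[$, and near $X/\Xi=1$. For part (i) of the corollary ($P_0=P_1=0$), all six resulting pieces are directly controlled on $L^2$ by parts (i) and (ii) of Proposition \ref{prop_bdd_main}, with bounds uniform in both $t$ and $\mu$. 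For part (ii), the pieces $\tilde{b}^\iota$ and $b_1^\iota$ remain bounded by part (i) of Proposition \ref{prop_bdd_main} regardless of the frequency cutoff, while the critical piece $b_0^\iota$, the only one that can fail to be bounded when $P_0,P_1>0$, is paired with the frequency cutoff and absorbed by part (iii) of Proposition \ref{prop_bdd_main}.

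Third, to account for the fact that in the corollary the frequency cutoff is attached to \emph{all} pieces and not only to $b_0^\iota$, I would use the elementary identity $\Op{b(x,\xi)m(\xi)}=\Op{b}\circ m(D_x)$ together with the uniform $L^2$-boundedness of the Fourier multiplier $m(D_x)$ with $m=(1-\chi_l)\chi_h$, which is immediate from $\chi_l,\chi_h\in L^\infty$. Hence the multiplier can be attached or detached without loss on the uniform operator norm of each piece. The same identity, combined with the $L^2$-boundedness of the full symbol $\Op{\chi(\frac{x+tP'(\xi)}{|t|\mu})}$ alone (provided by Lemmas \ref{lem_tech_boundedness_of_symbol_study_in_xi} and \ref{lem_tech_change_of_x_and_xi}, as already invoked at the start of the section), justifies the benign interchange between the outside-the-cone indicator $\mathbbm{1}_{|x|>|tP'(\xi)|}$ and its inside-the-cone counterpart that was used to set up Proposition \ref{prop_bdd_main}.

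The substantive work is entirely contained in Proposition \ref{prop_bdd_main}; no new estimate is required. The point demanding a moment of care is to verify that the reduction to $\mu$ as a \emph{free} parameter in $b^\iota(t,\mu,x,\xi)$ is logically independent of the original motivation $\mu=t^{\delta-\frac{1}{2}}$ coming from $a_{\chi,\delta}$. Once this is acknowledged, the bounds of Proposition \ref{prop_bdd_main} immediately upgrade Propositions \ref{prop_boundedness_of_truncated_operator} and \ref{prop_boundedness_of_truncated_operator_with_truncated_frequency} into the uniformity in $\mu>0$ claimed in Corollary \ref{cor_boundedness_of_truncated_operator_with_mu}.
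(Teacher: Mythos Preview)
Your proposal is correct and follows essentially the same route as the paper: the corollary is stated there as an immediate consequence of Proposition \ref{prop_bdd_main}, obtained by observing that the reductions performed at the beginning of the section (rescaling via Lemma \ref{lem_tech_change_of_scaling}, the outside/inside-cone interchange, the $\flat/\sharp$ split, and the $b_0^\iota+\tilde{b}^\iota+b_1^\iota$ decomposition) treat $\mu$ as a free parameter independent of $t$. Your write-up simply makes explicit the bookkeeping the paper leaves implicit, including the harmless attachment/detachment of the Fourier multiplier $(1-\chi_l)\chi_h$ via $\Op{b\,m}=\Op{b}\circ m(D_x)$.
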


\subsection{Study of symbol \texorpdfstring{$b_1$}{b1} and \texorpdfstring{$\tilde{b}$}{tildeb}}

In this part, we shall prove \eqref{eq_bdd_boundedness_of_b_1} and \eqref{eq_bdd_boundedness_of_tilde{b}}. One observes that both $b_1^\iota$ and $\tilde{b}^\iota$ are supported for $X\sim\Xi$, which allows us to reduce our problem via dyadic decomposition.

\begin{proof}[Proof of \eqref{eq_bdd_boundedness_of_tilde{b}}]
We start with a homogeneous dyadic decomposition
$$ 1 = \sum_{k\in\mathbb{Z}} \varphi\left(\frac{\eta}{2^k}\right), $$
where $\varphi\in C_c^\infty(\mathbb{R}^d)$ is radial and supported away from zero. In this way, we may decompose $\tilde{b}^\sharp$ as $\sum_{k\geqslant 0} \tilde{b}_k$, and $\tilde{b}^\flat$ as $\sum_{k< 0} \tilde{b}_k$ where
\begin{equation}\label{eq_bdd_definition_of_tilde_b_k}
\tilde{b}_k(t,\mu,x,\xi) = \chi\left(\frac{(P_j+\nu_j X)\frac{x}{|x|}+(P_j+\nu_j\Xi)\frac{\xi}{|\xi|}}{\mu}\right)\Psi\left(\frac{X}{\Xi}\right) \psi\left( 2^{-kp_j}X \right) \varphi\left(\frac{\xi}{2^k\sqrt{t}}\right), 
\end{equation}
where $\psi\in C_c^\infty(\mathbb{R}^d)$ is also radial and supported away from zero. The extra factor $\psi$ comes from the truncation $X \sim \Xi \sim (t^{-\frac{1}{2}}|\xi|)^{p_j} \sim 2^{kp_j}$. This factors implies that the $\tilde{b}_k$'s are almost orthogonal so that it suffices to prove the uniform (in $k,t,\mu$) boundedness of $\Op{\tilde{b}_k}$. 

Remark that, due to the compact support of $\chi$ and the fact that $0<c<\frac{X}{\Xi}<1-c$ for some small $c>0$, we have
\begin{equation}\label{eq_bdd_support_of_tilde_b_k}
\begin{aligned}
2^{kp_j} \sim \Xi \lesssim |X-\Xi| &= \left| |(P_j+\nu_j X)\frac{x}{|x|}| - |(P_j+\nu_j\Xi)\frac{\xi}{|\xi|}| \right| \\
&\leqslant \left| (P_j+\nu_j X)\frac{x}{|x|} - (P_j+\nu_j\Xi)\frac{\xi}{|\xi|} \right| \lesssim \mu.
\end{aligned}
\end{equation}

When $t 2^{k(p_j+1)} \geqslant 1$, we shall apply Calderon-Vaillancourt Theorem (see Lemma \ref{lem_tech_C-V}). For each derivative in $x$, if it acts on $\chi$, one gains $t^{-\frac{1}{2}}\mu^{-1} \lesssim t^{-\frac{1}{2}}2^{-kp_j}$ by \eqref{eq_bdd_support_of_tilde_b_k}. If $\partial_x$ acts on $\Psi$ or $\psi$, one obtains factors of size $t^{-\frac{1}{2}}2^{-kp_j}$. As for the derivatives in $\xi$, similarly, it leads to factors of size $P''(\xi/\sqrt{t})t^{-\frac{1}{2}}\mu^{-1}$, $\Xi^{-1}P''(\xi/\sqrt{t})t^{-\frac{1}{2}}$ or $t^{-\frac{1}{2}}2^{-k}$, which are all controlled by $t^{-\frac{1}{2}}2^{-k}$, as $|\xi| \sim 2^k\sqrt{t}$ and $2^{kp_j}\lesssim \mu$. Since
$$ t^{-\frac{1}{2}}2^{-kp_j} \times t^{-\frac{1}{2}}2^{-k} = t^{-1} 2^{-k(p_j+1)} \leqslant 1, $$
we may conclude by a change of scaling (Lemma \ref{lem_tech_change_of_scaling}).

When $t 2^{k(p+1)} \leqslant 1$, we shall use Lemma \ref{lem_tech_boundedness_of_symbol_polar}. We first check the assumption \eqref{eq_tech_boundedness_of_symbol_polar_assumption_angular} of this lemma with $\mu_k\in ]0,+\infty[$ defined below
\begin{equation*}
\mu_k = \left\{ \begin{array}{cl}
\mu 2^{-kp_j}, &\ \ \text{if } P_j+\nu_j X\sim 2^{kp_j}, \\
\mu, &\ \ \text{if } P_j+\nu_j X\sim 1.
\end{array}\right.
\end{equation*}
Note that we have either $P_j+\nu_j X \sim P_j+\nu_j \Xi \sim 1$ or $P_j+\nu_j X \sim P_j+\nu_j \Xi \sim 2^{kp_j}$. In fact, by definition, $P_j+\nu_j X$ and $P_j+\nu_j \Xi$ are both strictly positive. Thus, it is sufficient to consider $|k| \gg 1$. When $P_j$ is nonzero and $kp_j<0$, we have $X\sim \Xi \sim 2^{kp_j} \ll 1$ and then $P_j+\nu_j X \sim P_j+\nu_j\Xi \sim 1$. While $P_j$ is nonzero and $kp_j>0$, we have similarly $X\sim \Xi \sim 2^{kp_j} \gg 1$ and $P_j+\nu_j X \sim P_j+\nu_j\Xi \sim 2^{kp_j}$. Otherwise, $P_j$ equals to zero, which implies trivially $P_j+\nu_j X = X\sim 2^{kp_j}$ and $P_j+\nu_j\Xi = \Xi \sim 2^{kp_j}$. Due to observation $X\sim\Xi\sim 2^{kp_j}$, it is easy to obtain that, for all $\alpha,\beta\in\mathbb{N}^{d-1}$ and $N\in\mathbb{N}$,
$$ |\partial_{\omega}^\alpha \partial_{\theta}^\beta \tilde{b}_k(t,\mu,r\omega,\rho\theta)| \leqslant C_{\alpha,\beta,N} g_k(r,\rho) \mu_k^{-|\alpha|-|\beta|} \langle\frac{d(\omega,-\theta)}{\mu_k}\rangle^{-N}, $$
where $x=r\omega, \xi=\rho\theta$ are polar coordinates and
$$ g_k(r,\rho)  = \mathbbm{1}_{\rho\sim 2^k\sqrt{t}} \mathbbm{1}_{r\sim t^{\frac{1}{2}}2^{kp_j}}. $$
The operator of kernel $g_k$ is controlled by
$$ \|g_k\|_{L^2(drd\rho)} \lesssim (\sqrt{t}2^k\times \sqrt{t}2^{kp_j})^{\frac{1}{2}} = (t2^{k(p_j+1)})^\frac{1}{2} \leqslant 1, $$
which is no more than the assumption \eqref{eq_tech_boundedness_of_symbol_polar_assumption_kernel} of Lemma \ref{lem_tech_boundedness_of_symbol_polar}. As a result, we may conclude \eqref{eq_bdd_boundedness_of_tilde{b}} by \eqref{eq_tech_boundedness_of_symbol_polar_conclusion}.
\end{proof}

The idea of proof of \eqref{eq_bdd_boundedness_of_b_1} is similar. The only difficulty is that $b_1$ has a singularity near $X=\Xi$. We may treat the part away from $X=\Xi$ as above and study the area near $X=\Xi$ by convexity (or concavity) of $P$.

\begin{proof}[Proof of \eqref{eq_bdd_boundedness_of_b_1}]
As before, we begin with the homogeneous dyadic decomposition in $\frac{\xi}{\sqrt{t}}$, namely $b_1^\sharp=\sum_{k\geqslant0} b_{1,k}$ and $b_1^\flat=\sum_{k<0} b_{1,k}$, with
$$b_{1,k} = \chi\left(\frac{(P_j+\nu_j X)\frac{x}{|x|}+(P_j+\nu_j\Xi)\frac{\xi}{|\xi|}}{\mu}\right) \mathbbm{1}_{\frac{X}{\Xi}<1}\chi_1\left(1-\frac{X}{\Xi}\right) \psi\left( 2^{-kp_j}X \right) \varphi\left(\frac{\xi}{2^k\sqrt{t}}\right).$$
It suffices to prove that $\Op{b_{1,k}}$ is bounded on $L^2$, uniformly in $k,t,\mu$. In comparison with $\tilde{b}_k$ defined by \eqref{eq_bdd_definition_of_tilde_b_k}, the main difficulty is that the non-smooth term cannot be deleted. In the case $t 2^{k(p_j+1)} \leqslant 1$, we may repeat exactly the same argument as the study of $\tilde{b}_k$ since this argument does not require any regularity in $|x|,|\xi|$.

When $t 2^{k(p_j+1)} > 1$, we will separate the singularity near $\frac{X}{\Xi}=1$. Consider the following decomposition :
\begin{equation*}
\begin{aligned}
b_{1,k} &= b_{1,k}' + b_{1,k}'',\\
b_{1,k}' &= b_{1,k} \tilde{\chi}_1(\sqrt{t}2^{k\frac{1-p_j}{2}}(\Xi-X)),
\end{aligned}
\end{equation*}
where $\tilde{\chi}_1 \in C_c^\infty(\mathbb{R}^d)$ is radial and equal to $1$ near zero.

The proof of boundedness of $b_{1,k}'$ is similar to that of the case $t 2^{k(p_j+1)} \leqslant 1$. By setting $\mu_k\in ]0,+\infty[$ as before, namely
\begin{equation*}
\mu_k = \left\{ \begin{array}{cl}
\mu 2^{-kp_j}, &\ \ \text{if } P_j+\nu_j X\sim 2^{kp_j}, \\
\mu, &\ \ \text{if } P_j+\nu_j X\sim 1,
\end{array}\right.
\end{equation*}
we may have, for all $\alpha,\beta\in\mathbb{N}^{d-1}$ and $N\in\mathbb{N}$,
$$ |\partial_{\omega}^\alpha \partial_{\theta}^\beta b_{1,k}'(t,\mu,r\omega,\rho\theta)| \leqslant C_{\alpha,\beta,N} h_k(r,\rho) \mu_k^{-|\alpha|-|\beta|} \langle\frac{d(\omega,-\theta)}{\mu_k}\rangle^{-N}, $$
where
\begin{equation}\label{eq_bdd_def_of_h_k}
h_k(r,\rho)  = \sum_{n\sim \sqrt{t}2^{k\frac{p_j+1}{2}}} \mathbbm{1}_{I_n}(r) \mathbbm{1}_{J_n}(\rho),
\end{equation}
with
\begin{equation*}
\begin{aligned}
J_n :=& [ 2^{k\frac{1-p_j}{2}}n, 2^{k\frac{1-p_j}{2}}(n+1) ], \\
I_n :=&
\left\{\begin{aligned}
&\left[ \sqrt{t}P'\left( t^{-\frac{1}{2}}2^{k\frac{1-p_j}{2}}n \right) - c2^{k\frac{p_j-1}{2}}, \sqrt{t}P'\left( t^{-\frac{1}{2}}2^{k\frac{1-p_j}{2}}(n+1) \right) + c2^{k\frac{p_j-1}{2}} \right], \\
&\hspace{22em}\text{if }P''>0,\\
&\left[ \sqrt{t}P'\left( t^{-\frac{1}{2}}2^{k\frac{1-p_j}{2}}(n+1) \right) - c2^{k\frac{p_j-1}{2}}, \sqrt{t}P'\left( t^{-\frac{1}{2}}2^{k\frac{1-p_j}{2}}n \right) + c2^{k\frac{p_j-1}{2}} \right], \\
&\hspace{22em}\text{if }P''<0.
\end{aligned}\right.
\end{aligned}
\end{equation*}
Note that by writing in polar system $r=|x|$ $\rho=|\xi|$, $b'_{1,k}$ is supported for $\rho\sim 2^k\sqrt{t}$ and $\sqrt{t}2^{k\frac{1-p_j}{2}}|\Xi-X| \ll 1$ due to cut-off $\tilde{\chi}_1$. We first make a decomposition in $\rho$, namely 
$$ \rho \in [C^{-1}2^k\sqrt{t}, C2^k\sqrt{t}] \subset \underset{n\sim \sqrt{t}2^{k\frac{p_j+1}{2}}}{\cup} J_n, $$ 
and then the support $\sqrt{t}2^{k\frac{1-p_j}{2}}|\Xi-X| \ll 1$ ensures that $r$ lies in $I_n$ defined above, once $\rho$ belongs to $J_n$. This gives the control $h_k$ defined in \eqref{eq_bdd_def_of_h_k}.

In order to apply Lemma \ref{lem_tech_boundedness_of_symbol_polar}, it suffices to check that the operator with kernel $h_k$ is uniformly bounded on $L^2(\mathbb{R}_+)$, which can be reduced to $|I_n||J_n| \lesssim 1$ and that $\{I_n\}$ forms a uniformly finite cover. The first assertion is obvious since $n\sim \sqrt{t}2^{k\frac{p_j+1}{2}}$ implies that
\begin{equation*}
\begin{aligned}
|J_n||I_n| &\lesssim 2^{k\frac{1-p_j}{2}} \left( \left| \sqrt{t}P'\left( t^{-\frac{1}{2}}2^{k\frac{1-p_j}{2}}(n+1) \right) - \sqrt{t}P'\left( t^{-\frac{1}{2}}2^{k\frac{1-p_j}{2}}n \right) \right| + 2c2^{k\frac{p_j-1}{2}} \right)  \\
&\lesssim 2^{k\frac{1-p_j}{2}} \left(\sqrt{t} 2^{k(p_j-1)} t^{-\frac{1}{2}}2^{k\frac{1-p_j}{2}} + 2c2^{k\frac{p_j-1}{2}} \right) \lesssim 1.
\end{aligned}
\end{equation*}
As for the second one, we observe that $I_n\cap I_{n+l} \neq \varnothing$ if and only if
$$ \left| \sqrt{t}P'\left( t^{-\frac{1}{2}}2^{k\frac{1-p_j}{2}}n \right) - \sqrt{t}P'\left( t^{-\frac{1}{2}}2^{k\frac{1-p_j}{2}}(n+l) \right) \right| \leqslant 2c2^{k\frac{p_j-1}{2}}. $$
Without loss of generality, we may assume $l\geqslant 0$. Actually, the left hand side has the following equivalence:
\begin{align*}
&\left| \sqrt{t}P'\left( t^{-\frac{1}{2}}2^{k\frac{1-p_j}{2}}n \right) - \sqrt{t}P'\left( t^{-\frac{1}{2}}2^{k\frac{1-p_j}{2}}(n+l) \right) \right| \\
=& \left| 2^{k\frac{1-p}{2}}l P''\left( t^{-\frac{1}{2}}2^{k\frac{1-p_j}{2}}(n+sl) \right) \right|,\ \ \text{for some }s\in[0,1], \\
\sim& 2^{k\frac{1-p_j}{2}}l \times \left( t^{-\frac{1}{2}}2^{k\frac{1-p_j}{2}}(n+sl) \right)^{p_j-1},\ \ \text{since }|P''(\rho)| \sim \rho^{p_j-1} \\
\sim& 2^{k\frac{1-p_j}{2}}l \times 2^{k(p_j-1)} = 2^{k\frac{p_j-1}{2}}l.
\end{align*}
To prove the last equivalence, we may use the fact that $n,n+l \sim \sqrt{t}2^{k\frac{p_j+1}{2}}$, which implies that
$$ c\sqrt{t}2^{k\frac{p_j+1}{2}} \leqslant n \leqslant n+sl \leqslant n+l \leqslant C \sqrt{t}2^{k\frac{p_j+1}{2}}. $$
In conclusion, we have that $I_n\cap I_{n+l} \neq \varnothing$ holds for finitely many $l$. As a result,
$$ \|h_k\|_{\mathcal{L}(L^2)} \lesssim \sup_{n\sim \sqrt{t} 2^{k\frac{p_j+1}{2}}} \|\mathbbm{1}_{J_n} \otimes \mathbbm{1}_{I_n}\|_{\mathcal{L}(L^2)} \leqslant \sup_{n\sim \sqrt{t} 2^{k\frac{p_j+1}{2}}} \|\mathbbm{1}_{J_n}\mathbbm{1}_{I_n}\|_{L^2(\mathbb{R}^2_+)} \lesssim 1. $$

It remains to study the smooth symbol $b_{1,k}''$, which reads
\begin{equation*}
\begin{aligned}
b_{1,k}'' = &\chi\left(\frac{(P_j+\nu_j X)\frac{x}{|x|}+(P_j+\nu_j\Xi)\frac{\xi}{|\xi|}}{\mu}\right)  \psi\left( 2^{-kp}X \right) \varphi\left(\frac{\xi}{2^k\sqrt{t}}\right) \\
&\hspace{2em}\times \mathbbm{1}_{\frac{X}{\Xi}<1} \chi_1\left(1-\frac{X}{\Xi}\right)(1-\tilde{\chi}_1)(\sqrt{t}2^{k\frac{1-p_j}{2}}(\Xi-X)).
\end{aligned}
\end{equation*}
Remark that this symbol is smooth, since the singularity $\frac{X}{\Xi}=1$ is removed by $(1-\tilde{\chi}_1)$ factor. Under the condition $t 2^{k(p_j+1)} > 1$, it verifies the condition of Calderon-Vaillancourt Theorem (see Lemma \ref{lem_tech_C-V}). In fact, each derivative in $x$ leads to a factor of size $t^{-\frac{1}{2}}\mu^{-1}$ (from $\chi$), $t^{-\frac{1}{2}}2^{-kp_j}$ (from $\psi$ and $\chi_1$), or $2^{k\frac{1-p_j}{2}}$ (from $(1-\tilde{\chi}_1)$). The condition $t 2^{k(p_j+1)} > 1$ implies that $t^{-\frac{1}{2}}2^{-kp_j} \leqslant 2^{k\frac{1-p_j}{2}}$, while the compact support of $\chi$ and support of $(1-\tilde{\chi}_1)$ ensures that 
$$ t^{-\frac{1}{2}}2^{k\frac{p_j-1}{2}} \lesssim |X-\Xi| \lesssim \mu, $$
i.e. $t^{-\frac{1}{2}}\mu^{-1} \lesssim 2^{\frac{1-p_j}{2}k}$. The same argument for $\partial_\xi$ gives that each derivative in $\xi$ leads to a factor of size $2^{-k\frac{1-p_j}{2}}$. The desired result thus follows from a change of scaling (Lemma \ref{lem_tech_change_of_scaling}).
\end{proof}

\subsection{Study of symbol \texorpdfstring{$b_0$}{b0} with \texorpdfstring{$P_0=P_1=0$}{P0,P1=0}}

In the case $P_0=P_1=0$, due to the lack of almost orthogonality as $\tilde{b}_k$'s and $b_{1,k}$'s, the remaining symbol $b_0^\iota$ will be treated via Cotlar-Stein Lemma (Lemma \ref{lem_tech_C-S}). As before, we start with homogeneous dyadic decomposition in $\xi$, namely $b_0^\sharp = \mathbbm{1}_{X>0} \sum_{k\geqslant 0} c_k$ and $b_0^\flat = \mathbbm{1}_{X>0} \sum_{k< 0} c_k$, with
\begin{equation}\label{eq_bdd_definition_of_c_k}
c_k = \chi\left(\frac{(P_j+\nu_j X)\frac{x}{|x|}+(P_j+\nu_j \Xi)\frac{\xi}{|\xi|}}{\mu}\right) \chi_0\left(\frac{X}{\Xi}\right) \varphi\left(\frac{\xi}{2^k\sqrt{t}}\right).
\end{equation}
It suffices to prove the (uniform in $t$) boundedness of $\sum_{k\in\mathbb{Z}} c_k$ as the multiplication with $\mathbbm{1}_{X>0}$ is trivially bounded on $L^2$. 

We first check that $\Op{c_k}$'s are bounded uniformly in $k,t,\mu$. More precisely, all the $c_k$'s satisfy the following estimate:
\begin{lemma}\label{lem_bdd_bdd_c_k}
There exists $C>0$ independent of $k,t$, such that for all $t>0$ and $k\in\mathbb{Z}$,
\begin{equation}\label{eq_bdd_bdd_c_k}
\|\Op{c_k}\|_{\mathcal{L}(L^2)} \leqslant C \min\left(\max\left(1,(t2^{k(p_j+1)})^{-N_d}\right), (t2^{k(p_j+1)})^{\frac{d}{2}} \right) \leqslant C,
\end{equation}
where $N_d \in\mathbb{N}$ depends only on dimension $d$.
\end{lemma}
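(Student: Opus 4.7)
I would prove the estimate by combining two complementary $L^2$ bounds on $\Op{c_k}$, each optimal in a different regime of $\tau := t\,2^{k(p_j+1)}$: a Hilbert--Schmidt estimate giving $\tau^{d/2}$ (sharp for $\tau\ll 1$), and a Calderón--Vaillancourt estimate giving $\max(1,\tau^{-N_d})$ (sharp for $\tau\gtrsim 1$). Their minimum is exactly the middle expression in the statement, and one checks that $\min(\max(1,\tau^{-N_d}),\tau^{d/2})\leqslant 1$ for all $\tau>0$, so the final $\leqslant C$ follows. Setting $a := \sqrt{t}\,2^{kp_j}$ and $b := 2^k\sqrt{t}$, so that $ab = \tau$, the support analysis gives $|\xi|\sim b$ (from $\varphi$), $|x|\lesssim a$ (from $\chi_0(X/\Xi)$, since $X\ll\Xi\sim 2^{kp_j}$), and the triangle inequality applied to the argument of $\chi$ forces $\Xi\lesssim\mu$, i.e.\ $2^{kp_j}\lesssim\mu$.

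The Hilbert--Schmidt bound is immediate: the integral kernel $K(x,y) = (2\pi)^{-d}\int e^{i(x-y)\xi}c_k(x,\xi)\,d\xi$ satisfies, by Plancherel in $y$,
\begin{equation*}
\|K\|_{L^2(\mathbb{R}^{2d})}^2 = (2\pi)^{-d}\int |c_k(x,\xi)|^2\,dx\,d\xi \,\lesssim\, a^d b^d = \tau^d,
\end{equation*}
hence $\|\Op{c_k}\|_{\mathcal{L}(L^2)}\leqslant \|K\|_{\mathrm{HS}}\lesssim \tau^{d/2}$. For the Calderón--Vaillancourt bound I would rescale via Lemma \ref{lem_tech_change_of_scaling} by $x=ay$, $\xi=b\eta$, reducing to an $h$-pseudodifferential operator with $h = \tau^{-1}$ and rescaled symbol $\tilde c_k(y,\eta):=c_k(ay,b\eta)$, supported in $|y|\lesssim 1$ and $|\eta|\sim 1$. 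Using hypothesis \eqref{hyp_fractional-type_symbol} (with $P_0 = P_1 = 0$, so $P^{(j)}(\rho)\lesssim \rho^{p_j+1-j}$) together with the support relation $2^{kp_j}\lesssim\mu$, every derivative $\partial_y^\alpha\partial_\eta^\beta\tilde c_k$ is of size $O(1)$ uniformly in $(t,\mu,k)$. When $\tau\geqslant 1$, semiclassical Calderón--Vaillancourt (Lemma \ref{lem_tech_C-V}) yields $\|\Op[h]{\tilde c_k}\|_{\mathcal{L}(L^2)}\lesssim 1$; when $\tau<1$, I would rewrite $\Op[h]{\tilde c_k} = \Op{\tilde c_k(\cdot, h\cdot)}$ via the change of variable $\eta=h\omega$ and apply the standard Calderón--Vaillancourt theorem, the frequency dilation costing a factor $h^{N_d} = \tau^{-N_d}$ from the $N_d$ required $\omega$-derivatives; this yields the $\max(1,\tau^{-N_d})$ bound.

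The main obstacle is the uniform verification that $\tilde c_k$ lies in a bounded subset of $S(1)$. The delicate term comes from $\partial_\eta$ acting on the angular cutoff $\chi\bigl((X\hat x + \Xi\hat\xi)/\mu\bigr)$: it produces $P''(|\xi|/\sqrt t)/(\sqrt t\mu)$, which after multiplication by the rescaling factor $b$ becomes precisely $2^{kp_j}/\mu$, and this is $O(1)$ only because of the support constraint $2^{kp_j}\lesssim\mu$ forced by the $\chi$-factor itself. A parallel, slightly easier argument handles $\chi_0(X/\Xi)$ and $\varphi(\xi/b)$, for which the scales $a$ and $b$ are built in by construction. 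Once uniform control of the rescaled symbol in $S(1)$ is in hand, both bounds follow mechanically, and their minimum concludes the proof.
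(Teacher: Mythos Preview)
Your proposal is correct and follows essentially the same route as the paper: a Hilbert--Schmidt bound from the support $|x|\lesssim a$, $|\xi|\sim b$ giving $\tau^{d/2}$, combined with the derivative estimate $|\partial_x^\alpha\partial_\xi^\beta c_k|\lesssim a^{-|\alpha|}b^{-|\beta|}$ (crucially using $2^{kp_j}\lesssim\mu$ from the support of $\chi$) followed by a rescaling and Calder\'on--Vaillancourt. The only cosmetic difference is that the paper rescales symmetrically by $\lambda=2^{k(p_j-1)/2}=\sqrt{a/b}$ so that both derivative scales become $\tau^{-1/2}$ and then applies Lemma~\ref{lem_tech_C-V} directly, whereas you rescale by $\lambda=a$ and interpret the result as an $h$-pseudodifferential operator with $h=\tau^{-1}$; note that Lemma~\ref{lem_tech_change_of_scaling} as stated only covers the symplectic rescaling $(x,\xi)\mapsto(\lambda x,\xi/\lambda)$, so your phrase ``rescale via Lemma~\ref{lem_tech_change_of_scaling} by $x=ay$, $\xi=b\eta$'' should be read as first applying that lemma with $\lambda=a$ and then recognizing the resulting standard quantization of $\tilde c_k(\cdot,\cdot/\tau)$---which is exactly what you do in the next clause.
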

\begin{proof}
We observe that $c_k$ is supported for $X \ll \Xi \sim 2^{kp_j}$, which implies from one hand, as in \eqref{eq_bdd_support_of_tilde_b_k},
$$ 2^{kp_j} \sim |X-\Xi| \leqslant \left|(P_j+\nu_j X)\frac{x}{|x|}+(P_j+\nu_j \Xi)\frac{\xi}{|\xi|}\right| \lesssim \mu, $$
and, from another hand,
$$ |x| \ll \sqrt{t}2^{kp_j},\ |\xi|\sim \sqrt{t}2^{k}. $$
As a consequence of the second result, $\|\Op{c_k}\|_{\mathcal{L}(L^2)}$ can be trivially bounded by
$$ \|\Op{c_k}\|_{\mathcal{L}(L^2)} \lesssim \|c_k\|_{L^2(\mathbb{R}^{2d})} \lesssim (t2^{k(p_j+1)})^{\frac{d}{2}}. $$
It remains to check that
$$ \|\Op{c_k}\|_{\mathcal{L}(L^2)} \leqslant C \max\left(1,(t2^{k(p_j+1)})^{-N_d}\right), $$
which can be proved via Calderon-Vaillancourt Theorem (see Lemma \ref{lem_tech_C-V}). In fact, from each derivative in $x$, we may obtain extra factors of size $t^{-\frac{1}{2}}\mu^{-1}$ (action on $\chi$) or $t^{-\frac{1}{2}}2^{-kp_j}$ (action on $\chi_0$). As we have seen that $t^{-\frac{1}{2}}\mu^{-1} \lesssim t^{-\frac{1}{2}}2^{-kp_j}$, each derivative in $x$ leads to a factor of size $(t^{\frac{1}{2}}2^{kp_j})^{-1}$. Similarly, the action of $\partial_\xi$ on $\chi,\chi_0,\varphi$ gives factors of size $t^{-\frac{1}{2}}\mu^{-1}2^{k(p_j-1)}$, $t^{-\frac{1}{2}}2^{-k}$, and $t^{-\frac{1}{2}}2^{-k}$, respectively. We may also check that $ t^{-\frac{1}{2}}\mu^{-1} 2^{k(p_j-1)} \lesssim 2^{-k}t^{-\frac{1}{2}}$. To sum up, $c_k$ is smooth and satisfies 
\begin{equation}\label{eq_bdd_c_k_C-V}
\left| \partial_x^\alpha \partial_\xi^\beta c_k(x,\xi) \right| \leqslant C_{\alpha,\beta} \left(\frac{1}{\sqrt{t}2^{kp_j}}\right)^{|\alpha|} \left(\frac{1}{\sqrt{t}2^k}\right)^{|\beta|},\ \ \forall \alpha,\beta\in\mathbb{N}^d. 
\end{equation}
By Lemma \ref{lem_tech_change_of_scaling}, it is equivalent to consider rescaled symbol
$$ \tilde{c}_k(x,\xi) = c_k \left( 2^{k\frac{p_j-1}{2}} x, 2^{-k\frac{p_j-1}{2}} \xi \right), $$
which, as a result of \eqref{eq_bdd_c_k_C-V}, satisfies for all $\gamma\in\mathbb{N}^{2d}$,
$$ \|\partial_{x,\xi}^\gamma \tilde{c}_k\|_{L^\infty(\mathbb{R}^{2d})} \lesssim (t2^{k(p_j+1)})^{-|\gamma|}. $$
By applying Calderon-Vaillancourt Theorem (Lemma \ref{lem_tech_C-V}) to $\tilde{c}_k$, we have, due to estimate \eqref{eq_tech_C-V}, that
$$ \|\Op{c_k}\|_{\mathcal{L}(L^2)} = \|\Op{\tilde{c}_k}\|_{\mathcal{L}(L^2)} \leqslant C \max\left(1,(t2^{k(p_j+1)})^{-N_d}\right). $$
\end{proof}

In order to conclude \eqref{eq_bdd_boundedness_of_b_0} by Cotlar-Stein Lemma (Lemma \ref{lem_tech_C-S}), it is sufficient to check condition \eqref{eq_tech_C-S_def_of_A} and \eqref{eq_tech_C-S_def_of_B}, namely
\begin{lemma}
There exists $t,\mu$-independent constants $C$, such that for all $t,\mu>0$,
\begin{align}
&\sup_{k\in \mathbb{Z_+}} \sum_{l\in \mathbb{Z_+}} \|\Op{c_k}\Op{c_l}^*\|_{\mathcal{L}(L^2)}^\frac{1}{2} \leqslant C,\ \ \sup_{k\in \mathbb{Z_-}} \sum_{l\in \mathbb{Z_-}} \|\Op{c_k}\Op{c_l}^*\|_{\mathcal{L}(L^2)}^\frac{1}{2} \leqslant C, \label{eq_bdd_Cotlar-Stein_part1} \\
&\sup_{k\in \mathbb{Z_+}} \sum_{l\in \mathbb{Z_+}} \|\Op{c_k}^*\Op{c_l}\|_{\mathcal{L}(L^2)}^\frac{1}{2} \leqslant C,\ \ \sup_{k\in \mathbb{Z_-}} \sum_{l\in \mathbb{Z_-}} \|\Op{c_k}^*\Op{c_l}\|_{\mathcal{L}(L^2)}^\frac{1}{2} \leqslant C, \label{eq_bdd_Cotlar-Stein_part2}
\end{align}
where $\mathbb{Z}_- = \mathbb{Z} \cap ]-\infty,0[$ corresponds to low frequency part and $\mathbb{Z}_+ = \mathbb{Z} \cap [0,+\infty[$ corresponds to high frequency part.
\end{lemma}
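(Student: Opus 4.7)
The strategy is to apply the Cotlar--Stein lemma (Lemma \ref{lem_tech_C-S}) to the family $\{\Op{c_k}\}$, verifying the two almost-orthogonality conditions \eqref{eq_bdd_Cotlar-Stein_part1} and \eqref{eq_bdd_Cotlar-Stein_part2} separately for $k$ in $\mathbb{Z}_+$ and $\mathbb{Z}_-$. The essential inputs are the uniform operator-norm bound furnished by Lemma \ref{lem_bdd_bdd_c_k} and the frequency localization $c_k(x,\xi)=\tilde c_k(x,\xi)\varphi_k(\xi)$ with $\varphi_k(\xi)=\varphi(\xi/(2^k\sqrt t))$ built into the definition \eqref{eq_bdd_definition_of_c_k}.

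Condition \eqref{eq_bdd_Cotlar-Stein_part1} is immediate. A direct kernel computation shows that $\Op{c_k}\Op{c_l}^*$ has integral kernel
$$K_{k,l}(x,y)=\frac{1}{(2\pi)^d}\int e^{i(x-y)\xi}c_k(x,\xi)\overline{c_l(y,\xi)}\,d\xi,$$
which vanishes identically unless the $\xi$-supports of $c_k$ and $c_l$ overlap, that is unless $|k-l|\leq C_0$ for some fixed $C_0$ depending only on $\varphi$. Thus the sum in \eqref{eq_bdd_Cotlar-Stein_part1} contains at most $2C_0+1$ nonzero terms per $k$, each bounded by $\|\Op{c_k}\|\|\Op{c_l}\|\leq C^2$ thanks to Lemma \ref{lem_bdd_bdd_c_k}.

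For condition \eqref{eq_bdd_Cotlar-Stein_part2} the operator $\Op{c_k}^*\Op{c_l}$ is not zero for large $|k-l|$, so the decay must be quantified. I would compute its kernel on the Fourier side, namely $\widehat{\Op{c_k}^*\Op{c_l}f}(\zeta)=\int Q(\zeta,\eta)\hat f(\eta)\,d\eta$ with
$$Q(\zeta,\eta)=\frac{\varphi_k(\zeta)\varphi_l(\eta)}{(2\pi)^d}\int e^{iy(\eta-\zeta)}\overline{\tilde c_k(y,\zeta)}\,\tilde c_l(y,\eta)\,dy,$$
and integrate by parts $N$ times in $y$. Two observations make this clean. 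First, $\tilde c_k(\cdot,\zeta)$ is smooth on all of $\mathbb{R}^d$: it depends on $y$ only through $X\omega=y/\sqrt t$ (which is globally smooth) and through $\chi_0(|y|/(\sqrt t\,2^{kp_j}))$, the latter being identically $1$ near $y=0$. Second, each $y$-derivative on $\tilde c_k$ costs $O((\sqrt t\,2^{kp_j})^{-1})$, and when $p_j>0$ the common $y$-support has volume $\lesssim(\sqrt t\,2^{\min(k,l)p_j})^d$ (with the obvious modification when $p_j<0$). Since $|\zeta-\eta|\gtrsim\sqrt t\,2^{\max(k,l)}$ on the support of $Q$, combining the IBP with Schur's test on the Fourier side yields an estimate of $\|\Op{c_k}^*\Op{c_l}\|$ that decays polynomially in $|k-l|$.

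To get the uniform summability, I would combine the IBP bound with the trivial estimate
$$\|\Op{c_k}^*\Op{c_l}\|\leq\|\Op{c_k}\|\|\Op{c_l}\|\lesssim\min\!\bigl(1,(t\,2^{k(p_j+1)})^{d/2}\bigr)\min\!\bigl(1,(t\,2^{l(p_j+1)})^{d/2}\bigr)$$
coming from Lemma \ref{lem_bdd_bdd_c_k}. In the \emph{large} regime $t\,2^{l(p_j+1)}\gtrsim 1$, summability is provided by the IBP decay in $|k-l|$; in the \emph{small} regime $t\,2^{l(p_j+1)}\ll 1$, it is provided by the $(t\,2^{l(p_j+1)})^{d/4}$ geometric factor in Lemma \ref{lem_bdd_bdd_c_k}. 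The main obstacle is orchestrating this regime-dependent case analysis uniformly across both signs of $p_j$ and all positions of $k,l$ inside $\mathbb{Z}_\pm$, while keeping the final constants independent of $t$ and $\mu$; in particular the exponents produced by Schur's test must be bookkept carefully so that the $t$-powers from the four factors (support volumes in $\zeta$, $\eta$ and $y$, together with $|\zeta-\eta|^{-N}$) collapse to something independent of $t$.
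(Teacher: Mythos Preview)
Your proof of \eqref{eq_bdd_Cotlar-Stein_part1} is exactly the paper's: the $\xi$-supports force $|k-l|\leq C_0$, and Lemma~\ref{lem_bdd_bdd_c_k} finishes it.

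For \eqref{eq_bdd_Cotlar-Stein_part2} your strategy---integration by parts in $y$ against the oscillation $e^{iy(\eta-\zeta)}$---is also the paper's. The difference is that the paper avoids your regime-dependent case analysis entirely by two simplifications. First, instead of Schur's test on the Fourier-side kernel, the paper bounds the operator norm crudely by the $L^2(dxd\xi)$-norm of the composed symbol $c_k^*\sharp c_l$, i.e.\ the Hilbert--Schmidt norm; since the symbol has compact support in both $x$ and $\xi$ this is natural and loses nothing. Second, and this is the key point you are missing, the paper chooses the number of integrations by parts to be \emph{exactly} $N_1=d/2$ (interpolating between integers). With this choice the spatial-support factor $\min(\sqrt t\,2^{kp_j},\sqrt t\,2^{lp_j})^{d-2N_1}$ becomes $1$, and one reads off directly
\[
\|\Op{c_k}^*\Op{c_l}\|_{\mathcal{L}(L^2)}\lesssim (2^{l}\sqrt t)^{-d}\bigl(2^k\sqrt t\cdot 2^l\sqrt t\bigr)^{d/2}=2^{-\frac{d}{2}|k-l|},
\]
uniformly in $t,\mu$, in the sign of $p_j$, and in the position of $k,l$. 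Your worry about ``orchestrating the regime-dependent case analysis'' and making the $t$-powers collapse is therefore an artifact of not having spotted this balance; once $N_1=d/2$ is fixed, the trivial bound from Lemma~\ref{lem_bdd_bdd_c_k} is needed only for the finitely many diagonal terms $|k-l|\leq N_0$, and no splitting into large/small regimes is required.
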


\begin{proof}[Proof of \eqref{eq_bdd_Cotlar-Stein_part1}]
By symbolic calculus, $\Op{c_k}\Op{c_l}^*$ is an operator of symbol
$$ c_k \sharp c_l^*(x,\xi) = \frac{1}{(2\pi)^d} \int e^{-iy\eta} c_k(x,\xi+\eta) \overline{c_l(x+y,\xi+\eta)} d\eta. $$
By definition \eqref{eq_bdd_definition_of_c_k}, $c_l(x,\xi)$ is supported for $|\xi| \sim 2^l\sqrt{t}$. Thus, $c_k \sharp c_l^*$ is nonzero only if $|l-k|<N_0$ for some large $N_0\in\mathbb{N}^*$. As a consequence, \eqref{eq_bdd_Cotlar-Stein_part1} can be reduced to the uniform boundedness of $\Op{c_l}$, which has already been proved in Lemma \ref{lem_bdd_bdd_c_k}.
\end{proof}

\begin{proof}[Proof of \eqref{eq_bdd_Cotlar-Stein_part2}]
We apply again the symbolic calculus to obtain the following expression of symbol of $\Op{c_k}^*\Op{c_l}$,
\begin{equation*}
c_k^* \sharp c_l (x,\xi) = \frac{1}{(2\pi)^d} \int e^{i(x-y)\eta} \overline{c_k(y,\xi+\eta)} c_l(y,\xi) d\eta dy.
\end{equation*}

Recall that $c_l$ is supported for $|x|\ll \sqrt{t}2^{lp_j}$ and $|\xi|\sim t^{\frac{1}{2}}2^l$ with the estimate \eqref{eq_bdd_c_k_C-V}. We shall check that for all $l,k\in\mathbb{N}$, 
\begin{equation}\label{eq_bdd_Cotlar-Stein_main_estimate}
\|\Op{c_k}^*\Op{c_l}\|_{\mathcal{L}(L^2)} \lesssim 2^{-\frac{d}{2}|k-l|},
\end{equation}
which is enough to conclude \eqref{eq_bdd_Cotlar-Stein_part2}. Due to \eqref{eq_bdd_bdd_c_k}, we may ignore the case $|k-l|\leqslant N_0$, for some fixed large $N_0\in\mathbb{N}$. Remark that it is possible to prove \eqref{eq_bdd_Cotlar-Stein_main_estimate} only for $l\geqslant k$, since for terms with $l<k$, we have
\begin{align*}
\|\Op{c_k}^*\Op{c_l}\|_{\mathcal{L}(L^2)} =& \|\left(\Op{c_k}^*\Op{c_l}\right)^*\|_{\mathcal{L}(L^2)}\\
=& \|\Op{c_l}^*\Op{c_k}\|_{\mathcal{L}(L^2)} \lesssim 2^{-\frac{d}{2}|l-k|}.
\end{align*}

One observes that the bound of operator with symbol $c_k^* \sharp c_l$ can be controlled by
\begin{equation*}
\begin{aligned}
\| \Op{c_k^* \sharp c_l} \|_{\mathcal{L}(L^2)} \lesssim \| c_k^* \sharp c_l \|_{L^2(dxd\xi)} \lesssim \| \int e^{-iy\eta} \overline{c_k(y,\xi+\eta)} c_l(y,\xi) dy \|_{L^2(d\eta d\xi)}.
\end{aligned}
\end{equation*}
The integrand of the last integral is supported for
\begin{equation}\label{eq_bdd_Cotlar-Stein_support_of_composed_sym}
|\xi+\eta| \sim 2^k\sqrt{t}, |\xi| \sim 2^{l}\sqrt{t},\ \text{and } |y| \lesssim \min(2^{kp_j}\sqrt{t}, 2^{lp_j}\sqrt{t}).
\end{equation}
Moreover, we may apply integration by parts in $y$ to obtain some extra bounds in the estimate. To be precise, for all $N_1\in\mathbb{N}$
\begin{align*}
& \int e^{-iy\eta} \overline{c_k(y,\xi+\eta)} c_l(y,\xi) dy \\
=& \int \left(\frac{-\Delta_y}{|\eta|^2}\right)^{N_1} e^{-iy\eta} \overline{c_k(y,\xi+\eta)} c_l(y,\xi) dy \\
=& \int e^{-iy\eta} (-\Delta_y)^{N_1} \left(\overline{c_k(y,\xi+\eta)} c_l(y,\xi)\right) |\eta|^{-2N_1} dy \\
=& \sum_{|\alpha|+|\beta|=2N_1} C_{\alpha,\beta} \int e^{-iy\eta}  \overline{\partial_y^{\alpha}c_k(y,\xi+\eta)} \partial_y^{\beta}c_l(y,\xi) |\eta|^{-2N_1} dy.
\end{align*}
Since we have reduced our problem to the case $l \geqslant k + N_0$, the integral above is supported for $|\eta| \sim 2^{l}\sqrt{t}$. Together with \eqref{eq_bdd_c_k_C-V} and \eqref{eq_bdd_Cotlar-Stein_support_of_composed_sym}, we have
\begin{align*}
& \left| \int e^{-iy\eta} \overline{c_k(y,\xi+\eta)} c_l(y,\xi) dy \right| \\
\leqslant& \sum_{|\alpha|+|\beta|=2N_1} C_{\alpha,\beta} \int  \left|\partial_y^{\alpha}c_k(y,\xi+\eta)\right| \left|\partial_y^{\beta}c_l(y,\xi)\right| |\eta|^{-2N_1} dy \\
\lesssim& \sum_{|\alpha|+|\beta|=2N_1} \mathbbm{1}_{|\xi+\eta|\sim 2^k\sqrt{t}}\mathbbm{1}_{|\xi|\sim2^l\sqrt{t}} \left(2^{kp_j}\sqrt{t}\right)^{-|\alpha|} \left(2^{lp_j}\sqrt{t}\right)^{-|\beta|} (2^{l}\sqrt{t})^{-2N_1} \int \mathbbm{1}_{|y| \lesssim \min(2^{kp_j}\sqrt{t}, 2^{lp_j}\sqrt{t})} dy  \\
\lesssim& \mathbbm{1}_{|\xi+\eta|\sim 2^k\sqrt{t}}\mathbbm{1}_{|\xi|\sim2^l\sqrt{t}} (2^{l}\sqrt{t})^{-2N_1}\min\left(2^{kp_j}\sqrt{t},2^{lp_j}\sqrt{t}\right)^{d-2N_1}.
\end{align*}
The estimate above holds for all $N_1\in\mathbb{N}$, thus for all $N_1\in[0,\infty[$. In particular, we choose $N_1=\frac{d}{2}$, which gives that
\begin{align*}
\| \Op{c_k^* \sharp c_l} \|_{\mathcal{L}(L^2)} \lesssim& \| \int e^{-iy\eta} \overline{c_k(y,\xi+\eta)} c_l(y,\xi) dy \|_{L^2(d\eta d\xi)} \\
\lesssim& (2^{l}\sqrt{t})^{-d} \| \mathbbm{1}_{|\xi+\eta|\sim 2^k\sqrt{t}}\mathbbm{1}_{|\xi|\sim2^l\sqrt{t}} \|_{L^2(d\eta d\xi)} \\
\lesssim& (2^{l}\sqrt{t})^{-d} \times \left( 2^k\sqrt{t} \times 2^l\sqrt{t} \right)^{\frac{d}{2}}= 2^{\frac{d}{2}(k-l)} = 2^{-\frac{d}{2}|k-l|}.
\end{align*}
As a conclusion, we have managed to prove that
$$ \sup_{k\in\mathbb{Z}_\pm} \sum_{l\in\mathbb{Z}_\pm} \|\Op{c_k}^*\Op{c_l}\|_{\mathcal{L}(L^2)}^\frac{1}{2} \lesssim \sup_{k\in\mathbb{Z}_\pm} \sum_{l\in\mathbb{Z}_\pm} 2^{-\frac{d}{4}|k-l|} <\infty, $$
which completes the proof.
\end{proof}

\subsection{Study of symbol \texorpdfstring{$b_0$}{b0} with \texorpdfstring{$P_0,P_1>0$}{P0,P1>0}}

Till now, we have finished the proof of Proposition \ref{prop_boundedness_of_truncated_operator}. To complete the proof of Proposition \ref{prop_boundedness_of_truncated_operator_with_truncated_frequency}, it remains to check \eqref{eq_bdd_mod_boundedness_of_b_0}. Remark that the argument above relies on the fact that $|X| \ll 2^{kp_j}$ implies $x$ is supported in a region of area $(\sqrt{t}2^{kp_j})^d$, which is not true in the case where $P_0,P_1$ are non zero. To overcome this problem we need the extra truncation in $\xi$. 

\begin{proof}[Proof of \eqref{eq_bdd_mod_boundedness_of_b_0}]
As above, we may ignore the non-smooth factor $\mathbbm{1}_{0<\frac{X}{\Xi}<1}$. It remains smooth symbols
\begin{align*}
\tilde{c}^\sharp(t,x,\xi) &= \chi\left(\frac{(P_1+\nu_1 X)\frac{x}{|x|}+(P_1\nu_1\Xi)\frac{\xi}{|\xi|}}{\mu}\right) \chi_0\left(\frac{X}{\Xi}\right) (1-\tilde{\chi})\left(\frac{\xi}{\sqrt{t}}\right) \chi_h\left( \frac{\xi}{t^{\frac{1}{2}+\epsilon_1}} \right), \\
\tilde{c}^\flat(t,x,\xi) &= \chi\left(\frac{(P_0+\nu_0 X)\frac{x}{|x|}+(P_0\nu_0\Xi)\frac{\xi}{|\xi|}}{\mu}\right) \chi_0\left(\frac{X}{\Xi}\right) \tilde{\chi}\left(\frac{\xi}{\sqrt{t}}\right) (1-\chi_l)\left( \frac{\xi}{t^{\frac{1}{2}-\epsilon_0}} \right).
\end{align*}
We shall first check that $\tilde{c}^\sharp$ belongs uniformly to the Hörmander class $S^0_{1,\kappa}$ for some $\kappa\in ]0,1[$, namely the collection of smooth symbols $c(x,\xi)$ such that for all $\alpha,\beta\in\mathbb{N}^d$,
$$ |\partial_x^\alpha \partial_\xi^\beta c(x,\xi) | \lesssim \langle\xi\rangle^{-|\beta|+\kappa|\alpha|}. $$
It is well-known that the operators of symbol in this class is bounded on $L^2$, a proof of which can be found in \cite{Hormander2007}. We begin with the observation that high frequency symbol $\tilde{c}^\sharp$ is supported for $t^\frac{1}{2} \lesssim |\xi| \lesssim t^{\frac{1}{2}+\epsilon_1}$. Before calculating the bounds of derivatives in $x$ and $\xi$, recall that our goal is to show \eqref{eq_bdd_mod_boundedness_of_b_0} under the condition $t>1$.

For each derivative in $\xi$, we obtain from $\chi$ a factor of size
$$ t^{-\frac{1}{2}}\mu^{-1} P''(|\xi|/\sqrt{t}) \sim \mu^{-1} \left(\frac{|\xi|}{\sqrt{t}}\right)^{p_1} |\xi|^{-1} \lesssim \langle\xi\rangle^{-1}. $$
The last inequality is due the support of $\tilde{c}^\sharp$. More precisely,
$$ \left(\frac{|\xi|}{\sqrt{t}}\right)^{p_1} \sim \Xi \lesssim |X-\Xi| \lesssim \left|(P_1+\nu_1 X)\frac{x}{|x|}+(P_1+\nu_1\Xi)\frac{\xi}{|\xi|}\right| \lesssim \mu. $$
From the factor $\chi_0$, one gains
$$ \frac{X}{\Xi} \frac{P''(\xi/\sqrt{t})}{\Xi\sqrt{t}} \sim \frac{X}{\Xi} \frac{1}{|\xi|} \lesssim \langle\xi\rangle^{-1}. $$
Trivially, we will also obtain $\langle\xi\rangle^{-1}$ from derivative on $(1-\tilde{\chi})$ and $\chi_h$. 

As for derivatives in $x$, if $\partial_x$ acts on $\chi$, one gains $t^{-\frac{1}{2}}\mu^{-1} \lesssim (\sqrt{t}\Xi)^{-1}$. When it acts on $\chi_0$, the resulting factor is of size
$$ \frac{1}{\sqrt{t}\Xi} \sim \frac{1}{t} \left(\frac{|\xi|}{\sqrt{t}}\right)^{-p_1-1}|\xi|. $$
When $0>p_1 \geqslant -1$, we have
$$ \frac{1}{t} \left(\frac{|\xi|}{\sqrt{t}}\right)^{-p_1-1}|\xi| = |\xi|^{-p_1} t^{\frac{p_1-1}{2}} \lesssim \langle\xi\rangle t^{-\frac{1}{2}} \lesssim \langle\xi\rangle^\kappa t^{(\frac{1}{2}+\epsilon_1)(1-\kappa)-\frac{1}{2}}. $$
Thus, $\tilde{c}^\sharp \in S^0_{1,\kappa}$ for any $\kappa\in ]0,1[$ such that
$$ (\frac{1}{2}+\epsilon_1)(1-\kappa)-\frac{1}{2} \leqslant 0, $$
which is possible by choosing $\kappa$ close to $1$. When $p_1<-1$, the estimate above becomes
$$ \frac{1}{t} \left(\frac{|\xi|}{\sqrt{t}}\right)^{-p_1-1}|\xi| = \frac{|\xi|^{1-\kappa}}{t} \left(\frac{|\xi|}{\sqrt{t}}\right)^{-p_1-1}|\xi|^{\kappa} \lesssim t^{-1+(\frac{1}{2}+\epsilon_1)(1-\kappa)-(p_1+1)\epsilon_1} \langle\xi\rangle^\kappa. $$
To conclude $\tilde{c}^\sharp \in S^0_{1,\kappa}$, it suffices to choose $\kappa\in]0,1[$ such that
$$ -1+(\frac{1}{2}+\epsilon_1)(1-\kappa)-(p_1+1)\epsilon_1 \leqslant 0,  $$
which is equivalent to 
$$ \epsilon_1 \leqslant \frac{1}{-(p_1+1)}\left[ 1-(\frac{1}{2}+\epsilon_1)(1-\kappa) \right]. $$
This can be realized by choosing $\kappa$ close to $1$, due to the definition \eqref{eq_def_of_epsilons} of $\epsilon_1$.

To prove the uniform boundedness of $\tilde{c}^\flat$, which is supported for $t^{\frac{1}{2}-\epsilon_0} \lesssim |\xi| \lesssim t^{\frac{1}{2}}$, we shall apply Calderon-Vaillancourt Theorem (Lemma \ref{lem_tech_C-V}). As above one may check easily that each $\partial_\xi$ gives 
$$ |\xi|^{-1} \lesssim t^{-\frac{1}{2}+\epsilon_0}; $$
while each $\partial_x$ gives
$$ \frac{1}{\sqrt{t}} \left(\frac{|\xi|}{\sqrt{t}}\right)^{-p_0} \lesssim t^{-\frac{1}{2}+p_0\epsilon_0}. $$
As a consequence, the desired result follows from Lemma \ref{lem_tech_change_of_scaling} and Calderon-Vaillancourt Theorem (Lemma \ref{lem_tech_C-V}) once we have
$$ t^{-\frac{1}{2}+\epsilon_0} \times t^{-\frac{1}{2}+p_0\epsilon_0} \lesssim 1,\ \ \forall t>1, $$
equivalently, $(1+p_0)\epsilon_0 \leqslant 1$, which is exactly the definition \eqref{eq_def_of_epsilons} of $\epsilon_0$.
\end{proof}

\subsection{Proof of Theorem \ref{thm_boundedness_of_simple_truncated_operator}}

In all the proof above, we regard $\mu=t^{\delta-\frac{1}{2}} \in ]0,+\infty[$ as a time-independent parameter. This allows us to take the limit $\mu \rightarrow +\infty$ with all the uniform estimates remaining true. Rigorously, due to Corollary \ref{cor_boundedness_of_truncated_operator_with_mu}, for all $f,g\in \mathcal{S}(\mathbb{R}^d)$ and $t\in\mathbb{R},\mu>0$,
\begin{equation}\label{eq_bdd_uniform_in_mu_bound}
\left| \left\langle f , \Op{\chi\left( \frac{x+tP'(\xi)}{|t|\mu} \right) \mathbbm{1}_{|x|>|tP'(\xi)|} \Omega(t,\xi)} g \right\rangle  \right| \leqslant C\|f\|_{L^2}\|g\|_{L^2},
\end{equation}
where $\Omega=1$ when $P_0=P_1=0$, and
$$ \Omega(t,\xi) = (1-\chi_l)\left( \frac{\xi}{|t|^{-\epsilon_0}} \right) \chi_h\left( \frac{\xi}{|t|^{\epsilon_1}} \right) $$
when $P_0,P_1>0$.

The left hand side of \eqref{eq_bdd_uniform_in_mu_bound} is equal to
$$ \left| \frac{1}{(2\pi)^d} \int f(x) e^{-ix\xi} \chi\left( \frac{x+tP'(\xi)}{|t|\mu} \right) \mathbbm{1}_{|x|>|tP'(\xi)|} \Omega(t,\xi) \overline{\hat{g}(\xi)} d\xi dx \right|, $$
which, when $\mu\rightarrow +\infty$, due to Dominated Convergence Theorem, tends to
$$ \left| \frac{1}{(2\pi)^d} \int f(x) e^{-ix\xi} \mathbbm{1}_{|x|>|tP'(\xi)|} \Omega(t,\xi) \overline{\hat{g}(\xi)} d\xi dx \right|, $$
where we take $\chi(0)=1$ without loss of generality. In conclusion, for all $f,g\in \mathcal{S}(\mathbb{R}^d)$,
$$ \left| \left\langle f , \Op{\mathbbm{1}_{|x|>|tP'(\xi)|} \Omega(t,\xi)} g \right\rangle  \right| \leqslant C\|f\|_{L^2}\|g\|_{L^2}. $$
Theorem \ref{thm_boundedness_of_simple_truncated_operator} follows from the density of $\mathcal{S}(\mathbb{R}^d)$ in $L^2$.

\section{\texorpdfstring{$L^{2}$}{L2}-boundedness of microlocal truncation operators - An alternative symbol}\label{sect_L^2_bound_alt}

In this section, we will treat those $P$ with nonzero $P_0,P_1$ in an alternative way. Instead of adding extra truncation in $\xi$, we shall add some extra factor in the main truncation $\chi$. To be precise,

\begin{proposition}\label{prop_alt_boundedness_of_truncation_operator}
Let $P$, $\Lambda$ satisfy conditions \eqref{hyp_fractional-type_symbol} and \eqref{hyp_fractional-type_symbol_extra_factor} of Section \ref{subsect_main_result} respectively, with $p_1<0<p_0$, $\sigma_0 \geqslant p_0$, and $\sigma_1 \leqslant p_1$. We further assume that $\delta+\frac{\sigma_j}{2} < \frac{1}{2}$, $j=0,1$. Then there exist time-independent constants $C>0$, $t_0 \gg 1$, such that , for all $|t|>t_0$
$$ \|\Op{a^\text{alt}_{\chi,\delta,\Lambda}(t)}\|_{\mathcal{L}(L^2)} \leqslant C, $$
where $a^\text{alt}_{\chi,\delta,\Lambda}$ is defined in \eqref{eq_alt_truncation_symbol}
\end{proposition}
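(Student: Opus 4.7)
\textbf{Proof proposal for Proposition \ref{prop_alt_boundedness_of_truncation_operator}.}

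The plan is to mirror the three-step strategy used in Section \ref{sect_L^2_bound}, exploiting the fact that the extra factor $\Lambda(|t|^{1/2}\xi)$ effectively inflates the ``small parameter'' $\mu=|t|^{\delta-1/2}$ in a way that compensates exactly for the difficulty caused by $P_0,P_1>0$, thereby removing the need for any frequency cut-off. First I would show that the smoothed version
\[ \Op{\chi\!\left( \frac{x+tP'(\xi)}{|t|^{1/2+\delta}\Lambda(|t|^{1/2}\xi)}\right)} \]
is bounded on $L^{2}$ uniformly in $|t|\gg 1$, by the same argument (Lemma \ref{lem_tech_boundedness_of_symbol_study_in_xi} combined with Lemma \ref{lem_tech_change_of_x_and_xi}) used in Section \ref{sect_L^2_bound} to eliminate the exterior cut-off. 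This reduces the problem to the symbol with $\mathbbm{1}_{|x|<|t||P'(\xi)|}$ in place of $\mathbbm{1}_{|x|>|t||P'(\xi)|}$. After the rescaling $(x,\xi)\mapsto(\sqrt{t}\,x,\xi/\sqrt{t})$ via Lemma \ref{lem_tech_change_of_scaling}, the symbol becomes
\[ \chi\!\left( \frac{x/\sqrt{t}+P'(\xi/\sqrt{t})}{\mu\,\Lambda(\xi)}\right) \mathbbm{1}_{|x|/\sqrt{t}<|P'(\xi/\sqrt{t})|}, \qquad \mu=t^{\delta-1/2}, \]
and $\Lambda$ behaves like $|\xi|^{\sigma_{0}}$ near $0$ and like $|\xi|^{\sigma_{1}}$ at infinity.

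Next, using the same homogeneous dyadic decomposition $\sum_{k} \varphi(\xi/2^{k})$ and the tripartite split into $b_{0}^{\iota}$ (region $X/\Xi\ll 1$), $\tilde b^{\iota}$ (region $X/\Xi\sim 1$ bounded away from $0$ and $1$), and $b_{1}^{\iota}$ (region $1-X/\Xi\ll 1$), I would reproduce the arguments of \eqref{eq_bdd_boundedness_of_b_1} and \eqref{eq_bdd_boundedness_of_tilde{b}}, with $\mu$ systematically replaced by the effective scale $\mu_{k}:=\mu\cdot 2^{k\sigma_{j}}$, $j=0$ or $1$. The only change is bookkeeping: on the dyadic shell $|\xi|\sim 2^{k}$ the compact support of $\chi$ now yields
\[ 2^{kp_{j}} \sim |X-\Xi| \lesssim \mu_{k} = \mu\cdot 2^{k\sigma_{j}}, \]
which, combined with $\sigma_{0}\geqslant p_{0}>0$ for $k<0$ and $\sigma_{1}\leqslant p_{1}<0$ for $k>0$, keeps $\mu_{k}$ of order at least $2^{kp_{j}}$. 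The angular estimates needed for Lemma \ref{lem_tech_boundedness_of_symbol_polar} and the $x,\xi$-derivative estimates needed for Calderon-Vaillancourt then hold with the right homogeneity, and the almost-orthogonality of the $\tilde b_{k}$'s and $b_{1,k}$'s is unchanged.

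The main step, and the place where the extra factor $\Lambda$ earns its keep, is the treatment of $b_{0}^{\iota}$ when $P_{0},P_{1}>0$. In Section \ref{sect_L^2_bound} this forced the frequency truncation $(1-\chi_{l})\chi_{h}$; here, the conditions $\sigma_{0}\geqslant p_{0}$, $\sigma_{1}\leqslant p_{1}$ will replace that truncation. After the same rescaling the smoothed symbol will satisfy, for all $\alpha,\beta\in\mathbb{N}^{d}$,
\[ |\partial_{x}^{\alpha}\partial_{\xi}^{\beta} b_{0}^{\iota}(t,x,\xi)| \lesssim \langle\xi\rangle^{-|\beta|+\kappa|\alpha|} \]
for some $\kappa\in]0,1[$, by computing each derivative that hits $\chi$, $\chi_{0}$ or $\Lambda$ and using hypothesis \eqref{hyp_fractional-type_symbol_extra_factor}. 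Indeed, each $\partial_{\xi}$ on $\chi$ produces $\mu^{-1}\Lambda^{-1}P''(\xi/\sqrt{t})t^{-1/2}\lesssim \langle\xi\rangle^{-1}$ precisely because $\Lambda$ carries the power $\sigma_{j}\leqslant p_{j}$ at infinity (resp.\ $\sigma_{j}\geqslant p_{j}$ near $0$); the derivatives of $\Lambda$ itself give $\langle\xi\rangle^{-1}$-losses by (2.0)--(2.1) of \eqref{hyp_fractional-type_symbol_extra_factor}; and $\partial_{x}$ derivatives gain a factor $(\sqrt{t}\Xi\Lambda)^{-1}$ which, under $\delta+\sigma_{j}/2<1/2$, produces at worst a small power of $\langle\xi\rangle^{\kappa}$. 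Hence $b_{0}^{\iota}$ lies uniformly in the Hörmander class $S^{0}_{1,\kappa}$ and its operator is $L^{2}$-bounded by the standard Calderon-Vaillancourt argument cited in \cite{Hormander2007}.

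The main obstacle I anticipate is making the last step completely rigorous: carefully verifying, separately on the low- and high-frequency regions, that the competition between the factors coming from $P''$, from the compact support of $\chi$, and from the derivatives of $\Lambda$, can be arranged to land in $S^{0}_{1,\kappa}$ with a single $\kappa<1$ that is admissible in both regimes. This will require balancing the exponents exactly as in the proof of \eqref{eq_bdd_mod_boundedness_of_b_0}, but with the roles of $\epsilon_{0},\epsilon_{1}$ replaced by the gaps $\sigma_{0}-p_{0}\geqslant 0$ and $p_{1}-\sigma_{1}\geqslant 0$, and using the upper bound $\delta+\sigma_{j}/2<1/2$ as the analogue of \eqref{eq_def_of_epsilons}.
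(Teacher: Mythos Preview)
Your plan misses the one observation that makes Section~\ref{sect_L^2_bound_alt} genuinely simpler than Section~\ref{sect_L^2_bound}: the extra factor $\Lambda$ forces the support of the symbol to lie in the region $X\sim\Xi$, so the problematic piece $b_0^\iota$ (and in fact $\tilde b^\iota$ as well) is \emph{empty} for $t\gg 1$. Concretely, on the shell $|\xi|\sim 2^k\sqrt t$ the compact support of $\chi$ gives
\[
|X-\Xi|\ \lesssim\ t^{\delta-\frac12}\Lambda(\xi)\ \sim\ t^{\delta+\frac{\sigma_j}{2}-\frac12}2^{k\sigma_j},
\qquad \Xi\sim 2^{kp_j},
\]
hence $|X/\Xi-1|\lesssim t^{\delta+\sigma_j/2-1/2}\,2^{k(\sigma_j-p_j)}\leqslant t^{\delta+\sigma_j/2-1/2}$, where the last inequality uses $\sigma_0\geqslant p_0$ for $k<0$ and $\sigma_1\leqslant p_1$ for $k\geqslant 0$. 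Since $\delta+\sigma_j/2<\tfrac12$, this tends to $0$, so for $t>t_0$ one has $X/\Xi$ arbitrarily close to $1$ and the cut-offs $\chi_0(X/\Xi)$ and $\Psi(X/\Xi)$ vanish identically. The paper therefore never performs the tripartite split: it goes straight to the dyadic pieces $b_k$ (with an added $\psi(2^{-kp_j}X)$ for almost-orthogonality), and then applies exactly the $b_1$-type analysis you describe---Lemma~\ref{lem_tech_boundedness_of_symbol_polar} when $t2^{k(p_j+1)}\leqslant 1$, and the further split $b_k=b_k'+b_k''$ (near/away from $X=\Xi$) with Lemma~\ref{lem_tech_boundedness_of_symbol_polar} plus Calderon--Vaillancourt when $t2^{k(p_j+1)}>1$.

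So your ``main obstacle''---the H\"ormander-class argument for $b_0^\iota$---never arises, and the balancing of exponents you worry about at the end is unnecessary. A minor second point: your Step~1 reduction (boundedness of the smoothed symbol via Lemmas~\ref{lem_tech_boundedness_of_symbol_study_in_xi} and~\ref{lem_tech_change_of_x_and_xi}) is not immediate because the scale $\lambda=|t|^{\frac12+\delta}\Lambda(|t|^{1/2}\xi)$ depends on $\xi$; the paper avoids this by keeping the indicator in place (writing it as $H\big((|x|/\sqrt t - P'(|\xi|/\sqrt t))/(t^{\delta-1/2}\Lambda(\xi))\big)$ with $H=\mathbbm{1}_{]0,\infty[}$) and never separating it from $\chi$.
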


Without loss of generality, we may assume $t>0$. Meanwhile, the change of scaling (Lemma \ref{lem_tech_change_of_scaling}) allows us to reduce to the symbol
\begin{equation}\label{eq_alt_truncated_symbol_mod}
b(t,x,\xi) = \chi\left( \frac{\frac{x}{\sqrt{t}}+P'\left(\frac{\xi}{\sqrt{t}}\right)}{t^{\delta-\frac{1}{2}} \Lambda(\xi)} \right) H\left( \frac{\frac{|x|}{\sqrt{t}} - P'\left(\frac{|\xi|}{\sqrt{t}}\right)}{t^{\delta-\frac{1}{2}}\Lambda(\xi)} \right),
\end{equation}
where $H \in C^\infty_b(\mathbb{R}\backslash\{0\}) \cap L^\infty(\mathbb{R})$. To recover the desired result in Proposition \ref{prop_alt_boundedness_of_truncation_operator}, it suffices to take $H = \mathbbm{1}_{]0,+\infty[}$.

As in previous section, we set

-when $P_j=0$,
$$ X:= \frac{|x|}{\sqrt{t}},\ \ \Xi:= P'\left(\frac{|\xi|}{\sqrt{t}}\right),\ \ \nu_j=+; $$

-when $P_j>0$, $P'>P_j$
$$ X:= \frac{|x|}{\sqrt{t}}-P_j,\ \ \Xi:= P'\left(\frac{|\xi|}{\sqrt{t}}\right)-P_j,\ \ \nu_j=+; $$

-when $P_j>0$, $P'<P_j$
$$ X:= P_j-\frac{|x|}{\sqrt{t}},\ \ \Xi:= P_j-P'\left(\frac{|\xi|}{\sqrt{t}}\right),\ \ \nu_j=-. $$

With these notations, via homogeneous dyadic decomposition, we may rewrite symbol $b$ as
\begin{equation*}
\begin{aligned}
b(t,x,\xi) &= \sum_{k\in\mathbb{Z}} b_k(t,x,\xi), \\
b_k(t,x,\xi) &= \chi\left( \frac{(P_1+\nu_1 X)\frac{x}{|x|}+(P_1+\nu_1 \Xi)\frac{\xi}{|\xi|}}{t^{\delta-\frac{1}{2}} \Lambda(\xi)} \right) H \left( \frac{X-\Xi}{t^{\delta-\frac{1}{2}} \Lambda(\xi)} \right)\varphi\left(\frac{\xi}{\sqrt{t}2^{k}}\right),\ \ \forall k\geqslant 0, \\
b_k(t,x,\xi) &= \chi\left( \frac{(P_0+\nu_0 X)\frac{x}{|x|}+(P_0+\nu_0 \Xi)\frac{\xi}{|\xi|}}{t^{\delta-\frac{1}{2}} \Lambda(\xi)} \right) H \left( \frac{X-\Xi}{t^{\delta-\frac{1}{2}} \Lambda(\xi)} \right)\varphi\left(\frac{\xi}{\sqrt{t}2^{k}}\right),\ \ \forall k<0,
\end{aligned}
\end{equation*}
where $\varphi\in C_c^\infty(\mathbb{R}^d)$ is radial and supported in an annulus centered at zero. 

One observes that, due to factors $\chi$ and $\varphi$, $b_k$ is supported for 
$$ |X-\Xi| \lesssim t^{\delta-\frac{1}{2}} \Lambda(\xi) \sim t^{\delta-\frac{1}{2}} (\sqrt{t}2^{k})^{\sigma_j},$$
where $j=0$ when $k<0$, $j=1$ when $k\geqslant 0$. By using the fact that $\Xi \sim \left|\frac{\xi}{\sqrt{t}}\right|^{p_j} \sim 2^{kp_j}$, we obtain
$$ \left| \frac{X}{\Xi} -1 \right| \lesssim t^{\delta+\frac{\sigma_j}{2}-\frac{1}{2}} 2^{k(\sigma_j-p_j)} \leqslant t^{\delta+\frac{\sigma_j}{2}-\frac{1}{2}}. $$
The last inequality is the consequence of our assumptions $\sigma_0 \geqslant p_0$ and $\sigma_1 \leqslant p_1$. Since $\delta+\frac{\sigma_j}{2}-\frac{1}{2}<0$, if we further assume that $t \geqslant t_0 \gg 1$, the inequality above implies that $X\sim\Xi$, which allows us to add a complementary factor $\psi(2^{-kp_j}X)$ to the definition of $b_k$, where $\psi\in C_c^\infty(\mathbb{R})$ is radial and supported in an annulus centered at zero. In this way, we may reduce Proposition \ref{prop_alt_boundedness_of_truncation_operator} to
\begin{proposition}\label{prop_alt_boundedness_of_b_k}
Under the same assumptions as in Proposition \ref{prop_alt_boundedness_of_truncation_operator}, the operator $\Op{b_k}$ is bounded, uniformly in $t$ and $k$.
\end{proposition}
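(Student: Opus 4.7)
The plan is to mimic closely the proof of \eqref{eq_bdd_boundedness_of_b_1}, with the constant bandwidth $\mu$ there replaced by the $k$-dependent scale $t^{\delta-\frac{1}{2}}\Lambda(\xi)\sim t^{\delta-\frac{1}{2}}(\sqrt{t}2^{k})^{\sigma_{j}}$. The radial localizations $\varphi(\xi/(\sqrt{t}2^{k}))$ and $\psi(2^{-kp_{j}}X)$ already built into $b_{k}$ force $\Op{b_{k}}$ and $\Op{b_{k'}}$ to be almost orthogonal (in both $x$- and $\xi$-supports) whenever $|k-k'|$ exceeds a fixed integer, so by the standard almost orthogonality argument it suffices to bound $\|\Op{b_{k}}\|_{\mathcal{L}(L^{2})}$ uniformly in $t>t_{0}$ and $k\in\mathbb{Z}$.

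Set $\mu_{k}:=t^{\delta-\frac{1}{2}}(\sqrt{t}2^{k})^{\sigma_{j}}/\max(P_{j},2^{kp_{j}})$, the effective angular scale of the $\chi$ cut-off in polar coordinates. Two regimes are then distinguished according to whether $t\cdot 2^{k(p_{j}+1)}\leqslant 1$ or $>1$. In the first regime one applies Lemma \ref{lem_tech_boundedness_of_symbol_polar} directly: passing to polar coordinates $x=r\omega$, $\xi=\rho\theta$, every $\omega$- or $\theta$-derivative of $b_{k}$ gains a factor $\mu_{k}^{-1}$, while the radial kernel is supported on the product $\{r\sim\sqrt{t}\max(P_{j},2^{kp_{j}})\}\times\{\rho\sim\sqrt{t}2^{k}\}$ (further sharpened in $r$ to width $t^{\delta}(\sqrt{t}2^{k})^{\sigma_{j}}$ by $H$); its $L^{2}$-norm is $\lesssim (t\cdot 2^{k(p_{j}+1)})^{1/2}\lesssim 1$.

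In the regime $t\cdot 2^{k(p_{j}+1)}>1$, we split $b_{k}=b_{k}'+b_{k}''$ with
$$b_{k}':=b_{k}\cdot\tilde{\chi}_{1}\bigl(\sqrt{t}\,2^{k(1-p_{j})/2}(\Xi-X)\bigr),$$
$\tilde{\chi}_{1}\in C_{c}^{\infty}(\mathbb{R})$ radial and equal to $1$ near $0$, exactly as in the treatment of $b_{1,k}$. For $b_{k}'$, the singular factor $H$ is handled by Lemma \ref{lem_tech_boundedness_of_symbol_polar} using the same explicit partition $\{J_{n}\}$, $\{I_{n}\}$ of respective widths $2^{k(1-p_{j})/2}$ (in $\rho$) and $\sim 2^{k(p_{j}-1)/2}$ (in $r$), centered on $\sqrt{t}P'(t^{-1/2}2^{k(1-p_{j})/2}n)$; the estimates $|I_{n}||J_{n}|\lesssim 1$ and bounded overlap transfer verbatim from the $b_{1,k}'$ computation. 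For $b_{k}''$, the conditions $\sigma_{0}\geqslant p_{0}$, $\sigma_{1}\leqslant p_{1}$, and $\delta+\sigma_{j}/2<\frac{1}{2}$ ensure that on its support, $H$ is evaluated in a region where it is smooth with uniformly bounded derivatives; the usual bookkeeping then gives each $\partial_{x}$-derivative of $b_{k}''$ as $O(2^{k(1-p_{j})/2})$ and each $\partial_{\xi}$-derivative as $O(2^{-k(1-p_{j})/2})$, and the bound follows from Lemma \ref{lem_tech_change_of_scaling} together with Calderon-Vaillancourt (Lemma \ref{lem_tech_C-V}).

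The main technical obstacle is the uniform lower bound on the argument of $H$ on the support of $b_{k}''$: one needs $t^{-\delta-\sigma_{j}/2}2^{k((p_{j}-1)/2-\sigma_{j})}\gtrsim 1$. This is precisely the point where the hypotheses $\sigma_{0}\geqslant p_{0}$, $\sigma_{1}\leqslant p_{1}$ on $\Lambda$, combined with the regime condition $t\cdot 2^{k(p_{j}+1)}>1$ and the bandwidth constraint $\delta+\sigma_{j}/2<\frac{1}{2}$, must be exploited and verified case-by-case for low ($j=0$) and high ($j=1$) frequencies.
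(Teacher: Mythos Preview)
Your overall strategy is exactly the paper's: same two regimes in $t\,2^{k(p_j+1)}$, same splitting $b_k=b_k'+b_k''$ via $\tilde\chi_1\bigl(\sqrt{t}\,2^{k(1-p_j)/2}(\Xi-X)\bigr)$, same use of Lemma~\ref{lem_tech_boundedness_of_symbol_polar} for $b_k'$ with the partition $\{I_n\},\{J_n\}$, and Calderon--Vaillancourt for $b_k''$.

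The final paragraph, however, misidentifies the technical point. For the Calderon--Vaillancourt argument on $b_k''$ one needs each $\partial_x$ to produce a factor $O\bigl(2^{k(1-p_j)/2}\bigr)$. When $\partial_x$ hits the $\chi$ or $H$ factor it yields a factor of size $t^{-\delta-\sigma_j/2}2^{-k\sigma_j}$, so the required inequality is
\[
t^{-\delta-\sigma_j/2}\,2^{k((p_j-1)/2-\sigma_j)}\ \lesssim\ 1,
\]
i.e.\ the \emph{opposite} direction of what you wrote. This is not an obstacle: it follows automatically whenever $b_k''\not\equiv 0$, because the lower bound $|X-\Xi|\gtrsim t^{-1/2}2^{k(p_j-1)/2}$ (from $1-\tilde\chi_1$) must be compatible with the upper bound $|X-\Xi|\lesssim t^{\delta+\sigma_j/2-1/2}2^{k\sigma_j}$ (from $\chi$). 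No case-by-case check using $\sigma_0\geqslant p_0$, $\sigma_1\leqslant p_1$ is needed at this step; those hypotheses were already spent earlier to force $X\sim\Xi$ and justify the insertion of $\psi$. The inequality you stated (with $\gtrsim 1$) would assert that the argument of $H$ is bounded below by a fixed constant, which is neither required---since $H\in C^\infty_b(\mathbb{R}\setminus\{0\})$ has uniformly bounded derivatives regardless---nor true in general (take e.g.\ $p_1=\sigma_1=-\tfrac12$, $\delta=0$, and $k$ large).
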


In what follows, we keep using the subscript $j$, where $j=0$ for $k<0$ and $j=1$ for $k\geqslant 0$. By definition,
$$ b_k(t,x,\xi) = \chi\left( \frac{(P_j+\nu_j X)\frac{x}{|x|}+(P_j+\nu_j\Xi)\frac{\xi}{|\xi|}}{t^{\delta-\frac{1}{2}} \Lambda(\xi)} \right) H \left( \frac{X-\Xi}{t^{\delta-\frac{1}{2}} \Lambda(\xi)} \right)\varphi\left(\frac{\xi}{\sqrt{t}2^{k}}\right) \psi\left(\frac{X}{2^{kp_j}}\right). $$
When $P_j =0$, $b_k$ is supported for
\begin{align*}
\left| \frac{x}{|x|}+\frac{\xi}{|\xi|} \right| \leqslant& \frac{1}{X} \left| X\frac{x}{|x|}+\Xi\frac{\xi}{|\xi|} \right| + \frac{1}{X}\left| (X-\Xi)\frac{\xi}{|\xi|} \right| \\ \lesssim& \frac{1}{X}t^{\delta-\frac{1}{2}}\Lambda(\xi) \sim 2^{-kp_j}t^{\delta-\frac{1}{2}}(\sqrt{t}2^{k})^{\sigma_j} = t^{\delta+\frac{\sigma_j}{2}-\frac{1}{2}} 2^{k(\sigma_j-p_j)}.
\end{align*}
When $P_j \neq 0$, we observe that $2^{k\sigma_j} \leqslant 2^{kp_j} \leqslant 1$, due to the choice $p_1<0<p_0$. By choosing $\Supp\varphi$ small, which allows us to take $\Supp\psi$ small, we have $P_j+\nu_j X \sim 1$. Thus, $b_k$ is supported for
\begin{align*}
\left| \frac{x}{|x|}+\frac{\xi}{|\xi|} \right| \leqslant& \frac{1}{P_j+\nu_j X} \left| (P_j+\nu_j X)\frac{x}{|x|}+(P_j+\nu_j\Xi)\frac{\xi}{|\xi|} \right| + \frac{1}{P_j+\nu_j X}\left| (X-\Xi)\frac{\xi}{|\xi|} \right| \\ 
\lesssim& \frac{1}{P_j+\nu_j X}t^{\delta-\frac{1}{2}}\Lambda(\xi) \sim t^{\delta-\frac{1}{2}}(\sqrt{t}2^{k})^{\sigma_j} = t^{\delta+\frac{\sigma_j}{2}-\frac{1}{2}} 2^{k\sigma_j}.
\end{align*}

If $t 2^{k(p_j+1)} \leqslant 1$, by setting $\mu:= t^{\delta+\frac{\sigma_j}{2}-\frac{1}{2}}2^{k(\sigma_j-p_j)} \in ]0,1[$ in the case $P_j=0$, and $\mu:= t^{\delta+\frac{\sigma_j}{2}-\frac{1}{2}}2^{k\sigma_j} \in ]0,1[$ in the case $P_j \neq 0$, one may check that, for all $\alpha,\beta\in\mathbb{N}^{d-1}$ and $N\in\mathbb{N}$,
$$ \left| \partial_{\omega}^{\alpha} \partial_{\theta}^{\beta} b_k(r\omega,\rho\theta) \right| \lesssim \mu^{-|\alpha|-|\beta|} \mathbbm{1}_{r\lesssim\sqrt{t}2^{kp}}\mathbbm{1}_{\rho\sim\sqrt{t}2^k} \langle \frac{d(\omega,-\theta)}{\mu} \rangle^{-N}. $$
By applying Lemma \ref{lem_tech_boundedness_of_symbol_polar}, we have
$$ \|\Op{b_k}\|_{\mathcal{L}(L^2)} \lesssim \| \mathbbm{1}_{r\lesssim\sqrt{t}2^{kp_j}}\mathbbm{1}_{\rho\sim\sqrt{t}2^k} \|_{L^2(\mathbb{R}_+^2)} = C \sqrt{t 2^{k(p_j+1)}} \leqslant C. $$

If $t 2^{k(p_j+1)} > 1$, we decompose $b_k$ as the sum of $b_k',b_k''$, which are defined by
\begin{equation*}
\begin{aligned}
b_k(t,x,\xi) &= b_k'(t,x,\xi) + b_k''(t,x,\xi), \\
b_k'(t,x,\xi) &= b_k \chi_0\left( \sqrt{t}2^{k\frac{1-p_j}{2}} (X-\Xi) \right),
\end{aligned}
\end{equation*}
where $\chi_0 \in C_c^\infty(\mathbb{R})$ is radial, supported in a neighborhood of zero, and equal to $1$ near zero.

Clearly, $b_k''$ is smooth on $\mathbb{R}^{2d}$. Thus, to prove the boundedness of $\Op{b_k''}$, we may apply Calderon-Vaillancourt Theorem (Lemma \ref{lem_tech_C-V}). By definition, $b_k''$ reads
\begin{equation*}
\begin{aligned}
&\chi\left( \frac{(P_j+\nu_j X)\frac{x}{|x|}+(P_j+\nu_j\Xi)\frac{\xi}{|\xi|}}{t^{\delta-\frac{1}{2}} \Lambda(\xi)} \right) H \left( \frac{X-\Xi}{t^{\delta-\frac{1}{2}} \Lambda(\xi)} \right) \\
&\hspace{6em}\times\varphi\left(\frac{\xi}{\sqrt{t}2^{k}}\right) \psi\left(\frac{X}{2^{kp_j}}\right) (1-\chi_0)\left( \sqrt{t}2^{k\frac{1-p_j}{2}} (X-\Xi) \right),
\end{aligned}
\end{equation*}
which is supported for
$$ t^{-\frac{1}{2}}2^{k\frac{p_j-1}{2}} \lesssim |X-\Xi| \lesssim t^{\delta-\frac{1}{2}}\Lambda(\xi) \sim t^{\delta+\frac{\sigma_j}{2}-\frac{1}{2}}2^{k\sigma_j}. $$
As a consequence, $t^{-(\delta+\frac{\sigma_j}{2})} \lesssim 2^{k(\sigma_j+\frac{1-p_j}{2})}$.

For each $\partial_x$, when it acts on $\chi$, one obtains in its bound an extra factor of size
$$ \frac{1}{\sqrt{t}}t^{\frac{1}{2}-\delta}\Lambda(\xi)^{-1} \sim t^{-(\delta+\frac{\sigma_j}{2})}2^{-kp_j} \lesssim 2^{k\frac{1-p_j}{2}} 2^{k(\sigma_j-p_j)} \leqslant 2^{k\frac{1-p_j}{2}}. $$
When $\partial_x$ acts on $\psi$ and $(1-\chi_0)$ factors, we gain $t^{-\frac{1}{2}}2^{-kp_j} < 2^{k\frac{p_j+1}{2}-kp_j} = 2^{k\frac{1-p_j}{2}}$ and $2^{k\frac{1-p_j}{2}}$, respectively. For each $\partial_\xi$, similarly, one gains $2^{k\frac{p_j-1}{2}}$ in its bound. Namely, for all $\alpha,\beta\in\mathbb{N}^{d}$,
$$ \left| \partial_x^\alpha \partial_\xi^\beta b_k''(x,\xi) \right| \lesssim 2^{k\frac{1-p_j}{2}(|\alpha|-|\beta|)}. $$
The uniform boundedness of $\Op{b_k''}$ follows from Lemma \ref{lem_tech_change_of_scaling} and Calderon-Vaillancourt Theorem (Lemma \ref{lem_tech_C-V}).

It remains to study the symbol $b_k'$. To overcome the singularity near $X=\Xi$, we will apply Lemma \ref{lem_tech_boundedness_of_symbol_polar} with the same setting of $\mu$ as in previous paragraph, namely $\mu:= t^{\delta+\frac{\sigma_j}{2}-\frac{1}{2}}2^{k(\sigma_j-p_j)} \in ]0,1[$ in the case $P_j=0$, and $\mu:= t^{\delta+\frac{\sigma_j}{2}-\frac{1}{2}}2^{k\sigma_j} \in ]0,1[$ in the case $P_j \neq 0$. It is easy to check that, for all $\alpha,\beta\in\mathbb{N}^{d-1}$ and $N\in\mathbb{N}$,
$$ \left| \partial_{\omega}^{\alpha} \partial_{\theta}^{\beta} b_k(r\omega,\rho\theta) \right| \lesssim \mu^{-|\alpha|-|\beta|} h_k(r,\rho) \langle \frac{d(\omega,-\theta)}{\mu} \rangle^{-N}, $$
where 
$$ h_k(r,\rho) = \sum_{n\sim \sqrt{t} 2^{k\frac{p_j+1}{2}}} \mathbbm{1}_{I_n}(r) \mathbbm{1}_{J_n}(\rho), $$
which is exactly the same one defined in \eqref{eq_bdd_def_of_h_k}. We have seen that the operator with symbol $h_k$ is bounded on $L^2(\mathbb{R}_+)$ uniformly in $k$. By Lemma \ref{lem_tech_boundedness_of_symbol_polar}, we may conclude the uniform-in-$k$ boundedness of $b_k'$ and the proof of Proposition \ref{prop_alt_boundedness_of_b_k}, hence Proposition \ref{prop_alt_boundedness_of_truncation_operator} is completed.

\section{Limit of truncated energy}\label{sect_limit_of_energy}

In this section, we will complete the proof of Theorem \ref{thm_main}, \ref{thm_mod_main}, and \ref{thm_alt_main} by calculating the limit of truncated energy for some regular initial data $u_0$. These three results will follow from the following proposition.

\begin{proposition}\label{prop_limit_of_energy}
Let $a_{\chi,\delta,\Lambda}(t)$ be the symbol defined by
\begin{equation}\label{eq_lim_truncation_symbol}
a= a_{\chi,\delta,\Lambda}(t,x,\xi) = \chi\left(\frac{x+tP'(\xi)}{|t|^{\frac{1}{2}+\delta}\Lambda(|t|^\frac{1}{2}\xi)}\right)\mathbbm{1}_{|x|>|t||P'(\xi)|},
\end{equation}
where $P\in C^\infty(\mathbb{R}^d\backslash\{0\})$ is assumed to be a real radial function satisfying that $P''(\rho)\neq 0$ for all $\rho\in]0,\infty[$. Furthermore, we assume that $\chi\in C_c^\infty(\mathbb{R}^d)$ is real and radial with $\chi(0)=1$ and that $\Lambda$ verifies condition \eqref{hyp_fractional-type_symbol_extra_factor} without any restriction in $\sigma_0,\sigma_1$.

With these settings, if there exists $t_0\gg 1$, such that $\Op{a(t)}$ is bounded on $L^2$ uniformly in $|t|>t_0$, for all $u_0\in L^2$, the limits \eqref{eq_alt_limit_of_energy_subcritical}, \eqref{eq_alt_limit_of_energy_critical}, and \eqref{eq_alt_limit_of_energy_supercritical} hold true.
\end{proposition}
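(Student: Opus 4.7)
My plan is to expand $E(u_0,t)$ as an oscillatory integral, rescale adapted to the microlocal window, and identify the limit case by case. First, the uniform $L^2$-boundedness of $\Op{a(t)}$ assumed in the proposition lets us restrict to $u_0\in\mathcal{S}(\mathbb{R}^d)$ with $\hat u_0\in C_c^\infty(\mathbb{R}^d\setminus\{0\})$. For such data,
$$E(u_0,t) = \frac{1}{(2\pi)^{2d}}\iiint e^{i\Phi}\,a(t,x,\xi)\overline{a(t,x,\eta)}\,\hat u_0(\xi)\overline{\hat u_0(\eta)}\,dx\,d\xi\,d\eta,$$
with $\Phi = x\cdot(\xi-\eta)+t(P(\xi)-P(\eta))$. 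Setting $\mu=(\xi+\eta)/2$, $\nu=\xi-\eta$, and $z=x+tP'(\mu)$, the symmetric cancellation $P(\mu+\nu/2)-P(\mu-\nu/2) = P'(\mu)\cdot\nu + O(|\nu|^3)$ reduces the phase to $\Phi = z\cdot\nu + t\,O(|\nu|^3)$.

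Next, in polar coordinates $\mu=\rho\omega$, I would split $z,\nu$ into radial/tangential parts and rescale them using the eigenvalues of the Hessian of $P$ along the sphere, namely $\sqrt{P''(\rho)}$ radially and $\sqrt{P'(\rho)/\rho}$ tangentially, renormalized by the microlocal scale $|t|^{1/2+\delta}\Lambda(|t|^{1/2}\mu)$ of the $\chi$-window. The normalization is chosen so that the two $\chi$-arguments become $Z\pm V/2$ with $(Z,V)$ of size $O(1)$ on the support. Using $\Lambda(|t|^{1/2}\mu)\sim\lambda_1|t|^{\sigma_1/2}\rho^{\sigma_1}$ as $|t|\to\infty$, the total Jacobian is of order $|t|^{2d(\delta+\sigma_1/2)}$, the main phase $z\cdot\nu$ becomes $|t|^{2(\delta+\sigma_1/2)}\,Z\cdot V$ up to $\rho$-dependent factors, and the sharp indicator $\mathbbm{1}_{|x|>|tP'(\xi)|}$ linearizes to a half-space condition on the radial component $Z_\parallel\pm s/2$, the sign tracking $\operatorname{sgn}(P'')$.

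The three cases then follow from the trichotomy of $\delta+\sigma_1/2$. In case (i), $|t|^{2d(\delta+\sigma_1/2)}\to 0$ kills the Jacobian while the rescaled integrand stays uniformly in $L^1$, giving the limit $0$. In case (ii), every scale stabilizes and dominated convergence produces an integral of $|\hat u_0(\rho\omega)|^2$ weighted by a function of $(\rho,\omega)$; Plancherel in the rescaled variables identifies this function with $G^\text{alt}_\chi$ as in \eqref{eq_alt_definition_of_G_convex}--\eqref{eq_alt_definition_of_G_concave}, the sign of the quadratic exponent matching $\operatorname{sgn}(P'')$. In case (iii), the factor $|t|^{2(\delta+\sigma_1/2)}\to+\infty$ forces a further rescaling, after which $\chi$ tends to $\chi(0)=1$ pointwise and what remains is a Fresnel-type integral of two half-space indicators against a quadratic phase. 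The key structural observation is that the squared modulus of the half-line Fresnel integral $|\int_0^\infty e^{\pm ir^2/2}\,dr|^2$ equals $1/4$ of the full-line one (which would produce $\|u_0\|_{L^2}^2$); combined with a Gaussian tangential integral evaluated via Plancherel (using the radiality of $\chi$), this yields the somewhat surprising factor $\tfrac14\|u_0\|_{L^2}^2$.

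The main obstacle is the sharpness of the cut-off $\mathbbm{1}_{|x|>|tP'(\xi)|}$: its boundary coincides exactly with the stationary set of $\Phi$, so smooth stationary phase cannot be applied directly, and the indicator must be tracked as a half-space in the rescaled variables throughout. In case (iii), the stationary point of the rescaled phase sits precisely on this boundary, so the main contribution is a boundary Fresnel integral rather than an interior critical point---the structural origin of the non-obvious factor $1/4$ in \eqref{eq_alt_limit_of_energy_supercritical}. Justifying uniform integrability for dominated convergence, and in particular controlling the regime where the cubic phase remainder $t|\nu|^3$ is not itself negligible (so that the rescaling strategy must be adapted depending on the case), is the delicate step; the uniform $L^2$-bound for $\Op{a(t)}$ is what allows the residual pieces of the expansion to be absorbed.
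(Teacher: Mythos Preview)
Your outline for cases (i) and (ii) is close in spirit to the paper's argument: both exploit the density reduction via the assumed uniform $L^2$-bound, pass to polar/rescaled variables adapted to the microlocal window, and obtain the sub-critical limit from a volume bound of order $|t|^{2d(\delta+\sigma_1/2)}$ and the critical limit by dominated convergence. The paper organizes this slightly differently---it works directly in polar coordinates $x=r\omega$, $\xi=\rho\theta$, $\xi'=\rho'\theta'$, splits the integral along $\rho\gtrless\rho'$, and performs the change $r\to r\sqrt{t}+tP'(\rho)$, $\rho'\to\rho-w/\sqrt{t}$---but the content is the same.

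For case (iii), however, your route diverges from the paper's and has a genuine gap. Your plan is to rescale once more so that $\chi\to\chi(0)=1$ and reduce to a bare Fresnel integral of two half-space indicators. The difficulty is that after removing $\chi$, the remaining integral is not absolutely convergent: it is an oscillatory integral whose stationary set sits exactly on the boundary of the sharp cut-off, and neither dominated convergence nor a naive Fresnel evaluation applies. You flag this (``justifying uniform integrability\ldots is the delicate step'') but do not indicate a mechanism. The cubic phase remainder $t|\nu|^3$ compounds this: on the support forced by $\chi$ one has $|\nu|\lesssim |t|^{-1/2+\delta+\sigma_1/2}$, so $t|\nu|^3\lesssim |t|^{-1/2+3(\delta+\sigma_1/2)}$, which is \emph{not} $o(1)$ on the whole range $0<\delta+\sigma_1/2<\tfrac12$.

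The paper avoids both problems by a different decomposition in the super-critical case: it first applies a stationary-phase lemma on the sphere (Lemma~\ref{lem_stat_int_on_angular_limit_part}) to each of the angular integrals in $\theta,\theta'$, extracting a principal term and a symbol-class remainder. This collapses the $(2d)$-dimensional integral to a one-dimensional radial integral of the specific form $I(t,-\epsilon,\epsilon,-\epsilon,\epsilon;F)$ introduced in \eqref{eq_lim_def_of_standard_integral}, for which the limit is computed in Proposition~\ref{prop_lim_limit_of_integral} via a distributional argument (the identity $(w\mp i0)^{-1}=\pm\pi i\delta_0+\mathrm{P.V.}\,w^{-1}$ together with the Dirichlet integral). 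The factor $\tfrac14$ emerges from that computation as $\tfrac{\pi}{2}\cdot\tfrac{1}{2\pi}$, which is equivalent to your half-line Fresnel heuristic but made rigorous through the reduction to $I$. The radiality of $\chi$ is used in the paper at the stationary-phase step on the sphere (to identify $\chi(r\omega+\epsilon tP'(\rho)\theta)$ at $\theta=-\epsilon\omega$ with $\chi(r-tP'(\rho))$), not in a tangential Plancherel argument as you suggest.
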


\begin{corollary}\label{cor_limit_of_energy}
We consider the same symbol $a$ with an extra truncation in $\xi$, i.e.
$$ \tilde{a} = \tilde{a}_{\chi,\delta,\Lambda}(t,x,\xi) = \chi\left(\frac{x+tP'(\xi)}{|t|^{\frac{1}{2}+\delta}\Lambda(|t|^\frac{1}{2}\xi)}\right)\mathbbm{1}_{|x|>|t||P'(\xi)|} (1-\chi_l)\left( \frac{\xi}{|t|^{-\epsilon_0}} \right) \chi_h\left( \frac{\xi}{|t|^{\epsilon_1}} \right) $$
where $\chi_l,\chi_h \in C_c^\infty(\mathbb{R}^d)$ are equal to $1$ in a neighborhood of zero, and $\epsilon_0,\epsilon_1>0$. 

If $P$, $\Lambda$ satisfy the same conditions as in Proposition \ref{prop_limit_of_energy} and $\Op{\tilde{a}_{\delta,\chi,\Lambda}(t)}$ is bounded uniformly in $|t|>t_0 \gg 1$, then, for all $u_0\in L^2$, the limits \eqref{eq_alt_limit_of_energy_subcritical}, \eqref{eq_alt_limit_of_energy_critical}, and \eqref{eq_alt_limit_of_energy_supercritical} hold true with $E^\text{alt}_{\delta,\chi,\Lambda}(t)$ replaced by $\|\Op{\tilde{a}_{\delta,\chi,\Lambda}(t)}u(t)\|_{L^2}^2$.
\end{corollary}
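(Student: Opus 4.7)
The key observation is that the additional frequency cut-off
$$ \Omega(t,\xi) := (1-\chi_l)\!\left( \frac{\xi}{|t|^{-\epsilon_0}} \right) \chi_h\!\left( \frac{\xi}{|t|^{\epsilon_1}} \right) $$
is a Fourier multiplier in $\xi$, uniformly bounded in $t$, and identically equal to $1$ on any fixed frequency annulus once $|t|$ is large enough. Since $\Omega(t,D_x)$ commutes with $e^{itP(D_x)}$, one may factor
$$ \Op{\tilde{a}(t)}\, u(t) \;=\; \Op{a(t)}\, e^{itP(D_x)} \bigl[\Omega(t,D_x) u_0\bigr]. $$
The plan is to use this identity first to reduce to Proposition \ref{prop_limit_of_energy} on a dense class of initial data, and then to extend to general $u_0 \in L^2$ by density, exploiting the assumed uniform bound $\|\Op{\tilde{a}(t)}\|_{\mathcal{L}(L^2)} \leq C$.

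First I would handle the dense subspace $\mathcal{S}_0 := \{v \in \mathcal{S}(\mathbb{R}^d) : \hat{v} \in C_c^\infty(\mathbb{R}^d \setminus \{0\})\}$. For $u_0 \in \mathcal{S}_0$ with $\Supp \hat{u}_0 \subset \{c_0 < |\xi| < C_0\}$, the positivity of $\epsilon_0,\epsilon_1$ yields a threshold $t_1 = t_1(u_0)$ beyond which $\Omega(t,\xi) \equiv 1$ on $\Supp \hat{u}_0$; hence $\Omega(t,D_x) u_0 = u_0$ and
$$ \|\Op{\tilde{a}(t)} u(t)\|_{L^2}^2 \;=\; \|\Op{a(t)} u(t)\|_{L^2}^2 \;=\; E^{\text{alt}}_{\chi,\delta,\Lambda}(u_0,t), \qquad |t| > t_1. $$
Applying Proposition \ref{prop_limit_of_energy} then provides the desired limits \eqref{eq_alt_limit_of_energy_subcritical}--\eqref{eq_alt_limit_of_energy_supercritical} for such $u_0$.

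To pass to arbitrary $u_0 \in L^2$, I would pick an approximating sequence $(u_0^{(n)}) \subset \mathcal{S}_0$ with $u_0^{(n)} \to u_0$ in $L^2$, and denote by $L(v_0)$ the right-hand side of \eqref{eq_alt_limit_of_energy_subcritical}, \eqref{eq_alt_limit_of_energy_critical} or \eqref{eq_alt_limit_of_energy_supercritical} according to the sign of $\delta + \sigma_1/2$. The estimate
$$ |L(v_0) - L(w_0)| \;\leq\; C\,\|v_0 - w_0\|_{L^2}\bigl(\|v_0\|_{L^2} + \|w_0\|_{L^2}\bigr) $$
is immediate in the sub- and super-critical cases, while in the critical case it follows from the pointwise bound $G^{\text{alt}}_\chi \lesssim \|\hat\chi\|_{L^1}^2$ derived via Plancherel, cf.\ \eqref{eq_remark_limit}. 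Combined with $\|\Op{\tilde{a}(t)}\|_{\mathcal{L}(L^2)}\leq C$ and the $L^2$-isometry of $e^{itP(D_x)}$, a standard $\varepsilon/3$ argument closes the argument.

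The main potential obstacle is that, under the hypotheses of the corollary, the un-truncated symbol $a(t)$ need not yield a uniformly $L^2$-bounded operator — that is precisely the motivation for the extra cut-off in $\tilde a$ — so Proposition \ref{prop_limit_of_energy} is not strictly applicable as stated. I would circumvent this by observing that for $u_0 \in \mathcal{S}_0$ one may freely insert a further cut-off $\varphi(\xi) \in C_c^\infty(\mathbb{R}^d \setminus \{0\})$ with $\varphi \equiv 1$ on $\Supp \hat{u}_0$, and replace $a(t)$ by $a(t,x,\xi)\varphi(\xi)$; this regularised symbol is smooth and compactly supported in $\xi$, hence uniformly $L^2$-bounded by a direct application of Lemma \ref{lem_tech_C-V} after the rescaling of Lemma \ref{lem_tech_change_of_scaling}. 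The uniform-boundedness hypothesis of Proposition \ref{prop_limit_of_energy} is thereby restored, and its stationary-phase limit computation carries over without modification.
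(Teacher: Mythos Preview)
Your overall strategy matches the paper's exactly: use the uniform boundedness of $\Op{\tilde a(t)}$ to reduce to the dense class $\mathcal S_0=\mathcal F^{-1}C_c^\infty(\mathbb R^d\setminus\{0\})$, observe that on this class the extra frequency cut-off $\Omega(t,\xi)$ becomes identically $1$ for $|t|$ large, and then invoke the limit computation underlying Proposition~\ref{prop_limit_of_energy}.

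The only genuine flaw is in your last paragraph. The symbol $a(t,x,\xi)\varphi(\xi)$ is \emph{not} smooth: the sharp cut-off $\mathbbm 1_{|x|>|t||P'(\xi)|}$ is discontinuous in both $x$ and $\xi$, so Lemma~\ref{lem_tech_C-V} (Calder\'on--Vaillancourt) does not apply, with or without the rescaling of Lemma~\ref{lem_tech_change_of_scaling}. Inserting $\varphi$ does nothing to cure this singularity; indeed, establishing uniform $L^2$-boundedness in the presence of that indicator is exactly what Sections~\ref{sect_L^2_bound}--\ref{sect_L^2_bound_alt} are devoted to, and it is not a one-line consequence of smoothness in $\xi$.

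The paper's resolution of the obstacle you identify is simpler and is essentially what your final clause hints at: one does not need $\Op{a(t)}$ bounded at all. In the proof of Proposition~\ref{prop_limit_of_energy} the uniform-boundedness hypothesis is used \emph{only} for the density reduction; the stationary-phase computation in Sections~4.1--4.2 is carried out directly for $\hat u_0\in C_c^\infty(\mathbb R^d\setminus\{0\})$ and never appeals to $\|\Op{a(t)}\|_{\mathcal L(L^2)}$. So the correct argument is: perform the density step with the boundedness of $\Op{\tilde a(t)}$ (which \emph{is} assumed), and then, on $\mathcal S_0$, invoke the computational part of the proof of Proposition~\ref{prop_limit_of_energy} rather than the proposition as a black box. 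Drop the Calder\'on--Vaillancourt sentence and replace it with this observation.
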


In order to complete the proof of Theorem \ref{thm_main} and \ref{thm_alt_main}, we may combine Proposition \ref{prop_boundedness_of_truncated_operator}, \ref{prop_alt_boundedness_of_truncation_operator} with Proposition \ref{prop_limit_of_energy}, where we need to take $\Lambda \equiv 1$ in the proof of Theorem \ref{thm_main}. In the same way, Theorem \ref{thm_mod_main} follows from Proposition \ref{prop_boundedness_of_truncated_operator_with_truncated_frequency} and Corollary \ref{cor_boundedness_of_truncated_operator_with_mu} with $\Lambda \equiv 1$.

Before calculating the limit of truncated energy, we remark that, in the hypothesis of Proposition \ref{prop_limit_of_energy}, $\Op{a(t)}$ (or $\Op{\tilde{a}(t)}$) is assumed to be bounded uniformly in $|t|>t_0 \gg 1$, which allows us to replace general $u_0\in L^2$ by those belonging to some dense subset of $L^2$. In what follows, we may assume that $\hat{u}_0\in C_c^\infty(\mathbb{R}\backslash\{0\})$. As a consequence, by taking $t_0 \gg 1$, 
$$ (1-\chi_l)\left( \frac{\xi}{|t|^{-\epsilon_0}} \right) \chi_h\left( \frac{\xi}{|t|^{\epsilon_1}} \right) \hat{u}_0(\xi) = \hat{u}_0(\xi), $$
which proves Corollary \ref{cor_boundedness_of_truncated_operator_with_mu} from Proposition \ref{prop_limit_of_energy}.

\subsection{Super-critical case \texorpdfstring{$0<\delta+\frac{\sigma_1}{2}<\frac{1}{2}$}{0<delta+sigma/2<1/2}}

In this part, we will study the case $\delta+\frac{\sigma_1}{2}\in]0,\frac{1}{2}[$ (associated to limit \eqref{eq_limit_of_energy_supercritical}, \eqref{eq_mod_limit_of_energy_supercritical}, or \eqref{eq_alt_limit_of_energy_supercritical}) by following the same method introduced in \cite{delort2022microlocal}.

By definition, the truncated energy introduced in \eqref{eq_truncated_energy} is
\begin{equation*}
\begin{aligned}
E_{\chi,\delta,\Lambda}(\epsilon t) =& \|\Op{a_{\chi,\delta,\Lambda}(\epsilon t)}u(\epsilon t)\|_{L^2}^2 \\
=& \frac{1}{(2\pi)^{2d}} \int e^{ix\cdot(\xi-\xi')} e^{i\epsilon t (P(\xi)-P(\xi'))} \chi\left(\frac{x+\epsilon tP'(\xi)}{t^{\frac{1}{2}+\delta}\Lambda(t^\frac{1}{2}\xi)}\right) \\
&\hspace{4em}\times\chi\left(\frac{x+\epsilon tP'(\xi')}{t^{\frac{1}{2}+\delta}\Lambda(t^\frac{1}{2}\xi)}\right)\mathbbm{1}_{\frac{|x|}{t}>|P'(\xi)|,|P'(\xi')|} \hat{u}_0(\xi) \overline{\hat{u}_0(\xi')} dx d\xi d\xi',
\end{aligned}
\end{equation*}
where $t \gg 1$ and $\epsilon=\pm$. In polar system $x=r\omega$, $\xi=\rho\theta$, $\xi'=\rho'\theta'$, the integral above can be written as
\begin{equation*}
\begin{aligned}
& \frac{1}{(2\pi)^{2d}} \int e^{i(r\rho\omega\cdot\theta-r\rho'\omega\cdot\theta')} e^{i\epsilon t (P(\rho)-P(\rho'))} \chi\left(\frac{r\omega+\epsilon tP'(\rho)\theta}{t^{\frac{1}{2}+\delta}\Lambda(t^\frac{1}{2}\rho)}\right) \chi\left(\frac{r\omega+\epsilon tP'(\rho')\theta'}{t^{\frac{1}{2}+\delta}\Lambda(t^\frac{1}{2}\rho')}\right) \\
&\hspace{4em}\times\mathbbm{1}_{\frac{r}{t}>P'(\rho),P'(\rho')} \hat{u}_0(\rho\theta) \overline{\hat{u}_0(\rho'\theta')} (r\rho\rho')^{d-1} d\theta d\theta' d\omega dr d\rho d\rho'.
\end{aligned}
\end{equation*}
We firstly focus on the integral in $\theta$, with integral in $\theta'$ treated in exactly the same way,
\begin{equation}\label{eq_lim_crit_int_in_theta}
\int e^{ir\rho\omega\cdot\theta} \chi\left(\frac{r\omega+\epsilon tP'(\rho)\theta}{t^{\frac{1}{2}+\delta}\Lambda(t^\frac{1}{2}\rho)}\right) \hat{u}_0(\rho\theta) d\theta.
\end{equation}

In this part, we always set
$$ \mu=t^{\delta+\frac{\sigma_1}{2}-\frac{1}{2}} \in ]0,1[, $$
which is strictly positive and small, since we may choose $t>t_0 \gg 1$. Due to Lemma \ref{lem_stat_int_on_angular_limit_part}, \eqref{eq_lim_crit_int_in_theta} can be written as the sum of 
$$ (2\pi)^\frac{d-1}{2} e^{i\epsilon\frac{\pi}{4}(d-1)} e^{-i\epsilon r\rho} (r\rho)^{-\frac{d-1}{2}}\chi\left(\frac{r - t P'(\rho)}{t^{\frac{1}{2}+\delta}\Lambda(t^\frac{1}{2}\rho)}\right) \hat{u}_0(-\epsilon\rho\omega) \kappa\left(\frac{r}{t}\right) $$
and a remainder
$$ e^{-i\epsilon r\rho} \mu^{d-1} S_{-\frac{d+1}{2}}(\omega,\mu,\rho,\frac{r}{t}-P'(\rho),t; r\rho\mu^2) \kappa\left(\frac{r}{t}\right), $$
where $\kappa \in C_c^\infty(]0,\infty[)$ equals to $1$ in a neighborhood of $1$, and $S_{m}(\omega,\mu,\rho,r',t; \zeta)$ is supported for $\zeta>c>0$, $\rho\sim 1$ and $|r'| \lesssim \mu$ and satisfies for all $\alpha\in\mathbb{N}^{d-1}$, $j,k,l,\gamma\in\mathbb{N}$,
$$ | \partial_{\omega}^{\alpha} \partial_{\mu}^j \partial_{\rho}^{k} \partial_{r'}^{l} \partial_{\zeta}^{\gamma} S_{m} |
\leqslant C \mu^{-(|\alpha|+j+l)} \langle\zeta\rangle^{m-\gamma}. $$
Remark that it is harmless to add an extra factor $\kappa$, since the integrand of \eqref{eq_lim_crit_int_in_theta} is supported for $r\sim t$, which is a consequence of the cut-off $\chi$ together with $t \gg 1$, $\delta+\frac{\sigma_1}{2}<\frac{1}{2}$, and $\rho\sim 1$. We may repeat this argument for the integral in $\theta'$ and the truncated energy $E_{\chi,\delta,\Lambda}(\epsilon t)$ can be decomposed into a principal part
\begin{equation}\label{eq_lim_sup_crit_energy_principal}
\begin{aligned}
& \frac{1}{(2\pi)^{d+1}} \int e^{-i\epsilon r(\rho-\rho')} e^{i\epsilon t (P(\rho)-P(\rho'))} \chi\left(\frac{r- tP'(\rho)}{t^{\frac{1}{2}+\delta}\Lambda(t^\frac{1}{2}\rho)}\right) \chi\left(\frac{r- tP'(\rho')}{t^{\frac{1}{2}+\delta}\Lambda(t^\frac{1}{2}\rho')}\right) \\
&\hspace{4em}\times\mathbbm{1}_{\frac{r}{t}>P'(\rho),P'(\rho')} \hat{u}_0(-\epsilon\rho\omega) \overline{\hat{u}_0(-\epsilon\rho'\omega)} (\rho\rho')^{\frac{d-1}{2}} \kappa^2\left(\frac{r}{t}\right) d\omega dr d\rho d\rho',
\end{aligned}
\end{equation}
and remainders
\begin{equation*}
\begin{aligned}
& \frac{1}{(2\pi)^{d+1}} \int e^{-i\epsilon r(\rho-\rho')} e^{i\epsilon t (P(\rho)-P(\rho'))} \mu^{2(d-1)} \kappa^2\left(\frac{r}{t}\right) S_{m}(\omega,\mu,\rho,\frac{r}{t}-P'(\rho),t; r\rho\mu^2)  \\
&\hspace{4em}\times S_{m'}(\omega,\mu,\rho',\frac{r}{t}-P'(\rho'),t; r\rho'\mu^2) \mathbbm{1}_{\frac{r}{t}>P'(\rho),P'(\rho')} (r\rho\rho')^{d-1} d\omega dr d\rho d\rho',
\end{aligned}
\end{equation*}
where $(m,m')$ takes values among $(-\frac{d-1}{2},-\frac{d+1}{2}),(-\frac{d+1}{2},-\frac{d-1}{2})$, $(-\frac{d+1}{2},-\frac{d+1}{2})$. Note that due to the condition $\delta+\frac{\sigma_1}{2}>\frac{1}{2}$, we have
$$ r\rho\mu^2 = r\rho t^{2(\delta+\frac{\sigma_1}{2})-1} \sim t^{2(\delta+\frac{\sigma_1}{2})} >c>0. $$
The sum of these remainders can be simplified as
\begin{equation}\label{eq_lim_sup_crit_energy_remainder}
\begin{aligned}
& \int e^{-i\epsilon r(\rho-\rho')} e^{i\epsilon t (P(\rho)-P(\rho'))} \frac{1}{\mu^2 r} \Sigma(\omega,\mu,\rho,\rho',r,t;\frac{r}{t}-P'(\rho),\frac{r}{t}-P'(\rho'))  \\
&\hspace{20em}\mathbbm{1}_{\frac{r}{t}>P'(\rho),P'(\rho')} d\omega dr d\rho d\rho',
\end{aligned}
\end{equation}
where $\Sigma(\omega,\mu,\rho,\rho',r,t; s,s')$ is supported for $r \sim t$, $\rho,\rho'\sim 1$ and $|s|,|s'| \lesssim \mu$ and satisfies for all $\alpha\in\mathbb{N}^{d-1}$, $j,k,k',l,\gamma,\gamma'\in\mathbb{N}$,
$$ | \partial_{\omega}^{\alpha} \partial_{\mu}^j \partial_{\rho}^{k} \partial_{\rho'}^{k'} \partial_{r}^{l} \partial_{s}^{\gamma} \partial_{s'}^{\gamma'} \Sigma |
\lesssim \mu^{-(|\alpha|+j+\gamma+\gamma')} t^{-l}. $$

Before stepping further, we introduce the following integral:
\begin{equation}\label{eq_lim_def_of_standard_integral}
\begin{aligned}
I(t,\epsilon_1,\epsilon_1',\epsilon_2,\epsilon_2';F) :=& \int e^{i[r(\epsilon_1\rho+\epsilon_1'\rho') - t(\epsilon_2 P(\rho)+ \epsilon_2' P(\rho'))]} \mathbbm{1}_{\frac{r}{t}>P'(\rho),P'(\rho')}\\
&\hspace{1em}\times F(\rho,\rho',r,t;r-\epsilon_1\epsilon_2 tP'(\rho), r-\epsilon_1'\epsilon_2' tP'(\rho'))  dr d\rho d\rho'.
\end{aligned}
\end{equation}
The limit of such integral has been studied in \cite{delort2022microlocal} for strictly convex $P$, while the concave case can be studied with almost the same argument. To be precise,

\begin{proposition}\label{prop_lim_limit_of_integral}
Let $F(\rho,\rho',r,t;\zeta,\zeta')$ be a smooth function on $\mathbb{R}_+^{4}\times\mathbb{R}^2$ and $\delta'\in ]\frac{1}{2},1[$. Assume that $F$ is supported for
$$ \rho,\rho'\sim 1,\ r \sim t,\ |\zeta|,|\zeta'|\lesssim t^{\delta'}, $$
and for all $j,j',k,\gamma,\gamma'\in\mathbb{N}$,
$$ | \partial_{\rho}^{j} \partial_{\rho'}^{j'} \partial_{r}^{k} \partial_{\zeta}^{\gamma} \partial_{\zeta'}^{\gamma'} F(\rho,\rho',r,t;\zeta,\zeta') |
\lesssim t^{-\delta'(k+\gamma+\gamma')}. $$
We assume further that the following point-wise limit exists
$$ \lim_{t\rightarrow+\infty} F(\rho,\rho',r\sqrt{t}+tP'(\rho'),t;\zeta\sqrt{t},\zeta'\sqrt{t}) = F_0(\rho,\rho'). $$
Under all the assumptions above, we have
$$ \lim_{t\rightarrow+\infty} I(t,-\epsilon,\epsilon,-\epsilon,\epsilon;F) = \frac{\pi}{2} \int_0^\infty F_0(\rho,\rho) d\rho, $$
for all $\epsilon=\pm$ and $P \in C^\infty$ with $P''>0$ or $P''<0$.
\end{proposition}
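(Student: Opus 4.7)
The plan is to rescale variables so as to isolate the critical point of the phase, apply dominated convergence on a non-compact domain via integration by parts, and conclude by evaluating the resulting Fresnel-type integral explicitly.

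First, fixing $\rho'$, I would perform the change of variables $\rho = \rho' + \sigma/\sqrt{t}$ and $r = r'\sqrt{t} + tP'(\rho')$, whose Jacobian equals $1$. Taylor-expanding $P$ at $\rho'$, the phase $\epsilon[r(\rho'-\rho)+t(P(\rho)-P(\rho'))]$ of the integrand of $I(t,-\epsilon,\epsilon,-\epsilon,\epsilon;F)$ becomes
\begin{equation*}
\Phi_{t}(r',\sigma;\rho') = \epsilon\Bigl[-r'\sigma+\tfrac{1}{2}\sigma^{2}P''(\rho')\Bigr] + \epsilon\,O\bigl(|\sigma|^{3}t^{-1/2}\bigr).
\end{equation*}
The two $\zeta$-arguments of $F$ rescale as $\sqrt{t}\bigl(r'-\sigma P''(\rho')+O(\sigma^{2}t^{-1/2})\bigr)$ and $\sqrt{t}r'$, so by the pointwise-limit hypothesis the amplitude converges, on each compact set of $(\rho',r',\sigma)$, to $F_{0}(\rho',\rho')$, while the indicator $\mathbbm{1}_{r/t>P'(\rho),P'(\rho')}$ converges to $\mathbbm{1}_{\{r'>0,\,r'>\sigma P''(\rho')\}}$.

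The main obstacle is that the new integration region in $(r',\sigma)$ has size of order $t^{\delta'-1/2}\to+\infty$, so pointwise convergence alone does not suffice. To handle this, I would use the assumed derivative bounds on $F$ to check that, since each $\partial_{r'}$ and $\partial_{\sigma}$ in the new variables costs only a factor bounded by $t^{1/2-\delta'}\leqslant 1$ in view of $\delta'>\tfrac{1}{2}$, the rescaled amplitude has $(r',\sigma)$-derivatives bounded uniformly in $t$ on its support. Since $P''(\rho')\neq 0$, one has $|\nabla_{(r',\sigma)}\Phi_{t}|\gtrsim |r'|+|\sigma|$ once $(r',\sigma)$ leaves a fixed neighborhood of the critical point $(0,0)$, and repeated integration by parts with the operator $|\nabla\Phi_t|^{-2}\nabla\Phi_t\cdot\nabla$ yields a $t$-uniform integrable majorant. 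Dominated convergence then gives
\begin{equation*}
\lim_{t\to+\infty}I(t,-\epsilon,\epsilon,-\epsilon,\epsilon;F) = \int_{0}^{\infty}F_{0}(\rho,\rho)\left[\iint_{\{r'>0,\,r'>\sigma P''(\rho)\}} e^{i\epsilon(-r'\sigma+\sigma^{2}P''(\rho)/2)}\,dr'\,d\sigma\right]d\rho.
\end{equation*}

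To finish, I would evaluate the inner Fresnel integral via the symmetric substitution $v=r'-\tfrac{1}{2}\sigma P''(\rho)$, which kills the quadratic-in-$\sigma$ term: the phase becomes $-\epsilon v\sigma$ and the domain becomes $\{v>\tfrac{1}{2}|P''(\rho)|\,|\sigma|\}$, independently of the sign of $P''(\rho)$. Integrating in $\sigma$ first over $[-2v/|P''(\rho)|,\,2v/|P''(\rho)|]$ yields $2\sin(2v^{2}/|P''(\rho)|)/v$, and the substitution $u=2v^{2}/|P''(\rho)|$ reduces the remaining $v$-integral to the Dirichlet integral $\int_{0}^{\infty}(\sin u)/u\,du=\pi/2$. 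Since the value $\pi/2$ is independent of both the sign of $P''(\rho)$ and of $\epsilon=\pm$, one recovers the claimed $\frac{\pi}{2}\int_{0}^{\infty}F_{0}(\rho,\rho)\,d\rho$.
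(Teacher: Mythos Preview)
Your overall strategy (rescale to isolate the stationary point, pass to the limit, evaluate a Fresnel-type integral) matches the paper's, and your final evaluation is actually cleaner than the paper's. The paper splits into $I_\pm$ according to the sign of $\rho-\rho'$, passes to the limit using Lemma~3.1.4 and Proposition~3.1.3 of \cite{delort2022microlocal}, and then evaluates the limiting integral by computing $\int_0^\infty e^{-i\epsilon rw}\,dr=-i\epsilon(w-i\epsilon 0)^{-1}$ via Sokhotski--Plemelj and combining the resulting $\pi\delta_0$ term with a Dirichlet integral. Your substitution $v=r'-\tfrac12\sigma P''(\rho)$ bypasses all of this: it simultaneously kills the quadratic phase and symmetrizes the domain to $\{v>\tfrac12|P''(\rho)|\,|\sigma|\}$, so the independence on both $\epsilon$ and the sign of $P''$ is manifest, and a single Dirichlet integral finishes the job. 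This is a genuine simplification worth keeping.

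There is, however, a real gap in your passage to the limit. You claim that repeated integration by parts with $|\nabla\Phi_t|^{-2}\nabla\Phi_t\cdot\nabla_{(r',\sigma)}$ produces a $t$-uniform integrable majorant, but the integrand carries the sharp cutoff $\mathbbm{1}_{r'>0}\,\mathbbm{1}_{r'>\sigma P''(\rho')+O(\sigma^2 t^{-1/2})}$, and integrating by parts against it throws out boundary terms on both edges. These boundary contributions lie exactly where the phase is most degenerate (the edge $r'=0$ meets the stationary point, and on the other edge the $r'$-gradient of $\Phi_t$ vanishes to leading order), so they cannot simply be absorbed into the majorant; they must be estimated separately, and that estimate is the heart of the matter. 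The paper does not spell this out either---it defers to Lemma~3.1.4 and the proof of Proposition~3.1.3 in \cite{delort2022microlocal}---but those results proceed by first splitting on the sign of $\rho-\rho'$ (so that only one indicator, $\mathbbm{1}_{r'>0}$, survives), then doing one-dimensional integration by parts in $r'$ and in $w$ separately with explicit control of the boundary terms at $r'=0$ and $w=0$. Your two-variable IBP sketch would need an analogous treatment of the corners and edges of the domain to be made rigorous.
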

The proof for strictly convex $P$ follows from that of Proposition 3.1.3 in \cite{delort2022microlocal}. As for concave case, we will give a brief proof in Appendix \ref{sect_proof_of_limit_of_integral}. Remark that we compute the limit of $I(t,-\epsilon,\epsilon,-\epsilon,\epsilon;F)$ for both signs $\epsilon=\pm 1$, while in \cite{delort2022microlocal} only the limit of the sum of these two terms was determined. The proof of our stronger result is not essentially different from the one in \cite{delort2022microlocal} and we shall explain the modification one has to make to the argument in Appendix \ref{sect_proof_of_limit_of_integral}. 

With the notations above, the truncated energy $E_{\chi,\delta,\Lambda}(\epsilon t)$ given by the sum of \eqref{eq_lim_sup_crit_energy_principal} and \eqref{eq_lim_sup_crit_energy_remainder} equals to
$$ E_{\chi,\delta,\Lambda}(\epsilon t) = I(t,-\epsilon,\epsilon,-\epsilon,\epsilon;F) + I(t,-\epsilon,\epsilon,-\epsilon,\epsilon;F_R),$$
where
\begin{align*}
F(\rho,\rho',r,t;\zeta,\zeta') =& \frac{1}{(2\pi)^{d+1}} \kappa^2\left(\frac{r}{t}\right) \int \chi\left(\frac{\zeta}{t^{\frac{1}{2}+\delta}\Lambda(t^\frac{1}{2}\rho)}\right) \chi\left(\frac{\zeta'}{t^{\frac{1}{2}+\delta}\Lambda(t^\frac{1}{2}\rho')}\right) \\
&\hspace{8em}\times\hat{u}_0(-\epsilon\rho\omega) \overline{\hat{u}_0(-\epsilon\rho'\omega)} (\rho\rho')^{\frac{d-1}{2}} d\omega, \\
F_R(\rho,\rho',r,t;\zeta,\zeta') =& t^{-2(\delta+\frac{\sigma_1}{2})} \int \frac{t}{r} \Sigma(\omega,t^{\delta+\frac{\sigma_1}{2}-\frac{1}{2}},\rho,\rho',r,t;\frac{\zeta}{t},\frac{\zeta'}{t}) d\omega.
\end{align*}
It is easy to check that $F,F_R$ satisfy the conditions of Proposition \ref{prop_lim_limit_of_integral} with 
$$\delta'=\delta+\frac{\sigma_1}{2}+\frac{1}{2}.$$ 
Note that due to condition $\delta+\frac{\sigma_1}{2} \in ]0,\frac{1}{2}[$, we have $\delta'\in ]\frac{1}{2},1[$, which is required by Proposition \ref{prop_lim_limit_of_integral}. The corresponding limit is
$$F_0(\rho,\rho')=\frac{1}{(2\pi)^{d+1}} \int \hat{u}_0(-\epsilon\rho\omega) \overline{\hat{u}_0(-\epsilon\rho'\omega)} (\rho\rho')^{\frac{d-1}{2}} d\omega $$
and $0$, respectively. Therefore, we may conclude \eqref{eq_limit_of_energy_supercritical}, \eqref{eq_mod_limit_of_energy_supercritical}, and \eqref{eq_alt_limit_of_energy_supercritical} by Proposition \ref{prop_lim_limit_of_integral}. In fact, with $\epsilon=\pm$, one has
\begin{align*}
\lim_{t\rightarrow\pm\infty} E_{\chi,\delta}(t) = \lim_{t\rightarrow+\infty} E_{\chi,\delta}(\epsilon t) =& \lim_{t\rightarrow+\infty} I(t,-\epsilon,\epsilon,-\epsilon,\epsilon;F) + \lim_{t\rightarrow+\infty} I(t,-\epsilon,\epsilon,-\epsilon,\epsilon;F_R) \\
=& \frac{\pi}{2} \int_0^\infty F_0(\rho,\rho) d\rho + 0 \\
=& \frac{1}{4} \frac{1}{(2\pi)^d} \int \hat{u}_0(-\epsilon\rho\omega) \overline{\hat{u}_0(-\epsilon\rho\omega)} \rho^{d-1} d\omega d\rho = \frac{1}{4} \|u_0\|_{L^2}^2.
\end{align*}

\subsection{Sub-critical and critical case \texorpdfstring{$\delta+\frac{\sigma_1}{2} \leqslant 0$}{delta+sigma/2<=0}}

In the rest of this section, we will study, under the condition $\delta+\frac{\sigma_1}{2}\leqslant 0$, the truncated energy $E_{\chi,\delta,\Lambda}(u_0,\epsilon t)$, with $\epsilon = \pm$, $t>t_0 \gg 1$. Here, we only write the proof of the case $P''>0$, while the case $P''<0$ can be calculated in exactly the same way.

By definition \eqref{eq_truncated_energy}, the truncated energy $E_{\chi,\delta,\Lambda}(u_0,\epsilon t)$ equals to
\begin{equation*}
\begin{aligned}
\frac{1}{(2\pi)^{2d}} \int e^{ix\cdot(\xi-\xi')} e^{i\epsilon t \left( P(\xi) - P(\xi') \right)} &\chi\left( \frac{x+\epsilon tP'(\xi)}{t^{\frac{1}{2}+\delta}\Lambda(t^\frac{1}{2}\xi)} \right) \chi\left( \frac{x+\epsilon tP'(\xi')}{t^{\frac{1}{2}+\delta}\Lambda(t^\frac{1}{2}\xi')} \right) \\
&\times\mathbbm{1}_{|\frac{x}{t}|>|P'(\xi)|,|P'(\xi')|} \hat{u}_0(\xi) \overline{\hat{u}_0(\xi')} d\xi d\xi' dx,
\end{aligned}
\end{equation*}
which can be rewritten in polar system $x=r\omega,\ \xi=\rho\theta,\ \xi'=\rho'\theta'$ as
\begin{equation*}
\begin{aligned}
\frac{1}{(2\pi)^{2d}} \int e^{ir\omega\cdot(\rho\theta-\rho'\theta')} &e^{i\epsilon t \left( P(\rho)-P(\rho') \right)} \chi\left( \frac{r\omega+\epsilon tP'(\rho)\theta}{t^{\frac{1}{2}+\delta}\Lambda(t^\frac{1}{2}\rho)} \right) \chi\left( \frac{r\omega+\epsilon tP'(\rho')\theta'}{t^{\frac{1}{2}+\delta}\Lambda(t^\frac{1}{2}\rho')} \right) \\
&\times\mathbbm{1}_{\frac{r}{t}>P'(\rho),P'(\rho')} \hat{u}_0(\rho\theta) \overline{\hat{u}_0(\rho'\theta')} (r\rho\rho')^{d-1} dr d\theta d\theta' d\omega d\rho d\rho'.
\end{aligned}
\end{equation*}
We decompose $E_{\chi,\delta,\Lambda}(u_0,\epsilon t)$ as the sum of $E_\pm(\epsilon t)$, where $E_+$, $E_-$ are defined as integral over $\rho>\rho'$, $\rho'>\rho$, respectively, and the dependence on $\chi,\delta,\Lambda,u_0$ is omitted for the simplicity of notation. Since $E_- = \overline{E_+}$, it is enough to focus on the study of $E_+(\epsilon t)$. We first check that

\begin{lemma}
The integral
\begin{equation*}
\begin{aligned}
E_+(\epsilon t) = \frac{1}{(2\pi)^{2d}} \int e^{ir\omega\cdot(\rho\theta-\rho'\theta')} &e^{i\epsilon t \left( P(\rho)-P(\rho') \right)} \chi\left( \frac{r\omega+\epsilon tP'(\rho)\theta}{t^{\frac{1}{2}+\delta}\Lambda(t^\frac{1}{2}\rho)} \right) \chi\left( \frac{r\omega+\epsilon tP'(\rho')\theta'}{t^{\frac{1}{2}+\delta}\Lambda(t^\frac{1}{2}\rho')} \right) \\
&\times\mathbbm{1}_{\frac{r}{t}>P'(\rho)} \mathbbm{1}_{\rho>\rho'} \hat{u}_0(\rho\theta) \overline{\hat{u}_0(\rho'\theta')} (r\rho\rho')^{d-1} dr d\theta d\theta' d\omega d\rho d\rho'
\end{aligned}
\end{equation*}
equals, up to some $O(t^{-\frac{1}{2}})$ terms, to
\begin{equation}\label{eq_lim_sub&crit_simplified_energy}
\begin{aligned}
\frac{1}{(2\pi)^{2d}} \int_{\theta,\theta'\in\sqrt{t}(\epsilon\omega+\mathbb{S}^{d-1})} &e^{i \left[ \sqrt{t}P'(\rho)\rho\omega\cdot(\theta-\theta') + r\rho\omega\cdot(\theta-\theta') + wP'(\rho)\theta'\cdot\omega \right]} e^{-\epsilon i[rw + \frac{1}{2}P''(\rho)w^2]}  \\
&\times \chi\left( \frac{r\omega+\epsilon P'(\rho)\theta}{t^\delta\Lambda(t^\frac{1}{2}\rho)} \right) \chi\left( \frac{(r+P''(\rho)w)\omega+\epsilon P'(\rho)\theta'}{t^\delta\Lambda(t^\frac{1}{2}\rho)} \right) \\
&\times  \mathbbm{1}_{r>0}\mathbbm{1}_{\rho>\frac{w}{\sqrt{t}}>0} \left|\hat{u}_0(-\epsilon\rho\omega)\right|^2 \left(P'(\rho)\right)^{d-1}\rho^{2(d-1)} d\theta d\theta' dr dw d\omega d\rho.
\end{aligned}
\end{equation}
\end{lemma}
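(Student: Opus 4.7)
The plan is to perform an explicit change of variables that zooms in on the stationary/singular points of the original integrand, namely the cone $r=tP'(\rho)$, the diagonal $\rho'=\rho$, and the poles $\theta,\theta'=-\epsilon\omega$ (which are precisely where the cutoff $\chi$ localizes after the stationary-phase heuristic). Concretely I would set
\begin{equation*}
r = tP'(\rho) + \sqrt{t}\,\tilde r,\quad \rho'=\rho-\frac{w}{\sqrt t},\quad \theta = -\epsilon\omega+\frac{\tilde\theta}{\sqrt t},\quad \theta' = -\epsilon\omega+\frac{\tilde\theta'}{\sqrt t},
\end{equation*}
so that $\tilde\theta,\tilde\theta'$ range over the sphere $\sqrt t(\epsilon\omega+\mathbb{S}^{d-1})$. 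In these coordinates the surface measure $d\theta\,d\theta'$ picks up a factor $t^{-(d-1)}$, while $dr\,d\rho' = d\tilde r\,dw$, so the overall Jacobian is $t^{-(d-1)}$.

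The second step is to Taylor-expand the phase
$\Phi = r\,\omega\cdot(\rho\theta-\rho'\theta') + \epsilon t\bigl(P(\rho)-P(\rho')\bigr)$
in the new variables. A direct computation, using $P(\rho-w/\sqrt t)=P(\rho)-P'(\rho)w/\sqrt t+\tfrac12 P''(\rho)w^2/t+O(t^{-3/2})$, produces the cross term $\sqrt t\,P'(\rho)\rho\,\omega\cdot(\tilde\theta-\tilde\theta')$, the lower-order terms $\tilde r\rho\,\omega\cdot(\tilde\theta-\tilde\theta')+P'(\rho)w\,\omega\cdot\tilde\theta'-\epsilon[\tilde r w+\tfrac12 P''(\rho)w^2]$, and a cancellation $\pm\epsilon\sqrt t P'(\rho)w$ between the two pieces of $\Phi$; the residual terms are $O(t^{-1/2})$. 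The cutoffs $\chi$ transform directly, the only nontrivial error being a term of size $|P''(\rho)w\tilde\theta'|/\sqrt t$ inside the argument of the second $\chi$, which, by smoothness of $\chi$, contributes an $O(t^{-1/2})$ error. Finally, $\hat u_0(\rho\theta)\overline{\hat u_0(\rho'\theta')}$ is Taylor-expanded around $-\epsilon\rho\omega$ to obtain $|\hat u_0(-\epsilon\rho\omega)|^2 + O(t^{-1/2})$, while $(r\rho\rho')^{d-1}=t^{d-1}(P'(\rho))^{d-1}\rho^{2(d-1)}+O(t^{d-3/2})$. Combining with the Jacobian cancels the $t^{d-1}$ and yields exactly the amplitude of \eqref{eq_lim_sub&crit_simplified_energy}. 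The indicator functions convert as $\mathbbm{1}_{r/t>P'(\rho)}\to\mathbbm{1}_{\tilde r>0}$ and $\mathbbm{1}_{\rho>\rho'>0}\to\mathbbm{1}_{\rho>w/\sqrt t>0}$.

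The delicate point, and the step I expect to require the most care, is verifying that the various $O(t^{-1/2})$ remainders integrate to $O(t^{-1/2})$ rather than blowing up on the (a priori unbounded) new domain. The integrability issue is controlled as follows: $\hat u_0\in C^\infty_c(\mathbb{R}^d\setminus\{0\})$ confines $\rho$ to a compact subset of $(0,\infty)$; the first cutoff forces $|\tilde r\omega+\epsilon P'(\rho)\tilde\theta|\lesssim t^\delta\Lambda(\sqrt t\rho)\lesssim 1$ in the regime $\delta+\sigma_1/2\le 0$; combined with the geometric identity $\tilde\theta\cdot\omega=\epsilon|\tilde\theta|^2/(2\sqrt t)$ valid on the sphere $\sqrt t(\epsilon\omega+\mathbb{S}^{d-1})$, this bounds both the transverse component $\tilde\theta_\perp$ and, in turn, $\tilde r$; the analogous bound from the second cutoff then pins down $w$ (through $|\tilde r+P''(\rho)w|$) and $\tilde\theta'$. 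Hence, on the effective support of the integrand all new variables are confined to a bounded set uniformly in $t$, and a single application of dominated convergence together with the Taylor estimates yields the claimed $O(t^{-1/2})$ discrepancy between $E_+(\epsilon t)$ and \eqref{eq_lim_sub&crit_simplified_energy}.
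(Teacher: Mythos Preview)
Your proposal is correct and follows essentially the same approach as the paper: the same change of variables $r\to tP'(\rho)+\sqrt t\,\tilde r$, $\rho'\to\rho-w/\sqrt t$, $\theta\to-\epsilon\omega+\tilde\theta/\sqrt t$, the same Taylor expansions of $P$ and $P'$, and the same use of the cutoffs $\chi$ (together with the compact support of $\hat u_0$) to confine all new variables to a bounded region so that the $O(t^{-1/2})$ remainders integrate harmlessly. The only cosmetic difference is that the paper performs the change of variables in two stages (first $r,\rho'$, then $\theta,\theta'$) and derives the support bounds \eqref{eq_lim_sub&crit_support_of_energy} before the second step, whereas you do everything at once; the substance is identical.
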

\begin{proof}
To begin with, via change of variable,
$$ r \rightarrow rt^{\frac{1}{2}} + tP'(\rho),\ \ \rho' \rightarrow \rho-\frac{w}{t^\frac{1}{2}}, $$
the integral $E_+(\epsilon t)$ can be rewritten as
\begin{equation*}
\begin{aligned}
\frac{t^{d-1}}{(2\pi)^{2d}} \int &e^{i \left[ tP'(\rho)\rho\omega\cdot(\theta-\theta') + \sqrt{t}r\rho\omega\cdot(\theta-\theta') + \sqrt{t}wP'(\rho)\theta'\cdot\omega + rw\theta'\cdot\omega \right]}   \\
&\times e^{\epsilon it (P(\rho)-P(\rho-\frac{w}{\sqrt{t}}))}\chi\left( \frac{r\omega+\sqrt{t}P'(\rho)(\omega+\epsilon\theta)}{t^{\delta}\Lambda(t^\frac{1}{2}\rho)} \right) \\
&\times\chi\left( \frac{r\omega+\sqrt{t}P'(\rho)(\omega+\epsilon\theta')-\epsilon\sqrt{t}(P'(\rho)-P'(\rho-t^{-\frac{1}{2}}w))\theta'}{t^{\delta}\Lambda(t^\frac{1}{2}\rho-w)} \right) \\
&\times  \mathbbm{1}_{r>0}\mathbbm{1}_{\rho>\frac{w}{\sqrt{t}}>0} \hat{u}_0(\rho\theta) \overline{\hat{u}_0((\rho-t^{-\frac{1}{2}}w)\theta')} \\
&\times\left(\frac{r}{\sqrt{t}}+P'(\rho)\right)^{d-1}\rho^{d-1}(\rho-t^{-\frac{1}{2}}w)^{d-1}  d\theta d\theta' dr dw d\omega d\rho.
\end{aligned}
\end{equation*}
Due to $\chi$, $\hat{u}_0$ factors, the integrand is supported for
\begin{equation}\label{eq_lim_sub&crit_support_of_energy}
\begin{aligned}
&0<r,w\lesssim t^{\delta+\frac{\sigma_1}{2}},\ \rho\sim 1, \\
&|\omega + \epsilon\theta|,|\omega + \epsilon\theta'| \lesssim t^{\delta+\frac{\sigma_1}{2}-\frac{1}{2}}.
\end{aligned}
\end{equation}
In fact, $\rho\sim 1$ follows directly from the fact that $\hat{u}_0$ is compactly supported away from zero. Since $\chi$ is compactly supported, the first $\chi$ factor implies that
\begin{align*}
r =& \left| |(r+\sqrt{t}P'(\rho))\omega| - |\sqrt{t}P'(\rho)\theta| \right| \\
\leqslant& \left| r\omega+\sqrt{t}P'(\rho)(\omega+\epsilon\theta) \right| \lesssim t^\delta \Lambda(t^{\frac{1}{2}}\rho) \sim t^{\delta+\frac{\sigma_1}{2}},
\end{align*}
which further implies that
\begin{align*}
\sqrt{t}|\omega + \epsilon\theta| \lesssim& |\sqrt{t}P'(\rho)(\omega + \epsilon\theta)| \\
\leqslant& \left| r\omega+\sqrt{t}P'(\rho)(\omega+\epsilon\theta) \right| + |r\omega| \lesssim t^\delta \Lambda(t^{\frac{1}{2}}\rho) \sim t^{\delta+\frac{\sigma_1}{2}}.
\end{align*}
By applying a similar argument to the second $\chi$ factor, we obtain
\begin{equation*}
\begin{aligned}
w &\sim \left|\sqrt{t}P'(\rho) - \sqrt{t}P'(\rho-t^{-\frac{1}{2}}w) \right|  \\
&\leqslant \left| (r+\sqrt{t}P'(\rho)) - \sqrt{t}P'(\rho-t^{-\frac{1}{2}}w)\right| +r\\
&\leqslant \left| (r+\sqrt{t}P'(\rho))\omega + \epsilon\sqrt{t}P'(\rho-t^{-\frac{1}{2}}w)\theta'\right| +r \lesssim t^{\delta+\frac{\sigma_1}{2}},
\end{aligned}
\end{equation*}
and that
\begin{equation*}
\begin{aligned}
\sqrt{t}|\omega+\epsilon\theta'| \sim& \left| \sqrt{t}P'(\rho)(\omega+\epsilon\theta') \right| \\
\leqslant& \left| r\omega+\sqrt{t}P'(\rho)(\omega+\epsilon\theta')-\epsilon\sqrt{t}(P'(\rho)-P'(\rho-t^{-\frac{1}{2}}w))\theta' \right| \\
&+ r + \left| \sqrt{t}(P'(\rho)-P'(\rho-t^{-\frac{1}{2}}w)) \right| \\
&\lesssim t^{\delta+\frac{\sigma_1}{2}} + r + w \lesssim t^{\delta+\frac{\sigma_1}{2}}.
\end{aligned}
\end{equation*}

As a result, the boundedness of integrand implies that
$$ E_{+}(\epsilon t) \lesssim t^{d-1} \times t^{2(\delta+\frac{\sigma_1}{2})} \times \left(\frac{1}{\sqrt{t}}t^{\delta+\frac{\sigma_1}{2}}\right)^{2(d-1)} = t^{2(\delta+\frac{\sigma_1}{2}) d}, $$
which tends to zero as $t\rightarrow +\infty$ when $\delta+\frac{\sigma_1}{2}<0$, and the limit \eqref{eq_limit_of_energy_subcritical}, \eqref{eq_mod_limit_of_energy_subcritical}, and \eqref{eq_alt_limit_of_energy_subcritical} follow. In the remaining of this section, we take $\delta+\frac{\sigma_1}{2}=0$.

The support of integrand also allows us to simplify $E_+(\epsilon t)$, up to some $O(t^{-\frac{1}{2}})$ terms, as
\begin{equation*}
\begin{aligned}
\frac{t^{d-1}}{(2\pi)^{2d}} \int &e^{i \left[ tP'(\rho)\rho\omega\cdot(\theta-\theta') + \sqrt{t}r\rho\omega\cdot(\theta-\theta') + \sqrt{t}wP'(\rho)\theta'\cdot\omega + rw\theta'\cdot\omega \right]}   \\
&\times e^{\epsilon i[\sqrt{t}P'(\rho)w - \frac{1}{2}P''(\rho)w^2]}\chi\left( \frac{r\omega+\sqrt{t}P'(\rho)(\omega+\epsilon\theta)}{t^\delta\Lambda(t^\frac{1}{2}\rho)} \right) \\
&\times\chi\left( \frac{r\omega+\sqrt{t}P'(\rho)(\omega+\epsilon\theta')-\epsilon P''(\rho)w\theta'}{t^\delta\Lambda(t^\frac{1}{2}\rho)} \right) \\
&\times  \mathbbm{1}_{r>0}\mathbbm{1}_{\rho>\frac{w}{\sqrt{t}}>0} \hat{u}_0(\rho\theta) \overline{\hat{u}_0(\rho\theta')} \left(P'(\rho)\right)^{d-1}\rho^{2(d-1)} d\theta d\theta' dr dw d\omega d\rho.
\end{aligned}
\end{equation*}
Here, we use the approximations
\begin{equation*}
\begin{aligned}
t\left(P(\rho)-P\left(\rho-\frac{w}{t^\frac{1}{2}}\right)\right) =& t\left(-P'(\rho)\left(-\frac{w}{t^\frac{1}{2}}\right) -\frac{P''(\rho)}{2}\frac{w^2}{t} + O\left(\frac{w^3}{t^\frac{3}{2}}\right)\right) \\
=& \sqrt{t}P'(\rho)w - \frac{1}{2} P''(\rho) w^2 + O\left(\frac{1}{\sqrt{t}}\right),
\end{aligned}
\end{equation*}
to simplify the phase and
\begin{equation*}
\begin{aligned}
\sqrt{t}\left(P'(\rho)-P'\left(\rho-\frac{w}{\sqrt{t}}\right)\right) =&
\sqrt{t}\left( -P''(\rho)\left(-\frac{w}{\sqrt{t}}\right) + O\left(\frac{w^2}{t}\right) \right) \\
=& P''(\rho)w + O\left( \frac{1}{\sqrt{t}}\right)
\end{aligned}
\end{equation*}
to simplify the argument of the second $\chi$.

By applying a change of variable in $\theta,\theta'$,
\begin{align*}
\theta &\mapsto t^{-\frac{1}{2}}\theta - \epsilon\omega, \\
\theta' &\mapsto t^{-\frac{1}{2}}\theta' - \epsilon\omega,
\end{align*}
we can rewrite $E_+(\epsilon t)$ as
\begin{equation*}
\begin{aligned}
\frac{1}{(2\pi)^{2d}} \int_{\theta,\theta'\in\sqrt{t}(\epsilon\omega+\mathbb{S}^{d-1})} &e^{i \left[ \sqrt{t}P'(\rho)\rho\omega\cdot(\theta-\theta') + r\rho\omega\cdot(\theta-\theta') + wP'(\rho)\theta'\cdot\omega + t^{-\frac{1}{2}}rw\theta'\cdot\omega \right]}   \\
&\times e^{-\epsilon i[rw + \frac{1}{2}P''(\rho)w^2]}\chi\left( \frac{r\omega+\epsilon P'(\rho)\theta}{t^\delta\Lambda(t^\frac{1}{2}\rho)} \right) \\
&\times\chi\left( \frac{(r+P''(\rho)w)\omega+\epsilon P'(\rho)\theta' -\epsilon t^{-\frac{1}{2}} P''(\rho)w \theta'}{t^\delta\Lambda(t^\frac{1}{2}\rho)} \right) \\
&\times  \mathbbm{1}_{r>0}\mathbbm{1}_{\rho>\frac{w}{\sqrt{t}}>0} \hat{u}_0(t^{-\frac{1}{2}}\rho\theta-\epsilon\rho\omega) \overline{\hat{u}_0(t^{-\frac{1}{2}}\rho\theta'-\epsilon\rho\omega)} \\
&\times\left(P'(\rho)\right)^{d-1}\rho^{2(d-1)} d\theta d\theta' dr dw d\omega d\rho + O(t^{-\frac{1}{2}}),
\end{aligned}
\end{equation*}
where the integrand is supported for $0<r,w\lesssim 1$, $\rho\sim 1$, and $|\theta|,|\theta'| \lesssim 1$ due to \eqref{eq_lim_sub&crit_support_of_energy} together with $\delta+\frac{\sigma_1}{2}=0$, which allows us to do another simplification and write $E_+(\epsilon t)$ as \eqref{eq_lim_sub&crit_simplified_energy}.
\end{proof}

Till now, we have managed to write $E_+(\epsilon t)$, up to some admissible terms, as \eqref{eq_lim_sub&crit_simplified_energy}, namely
\begin{equation*}
\begin{aligned}
\frac{1}{(2\pi)^{2d}} \int_{\theta,\theta'\in\sqrt{t}(\epsilon\omega+\mathbb{S}^{d-1})} &e^{i \left[ \sqrt{t}P'(\rho)\rho\omega\cdot(\theta-\theta') + r\rho\omega\cdot(\theta-\theta') + wP'(\rho)\theta'\cdot\omega \right]} e^{-\epsilon i[rw + \frac{1}{2}P''(\rho)w^2]}  \\
&\times \chi\left( \frac{r\omega+\epsilon P'(\rho)\theta}{t^\delta\Lambda(t^\frac{1}{2}\rho)} \right) \chi\left( \frac{(r+P''(\rho)w)\omega+\epsilon P'(\rho)\theta'}{t^\delta\Lambda(t^\frac{1}{2}\rho)} \right) \\
&\times  \mathbbm{1}_{r>0}\mathbbm{1}_{\rho>\frac{w}{\sqrt{t}}>0} \left|\hat{u}_0(-\epsilon\rho\omega)\right|^2 \left(P'(\rho)\right)^{d-1}\rho^{2(d-1)} d\theta d\theta' dr dw d\omega d\rho.
\end{aligned}
\end{equation*}
In the rest of this section, we will calculate the limit of this integral and conclude Proposition \ref{prop_limit_of_energy}. Since the integral in $\theta,\theta'$ is over a sphere centered at $\epsilon\omega$ with radius $\sqrt{t}$, we may write $\theta$, $\theta'$ in local coordinate
\begin{align*}
\theta &= h \epsilon\omega + y,\ h=\sqrt{t} - \sqrt{t-|y|^2},\\
\theta' &= h' \epsilon\omega + y',\ h'=\sqrt{t} - \sqrt{t-|y'|^2},
\end{align*}
where $h,h'\in\mathbb{R}$, $y,y'\in \omega^\perp:=\{z\in\mathbb{R}^d:z\cdot\omega=0\}$. The condition of support implies that $|h|,|h'|,|y|,|y'| \lesssim 1$. It is easy to check that, as $t\rightarrow+\infty$
\begin{align*}
&\sqrt{t}\omega\cdot(\theta-\theta') = \epsilon\sqrt{t}\left( \sqrt{t-|y'|^2} - \sqrt{t-|y|^2} \right) \rightarrow \epsilon\left( \frac{|y|^2}{2} - \frac{|y'|^2}{2} \right), \\
&\theta\cdot\omega= \epsilon \left( \sqrt{t} - \sqrt{t-|y|^2} \right) \rightarrow 0, \\
&\theta'\cdot\omega= \epsilon \left( \sqrt{t} - \sqrt{t-|y'|^2} \right) \rightarrow 0.
\end{align*}
Therefore, by Dominated Convergence Theorem, as $t$ tends to infinity, the limit of $E_+(\epsilon t)$ equals to
\begin{equation*}
\begin{aligned}
\frac{1}{(2\pi)^{2d}} \int_{y,y'\in\omega^\perp,r,w>0} &e^{\epsilon i \left[ P'(\rho)\rho\left(\frac{|y|^2}{2}-\frac{|y'|^2}{2}\right) - rw - \frac{1}{2}P''(\rho)w^2 \right]}  \\
&\times \chi\left( \frac{r\omega+\epsilon P'(\rho)y}{\lambda_1\rho^{\sigma_1}} \right) \chi\left( \frac{(r+P''(\rho)w)\omega+\epsilon P'(\rho)y'}{\lambda_1\rho^{\sigma_1}} \right) \\
&\times  \left|\hat{u}_0(-\epsilon\rho\omega)\right|^2 \left(P'(\rho)\right)^{d-1}\rho^{2(d-1)} dy dy' dr dw d\omega d\rho,
\end{aligned}
\end{equation*}
which, after a change of variable, is equal to
\begin{equation}\label{eq_lim_crit_limit_of_energy_sans_simplify}
\begin{aligned}
\frac{1}{(2\pi)^{2d}} \int_{y,y'\in\omega^\perp,r,w>0} &e^{\epsilon i \left[ \left(\frac{|y|^2}{2}-\frac{|y'|^2}{2}\right) - rw - \frac{1}{2}w^2 \right]} \chi\left( \frac{\sqrt{P''(\rho)}r\omega+\epsilon \sqrt{\rho^{-1}P'(\rho)} y}{\lambda_1\rho^{\sigma_1}} \right) \\
&\times  \chi\left( \frac{\sqrt{P''(\rho)}(r+w)\omega+\epsilon \sqrt{\rho^{-1}P'(\rho)} y'}{\lambda_1\rho^{\sigma_1}} \right) \\
&\times \left|\hat{u}_0(-\epsilon\rho\omega)\right|^2 \rho^{d-1} dy dy' dr dw d\omega d\rho.
\end{aligned}
\end{equation}
In order to give a compact form, we introduce the following functions,
\begin{equation*}
\begin{aligned}
&H(r,\omega) := \frac{1}{(2\pi)^\frac{d}{2}} \int_{y\cdot\omega=0} e^{\epsilon i \frac{r^2+|y|^2}{2}} \chi\left( \frac{\sqrt{P''(\rho)}r\omega+\epsilon \sqrt{\rho^{-1}P'(\rho)} y}{\lambda_1\rho^{\sigma_1}} \right) dy, \\
&F(r,\omega) := \int_{r}^\infty H(s,\omega) ds,
\end{aligned}
\end{equation*}
Remark that since $\chi\in \mathcal{S}(\mathbb{R}^d)$, $H$ decay rapidly at infinity, uniformly in $\omega$. Thus, $F$ is well-defined. With these functions, we can rewrite the integral \eqref{eq_lim_crit_limit_of_energy_sans_simplify} as 
\begin{equation*}
\begin{aligned}
&\frac{1}{(2\pi)^{d}} \int \int_{0}^{\infty} H(r,\omega) \int_{0}^{\infty}\overline{H(r+w,\omega)} dw dr \left|\hat{u}_0(-\epsilon\rho\omega)\right|^2 \rho^{d-1} d\omega d\rho \\
=& -\frac{1}{(2\pi)^{d}} \int \int_{0}^{\infty} \partial_r F(r,\omega) \overline{F(r,\omega)} dr \left|\hat{u}_0(-\epsilon\rho\omega)\right|^2 \rho^{d-1} d\omega d\rho.
\end{aligned}
\end{equation*}
As a consequence,
\begin{equation*}
\begin{aligned}
\lim_{t\rightarrow\infty} E_{\chi,\delta,\Lambda}(u_0,\epsilon t) =& \lim_{t\rightarrow\infty} 2\Real E_+(\epsilon t) \\
=& -\frac{2}{(2\pi)^{d}} \Real \int \int_{0}^{\infty} \partial_rF(r,\omega) \overline{F(r,\omega)} dr \left|\hat{u}_0(-\epsilon\rho\omega)\right|^2 \rho^{d-1} d\omega d\rho \\
=& -\frac{1}{(2\pi)^{d}} \int \int_{0}^{\infty} \frac{\partial}{\partial r}|F|^2(r,\omega) dr \left|\hat{u}_0(-\epsilon\rho\omega)\right|^2 \rho^{d-1} d\omega d\rho \\
=& \frac{1}{(2\pi)^{d}} \int |F(0,\omega)|^2 \left|\hat{u}_0(-\epsilon\rho\omega)\right|^2 \rho^{d-1} d\omega d\rho,
\end{aligned}
\end{equation*}
where 
$$ |F(0,\omega)|^2 = \frac{1}{(2\pi)^d} \left| \int_0^\infty \int_{y\cdot\omega=0} e^{\epsilon i \frac{r^2+|y|^2}{2}} \chi\left( \frac{\sqrt{P''(\rho)}r\omega+\epsilon \sqrt{\rho^{-1}P'(\rho)} y}{\lambda_1\rho^{\sigma_1}} \right) dydr \right|^2, $$
which is exactly $G^\text{alt}_\chi(\rho,\omega)$ defined in \eqref{eq_alt_definition_of_G_convex}, or \eqref{eq_definition_of_G_convex} with $\sigma_1=0$ and $\lambda_1=1$. The limits \eqref{eq_limit_of_energy_critical}, \eqref{eq_mod_limit_of_energy_critical}, and \eqref{eq_alt_limit_of_energy_critical} thus follow.

\section{Study of Klein-Gordon equation}\label{sect_Klein-Gordon}

In this section, we shall prove Theorem \ref{thm_K-G_main} via a study of half-Klein-Gordon equation, i.e. \eqref{eq_frac_disper} with $P(\xi) = \langle\xi\rangle$. Let $w$ be the (real) solution to Klein-Gordon equation \eqref{eq_Klein-Gordon}. We have then
$$ 0 = (\partial_t^2 - \Delta + 1) w = -\left( \frac{\partial_t}{i} - P(D_x) \right)  \left( \frac{\partial_t}{i} + P(D_x) \right)w. $$
Thus, the complex-valued function
$$ u := \left( \frac{\partial_t}{i} + P(D_x) \right)w, $$
is the unique solution to half-Klein-Gordon equation with initial data
$$ u_0 := u|_{t=0} =\frac{w_1}{i} + P(D_x)w_0 \in L^2. $$
Due to the fact that $w$ is real-valued, we have the following relations
$$ \partial_t w = -\Imaginary u,\ \ w = \langle D_x \rangle^{-1} \Real u. $$

In this section, we denote the truncation $\Op{a^\text{KG}_\epsilon(t)}$, whose symbol is defined in \eqref{eq_K-G_def_truncated_symbol}, as $A(t)$, and define operators $A_{\pm}(t)$ as $\Op{a^\text{KG}_{\pm}(t)}$, where
\begin{equation}\label{eq_K-G_truncation_symbol_pm}
a^\text{KG}_{\pm}(t) = \chi\left( \frac{x \pm tP'(\xi)}{|t|^{\frac{1}{2}+\delta}} \right) \mathbbm{1}_{|x|>|tP'(\xi)|} \chi\left( \frac{\xi}{|t|^\epsilon} \right),
\end{equation}
$0<\delta<\frac{1}{2}$, $0<\epsilon<1$, and $\chi\in C_c^\infty(\mathbb{R}^d)$ are the same as in \eqref{eq_K-G_def_truncated_symbol}. Since we are interested in the behavior as $t \rightarrow +\infty$, it is harmless to assume $t \gg 1$. Before continuing the proof, we clarify that all the involved operators are bounded uniformly in $t \gg 1$.
\begin{lemma}\label{lem_K-G_bdd_of_truncation_op}
There exists time-independent constant $C>0$ and $t_0 \gg 1$, such that for all $t>t_0$,
$$ \|A(t)\|_{\mathcal{L}(L^2)},\ \|A_{\pm}(t)\|_{\mathcal{L}(L^2)} \leqslant C. $$
\end{lemma}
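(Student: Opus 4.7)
The plan is to observe that $P(\xi)=\langle\xi\rangle$ is a fractional-type symbol in the sense of Section \ref{sect_L^2_bound}, fitting a \emph{mixed} case of hypothesis \eqref{hyp_fractional-type_symbol}: $P'(\xi)=\xi/\langle\xi\rangle$ gives $p_0=1$, $P_0=0$ (Schr\"odinger-like behavior as $\xi\to 0$) and $p_1=-2$, $P_1=1$ with $P'(\xi)<P_1$, so that $\nu_1=-$ in the notation of Section \ref{sect_L^2_bound}. The admissibility condition $0<\epsilon<1$ in Theorem \ref{thm_K-G_main} is exactly the upper bound $\epsilon_1\leqslant 1/|p_1+1|=1$ required in \eqref{eq_def_of_epsilons}, so the cut-off $\chi(\xi/|t|^\epsilon)$ in the definition of $a^{\mathrm{KG}}_\epsilon$ and $a^{\mathrm{KG}}_\pm$ plays the role of the high-frequency factor $\chi_h(\xi/|t|^{\epsilon_1})$ of \eqref{eq_mod_truncation_symbol}; no low-frequency cut-off is needed since $P_0=0$.

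First I would handle $A_\pm(t)$ by splitting the symbol into low- and high-frequency parts via a smooth partition $1=\psi(\xi)+(1-\psi(\xi))$ with $\psi\in C_c^\infty(\mathbb{R}^d)$ equal to $1$ near the origin. On the support of $1-\psi$ we are in the regime $|\xi|\gtrsim 1$ with $P_1>0$: the $\tilde c^\sharp$ branch of the proof of Proposition \ref{prop_boundedness_of_truncated_operator_with_truncated_frequency} applies, showing that the resulting smooth symbol belongs uniformly in $t$ to the H\"ormander class $S^0_{1,\kappa}$ for some $\kappa\in ]0,1[$ close to $1$, which ensures $L^2$-boundedness. On the support of $\psi$ we are in the regime $|\xi|\lesssim 1$ with $P_0=0$, and after the rescaling $x\mapsto \sqrt{t}\,x$, $\xi\mapsto \xi/\sqrt{t}$ the rescaled symbol falls into the setting of Proposition \ref{prop_boundedness_of_truncated_operator}: decompose it via cut-offs in $X/\Xi$ into the three pieces $b_0^\flat, \tilde b^\flat, b_1^\flat$ and bound each uniformly via Cotlar--Stein (for $b_0^\flat$) together with Calder\'on--Vaillancourt and the polar estimate of Lemma \ref{lem_tech_boundedness_of_symbol_polar} (for $\tilde b^\flat$ and $b_1^\flat$), exactly as in Sections 2.1--2.2. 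Summing the two contributions yields the uniform bound on $A_\pm(t)$.

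For $A(t)$, whose symbol differs from the one above only by the absence of the spatial cut-off $\chi((x\pm tP'(\xi))/|t|^{1/2+\delta})$, I would insert an auxiliary factor $\chi_0((x+tP'(\xi))/(|t|\mu))$ with $\chi_0\in C_c^\infty$, $\chi_0(0)=1$, and run the same hybrid argument to obtain an $L^2$-bound uniform in $\mu>0$ and in $t>t_0$. Then, as in the final subsection of Section \ref{sect_L^2_bound}, let $\mu\to+\infty$ in the sesquilinear pairing $\langle\Op{\cdot}f,g\rangle$ with $f,g\in\mathcal{S}$: dominated convergence removes the $\chi_0$ factor, and density of $\mathcal{S}$ in $L^2$ yields the desired bound on $A(t)$.

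The main obstacle I anticipate is that half-Klein-Gordon is a \emph{mixed} case not covered verbatim by either Proposition \ref{prop_boundedness_of_truncated_operator} or Proposition \ref{prop_boundedness_of_truncated_operator_with_truncated_frequency}: one has to verify that the low-frequency branch of the former and the high-frequency branch of the latter can be glued together through the smooth partition $\psi$. The gluing is painless because on the overlapping annulus $|\xi|\sim 1$ both arguments reduce to standard smooth symbolic calculus via Calder\'on--Vaillancourt, but the small bookkeeping needed to make the low-frequency side unaware of $P_1\neq 0$ and the high-frequency side unaware of $P_0=0$ is the only non-trivial point.
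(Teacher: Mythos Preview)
Your approach is essentially the same as the paper's: split into low and high frequency via a smooth cut-off in $\xi$, invoke Proposition \ref{prop_boundedness_of_truncated_operator} on the low side and Proposition \ref{prop_boundedness_of_truncated_operator_with_truncated_frequency} on the high side, and for $A(t)$ pass to the limit $\mu\to\infty$ (which is exactly the content of Theorem \ref{thm_boundedness_of_simple_truncated_operator}, so the paper simply cites it rather than redoing the limit argument). The one place where the paper is more explicit than your sketch is the ``bookkeeping'' you flag at the end: instead of reopening the low-frequency branch of the proof of Proposition \ref{prop_boundedness_of_truncated_operator}, the paper constructs an auxiliary radial symbol $P_l$ that agrees with $P=\langle\cdot\rangle$ on $|\xi|\leqslant 2$ but satisfies \eqref{hyp_fractional-type_symbol} globally with $p_0=p_1=1$ and $P_0=P_1=0$; then Proposition \ref{prop_boundedness_of_truncated_operator} applies to $P_l$ as a black box, and on the support of the low-frequency cut-off one has $P'=P_l'$ so the symbols coincide. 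This is a cleaner way to make the low-frequency side unaware that $P_1\neq 0$, and it is worth writing out explicitly since Proposition \ref{prop_boundedness_of_truncated_operator} as stated requires the global hypothesis on $P$, not just its behavior near the origin.
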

\begin{proof}
It is obvious that $P(\xi)=\langle\xi\rangle$ satisfies hypothesis \eqref{hyp_fractional-type_symbol} with $p_0=1$, $P_0=0$, and $p_1=-2$, $P_1=1$. In what follows, we shall focus on symbols $a^\text{KG}_{\pm}(t)$ (with $a^\text{KG}_\epsilon(t)$ treated in the same way) and decompose them in high and low frequency. Let $\tilde{\chi}\in C_c^\infty(\mathbb{R}^d)$ be a radial function which is equal to $1$ in the ball $B(0,1)$ and vanishes outside a larger ball $B(0,2)$. We may write
$$ a_{\pm,l}^\text{KG}(t):= a^\text{KG}_{\pm}(t)\tilde{\chi}(\xi),\ \ a_{\pm,h}^\text{KG}(t):= a^\text{KG}_{\pm}(t)(1-\tilde{\chi})(\xi). $$

For low frequency part $a_{\pm,l}^\text{KG}(t)$, it is easy to construct a symbol $P_l(\xi)$ such that $P_l$ verifies hypothesis \eqref{hyp_fractional-type_symbol} with $p_0=p_1=1$ and $P_0=P_1=0$, and that $P_l(\xi)=P(\xi)$ for all $|\xi| \leqslant 2$. In this way, we have
$$ a_{\pm,l}^\text{KG}(t,x,\xi) = \chi\left( \frac{x \pm tP'_l(\xi)}{|t|^{\frac{1}{2}+\delta}} \right) \mathbbm{1}_{|x|>|tP'(\xi)|} \tilde{\chi}(\xi), $$
where the factor involving $t^\epsilon$ disappears since the symbol is supported for $ |\xi| \leqslant 2 \ll t^\epsilon $, by choosing $t \gg 1$. Now, we may apply Proposition \ref{prop_boundedness_of_truncated_operator} to obtain the uniform-in-$t$ boundedess of operator with symbol
$$ \chi\left( \frac{x \pm tP'_l(\xi)}{|t|^{\frac{1}{2}+\delta}} \right) \mathbbm{1}_{|x|>|tP'(\xi)|} $$
and hence boundedness of the operator $\Op{a_{\pm,l}^\text{KG}(t)}$.

The boundedness of $\Op{a_{\pm,h}^\text{KG}(t)}$ is actually an immediate consequence of Proposition \ref{prop_boundedness_of_truncated_operator_with_truncated_frequency}. More precisely, by choosing arbitrary $\epsilon'$ such that $(\epsilon',\epsilon)$ satisfies conditions \eqref{eq_def_of_epsilons} associated to $P$ (i.e. with $p_0=1$ and $p_1=-2$), namely
$$ 0<\epsilon'\leqslant\frac{1}{p_0+1}=\frac{1}{2},\ \ 0<\epsilon\leqslant \frac{1}{-(p_1+1)}=1, $$
we are able to apply Proposition \ref{prop_boundedness_of_truncated_operator_with_truncated_frequency} to obtain the uniform boundedness of operator with symbol
$$ \chi\left( \frac{x \pm tP'(\xi)}{t^{\frac{1}{2}+\delta}} \right) \mathbbm{1}_{|x|>|tP'(\xi)|} (1-\chi)\left(\frac{\xi}{t^{-\epsilon'}}\right)  \chi\left( \frac{\xi}{t^\epsilon} \right). $$
If we add the high frequency truncation $(1-\tilde{\chi})(\xi)$, the symbol will be supported in $|\xi|\geqslant 1 \gg t^{-\epsilon'}$ since we have chosen $t \gg 1$. As a result, the truncation $(1-\chi)$ equals $1$ and the uniform boundedness of $\Op{a_{\pm,h}^\text{KG}(t)}$ follows.

We have shown that $A_{\pm}(t) = \Op{a_{\pm,l}^\text{KG}(t)} + \Op{a_{\pm,h}^\text{KG}(t)}$ is uniformly bounded on $L^2$. By repeating the same argument and replacing Proposition \ref{prop_boundedness_of_truncated_operator}, \ref{prop_boundedness_of_truncated_operator_with_truncated_frequency} by Theorem \ref{thm_boundedness_of_simple_truncated_operator}, we may also obtain the uniform boundedness of $A(t)$.
\end{proof}

Now we turn back to the proof of Theorem \ref{thm_K-G_main}. By definition, the truncated energy \eqref{eq_K-G_limit_of_energy} can be expressed as
$$ E^\text{KG}_\epsilon(\pm t) = \left\|A(t) \Imaginary u(\pm t)\right\|_{L^2}^2 + \left\|A(t) \frac{D_x}{\langle D_x\rangle} \Real u(\pm t)\right\|_{L^2}^2 + \left\|A(t) \frac{1}{\langle D_x\rangle} \Real u(\pm t)\right\|_{L^2}^2. $$
The three terms on the right hand side takes the form
\begin{equation}\label{eq_K-G_basic_form}
Q^\text{KG}_\pm(t,\epsilon_0,R) = \frac{1}{4}\left\|A(t) \left(Ru(\pm t) + \epsilon_0 \overline{Ru(\pm t)} \right) \right\|_{L^2}^2,
\end{equation}
where $\epsilon_0 \in \{+,-\}$ and $R$ is a bounded Fourier multiplier taking values among $1$, $D_x \langle D_x \rangle^{-1}$, and $\langle D_x \rangle^{-1}$. Due to the uniform boundedness of truncation operators $A(t)$, $A_{\pm}(t)$, \eqref{eq_K-G_basic_form} can be written as
\begin{align*}
&\frac{1}{4}\left\|A_\pm(t) Ru(\pm t) + \epsilon_0 A_\mp(t) \overline{Ru(\pm t)} \right\|_{L^2}^2 + O\left( \left\|(A(t)-A_\pm(t)) Ru(\pm t) \right\|_{L^2} \right) \\
&+ O\left( \left\|(A(t)-A_\mp(t)) \overline{Ru(\pm t)} \right\|_{L^2} \right) \\
=& \frac{1}{4}\left\|A_\pm(t) Ru(\pm t) \right\|_{L^2}^2 + \frac{1}{4}\left\| A_\mp(t) \overline{Ru(\pm t)} \right\|_{L^2}^2 + \epsilon_0 \Real \left\langle A_\pm(t) Ru(\pm t), A_\mp(t) \overline{Ru(\pm t)} \right\rangle_{L^2}\\ 
&+O\left( \left\|(A(t)-A_\pm(t)) Ru(\pm t) \right\|_{L^2} \right) + O\left( \left\|(A(t)-A_\mp(t)) \overline{Ru(\pm t)} \right\|_{L^2} \right) \\
=& \frac{1}{2}\left\|A_\pm(t) Ru(\pm t) \right\|_{L^2}^2 + \epsilon_0 \Real \left\langle A_\pm(t) Ru(\pm t), A_\mp(t) \overline{Ru(\pm t)} \right\rangle_{L^2}\\ 
&+O\left( \left\|(A(t)-A_\pm(t)) Ru(\pm t) \right\|_{L^2} \right),
\end{align*}
where we use the fact that for all complex-valued function $f\in L^2$
$$ A(t)\overline{f} = \overline{A(t)f},\ \ A_\pm(t)\overline{f} = \overline{A_\mp(t)f}. $$

In order to conclude the desired limit \eqref{eq_K-G_limit_of_energy}, it suffices to prove
\begin{proposition}\label{prop_K-G_lim_of_componants}
Let $v_0,v_{1,0}$ be two functions in $L^2$. With $A(t)$, $A_\pm(t)$ as above, we have following limits:
\begin{align}
&\lim_{t\rightarrow +\infty} \left\|A_\pm(t) e^{\pm itP(D_x)}v_0 \right\|_{L^2}^2 = \frac{1}{4}\|v_0\|_{L^2}^2, \label{eq_K-G_lim_of_componants_p1} \\
&\lim_{t\rightarrow +\infty} \left\langle A_\pm(t) e^{\pm itP(D_x)}v_0, A_\mp(t) e^{\mp itP(D_x)}v_{0,1} \right\rangle_{L^2} = 0, \label{eq_K-G_lim_of_componants_p2} \\
&\lim_{t\rightarrow +\infty} \left\| (A(t)-A_{\pm}(t))e^{\pm itP(D_x)}v_0 \right\|_{L^2}^2 = 0. \label{eq_K-G_lim_of_componants_p3}
\end{align}
\end{proposition}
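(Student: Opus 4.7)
The plan is to establish (1), (2), (3) in turn, each by reducing to results of the earlier sections applied to half-Klein-Gordon. Throughout, Lemma \ref{lem_K-G_bdd_of_truncation_op} together with density permits restricting attention to $v_0, v_{0,1}$ with $\hat v_0, \hat v_{0,1} \in C_c^\infty(\mathbb{R}^d \setminus \{0\})$; for such data and $|t|$ large enough, the frequency cutoff $\chi(\xi/|t|^\epsilon)$ appearing in $a^{KG}_\pm$ (and the low-frequency cutoff $(1-\chi_l)(\xi/|t|^{-\epsilon_0})$ appearing in the symbol $a^{\mathrm{mod}}$ of Theorem \ref{thm_mod_main}) act as the identity on $\hat v_0$.

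For (1), under this reduction $A_+(t)e^{itP(D_x)}v_0$ coincides with $\Op{a^{\mathrm{mod}}_{\chi,\delta}(t)}u(t)$, where $u$ is the half-Klein-Gordon evolution of $v_0$. Since $P(\xi)=\langle\xi\rangle$ satisfies hypothesis \eqref{hyp_fractional-type_symbol} with $p_1<0<p_0$ and $P_0=0$, $P_1=1>0$, Theorem \ref{thm_mod_main}(iii) (in the supercritical range $0<\delta<\tfrac12$) yields the limit $\tfrac14\|v_0\|_{L^2}^2$. The identity $a^{KG}_-(t)=a^{KG}_+(-t)$ then reduces $\|A_-(t)e^{-itP(D_x)}v_0\|_{L^2}^2$ to the same energy evaluated at time $-t$, giving the same limit.

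For (2), I reproduce the angular stationary-phase reduction of Section \ref{sect_limit_of_energy} applied to the cross inner product, whose phase in $(x,\xi,\xi')$ is $x\cdot(\xi-\xi')+t(P(\xi)+P(\xi'))$. The cutoff in $a^{KG}_+(t,x,\xi)$ selects the angular critical point $\theta=-\omega$ (giving $e^{-ir\rho}$), while that in $a^{KG}_-(t,x,\xi')$ selects $\theta'=+\omega$ (giving $e^{-ir\rho'}$), so the resulting one-dimensional oscillatory integral has the shape $I(t,-1,-1,-1,-1;F)$ in the notation of Proposition \ref{prop_lim_limit_of_integral}. In contrast to the diagonal case, the $r$-derivative of the phase is $-(\rho+\rho')$, bounded below by a positive constant on the support of $\hat v_0, \hat v_{0,1}$, so there is no stationary point. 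Integration by parts in $r$, following the non-stationary half of the argument in the proof of Proposition \ref{prop_lim_limit_of_integral}, yields arbitrary decay; the boundary contributions produced by the sharp indicator $\mathbbm{1}_{r>tP'(\rho),\,tP'(\rho')}$ localise on $\rho\approx\rho'$ and carry residual oscillation (the phase at $r=tP'(\rho)$ evaluates, for $P=\langle\cdot\rangle$ and $\rho'\approx\rho$, to $\sim 2t/\langle\rho\rangle$), whose $\rho$-derivative is again of order $t$, so they also vanish after further integration by parts.

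For (3), decompose $A(t)-A_+(t)=A_-(t)+\Op{r(t)}$ with
$$r(t,x,\xi)=\Bigl(1-\chi\bigl(\tfrac{x+tP'(\xi)}{|t|^{1/2+\delta}}\bigr)-\chi\bigl(\tfrac{x-tP'(\xi)}{|t|^{1/2+\delta}}\bigr)\Bigr)\mathbbm{1}_{|x|>|tP'(\xi)|}\chi(\xi/|t|^\epsilon),$$
and reduce to proving that both $\|A_-(t)e^{itP(D_x)}v_0\|_{L^2}$ and $\|\Op{r(t)}e^{itP(D_x)}v_0\|_{L^2}$ tend to $0$. For the first, the angular reduction produces a 1D integral with phase $r(\rho-\rho')+t(P(\rho)-P(\rho'))$, whose $\rho$-derivative is $r+tP'(\rho)\gtrsim t$ on the support (where $r\approx tP'(\rho)>0$), giving vanishing by non-stationary integration by parts in $\rho$. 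For the second, $r(t)$ is supported where both $x\pm tP'(\xi)$ have size at least $|t|^{1/2+\delta}$, so the angular stationary-phase analysis identifies no contributing critical point, and the resulting integral decays by the same type of argument. The principal technical obstacle throughout is the non-smoothness of $\mathbbm{1}_{|x|>|tP'(\xi)|}$: this is handled, as in Section \ref{sect_limit_of_energy} and Appendix \ref{sect_proof_of_limit_of_integral}, by passing to polar coordinates so that the indicator becomes a sharp one-dimensional step in $r$, whose boundary contributions carry their own oscillation amenable to further integration by parts.
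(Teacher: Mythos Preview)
Your treatment of (1) and (2) matches the paper's: (1) is a direct consequence of the supercritical limit \eqref{eq_mod_limit_of_energy_supercritical} (strictly speaking via Proposition \ref{prop_limit_of_energy} and Corollary \ref{cor_limit_of_energy}, since $P_0=0$ here), and (2) reduces to $I(t,-,-,-,-;F)$, which the paper dispatches by invoking Proposition \ref{prop_K-G_interaction_limit_of_integral}. For (3) the paper organises things differently but equivalently: instead of splitting $A-A_+=A_-+\Op{r}$ at the symbol level, it works with $(A-A_+)e^{itP}v_0$ as a function of $x$, first localises to $|x|\sim t$ (Lemma \ref{lem_K-G_outside_truncation_outside_cone}), then inserts an angular partition $\chi_0((\omega\pm\theta)/t^{\delta-1/2})$ to obtain pieces $I_+,I_-,I_0$, each estimated pointwise in $(r,\omega)$ (Lemma \ref{lem_K-G_outside_truncation_near_cone}). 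Your $A_-$-term is exactly the paper's $I_-$; your $\Op{r}$-term corresponds to $I_++I_0$ together with a harmless extra slice near $\theta=\omega$.

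There is one inaccuracy in your argument for $\Op{r(t)}$: the claim that ``the angular stationary-phase analysis identifies no contributing critical point'' is not correct as written. The angular phase $r\rho\,\omega\cdot\theta$ always has critical points at $\theta=\pm\omega$, and the amplitude $(1-\chi_+-\chi_-)\mathbbm{1}_{r>tP'(\rho)}$ does \emph{not} vanish there. What makes these contributions decay is that the cut-off supports force the \emph{radial} phase to be non-stationary: at $\theta=-\omega$ the factor $(1-\chi_+)$ imposes $|r-tP'(\rho)|\gtrsim t^{1/2+\delta}$, so integration by parts in $\rho$ against $-r\rho+tP(\rho)$ gains $t^{-2\delta}$ per step (this is the paper's $I_+$); at $\theta=+\omega$ the $\rho$-derivative of the phase $r\rho+tP(\rho)$ equals $r+tP'(\rho)\gtrsim t$, and since $(1-\chi_-)$ vanishes at the indicator's jump the integrand is smooth in $\rho$, so integration by parts gains a full $t^{-1}$. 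Away from both poles one integrates by parts in the angular variable (the paper's $I_0$). So your conclusion is right, but the mechanism is radial non-stationarity induced by the support constraints, not absence of angular critical points.
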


Once Proposition \ref{prop_K-G_lim_of_componants} is proved, we may apply the three limits with $v_0 = Ru_0$ and $v_{0,1} = \overline{Ru_0}$ to obtain that
$$ \lim_{t\rightarrow +\infty}Q^\text{KG}_\pm(t,\epsilon_0,R) = \frac{1}{8}\|Ru_0\|_{L^2}^2, $$
since, due to the definition of $u$, we have, for all $t\in\mathbb{R}$,
$$ Ru(t) = e^{itP(D_x)}Ru_0,\ \ \overline{Ru(t)} = e^{-itP(D_x)}\overline{Ru_0}. $$
As a result, the limit \eqref{eq_K-G_limit_of_energy} follows from
\begin{align*}
&\lim_{t\rightarrow\pm\infty} E^{KG}_{\epsilon}(w_0,w_1,t) = \lim_{t\rightarrow+\infty} E^{KG}_{\epsilon}(w_0,w_1,\pm t) \\ 
=& \lim_{t\rightarrow+\infty} Q^\text{KG}_\pm(t,-1,1) + Q^\text{KG}_\pm(t,-1,D_x\langle D_x \rangle^{-1}) + Q^\text{KG}_\pm(t,1,\langle D_x \rangle^{-1}) \\
=& \frac{1}{8} \left( \left\| u_0 \right\|_{L^2}^2 + \left\| \frac{D_x}{\langle D_x \rangle} u_0 \right\|_{L^2}^2 + \left\| \frac{1}{\langle D_x \rangle} u_0 \right\|_{L^2}^2 \right) = \frac{1}{4}\|u_0\|_{L^2}^2 = \frac{1}{4} \left( \|w_0\|_{H^1}^2 + \|w_1\|_{L^2}^2 \right).
\end{align*}

In the rest of this section, we shall prove the limits \eqref{eq_K-G_lim_of_componants_p1}, \eqref{eq_K-G_lim_of_componants_p2}, and \eqref{eq_K-G_lim_of_componants_p3}. We have seen that operators $A(t)$, $A_\pm(t)$ are bounded uniformly in $t>t_0 \gg 1$. As a result, it suffices to calculate these limits for those $v_0,v_{0,1}$ belonging to some dense subspace of $L^2$. In what follows, we assume $\hat{v}_0, \hat{v}_{0,1}$ are smooth and supported in a annulus centered at zero.

The first limit \eqref{eq_K-G_lim_of_componants_p1} is no more than a consequence of \eqref{eq_mod_limit_of_energy_supercritical} with $P(\xi)=\langle\xi\rangle$, $\epsilon_1 = \epsilon$, $\chi_h=\chi$, and any $\epsilon_0 \in ]0,\frac{1}{2}]$. The exceptional truncation $\chi_l$ is not a problem, since it disappears when $|t|$ is large enough. For the remaining results \eqref{eq_K-G_lim_of_componants_p2} and \eqref{eq_K-G_lim_of_componants_p3}, we will apply a similar argument as in the proof of Proposition \ref{prop_limit_of_energy} for super-critical case.

\subsection{Limit of interaction term}

We first calculate the limit \eqref{eq_K-G_lim_of_componants_p2}. Clearly, via conjugation, it is enough to study the limit of
\begin{equation}\label{eq_K-G_interaction_term}
\left\langle A_+(t) e^{itP(D_x)}v_0, A_-(t) e^{- itP(D_x)}v_{0,1} \right\rangle_{L^2},
\end{equation}
since the other one can be recovered by relation
\begin{equation*}
\begin{aligned}
\overline{\left\langle A_-(t) e^{- itP(D_x)}v_0, A_+(t) e^{+ itP(D_x)}v_{0,1} \right\rangle_{L^2}} &= \left\langle \overline{A_-(t) e^{- itP(D_x)}v_0}, \overline{A_+(t) e^{itP(D_x)}v_{0,1}} \right\rangle_{L^2} \\
&= \left\langle A_+(t) e^{itP(D_x)}\overline{v_0}, A_-(t) e^{- itP(D_x)} \overline{v_{0,1}} \right\rangle_{L^2},
\end{aligned}
\end{equation*}
where the Fourier transform of $\overline{v_0}$, $\overline{v_{0,1}}$ still belongs to the class $C_c^\infty(\mathbb{R}^d\backslash\{0\})$.

By definition \eqref{eq_K-G_truncation_symbol_pm} of $A_{\pm}(t) = \Op{a^\text{KG}_{\pm}(t)}$, \eqref{eq_K-G_interaction_term} equals, in polar system, to
\begin{equation*}
\begin{aligned}
& \frac{1}{(2\pi)^{2d}} \int e^{i(r\rho\omega\cdot\theta-r\rho'\omega\cdot\theta')} e^{it (P(\rho)+P(\rho'))} \chi\left(\frac{r\omega+tP'(\rho)\theta}{t^{\frac{1}{2}+\delta}}\right) \chi\left(\frac{r\omega- tP'(\rho')\theta'}{t^{\frac{1}{2}+\delta}}\right) \\
&\hspace{4em}\times\mathbbm{1}_{\frac{r}{t}>P'(\rho),P'(\rho')} \hat{v}_0(\rho\theta) \overline{\hat{v}_{0,1}(\rho'\theta')} (r\rho\rho')^{d-1} d\theta d\theta' d\omega dr d\rho d\rho'.
\end{aligned}
\end{equation*}
Here we may omit the truncation in $\xi=\rho\theta$ and $\xi'=\rho'\theta'$ by taking $t \gg 1$. As in previous section, we focus on the integral in $\theta$ and $\theta'$, which are equal to, by Lemma \ref{lem_stat_int_on_angular_limit_part},
\begin{equation*}
\begin{aligned}
&\int e^{ir\rho\omega\cdot\theta} \chi\left(\frac{r\omega+tP'(\rho)\theta}{t^{\frac{1}{2}+\delta}}\right) \overline{\hat{v}_{0}(\rho\theta)}d\theta \\
&\hspace{10em}= e^{-i r\rho} \mu^{d-1} S_{-\frac{d-1}{2}}^{+}(\omega,\mu,\rho,\frac{r}{t}-P'(\rho),t; r\rho\mu^2) \kappa\left(\frac{r}{t}\right), \\
&\int e^{-ir\rho'\omega\cdot\theta'} \chi\left(\frac{r\omega-tP'(\rho')\theta'}{t^{\frac{1}{2}+\delta}}\right) \overline{\hat{v}_{0,1}(\rho'\theta')}d\theta' \\
&\hspace{10em}= e^{-i r\rho'} \mu^{d-1} S_{-\frac{d-1}{2}}^{-}(\omega,\mu,\rho',\frac{r}{t}-P'(\rho'),t; r\rho'\mu^2) \kappa\left(\frac{r}{t}\right),
\end{aligned}
\end{equation*}
respectively, where $\mu=t^{\delta-\frac{1}{2}}$ and $S_{m}^{\pm}(\omega,\mu,\rho,r',t; \zeta)$ is supported for $\zeta>c>0$, $\rho\sim 1$ and $|r'| \lesssim \mu$ and satisfies for all $\alpha\in\mathbb{N}^{d-1}$, $j,k,l,\gamma\in\mathbb{N}$,
$$ | \partial_{\omega}^{\alpha} \partial_{\mu}^j \partial_{\rho}^{k} \partial_{r'}^{l} \partial_{\zeta}^{\gamma} S_{m} |
\leqslant C \mu^{-(|\alpha|+j+l)} \langle\zeta\rangle^{m-\gamma}. $$
Here we add extra factor $\kappa\in C_c^\infty(]0,+\infty[)$, which equals to $1$ in a neighborhood of $1$, due to the support of integrand.

As a consequence, \eqref{eq_K-G_interaction_term} reads
\begin{equation*}
\begin{aligned}
& \frac{1}{(2\pi)^{2d}} \int e^{i\left[r(-\rho-\rho')-t (-P(\rho)-P(\rho'))\right]} S_{0}^{+}(\omega,\mu,\rho,\frac{r}{t}-P'(\rho),t; r\rho\mu^2) \\
&\hspace{4em}\times S_{0}^{-}(\omega,\mu,\rho',\frac{r}{t}-P'(\rho'),t; r\rho'\mu^2) \kappa^2\left(\frac{r}{t}\right)\mathbbm{1}_{\frac{r}{t}>P'(\rho),P'(\rho')} \\
&\hspace{4em}\times \hat{v}_0(\rho\theta) \overline{\hat{v}_{0,1}(\rho'\theta')} (\rho\rho')^{\frac{d-1}{2}} d\omega dr d\rho d\rho',
\end{aligned}
\end{equation*}
which can be rewritten as $I(t,-,-,-,-;F)$ defined in \eqref{eq_lim_def_of_standard_integral}, with
\begin{equation}\label{eq_K-G_interaction_def_of_F}
F(\rho,\rho',r,t;\zeta,\zeta') = S_{0}^{+}(\omega,t^{\delta-\frac{1}{2}},\rho,\frac{\zeta}{t},t; r\rho t^{2\delta-1}) S_{0}^{-}(\omega,t^{\delta-\frac{1}{2}},\rho',\frac{\zeta'}{t},t; r\rho' t^{2\delta-1})\kappa^2\left(\frac{r}{t}\right). 
\end{equation}
In \cite{delort2022microlocal}, the author has proved in Proposition 3.1.1 that 
\begin{proposition}\label{prop_K-G_interaction_limit_of_integral}
Let $F(\rho,\rho',r,t;\zeta,\zeta')$ be a smooth function on $\mathbb{R}_+^{4}\times\mathbb{R}^2$ and $\delta'\in ]\frac{1}{2},1[$. Assume that $F$ is supported for
$$ \rho,\rho'\sim 1,\ r \sim t,\ |\zeta|,|\zeta'|\lesssim t^{\delta'}, $$
and for all $j,j',k,\gamma,\gamma'\in\mathbb{N}$,
$$ | \partial_{\rho}^{j} \partial_{\rho'}^{j'} \partial_{r}^{k} \partial_{\zeta}^{\gamma} \partial_{\zeta'}^{\gamma'} F(\rho,\rho',r,t;\zeta,\zeta') |
\lesssim t^{-\delta'(k+\gamma+\gamma')}. $$
Under all the assumptions above, we have
$$ \lim_{t\rightarrow +\infty} I(t,\pm,\pm,\pm,\pm;F) = 0. $$
\end{proposition}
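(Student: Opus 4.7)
The plan is to exploit the fact that, in the sign pattern $(\pm,\pm,\pm,\pm)$, the phase
$$\phi(r,\rho,\rho') \;=\; \pm\bigl[r(\rho+\rho') - t(P(\rho)+P(\rho'))\bigr]$$
is non-stationary in $r$ on the support of the integrand: $\partial_r\phi = \pm(\rho+\rho')$ is bounded away from zero because $\rho,\rho'\sim 1$. By contrast, $\partial_\rho\phi = \pm(r - tP'(\rho))$ nearly vanishes precisely where $F$ is concentrated (one has $|r-tP'(\rho)|\lesssim t^{\delta'}$ there), so $\rho,\rho'$ are the ``bad'' directions for stationary phase while $r$ is the ``good'' one. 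This is the opposite situation from Proposition \ref{prop_lim_limit_of_integral}, in which the mixed signs made $\partial_r\phi$ small along $\rho\approx\rho'$ and produced a non-trivial limit; here no such cancellation can occur and we expect pure decay.

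My plan is to perform repeated integration by parts in $r$ with the first-order operator $L = \frac{\mp 1}{i(\rho+\rho')}\partial_r$, which satisfies $L(e^{i\phi}) = e^{i\phi}$. Writing $G(r) := F(\rho,\rho',r,t;r-tP'(\rho),r-tP'(\rho'))$, each step produces a bulk piece whose integrand is proportional to $\partial_r G = (\partial_r F + \partial_\zeta F + \partial_{\zeta'}F)(\cdots)$, every summand being $O(t^{-\delta'})$ by the hypothesis on $F$, together with a boundary contribution at the lower endpoint $r_* = \max(tP'(\rho),tP'(\rho'))$ coming from the sharp indicator $\mathbbm{1}_{r/t>P'(\rho),P'(\rho')}$ (the boundary at $r=+\infty$ vanishes since $F$ has compact $r$-support). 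After $N$ such iterations, the bulk term is estimated by observing that the $r$-support of $G$ is the intersection of two $t^{\delta'}$-neighborhoods of $tP'(\rho)$ and $tP'(\rho')$ and has measure $\lesssim t^{\delta'}$; combined with the $t^{-N\delta'}$ gain from the $N$ derivatives, this yields per fixed $(\rho,\rho')$ a bound $t^{\delta'}\cdot t^{-N\delta'} = t^{(1-N)\delta'}$, which, integrated over the compact $\rho,\rho'$-range, tends to zero as soon as $N\geq 2$.

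For the $k$-th boundary term the argument of $F$ corresponding to the variable \emph{not} maximizing $P'$ equals $t(P'(\rho)-P'(\rho'))$, so $F$ can be nonzero there only when $|t(P'(\rho)-P'(\rho'))|\lesssim t^{\delta'}$; by the strict monotonicity of $P'$ granted by hypothesis \eqref{hyp_fractional-type_symbol} this confines $(\rho,\rho')$ to a strip of measure $\lesssim t^{\delta'-1}$, and since each boundary evaluation of $\partial_r^{k-1}G$ costs $t^{-(k-1)\delta'}$, the $k$-th boundary contribution is bounded by $t^{-(k-1)\delta'}\cdot t^{\delta'-1}\to 0$ as $\delta'<1$. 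The main obstacle is precisely this boundary contribution, which in a smooth-symbol setting could be ignored but must here be tracked carefully because of the sharp indicator; the saving is that the codimension-one constraint $P'(\rho)=P'(\rho')$ imposed on the boundary has a quantitative width vanishing at just the right rate. The case $(-,-,-,-)$ is recovered from $(+,+,+,+)$ either by complex conjugation of $F$ or by reflecting $t\mapsto -t$.
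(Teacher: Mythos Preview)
Your argument is correct. The paper itself does not give a self-contained proof of this proposition; it simply cites Proposition~3.1.1 of \cite{delort2022microlocal}. Your approach---repeated integration by parts in $r$, exploiting that $\partial_r\phi=\pm(\rho+\rho')\sim 1$ on the support, together with careful bookkeeping of the boundary contributions at $r_*=t\max(P'(\rho),P'(\rho'))$ forced by the sharp cut-off---is the natural one for this sign pattern and is essentially what the cited reference does. Two minor remarks: (i) your bulk estimate $t^{(1-N)\delta'}$ can be sharpened by an extra factor $t^{\delta'-1}$, since the $r$-support of $G$ being nonempty already imposes $|P'(\rho)-P'(\rho')|\lesssim t^{\delta'-1}$; this is not needed for the conclusion but gives the same mechanism as in the boundary terms. (ii) The strict monotonicity you invoke is equivalently the hypothesis $P''\neq 0$ used throughout Section~\ref{sect_limit_of_energy}, which on the compact $\rho$-range makes $|P'(\rho)-P'(\rho')|\sim|\rho-\rho'|$.
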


It is easy to check that the function $F$ defined in \eqref{eq_K-G_interaction_def_of_F} satisfies the conditions above with $\delta'=\delta+\frac{1}{2}$ and limit \eqref{eq_K-G_lim_of_componants_p2} follows.

\subsection{Limit of energy outside the truncation area}
 
It remains to prove \eqref{eq_K-G_lim_of_componants_p3}, which requires a study of the $L^2$-norm of $(A(t)-A_{\pm}(t))e^{\pm itP(D_x)}v_0$. As in previous part, by conjugation, it suffices to focus on
$$ (A(t)-A_{+}(t))e^{itP(D_x)}v_0(x) = \frac{1}{(2\pi)^d} \int e^{ix\xi} e^{itP(\xi)} (1-\chi)\left( \frac{x+tP'(\xi)}{t^{\frac{1}{2}+\delta}} \right) \mathbbm{1}_{|x|>|tP'(\xi)|} \hat{v}_0(\xi) d\xi, $$
where we omit again the truncation in $\xi$ by assuming $t \gg 1$. Actually, we have
$$ \overline{(A(t)-A_{-}(t))e^{-itP(D_x)}v_0} = (A(t)-A_{+}(t))e^{itP(D_x)}\overline{v_0}, $$
with $\hat{\overline{v}_0}$ belonging to the same subspace $C_c^\infty(\mathbb{R}^d\backslash\{0\})$.

We first check that the $L^2$-norm of $(A(t)-A_{+}(t))e^{itP(D_x)}v_0$ concentrates near $|x|=t$, i.e.
\begin{lemma}\label{lem_K-G_outside_truncation_outside_cone}
There exists a radial function $\kappa\in C_c^\infty(\mathbb{R}^d)$ supported in an annulus centered at zero, such that, when $t \rightarrow +\infty$,
$$ (A(t)-A_{+}(t))e^{itP(D_x)}v_0 = \kappa\left(\frac{x}{t}\right)(A(t)-A_{+}(t))e^{itP(D_x)}v_0 + O_{L^2}(t^{-N}),$$
for any $N\in\mathbb{N}$.
\end{lemma}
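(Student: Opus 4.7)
The plan is to exhibit $\kappa$ as a radial cutoff supported in a compact annulus around the ``Klein-Gordon light ring'' and to show $(1-\kappa(x/t))(A(t)-A_+(t))e^{itP(D_x)}v_0$ is $O_{L^2}(t^{-N})$ for any $N$. Since we have reduced to initial data with $\hat v_0 \in C_c^\infty(\mathbb{R}^d\setminus\{0\})$, I may assume $\hat v_0$ is supported in an annulus $c_1 \leq |\xi| \leq c_2$; then, with $P'(\xi)=\xi/\langle\xi\rangle$, the quantity $|P'(\xi)|$ takes values in some compact subset $[\alpha,\beta]\subset\,]0,1[$ on $\mathrm{supp}\,\hat v_0$. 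I would choose $\kappa$ radial with $\kappa\equiv 1$ on $\{\alpha/2 \le |y|\le 2\beta\}$ and supported in $\{\alpha/4 \le |y|\le 4\beta\}$, and write $1-\kappa(x/t) = \psi_-(x/t) + \psi_+(x/t)$ with $\psi_-$ supported in $|y|<\alpha/2$ and $\psi_+$ in $|y|>2\beta$.

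The inner contribution $\psi_-(x/t)(A(t)-A_+(t))e^{itP(D_x)}v_0$ vanishes identically: on $\mathrm{supp}\,\psi_-(x/t)$ one has $|x|<\alpha t/2$, whereas on $\mathrm{supp}\,\hat v_0$ we have $t|P'(\xi)| \ge \alpha t$, so the indicator $\mathbbm{1}_{|x|>t|P'(\xi)|}$ present in both symbols is zero on the whole integration region, and there is simply nothing to estimate.

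For the outer contribution, note that on $|x|>2\beta t$ one has $|x+tP'(\xi)|\ge |x|-t|P'(\xi)|\ge |x|/2 \gtrsim |x|+t$ for $\xi\in\mathrm{supp}\,\hat v_0$. In particular $|x+tP'(\xi)|/t^{1/2+\delta} \gtrsim t^{1/2-\delta}\to\infty$, so $\chi((x+tP'(\xi))/t^{1/2+\delta})$ vanishes and $\mathbbm{1}_{|x|>t|P'(\xi)|}=1$; the symbol of $A(t)-A_+(t)$ collapses, on this region, to $\chi(\xi/t^\epsilon)$, giving
\[ \psi_+\!\left(\tfrac{x}{t}\right)(A(t)-A_+(t))e^{itP(D_x)}v_0(x) = \frac{\psi_+(x/t)}{(2\pi)^d}\int e^{i(x\cdot\xi + tP(\xi))}\chi(\xi/t^\epsilon)\hat v_0(\xi)\,d\xi. \]
I then integrate by parts in $\xi$ using the transpose of $L = -i|x+tP'(\xi)|^{-2}(x+tP'(\xi))\cdot\nabla_\xi$, which reproduces $e^{i(x\xi+tP(\xi))}$. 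Each application of $L^*$ gains a factor of order $(|x|+t)^{-1}$: the most dangerous coefficient term involves $tP''(\xi)/|x+tP'|^2 \lesssim t/(|x|+t)^2 \lesssim (|x|+t)^{-1}$, while derivatives of $\hat v_0$ and of $\chi(\xi/t^\epsilon)$ (where $|\xi|\sim 1 \ll t^\epsilon$) stay uniformly bounded. After $N$ iterations, $|\psi_+(x/t)(A(t)-A_+(t))e^{itP(D_x)}v_0(x)| \lesssim_N (|x|+t)^{-N}$; the $L^2$ norm of the right-hand side over $|x|\ge 2\beta t$ is $O(t^{-N+d/2})$, yielding the required polynomial decay.

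The main point to verify with care is that the iterated action of $L^*$ really preserves the gain $(|x|+t)^{-1}$ at each step and does not amplify by powers of $t$ coming from the higher derivatives of $P$; this rests on the dominance $|x+tP'(\xi)|\gtrsim |x|+t$ on the outer region combined with the boundedness of all $\partial^\alpha P(\xi)=\partial^\alpha\langle\xi\rangle$ on the compact set $\mathrm{supp}\,\hat v_0$, and is the one place where the restriction to frequencies bounded away from $0$ and $\infty$ is genuinely used.
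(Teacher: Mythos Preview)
Your argument is correct and follows the same overall architecture as the paper: split $1-\kappa(x/t)$ into an inner piece (where the indicator $\mathbbm{1}_{|x|>t|P'(\xi)|}$ vanishes on $\mathrm{supp}\,\hat v_0$, so the whole expression is identically zero) and an outer piece (where the indicator is identically $1$, the $\chi$ factor in $A_+$ disappears, and one integrates by parts in $\xi$ against the nondegenerate phase $x\cdot\xi+tP(\xi)$).

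The difference lies in how the outer piece is estimated. The paper chooses $\kappa_1$ supported in $\{|y|>1\}$, keeps the $(1-\chi)((x+tP'(\xi))/t^{1/2+\delta})$ factor explicitly, performs one integration by parts to produce symbols $q_0,q_1$, bounds them by Calder\'on--Vaillancourt to gain $t^{-2\delta}$ in operator norm, and then iterates at the operator level. You instead pick the outer region $|x|>2\beta t$ with $\beta=\sup_{\mathrm{supp}\,\hat v_0}|P'|<1$, observe that $(1-\chi)\equiv 1$ there so the amplitude collapses to $\chi(\xi/t^\epsilon)\hat v_0(\xi)$, and iterate integration by parts pointwise, using the sharper bound $|x+tP'(\xi)|\gtrsim |x|+t$ to gain $(|x|+t)^{-1}$ each time. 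Your route is more elementary (no Calder\'on--Vaillancourt needed) and gives a stronger intermediate gain per step; the paper's route stays closer to the operator-theoretic framework used elsewhere and only exploits the weaker bound $|x+tP'(\xi)|\gtrsim t^{1/2+\delta}$ coming from $\mathrm{supp}(1-\chi)$. Both lead to the same conclusion.
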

\begin{proof}
Let $\kappa_0\in C_c^\infty(\mathbb{R}^d)$ be a radial function supported near zero and equal to $1$ near zero. By definition, 
$$ (A(t)-A_{+}(t))e^{itP(D_x)}v_0(x) = \frac{1}{(2\pi)^d} \int e^{ix\xi} e^{itP(\xi)} (1-\chi)\left( \frac{x+tP'(\xi)}{t^{\frac{1}{2}+\delta}} \right) \mathbbm{1}_{|x|>|tP'(\xi)|} \hat{v}_0(\xi) d\xi. $$
Due to the support of integrand, namely $0<c<|\xi|<C$ and $|x|>tP'(|\xi|)$, the function above is supported for $|x|>tc_0$, where $0<c_0<P'(c)$. That is to say, when $\Supp\kappa_0$ is chosen small enough,
$$ \kappa_0\left(\frac{x}{t}\right)(A(t)-A_{+}(t))e^{itP(D_x)}v_0(x) = 0,\ \ \forall t,x. $$

Let $\kappa_1\in C_c^\infty(\mathbb{R}^d)$ be a radial function supported outside the unit ball. We further assume that $\kappa_1$ equals to $1$ away from zero. By observing that $|P'(\xi)|<1$ for all $\xi\in\mathbb{R}^d$, we have
$$ \kappa_1\left(\frac{x}{t}\right)(A(t)-A_{+}(t))e^{itP(D_x)}v_0(x) = \frac{\kappa_1\left(\frac{x}{t}\right)}{(2\pi)^d}  \int e^{i(x\xi+tP(\xi))} (1-\chi)\left( \frac{x+tP'(\xi)}{t^{\frac{1}{2}+\delta}} \right) \hat{v}_0(\xi) d\xi.$$
Remark that the non-smooth term $\mathbbm{1}_{|x|>t|P'(\xi)|}$ is identically $1$ as we add the cut-off $\kappa_1$. By integration by parts in $\xi$, we may rewrite the quantity above as
\begin{align*}
&\frac{\kappa_1\left(\frac{x}{t}\right)}{(2\pi)^d}  \int e^{i(x\xi+tP(\xi))} \frac{-\partial_\xi}{i} \cdot \left[ \frac{x+tP'(\xi)}{|x+tP'(\xi)|^2}(1-\chi)\left( \frac{x+tP'(\xi)}{t^{\frac{1}{2}+\delta}} \right) \hat{v}_0(\xi) \right] d\xi \\
=& \frac{\kappa_1\left(\frac{x}{t}\right)}{(2\pi)^d}  \int e^{i(x\xi+tP(\xi))} q_1(t,x,\xi) \hat{v}_0(\xi) d\xi + \frac{\kappa_1\left(\frac{x}{t}\right)}{(2\pi)^d}  \int e^{i(x\xi+tP(\xi))} q_0(t,x,\xi) \cdot \hat{v}_1(\xi) d\xi,
\end{align*}
where  $v_1(x) = xv_0(x)$, and
\begin{align*}
q_0(t,x,\xi) &= \frac{-\partial_\xi}{i} \left[ \frac{x+tP'(\xi)}{|x+tP'(\xi)|^2}(1-\chi)\left( \frac{x+tP'(\xi)}{t^{\frac{1}{2}+\delta}} \right) \right], \\
q_1(t,x,\xi) &= \frac{x+tP'(\xi)}{|x+tP'(\xi)|^2}(1-\chi)\left( \frac{x+tP'(\xi)}{t^{\frac{1}{2}+\delta}} \right)
\end{align*}
are smooth symbols satisfying for all $\alpha,\beta\in\mathbb{N}^{d}$,
$$ \left| \partial_{x}^\alpha \partial_\xi^\beta q_k(t,x,\xi) \right| \lesssim t^{-2\delta} t^{-(\frac{1}{2}+\delta)|\alpha|} t^{(\frac{1}{2}-\delta)|\beta|},\ \ k=0,1. $$
By Calderon-Vaillancourt Theorem (Lemma \ref{lem_tech_C-V}) and Lemma \ref{lem_tech_change_of_scaling}, the $\mathcal{L}(L^2)$-norm of operators $\Op{q_k}$'s is bounded by $t^{-2\delta}$, which implies that
$$ \left\| \kappa_1\left(\frac{x}{t}\right)(A(t)-A_{+}(t))e^{itP(D_x)}v_0 \right\|_{L^2} \lesssim t^{-2\delta}\left( \|v_0\|_{L^2} + \|v_1\|_{L^2} \right) \sim t^{-2\delta} \|\langle x \rangle v_0\|_{L^2}. $$
By repeating this procedure for $M$ times, we obtain
$$ \left\| \kappa_1\left(\frac{x}{t}\right)(A(t)-A_{+}(t))e^{itP(D_x)}v_0 \right\|_{L^2} \lesssim  t^{-2M\delta} \|\langle x \rangle^M v_0\|_{L^2} \lesssim t^{-2M\delta}. $$
The last estimate follows from the fact that $v_0$ is a Schwartz function. The proof is completed by choosing $\kappa = 1-\kappa_0-\kappa_1$.
\end{proof}

Thanks to Lemma \ref{lem_K-G_outside_truncation_outside_cone}, it remains to estimate the $L^2(dx)$-norm of 
$$ \kappa\left(\frac{x}{t}\right)(A(t)-A_{+}(t))e^{itP(D_x)}v_0(x), $$
or equivalently, in polar system, the $L^2(r^{d-1} drd\omega)$-norm of $I(t,r,\omega)$ defined by
\begin{equation}\label{eq_K-G_outside_truncation_near_cone_integral}
I(t,r,\omega) = \kappa\left(\frac{r}{t}\right) \int e^{ir\rho\omega\cdot\theta} e^{itP(\rho)} (1-\chi)\left( \frac{r\omega+tP'(\rho)\theta}{t^{\frac{1}{2}+\delta}} \right) \mathbbm{1}_{r>tP'(\rho)} \hat{v}_0(\rho\theta) \rho^{d-1} d\rho d\theta
\end{equation}

To begin with, we decompose \eqref{eq_K-G_outside_truncation_near_cone_integral} into three parts $I_+(t,r,\omega),I_-(t,r,\omega),I_0(t,r,\omega)$ by inserting
$$ \chi_0\left( \frac{\omega+\theta}{t^{\delta-\frac{1}{2}}} \right),\ \chi_0\left( \frac{\omega-\theta}{t^{\delta-\frac{1}{2}}} \right),\ 1-\chi_0\left( \frac{\omega+\theta}{t^{\delta-\frac{1}{2}}} \right)-\chi_0\left( \frac{\omega-\theta}{t^{\delta-\frac{1}{2}}} \right) $$
to the integral respectively, where $\chi_0\in C_c^\infty(\mathbb{R}^d)$ is radial, supported in a small ball centered at zero, and equal to $1$ near zero. The desired result \eqref{eq_K-G_lim_of_componants_p3} thus follows from the lemma below :

\begin{lemma}\label{lem_K-G_outside_truncation_near_cone}
For all $N\in\mathbb{N}$, there exists constants $C$, $C_N$ which is independent of $t,r,\omega$, such that
\begin{align}
&\| I_{+}(t,r,\omega) \|_{L^2(r^{d-1}drd\omega)}^2 \leqslant C_N t^{-N}, \label{eq_K-G_outside_truncation_near_cone_p1} \\
&\| I_{-}(t,r,\omega) \|_{L^2(r^{d-1}drd\omega)}^2 \leqslant C t^{-1}, \label{eq_K-G_outside_truncation_near_cone_p2} \\
&\| I_{0}(t,r,\omega) \|_{L^2(r^{d-1}drd\omega)}^2 \leqslant C_N t^{-N}. \label{eq_K-G_outside_truncation_near_cone_p3}
\end{align}
\end{lemma}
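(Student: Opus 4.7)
The strategy is to handle each of $I_+,I_-,I_0$ by angular stationary phase at one of the critical points $\theta=\pm\omega$ of $\theta\mapsto r\rho\omega\cdot\theta$ on the sphere, followed by integration by parts in $\rho$. Throughout, $r\sim t$ on the support of $\kappa(r/t)$, $\rho$ is compactly supported away from $0$ by $\hat v_0$, and $P'(\rho)\in(0,1)$ is bounded away from both $0$ and $1$.

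For $I_+$, I would invoke a version of Lemma \ref{lem_stat_int_on_angular_limit_part} at the critical point $\theta=-\omega$, which lies inside the angular window $\chi_0((\omega+\theta)/t^{\delta-1/2})$. Modulo a rapidly decaying remainder, this turns $I_+$ into a one-dimensional integral of the form
$$
\kappa\!\left(\tfrac{r}{t}\right)(r\rho)^{-\frac{d-1}{2}}\int e^{i(tP(\rho)-r\rho)}(1-\chi)\!\left(\tfrac{(r-tP'(\rho))\omega}{t^{\frac{1}{2}+\delta}}\right)\mathbbm{1}_{r>tP'(\rho)}\,b(t,r,\rho,\omega)\,d\rho,
$$
with smooth bounded amplitude $b$. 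On the support of $(1-\chi)$ one has $|r-tP'(\rho)|\gtrsim t^{1/2+\delta}$; combined with the indicator this forces $r-tP'(\rho)\gtrsim t^{1/2+\delta}$, so $\mathbbm{1}_{r>tP'(\rho)}$ is identically $1$ in a neighborhood of every support point and can be treated as a smooth factor. The phase derivative $\partial_\rho(tP(\rho)-r\rho)=tP'(\rho)-r$ then has magnitude $\gtrsim t^{1/2+\delta}$, while derivatives of $(1-\chi)$ in $\rho$ are of size $\lesssim t^{1/2-\delta}|P''(\rho)|\lesssim t^{1/2-\delta}$. Iterated integration by parts in $\rho$ therefore produces a factor $t^{-2\delta}$ per step; after $N_0$ steps, $|I_+|\lesssim r^{-(d-1)/2}t^{-2N_0\delta}$, and after multiplying by $r^{d-1}$ and integrating over $r\sim t$ and $\omega\in\mathbb{S}^{d-1}$ we get \eqref{eq_K-G_outside_truncation_near_cone_p1}.

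For $I_0$, $\theta$ is separated from both $\pm\omega$ by $\gtrsim t^{\delta-1/2}$, so the tangential spherical gradient $\nabla^{\mathbb{S}}_\theta(r\rho\omega\cdot\theta)=r\rho(\omega-(\omega\cdot\theta)\theta)$ has magnitude $r\rho\sqrt{1-(\omega\cdot\theta)^2}\gtrsim t\cdot t^{\delta-1/2}=t^{1/2+\delta}$; the critical points are avoided so the indicator is harmless, and direct iterated integration by parts on the sphere applies, each step contributing $t^{1/2-\delta}/t^{1/2+\delta}=t^{-2\delta}$ from derivatives of the cut-offs, giving \eqref{eq_K-G_outside_truncation_near_cone_p3}. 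For $I_-$, the critical point $\theta=\omega$ is on the support; there $|r\omega+tP'(\rho)\omega|=r+tP'(\rho)\sim t\gg t^{1/2+\delta}$, so $(1-\chi)\equiv 1$, and $\mathbbm{1}_{r>tP'(\rho)}$ can be checked to equal $1$ on the relevant support. Stationary phase in $\theta$ at $\theta=\omega$ reduces $I_-$ to a one-dimensional integral with phase $r\rho+tP(\rho)$ and derivative $r+tP'(\rho)\sim t$ which never vanishes. A single integration by parts in $\rho$ yields $|I_-(t,r,\omega)|\lesssim r^{-(d-1)/2}t^{-1}$, whence $r^{d-1}|I_-|^2\lesssim t^{-2}$, and integrating over $r\in[c_0t,c_1t]$ and $\omega$ gives \eqref{eq_K-G_outside_truncation_near_cone_p2}.

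The main obstacle is $I_+$: one must carry out the angular stationary phase with sharp enough remainders so that, after reduction, the factor $(1-\chi)$ evaluated at $\theta=-\omega$ genuinely confines the $\rho$-integration to $\{|r-tP'(\rho)|\gtrsim t^{1/2+\delta}\}$. The delicate point is the coincidence of three scales—the $t^{1/2+\delta}$ size of the angular cut-off, the $t^{1/2+\delta}$ lower bound on the phase derivative, and the $t^{1/2+\delta}$ distance from the singular set of the indicator—whose compatibility is what allows the indicator to be treated as smooth and the iterated integration by parts to proceed cleanly.
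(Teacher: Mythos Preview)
Your treatment of $I_0$ matches the paper's (integration by parts in the tangential direction on the sphere), but there are issues with $I_+$ and $I_-$.

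For $I_-$, the claim that $\mathbbm{1}_{r>tP'(\rho)}$ equals $1$ on the relevant support is false. The cut-off $\kappa$ constructed in Lemma~\ref{lem_K-G_outside_truncation_outside_cone} is supported in an annulus whose inner radius $c_0$ satisfies $c_0<P'(c)$, where $c$ is the lower bound of $|\xi|$ on $\Supp\hat v_0$; hence $r/t$ can lie below $P'(\rho)$ for some $\rho$ in the support of $\hat v_0$. The paper keeps the indicator as an upper limit $\min(\rho_1,(P')^{-1}(r/t))$ on the $\rho$-integration; after stationary phase in $\theta$ and one integration by parts in $\rho$, the boundary term at this endpoint is also $O(t^{-1})$ since $r+tP'(\rho)\sim t$ there. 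Your final bound survives, but only once this boundary term is accounted for.

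For $I_+$, the detour through angular stationary phase is unnecessary and creates the complication you flag at the end: the stationary-phase remainder does not manifestly carry the $(1-\chi)$ factor, so one has to argue separately that it inherits the support constraint $|r-tP'(\rho)|\gtrsim t^{1/2+\delta}$ before doing IBP in $\rho$. The paper bypasses this entirely: on the support of $\chi_0\big((\omega+\theta)/t^{\delta-1/2}\big)$ and $(1-\chi)$ one checks directly that
\[
|r-tP'(\rho)|\ \geq\ |r\omega+tP'(\rho)\theta|-r|\omega+\theta|\ \geq\ (c-Cc_0)\,t^{1/2+\delta},
\]
so the indicator is already smooth, and similarly $|r\omega\cdot\theta+tP'(\rho)|\gtrsim t^{1/2+\delta}$. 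One can then integrate by parts in $\rho$ with the full phase $r\rho\,\omega\cdot\theta+tP(\rho)$ \emph{before} touching $\theta$, gaining $t^{-2\delta}$ at each step. This is both simpler and avoids the remainder-tracking you would otherwise need.
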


\begin{proof}[Proof of \eqref{eq_K-G_outside_truncation_near_cone_p1}]
The integral $I_+(t,r,\omega)$, by definition, reads
$$ \kappa\left(\frac{r}{t}\right) \int e^{i\left[r\rho\omega\cdot\theta+tP(\rho)\right]} (1-\chi)\left( \frac{r\omega+tP'(\rho)\theta}{t^{\frac{1}{2}+\delta}} \right) \chi_0\left( \frac{\omega+\theta}{t^{\delta-\frac{1}{2}}} \right) \mathbbm{1}_{r>tP'(\rho)} \hat{v}_0(\rho\theta) \rho^{d-1} d\rho d\theta. $$
The intergand of $I_+$ is supported for
$$ |r-tP'(\rho)| = |-r\theta+tP'(\rho)\theta| \geqslant |r\omega + tP'(\rho)\theta| - r|\theta+\omega| \geqslant ct^{\frac{1}{2}+\delta} - Cc_0 t \times t^{\delta-\frac{1}{2}} = (c-Cc_0)t^{\frac{1}{2}+\delta}, $$
where $0<c_0 \ll 1$ is the radius of $\Supp \chi_0$. This implies that
$$|r-tP'(\rho)| \geqslant c' t^{\frac{1}{2}+\delta}. $$
As a consequence, the integrand of $I_{+}$ is smooth. This allows us to apply integration by parts in $\rho$, since $\rho\sim 1$ and
$$ |r\omega\cdot\theta+tP'(\rho)| \geqslant |-r+tP'(\rho)| - r|\omega+\theta| \geqslant (c-2Cc_0)t^{\frac{1}{2}+\delta} \geqslant c'' t^{\frac{1}{2}+\delta}. $$
More precisely, by using
$$ e^{i\left[r\rho\omega\cdot\theta+tP(\rho)\right]} = \frac{-i}{r\omega\cdot\theta+tP'(\rho)} \partial_\rho  e^{i\left[r\rho\omega\cdot\theta+tP(\rho)\right]}, $$
one may gain $t^{-(\frac{1}{2}+\delta)}$ from $|r\omega\cdot\theta+tP'(\rho)|^{-1}$ and $t^{\frac{1}{2}-\delta}$ from each $\partial_\rho$. In conclusion, after $M$ times integration by parts in $\rho$ we have
$$ |I_+(t,r,\omega)| \lesssim t^{-2M\delta}. $$
Due to factor $\kappa$, $I_+$ is supported for $r\sim t$, which implies that
$$ \| I_{+}(t,r,\omega) \|_{L^2(r^{d-1}drd\omega)}^2 \lesssim t^{-4M\delta+d} \leqslant t^{-N}, $$
where $M$ is large enough so that $4M\delta-d \geqslant N$.
\end{proof}

\begin{proof}[Proof of \eqref{eq_K-G_outside_truncation_near_cone_p2}]

The integral $I_-$ will be treated as in Section \ref{sect_limit_of_energy}. We first observe that the integrand of $I_-(t,r,\omega)$, which reads
$$ \kappa\left(\frac{r}{t}\right) \int e^{i\left[r\rho\omega\cdot\theta+tP(\rho)\right]} (1-\chi)\left( \frac{r\omega+tP'(\rho)\theta}{t^{\frac{1}{2}+\delta}} \right) \chi_0\left( \frac{\omega-\theta}{t^{\delta-\frac{1}{2}}} \right) \mathbbm{1}_{r>tP'(\rho)} \hat{v}_0(\rho\theta) \rho^{d-1} d\rho d\theta, $$
is supported for, as $t \gg 1$,
$$ |r\omega+tP'(\rho)\theta| \geqslant |r\theta+tP'(\rho)\theta| - r|\omega-\theta| \geqslant r+tP'(\rho) - Cc_0 t^{\frac{1}{2}+\delta} \geqslant c't, $$
where $0<c_0 \ll 1$ is the radius of $\Supp\chi_0$. That is to say, the factor $(1-\chi)$ is identically $1$, and $I_-$ becomes
$$ \kappa\left(\frac{r}{t}\right) \int_{\rho_0}^{\min(\rho_1,(P')^{-1}(r/t))} e^{itP(\rho)} \int_{\mathbb{S}^d}  e^{ir\rho\omega\cdot\theta} \chi_0\left( \frac{\omega-\theta}{t^{\delta-\frac{1}{2}}} \right) \hat{v}_0(\rho\theta) d\theta  \rho^{d-1} d\rho,  $$
where $\hat{v}_0(\rho\theta)$ is supported for $\rho \in [\rho_0,\rho_1]$ and we use the convention $(P')^{-1}(s) = +\infty$ when $s\geqslant 1$. Now, we may apply Lemma \ref{lem_stat_stationary_phase} with $\lambda=r\rho$, $\mu = t^{\delta-\frac{1}{2}}$, and
$$ F(x,y,z,\mu;\rho) = \chi_0\left( \frac{y-z}{\mu} \right) \tilde{\chi}_0\left( \frac{x-y}{\mu} \right) \hat{v}_0(\rho y). $$
Here $\tilde{\chi}_0\in C_c^\infty(\mathbb{R}^d)$ is chosen to be equal to $1$ on the support of $\chi_0$. Note that $F$ has to be taken at $(x,y,z)=(\omega,\theta,\omega)$ in order to recover the above integral. This corresponds in the statement of Lemma \ref{lem_stat_stationary_phase} to $x=\theta$, $y=\theta'$, and $z=\omega$ (and no variable $\omega'$). The extra term $\rho$ is an extra parameter staying in a compact subset. Due to our choice of $\mu$, it is clear that $r\sim t$ and $\rho\sim 1$ imply $\lambda\mu^2 \geqslant c t^{2\delta} \gg 1$. As a result, the integral in $\theta$ equals to
$$ \int_{\mathbb{S}^d}  e^{ir\rho\omega\cdot\theta} \chi_0\left( \frac{\omega-\theta}{t^{\delta-\frac{1}{2}}} \right) \hat{v}_0(\rho\theta) d\theta = (2\pi)^{\frac{d-1}{2}} e^{ir\rho} \mu^{d-1} S_{-\frac{d-1}{2}}(\omega,\rho,\mu;r\rho\mu^2), $$
with $S_{m}(\omega,\rho,\mu;\zeta)$ smooth, supported for $\rho\sim 1$, $\zeta > 1$, and satisfying for all $\alpha,\in\mathbb{N}$, $j,k,\gamma\in\mathbb{N}$
$$ |\partial_{\omega}^{\alpha} \partial_{\rho}^{j} \partial_{\mu}^{k}\partial_{\zeta}^{\gamma} S^{\pm}_{m}(\omega,\rho,\mu;\zeta)| 
\leqslant C \mu^{-|\alpha|-k} \langle\zeta\rangle^{m-\gamma}. $$
This formulation allows us to rewrite $I_- r^{\frac{d-1}{2}}$, up to multiplication with constants, as
$$ I_-(t,r,\omega)r^{\frac{d-1}{2}} = \kappa\left(\frac{r}{t}\right) \int_{\rho_0}^{\min(\rho_1,(P')^{-1}(r/t))} e^{i\left[r\rho+tP(\rho)\right]} S_{0}(\omega,\rho,\mu;r\rho\mu^2) \rho^\frac{d-1}{2} d\rho. $$
Now, we may apply integration by parts in $\rho$. Due to the fact that $r+tP'(\rho) \sim t$ and $\min(\rho_1,(P')^{-1}(r/t)) \sim 1$, the boundary terms and remaining term are all bounded by $t^{-1}$, namely
$$ \left| I_-(t,r,\omega)r^{\frac{d-1}{2}} \right| \lesssim \mathbbm{1}_{r\sim t} t^{-1}, $$
which implies \eqref{eq_K-G_outside_truncation_near_cone_p2}.
\end{proof}

\begin{proof}[Proof of \eqref{eq_K-G_outside_truncation_near_cone_p3}]
Unlike the study of $I_{\pm}$, $I_0$ will be estimated via integration by parts in $\theta$ variable. It is clear that the integrand of
\begin{equation*}
\begin{aligned}
I_0(t,r,\omega) =& \kappa\left(\frac{r}{t}\right) \int e^{ir\rho\omega\cdot\theta} e^{itP(\rho)} (1-\chi)\left( \frac{r\omega+tP'(\rho)\theta}{t^{\frac{1}{2}+\delta}} \right)\mathbbm{1}_{r>tP'(\rho)} \hat{v}_0(\rho\theta) \rho^{d-1} \\
&\hspace{10em}\times\left[ 1-\chi_0\left( \frac{\omega+\theta}{t^{\delta-\frac{1}{2}}} \right)-\chi_0\left( \frac{\omega-\theta}{t^{\delta-\frac{1}{2}}} \right) \right]  d\rho d\theta
\end{aligned}
\end{equation*}
is supported away from $\{\theta\pm\omega=0\}$, which allows us to rewrite the integral above in local coordinate
$$ \theta = h\omega + \sqrt{1-h^2}y,\ \ h\in [-1,1],y\in\omega^\perp\cap\mathbb{S}^{d-1}, $$
where $\omega^\perp$ is defined as the hyperplane $\{ y\in\mathbb{R}^d : y\cdot\omega=0 \}$. In this way, we may rewrite $I_0$ as
\begin{equation}\label{eq_K-G_outside_truncation_near_cone_p3_int_loc_coord}
\begin{aligned}
&\kappa\left(\frac{r}{t}\right) \int e^{ir\rho h} F_0(t,r,\rho,\omega,h\omega + \sqrt{1-h^2}y) (1-h^2)^{\frac{d}{2}-1} \\
&\hspace{4em}\times\left[ 1-\chi_0\left( \frac{(h+1)\omega + \sqrt{1-h^2}y}{t^{\delta-\frac{1}{2}}} \right)-\chi_0\left( \frac{(h-1)\omega + \sqrt{1-h^2}y}{t^{\delta-\frac{1}{2}}} \right) \right] dh dy  d\rho,
\end{aligned}
\end{equation}
where 
$$ F_0(t,r,\rho,\omega,\theta) = e^{itP(\rho)} (1-\chi)\left( \frac{r\omega+tP'(\rho)\theta}{t^{\frac{1}{2}+\delta}} \right)\mathbbm{1}_{r>tP'(\rho)} \hat{v}_0(\rho\theta) \rho^{d-1}. $$
Remark that due to the cut-off away from $\pm\omega$, the integrand of \eqref{eq_K-G_outside_truncation_near_cone_p3_int_loc_coord} is supported for
$$ | (h\pm 1)\omega + \sqrt{1-h^2}y |^2 \geqslant ct^{2\delta-1}. $$
By developing the inequality above we obtain
$$ \sqrt{1-h^2} \geqslant c't^{\delta-\frac{1}{2}}.$$
Thus, $F_0$ is supported for $\rho \sim 1$ and satisfies for all $k\in\mathbb{N}$,
$$ \left| \partial_h^k \left( F_0(t,r,\rho,\omega,h\omega + \sqrt{1-h^2}y) \right) \right| \lesssim t^{(1-2\delta)k}. $$
The same estimates hold for $(1-h^2)^{\frac{d}{2}-1}$ and cut-off $\chi_0$'s in dimension $d \geqslant 2$, while in trivial case $d=1$, $I_0$ is identically zero.

Now, we may apply $M$ times integration by parts in $h$ for \eqref{eq_K-G_outside_truncation_near_cone_p3_int_loc_coord}. As $r\sim t$, $\rho\sim 1$, each $\partial_h$ in the amplitude gives $t^{1-2\delta}$, we have
$$ \left| I_0(t,r,\omega) \right| \lesssim \mathbbm{1}_{r\sim t} t^{-2M\delta}, $$
and \eqref{eq_K-G_outside_truncation_near_cone_p3} follows from
$$ \left\| I_0(t,r,\omega) \right\|_{L^2(r^{d-1}drd\omega)}^2 \lesssim t^{-4M\delta+d} \lesssim t^{-N}, $$
by choosing $4M\delta-d \geqslant N$.
\end{proof}

\appendix

\section{Technical lemmas}\label{sect_lemmas}

This appendix is a collection of technical lemmas which are used in previous sections.

\subsection{Some technical inequalities}

\begin{lemma}\label{lem_tech_int_on_sphere}
For any real number $m>d-1$, there exists constant $C = C(m,d)>0$, such that
$$ \sup_{\xi\in\mathbb{R}^{d}} \int_{\mathbb{S}^{d-1}} \langle R\omega-\xi \rangle ^{-m}d\omega \leqslant C R^{-(d-1)}.$$
\end{lemma}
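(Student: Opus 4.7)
The plan is to reduce the problem to a sharp bound on spherical cap measures. By rotational invariance of surface measure on $\mathbb{S}^{d-1}$, I may assume $\xi = \rho\, e_1$ with $\rho = |\xi|$. Writing $\omega = \cos\theta\, e_1 + \sin\theta\, \omega'$ for $\omega' \in \mathbb{S}^{d-2}$, the identity
\begin{equation*}
|R\omega - \xi|^2 = (R-\rho)^2 + 4R\rho\sin^2(\theta/2)
\end{equation*}
reduces the integral to a one-dimensional integral in $\theta \in [0,\pi]$ with weight $\sin^{d-2}\theta$.

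I would first dispose of the easy regimes. When $R \leq 1$, the bound is trivial since $R^{-(d-1)} \geq 1$ and the integrand is bounded by $1$ (so the estimate holds with $C = |\mathbb{S}^{d-1}|$). When $R \geq 1$ and $|\rho - R| \geq R/2$, the inequality above gives $|R\omega - \xi| \geq R/2$ uniformly in $\omega$, whence the integral is bounded by $C R^{-m}$, which is controlled by $C R^{-(d-1)}$ thanks to the hypothesis $m > d - 1$.

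The core of the argument treats the remaining case $R \geq 1$ and $\rho \sim R$. I would dyadically decompose the sphere: for $k \in \mathbb{Z}$, set $A_k = \{\omega \in \mathbb{S}^{d-1} : 2^k \leq \langle R\omega - \xi\rangle < 2^{k+1}\}$. The essential geometric input is that $\{\omega : |R\omega - \xi| \leq T\}$ is a spherical cap of half-angle $\lesssim T/\sqrt{R\rho}$, so its surface measure is bounded by $C(T/R)^{d-1}$ (capped at a constant when $T \gtrsim R$). This follows directly from the identity above via the substitution $u = \sin(\theta/2)$, using that the cap of half-angle $\theta_{0}$ has area $\sim \theta_{0}^{d-1}$ for $\theta_{0}$ small. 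Consequently,
\begin{equation*}
\int_{A_k} \langle R\omega - \xi\rangle^{-m}\, d\omega \leq C\, 2^{-km}\, \min\!\bigl(1,(2^k/R)^{d-1}\bigr),
\end{equation*}
and summing in $k$ yields
\begin{equation*}
\int_{\mathbb{S}^{d-1}} \langle R\omega - \xi\rangle^{-m}\, d\omega \leq C R^{-(d-1)} \sum_{2^k \leq R} 2^{k(d-1-m)} + C \sum_{2^k > R} 2^{-km} \leq C_m\, R^{-(d-1)}.
\end{equation*}

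The only nontrivial step is the spherical cap measure estimate, but it is elementary. The hypothesis $m > d-1$ enters (and is sharp) exactly to ensure convergence of the first geometric series; in the second series it is used to obtain $R^{-m} \leq R^{-(d-1)}$ for $R \geq 1$. There is no serious obstacle beyond a careful bookkeeping of cases according to the relative sizes of $R$, $|\xi|$, and $R - |\xi|$.
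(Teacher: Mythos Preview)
Your argument is correct. The case split and dyadic decomposition are sound, the spherical cap estimate is exactly what is needed, and the hypothesis $m>d-1$ is used precisely where it must be.

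The paper's proof proceeds differently: it rewrites the integral as a convolution $K*\mu_R(\xi)$ with $K(x)=(1+|x|^m)^{-1}$ and $\mu_R$ the surface measure on $R\mathbb{S}^{d-1}$, then proves the abstract bound $K*\mu(\xi)\leqslant c\,M^{(d-1)}_\mu(\xi)$ for any Borel measure $\mu$, where $M^{(d-1)}_\mu(\xi)=\sup_{r>0}r^{-(d-1)}\mu(B(\xi,r))$ is the $(d-1)$-dimensional density. A layer-cake decomposition of $K$ reduces this to summing $\mu(B(0,r_j))\leqslant M^{(d-1)}_\mu(0)\,r_j^{d-1}$ over level sets $r_j$ of $K$. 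The result then follows from the scaling identity $M^{(d-1)}_{\mu_R}(\xi)=M^{(d-1)}_{\mu_1}(\xi/R)$, which is bounded uniformly. Your spherical cap bound is exactly the statement that $M^{(d-1)}_{\mu_R}$ is uniformly bounded, and your dyadic sum in $k$ plays the role of the paper's layer-cake sum in $j$; the two arguments are really the same computation in different clothes. The paper's framing is more general (it applies to any kernel of the right decay against any Ahlfors $(d-1)$-regular measure) and avoids the case analysis in $R,\rho$, while your approach is more explicit and self-contained.
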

\begin{proof}
By change of variable, it is equivalent to study the boundedness of 
$$ \int_{R\mathbb{S}^{d-1}} K(\omega-\xi) d\omega, $$
where $K(x) = (1+|x|^{m})^{-1}$. This integral can be regarded as a convolution of $K$ and the Borel measure $\mu_{R}$, which is defined by
$$ \forall \phi\in C_{c}(\mathbb{R}^d),\ \ \langle \mu_R, \phi \rangle:= \int_{R\mathbb{S}^{d-1}} \phi(\omega) d\omega. $$

We introduce the $\alpha$-dimensional density of a Borel measure $\mu$:
$$M^{(\alpha)}_{\mu}(x):= \sup_{r>0} \frac{\mu(B(x,r))}{r^{\alpha}}.$$
Remark that $M^{(d-1)}_{\mu_R}(\xi) = M^{(d-1)}_{\mu_1}(\xi/R)$ and that $M^{(d-1)}_{\mu_1}$ is a bounded function. Thus, $M^{(d-1)}_{\mu_R}(\xi)$ is bounded uniformly in $R>0$ and $\xi\in\mathbb{R}^d$.

Now, it suffices to show that there exists some constant $c = c(m,d)$, such that for any Borel measure $\mu$ on $\mathbb{R}^d$,
$$K*\mu(\xi) \leqslant c M^{(d-1)}_{\mu}(\xi).$$
By applying a translation, the problem can be reduced to the case $\xi=0$.
\begin{equation*}
\begin{aligned}
K*\mu(0) \
&= \int_{\mathbb{R}^d} K(-y) d\mu(y) \\
&= \int_{\mathbb{R}^d} K(y) d\mu(y) \\
&= \lim_{n\rightarrow\infty} \frac{1}{n} \sum_{j=1}^{n} \mu(B(0,r^{(n)}_j)), \\
\end{aligned}
\end{equation*}
where that last equality follows from dominated convergence theorem, with $r^{(n)}_j$ defined by $K^{-1}(1-\frac{j}{n}) = r^{(n)}_j \mathbb{S}^{d-1}$. We may calculate $r^{(n)}_j$ explicitly $r^{(n)}_j = \left(\frac{j/n}{1-j/n}\right)^{\frac{1}{m}}$. Therefore, by definition of $(d-1)$-dimensional density, we have
\begin{equation*}
\begin{aligned}
K*\mu(0) \
&\leqslant M^{(d-1)}_{\mu}(0) \lim_{n\rightarrow\infty} \frac{1}{n} \sum_{j=1}^{n} \left(r^{(n)}_j\right)^{d-1} \\
&= M^{(d-1)}_{\mu}(0) \lim_{n\rightarrow\infty} \frac{1}{n} \sum_{j=1}^{n} \left(\frac{j/n}{1-j/n}\right)^{\frac{d-1}{m}} \\
&= M^{(d-1)}_{\mu}(0) \int_{0}^{1} \left(\frac{x}{1-x}\right)^{\frac{d-1}{m}} dx.
\end{aligned}
\end{equation*}
The last quantity is finite since $m>d-1$.
\end{proof}

\begin{lemma}\label{lem_tech_fourier_symbol}
Let $S_{-1} \in C^{\infty}(\mathbb{R})$ with $|S_{-1}^{(\alpha)}(\xi)|\leqslant C_\alpha \langle\xi\rangle^{-1-\alpha} $, for any $\alpha\in\mathbb{N}$. Then, for any $N\in\mathbb{N}$, there exists some constant $C$, such that
\begin{equation}\label{eq_tech_lem_fourier_symbol}
\left| \int_{\mathbb{R}} e^{i\lambda\xi} S_{-1}(\xi) d\xi \right| \leqslant C \langle\lambda\rangle^{-N}( 1 + \log_{-}(|\lambda|) ),\ \ \forall\lambda\neq 0,
\end{equation}
where the integral on the left hand side should be understood as oscillatory integral and the function $\log_{-}$ is defined by:
\begin{equation*}
\log_{-}(t):= \left\{
\begin{array}{cl}
|\log(t)| & \text{if\ }t \in ]0,1[ \\
0 & \text{otherwise}
\end{array}\right.
\end{equation*}
\end{lemma}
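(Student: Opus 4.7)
The plan is to treat the large-$\lambda$ and small-$\lambda$ regimes separately, since $\langle\lambda\rangle^{-N}(1+\log_{-}|\lambda|) \sim |\lambda|^{-N}$ for $|\lambda| \geqslant 1$ and $\sim 1+|\log|\lambda||$ for $|\lambda| < 1$. The oscillatory integral is well-defined because $S_{-1}'$ decays like $\langle\xi\rangle^{-2}$ and hence lies in $L^{1}$, so one integration by parts already produces an absolutely convergent integral $I(\lambda) = (i\lambda)^{-1}\int e^{i\lambda\xi} S_{-1}'(\xi)\,d\xi$ for $\lambda \neq 0$.

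For $|\lambda| \geqslant 1$, I would iterate integration by parts $N$ times to obtain
\begin{equation*}
I(\lambda) = (i\lambda)^{-N}\int_{\mathbb{R}} e^{i\lambda\xi} S_{-1}^{(N)}(\xi)\,d\xi.
\end{equation*}
Since $|S_{-1}^{(N)}(\xi)| \leqslant C_{N}\langle\xi\rangle^{-1-N}$, which is integrable for any $N \geqslant 1$, the right-hand side is bounded by $C|\lambda|^{-N} \leqslant C'\langle\lambda\rangle^{-N}$. In this range $\log_{-}|\lambda| = 0$, so the desired estimate follows.

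For $0 < |\lambda| < 1$, I would introduce a cutoff $\chi_{0} \in C_{c}^{\infty}(\mathbb{R})$ equal to $1$ on $[-1,1]$ and supported in $[-2,2]$, and split $I(\lambda) = I_{1}(\lambda) + I_{2}(\lambda)$ according to $\chi_{0}(|\lambda|\xi)$ and $1-\chi_{0}(|\lambda|\xi)$. The low-frequency piece $I_{1}$ is estimated directly:
\begin{equation*}
|I_{1}(\lambda)| \leqslant C \int_{|\xi| \leqslant 2/|\lambda|} \langle\xi\rangle^{-1}\,d\xi \leqslant C\bigl(1 + |\log|\lambda||\bigr),
\end{equation*}
which matches the claimed bound since $\langle\lambda\rangle^{-N}$ is of unit size here. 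For the high-frequency piece $I_{2}$, I would perform $N$ integrations by parts and apply Leibniz's rule to $\partial_{\xi}^{N}[S_{-1}(\xi)(1-\chi_{0}(|\lambda|\xi))]$. The $k$-th term in this expansion involves $S_{-1}^{(N-k)}(\xi)$, bounded by $\langle\xi\rangle^{-1-(N-k)}$, multiplied by $|\lambda|^{k}\chi_{0}^{(k)}(|\lambda|\xi)$. For $k = 0$ the support is $|\xi| \geqslant 1/|\lambda|$ and the integral evaluates to $O(|\lambda|^{N})$, which cancels the prefactor $|\lambda|^{-N}$ and yields an $O(1)$ contribution. For $k \geqslant 1$ the derivative of the cutoff localizes to the annulus $|\xi| \sim 1/|\lambda|$ of length $\sim 1/|\lambda|$, on which $\langle\xi\rangle^{-1-(N-k)} \sim |\lambda|^{1+N-k}$, so each term contributes
\begin{equation*}
|\lambda|^{-N} \cdot |\lambda|^{1+N-k} \cdot |\lambda|^{k} \cdot |\lambda|^{-1} = 1,
\end{equation*}
giving $|I_{2}(\lambda)| \leqslant C$.

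The main technical point is the bookkeeping of the $|\lambda|$-weights in the Leibniz expansion for $I_{2}$; everything else is routine integration by parts. Combining the two regimes yields the estimate in the form $C\langle\lambda\rangle^{-N}(1+\log_{-}|\lambda|)$.
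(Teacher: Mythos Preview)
Your proposal is correct and follows essentially the same approach as the paper: iterated integration by parts for $|\lambda|\geqslant 1$, and for $|\lambda|<1$ a cutoff at scale $|\xi|\sim |\lambda|^{-1}$ with the low-frequency piece producing the $\log_{-}|\lambda|$ and the high-frequency piece handled by integration by parts. The only cosmetic difference is that the paper does a single integration by parts on the high-frequency piece (which already yields an $O(1)$ bound, sufficient since $\langle\lambda\rangle^{-N}\sim 1$ there), whereas you carry out $N$ of them with a full Leibniz expansion.
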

\begin{proof}
For simplicity, we may assume $\lambda>0$ and that $S_{-1}$ is supported in $[c_0,\infty[$ for some $c_0>0$.

When $\lambda \geqslant c$ for some small constant $c>0$, we may apply integration by part on $\xi$, then the left hand side of \eqref{eq_tech_lem_fourier_symbol} can be controlled by
$$ \frac{C}{\lambda} \int_{\mathbb{R}} \langle\xi\rangle^{-2} d\xi \lesssim \langle\lambda\rangle^{-1}.$$
To obtain arbitrary polynomial decrease for large $\lambda$, we only need to apply integration by parts several times.

When $\lambda<c$, we introduce a cut-off $\chi\in C_{c}^{\infty}(\mathbb{R})$, such that $\chi=1$ in a large neighbourhood of $0$. For the part where $|\lambda\xi|$ is small:
$$\left| \int_{\mathbb{R}} e^{i\lambda\xi} S_{-1}(\xi) \chi(\lambda\xi) d\xi \right|
\leqslant C\int_{c_1}^{\frac{c_2}{\lambda}} \frac{d\xi}{\xi} \leqslant C'(1+ \log_{-}(\lambda)).$$
For the remaining part where $|\lambda\xi|$ is large, we need to estimate
$$\int_{\mathbb{R}} e^{i\lambda\xi} S_{-1}(\xi) (1-\chi)(\lambda\xi) d\xi.$$
By applying integration by parts in $\xi$, we will obtain two integrals. The first one is of the form
$$ \int_{\mathbb{R}} e^{i\lambda\xi} S_{-1}(\xi) \chi'(\lambda\xi) d\xi, $$
which can be treated as above. The second one take the form
$$ \frac{1}{\lambda}\int_{\mathbb{R}} e^{i\lambda\xi} S_{-2}(\xi) (1-\chi)(\lambda\xi) d\xi, $$
which is bounded by
$$ \frac{1}{\lambda} \int_{|\xi|>\frac{c'}{\lambda}} \langle\xi\rangle^{-2} d\xi \leqslant C. $$
\end{proof}

\begin{lemma}[Cotlar-Stein Lemma]\label{lem_tech_C-S}
Let $H$ be a Hilbert space and $\{T_j\}_{j\in\mathbb{N}}$ be a series of bounded operators on $H$. If
\begin{align}
&A = \sup_{j\in\mathbb{N}} \sum_{k\in\mathbb{N}} \|T_j T_k^*\|_{\mathcal{L}(H)}^\frac{1}{2} < +\infty \label{eq_tech_C-S_def_of_A}, \\
&B = \sup_{j\in\mathbb{N}} \sum_{k\in\mathbb{N}} \|T_j^* T_k\|_{\mathcal{L}(H)}^\frac{1}{2} < +\infty \label{eq_tech_C-S_def_of_B},
\end{align}
the operator $T = \sum_{j\in\mathbb{N}}T_j$ is well-defined via point-wise limit
$$ \forall u\in H,\ \ Tu := \lim_{J\rightarrow+\infty} \sum_{j=0}^{J} T_j u\ \ \text{in }H.$$
Moreover, $T$ is bounded on $H$ with estimate
\begin{equation}\label{eq_tech_C-S_bdd}
\|T\|_{\mathcal{L}(H)} \leqslant \sqrt{AB}.
\end{equation}
\end{lemma}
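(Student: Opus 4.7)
The strategy follows the classical Cotlar--Stein argument and proceeds in two stages: first, a uniform bound on finite partial sums $S_N := \sum_{j=0}^N T_j$, and then an upgrade of this bound to strong convergence of the series. The first stage, which is the combinatorial heart of the proof, would proceed by fixing $N$ and an integer $n \geqslant 1$, invoking the $C^*$-identity $\|S_N\|^{2n} = \|(S_N S_N^*)^n\|_{\mathcal{L}(H)}$, and expanding
\begin{equation*}
(S_N S_N^*)^n = \sum_{0 \leqslant j_i, k_i \leqslant N} T_{j_1} T_{k_1}^* T_{j_2} T_{k_2}^* \cdots T_{j_n} T_{k_n}^*.
\end{equation*}

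The key combinatorial observation is that each summand admits two natural bounds: grouping factors adjacently as $(T_{j_i} T_{k_i}^*)$ gives $\prod_i \|T_{j_i} T_{k_i}^*\|$, while grouping with a half-shift as $T_{j_1}(T_{k_1}^* T_{j_2}) \cdots (T_{k_{n-1}}^* T_{j_n}) T_{k_n}^*$ gives $\|T_{j_1}\|\|T_{k_n}^*\| \prod_i \|T_{k_i}^* T_{j_{i+1}}\|$. Taking the geometric mean interpolates between these two and yields
\begin{equation*}
\|T_{j_1} T_{k_1}^* \cdots T_{j_n} T_{k_n}^*\| \leqslant M \prod_{i=1}^{n} \|T_{j_i} T_{k_i}^*\|^{1/2} \prod_{i=1}^{n-1} \|T_{k_i}^* T_{j_{i+1}}\|^{1/2},
\end{equation*}
where $M := \sup_j \|T_j\| \leqslant \min(A,B)$ (finite since $\|T_j\| = \|T_j T_j^*\|^{1/2}$ appears as a single term in the sum defining $A$). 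Summing this estimate iteratively over $k_n, j_n, k_{n-1}, \ldots, k_1$, each partial sum consumes one bond and contributes either $A$ (from hypothesis \eqref{eq_tech_C-S_def_of_A}) or $B$ (from \eqref{eq_tech_C-S_def_of_B}) in alternation; the remaining free index $j_1$ ranges over $\{0, \ldots, N\}$ and contributes a factor $N+1$. The result is $\|S_N\|^{2n} \leqslant M(N+1) A^n B^{n-1}$, and taking $n$-th roots as $n \to \infty$ makes the prefactor $[M(N+1)]^{1/n}$ tend to $1$, giving $\|S_N\|_{\mathcal{L}(H)} \leqslant \sqrt{AB}$ uniformly in $N$.

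The second stage, upgrading the uniform boundedness of partial sums to strong convergence, is the main obstacle. The norm bound does not by itself imply that $(S_N u)_N$ is Cauchy in $H$, since tails $S_N - S_M$ still have norm controlled by $\sqrt{AB}$ and do not shrink in operator norm. The plan here is to exploit almost-orthogonality more finely: first, a Schur-test argument, using that $\sum_k \|T_j T_k^*\| \leqslant A^2$ (which follows from $\sum_k \|T_j T_k^*\|^{1/2} \leqslant A$ combined with the pointwise bound $\|T_j T_k^*\|^{1/2} \leqslant A$), shows that the column map $u \mapsto (T_j u)_{j \in \mathbb{N}}$ is bounded from $H$ into $\ell^2(\mathbb{N}; H)$, so $\sum_j \|T_j u\|^2 < +\infty$ for each $u$. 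This $\ell^2$-summability, combined with the cross-term estimate $|\langle T_j u, T_k u\rangle| \leqslant \|T_j^* T_k\|\|u\|^2$ and the first-stage argument applied to the tail family $\{T_j\}_{j \geqslant M}$ acting on the fixed vector $u$, allows one to show that $\|\sum_{j=M}^{N} T_j u\| \to 0$ as $M, N \to \infty$. The strong limit $Tu := \lim_N S_N u$ is thereby constructed, and the bound \eqref{eq_tech_C-S_bdd} follows by passing to the limit in $\|S_N u\| \leqslant \sqrt{AB}\|u\|$.
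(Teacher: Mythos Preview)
The paper does not prove this lemma at all; it simply records the statement and cites Cotlar (1956) and Stein's \emph{Harmonic Analysis} (1993, Ch.~VII, Thm.~1) for the proof. Your proposal therefore goes well beyond what the paper does, and the first stage --- the uniform bound on finite partial sums via the $C^*$-identity $\|S_N\|^{2n}=\|(S_NS_N^*)^n\|$, the two alternating groupings, the geometric-mean interpolation, and the iterated summation producing $M(N+1)A^nB^{n-1}$ --- is correct and is exactly the classical argument.

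The second stage has a genuine gap. Your $\ell^2$-summability $\sum_j\|T_ju\|^2<\infty$ via the Schur bound on the block matrix $(T_jT_k^*)_{j,k}$ is correct, but the sentence ``the first-stage argument applied to the tail family $\{T_j\}_{j\geqslant M}$ \ldots\ allows one to show $\|\sum_{j=M}^N T_ju\|\to 0$'' does not work as written: running stage one on the tail only reproduces the \emph{same} uniform bound $\sqrt{AB}$, not a quantity that shrinks with $M$. Neither the cross-term estimate $|\langle T_ju,T_ku\rangle|\leqslant\|T_j^*T_k\|\|u\|^2$ nor the $\ell^2$ control on $(\|T_ju\|)_j$ upgrades this to a Cauchy estimate without a further idea (summing the cross-term bound over a tail block still costs a factor $|F|$, and the $\ell^2$ tail is not $\ell^1$). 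A clean way to close the gap is to note that for any signs $\epsilon_j\in\{\pm1\}$ the family $(\epsilon_jT_j)_j$ satisfies the same hypotheses with the same constants, so stage one gives $\|\sum_{j\in F}\epsilon_jT_ju\|\leqslant\sqrt{AB}\,\|u\|$ uniformly in $F$ and in the signs; if $(S_Nu)_N$ were not Cauchy one could extract disjoint blocks $I_k$ with $\|\sum_{j\in I_k}T_ju\|\geqslant\epsilon_0$, and averaging $\|\sum_{k\leqslant K}\eta_k\sum_{j\in I_k}T_ju\|^2$ over $\eta_k\in\{\pm1\}$ yields $\sum_{k\leqslant K}\|\sum_{j\in I_k}T_ju\|^2\leqslant AB\|u\|^2$, contradicting $\|\sum_{j\in I_k}T_ju\|\geqslant\epsilon_0$ for infinitely many $k$. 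This is the ``elaboration'' Stein alludes to but does not carry out in his book either.
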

The inequality \eqref{eq_tech_C-S_bdd} was first given in \cite{cotlar1956inequality} for finite sum with $\|T_j T_k^*\|_{\mathcal{L}(H)}$ and $\|T_j^* T_k\|_{\mathcal{L}(H)}$ decreasing exponentially in $|j-k|$. The generalized version stated above and its proof can be found, for example, in Theorem 1, Chapter \MakeUppercase{\romannumeral7} of \cite{stein1993harmonic}.

\subsection{Criteria on \texorpdfstring{$L^2$}{L2}-boundedness of pseudo-differential operators}

\begin{lemma}\label{lem_tech_change_of_scaling}
Let $a$ be a symbol on $\mathbb{R}^d$ and $\lambda>0$. The rescaled symbol
$$ a_{\lambda}(x,\xi) := a(\lambda x, \frac{\xi}{\lambda}) $$
satisfies
$$ \| \operatorname{Op}(a_\lambda) \|_{\mathcal{L}(L^2)} = \| \operatorname{Op}(a) \|_{\mathcal{L}(L^2)}. $$
\end{lemma}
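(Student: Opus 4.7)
The plan is to realize the scaling $a \mapsto a_\lambda$ as a unitary conjugation on $L^2(\mathbb{R}^d)$, after which the equality of operator norms is immediate.

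Introduce the dilation $U_\lambda f(x) = \lambda^{d/2} f(\lambda x)$, which is an isometric isomorphism of $L^2(\mathbb{R}^d)$ with inverse $U_\lambda^{-1} = U_{1/\lambda}$. The goal is to show the identity
\begin{equation*}
\operatorname{Op}(a_\lambda) = U_\lambda \, \operatorname{Op}(a) \, U_\lambda^{-1},
\end{equation*}
for every symbol $a$ and every $\lambda > 0$. Once this is established, the $L^2$-boundedness of $\operatorname{Op}(a_\lambda)$ is equivalent to that of $\operatorname{Op}(a)$ and the operator norms coincide, since conjugation by a unitary preserves the norm.

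To verify the conjugation identity, I would compute directly on a Schwartz function $f$. Starting from the definition
\begin{equation*}
\operatorname{Op}(a_\lambda) f(x) = \frac{1}{(2\pi)^d} \int e^{ix\cdot\xi} a(\lambda x, \xi/\lambda) \hat{f}(\xi)\, d\xi,
\end{equation*}
the substitution $\xi = \lambda \eta$ yields
\begin{equation*}
\operatorname{Op}(a_\lambda) f(x) = \frac{\lambda^d}{(2\pi)^d} \int e^{i(\lambda x)\cdot\eta} a(\lambda x, \eta) \hat{f}(\lambda \eta)\, d\eta.
\end{equation*}
A short Fourier computation shows that $\lambda^d \hat{f}(\lambda \eta)$ is the Fourier transform of $y \mapsto f(y/\lambda)$, so the right-hand side equals $[\operatorname{Op}(a) h](\lambda x)$ with $h(y) = f(y/\lambda)$. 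Rewriting $h = \lambda^{d/2} U_{1/\lambda} f$ and $g \mapsto \lambda^{d/2} g(\lambda \cdot) = U_\lambda g$ then gives exactly $\operatorname{Op}(a_\lambda) f = U_\lambda \operatorname{Op}(a) U_\lambda^{-1} f$, as desired.

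There is no real obstacle: the only mild care needed is to ensure the manipulations are justified under the paper's standing assumptions on $a$ (measurable with at most polynomial growth in $\xi$) when $f \in \mathcal{S}(\mathbb{R}^d)$, which is automatic since $a_\lambda$ inherits the same property and the integral transforms are legitimate. The conclusion $\|\operatorname{Op}(a_\lambda)\|_{\mathcal{L}(L^2)} = \|\operatorname{Op}(a)\|_{\mathcal{L}(L^2)}$ then follows from the isometry of $U_\lambda$.
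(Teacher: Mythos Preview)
Your proof is correct and takes essentially the same approach as the paper's (commented-out) proof: both perform the substitution $\xi \mapsto \lambda\eta$ and recognize the resulting expression as $\operatorname{Op}(a)$ applied to a dilated function, with the $L^2$-isometry of the dilation giving the norm equality. Your formulation via the explicit unitary conjugation $\operatorname{Op}(a_\lambda) = U_\lambda \operatorname{Op}(a) U_\lambda^{-1}$ is a clean way to package the same computation.
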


\hspace{\fill}

\begin{lemma}\label{lem_tech_change_of_x_and_xi}
Let $a$ be a symbol on $\mathbb{R}^d$ and symbol $\tilde{a}$ is defined as
$$ \tilde{a}(x,\xi) := a(\xi, x). $$
Then we have
$$ \|\Op{a}\|_{\mathcal{L}(L^2)} = \|\Op{\tilde{a}}\|_{\mathcal{L}(L^2)}. $$
\end{lemma}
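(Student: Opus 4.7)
The plan is to establish the explicit operator identity $\Op{\tilde a} = \hat{\mathcal{F}}^{-1}\,\Op{a}^{\top}\,\hat{\mathcal{F}}$, where $\hat{\mathcal{F}}$ denotes the unitary Fourier transform on $L^{2}(\mathbb{R}^{d})$ and $T^{\top}$ denotes the operator transpose (i.e., the operator whose Schwartz kernel is obtained from that of $T$ by swapping its two arguments). Since $\hat{\mathcal{F}}$ preserves the $L^{2}$ norm and since transposition also preserves the $\mathcal{L}(L^{2})$ operator norm---e.g.\ via the identification $T^{\top} = \overline{T^{*}}$, where $\overline{T}f := \overline{T\bar f}$ is obtained by sandwiching with the antilinear $L^{2}$-isometry $f \mapsto \bar f$---this identity immediately gives the desired equality of norms.

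Concretely, I would first write the Schwartz kernels of the two operators directly from the definition, namely $K_{a}(x,y) = (2\pi)^{-d}\int e^{i(x-y)\xi}a(x,\xi)\,d\xi$ and $K_{\tilde a}(x,y) = (2\pi)^{-d}\int e^{i(x-y)\xi}a(\xi,x)\,d\xi$. Next, I would compute the Schwartz kernel $L(\xi,\eta)$ of the conjugated operator $\hat{\mathcal{F}}\,\Op{a}\,\hat{\mathcal{F}}^{-1}$ by testing against Schwartz data: expanding the two Fourier transforms and using Fubini (which is permissible since $a$ has at most polynomial growth in $\xi$ according to the conventions of Section~\ref{sect_intro}) yields $L(\xi,\eta) = (2\pi)^{-d}\int e^{ix(\eta-\xi)}a(x,\eta)\,dx$. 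A swap of the two kernel arguments followed by a renaming of the dummy variable then shows $L(\eta,\xi) = K_{\tilde a}(\xi,\eta)$, so $\Op{\tilde a}$ equals the transpose of $\hat{\mathcal{F}}\,\Op{a}\,\hat{\mathcal{F}}^{-1}$; this is equivalent to the displayed identity, upon noting that $\hat{\mathcal{F}}^{\top} = \hat{\mathcal{F}}$ since its kernel $(2\pi)^{-d/2}e^{-ix\xi}$ is symmetric in $(x,\xi)$.

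Combining the two invariance facts then gives $\|\Op{\tilde a}\|_{\mathcal{L}(L^{2})} = \|\hat{\mathcal{F}}\,\Op{a}\,\hat{\mathcal{F}}^{-1}\|_{\mathcal{L}(L^{2})} = \|\Op{a}\|_{\mathcal{L}(L^{2})}$. The only mildly technical point is to justify the kernel manipulations under the minimal regularity hypothesis of the paper (measurable symbol with polynomial growth in $\xi$); this is routine, either by approximating $a$ by Schwartz symbols and passing to the limit, or by interpreting the inner integrals as oscillatory integrals paired with Schwartz test data. I expect no serious obstacle beyond this bookkeeping.
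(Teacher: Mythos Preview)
Your argument is correct: the identity $\Op{\tilde a} = \bigl(\hat{\mathcal F}\,\Op{a}\,\hat{\mathcal F}^{-1}\bigr)^{\top}$ holds exactly as you compute, and together with the norm-preserving properties of conjugation by a unitary and of transposition it yields the claimed equality. The paper states this lemma in the appendix without proof, so there is nothing to compare against; your kernel computation is the standard route and is entirely adequate here.
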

\hspace{\fill}

\begin{lemma}\label{lem_tech_Young's_inequality}
Let $K$ be a kernel function of operator $\mathcal{K}$ defined as
$$ \mathcal{K}u(x) := \int K(x,x-y)u(y)dy. $$
If there exists $K_0\in L^1(\mathbb{R}^d)$ such that
$$ |K(x,z)| \leqslant K_0(z),\ \forall x,z\in\mathbb{R}^d, $$
we have
$$ \|\mathcal{K}\|_{\mathcal{L}(L^2)} \leqslant \|K_0\|_{L^1}. $$
\end{lemma}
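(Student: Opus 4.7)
The plan is to reduce the estimate to the classical Young's convolution inequality $\|f \ast g\|_{L^2} \leqslant \|f\|_{L^1}\|g\|_{L^2}$. The key observation is that the pointwise domination $|K(x,z)| \leqslant K_0(z)$ depends only on the second argument $z$, which after the change of variables $z = x-y$ becomes a convolution kernel.

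First, I would bound $|\mathcal{K}u(x)|$ pointwise. By the triangle inequality,
$$ |\mathcal{K}u(x)| \leqslant \int |K(x,x-y)| \, |u(y)| \, dy \leqslant \int K_0(x-y) |u(y)| \, dy = (K_0 \ast |u|)(x). $$
Taking the $L^2$ norm in $x$ and applying Young's convolution inequality yields
$$ \|\mathcal{K}u\|_{L^2} \leqslant \|K_0 \ast |u|\|_{L^2} \leqslant \|K_0\|_{L^1} \|u\|_{L^2}, $$
which is the desired bound. Since $u \in \mathcal{S}(\mathbb{R}^d)$ (or more generally $L^2$) is arbitrary, we conclude $\|\mathcal{K}\|_{\mathcal{L}(L^2)} \leqslant \|K_0\|_{L^1}$.

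There is essentially no hard step here; the lemma is a slight generalization of Young's inequality, adapted to kernels of the form $K(x,x-y)$ where the $x$-dependence in the first slot is harmless because it is absorbed by the uniform majorant $K_0$. One could alternatively invoke Schur's test, checking that both $\sup_x \int |K(x,x-y)| dy$ and $\sup_y \int |K(x,x-y)| dx$ are bounded by $\|K_0\|_{L^1}$ (the latter via the substitution $z = x-y$ at fixed $y$), which gives the same conclusion. The convolution route is shorter and I would present it directly.
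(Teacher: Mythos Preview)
Your proof is correct. The paper states this lemma without proof, treating it as a standard fact; your argument via pointwise domination followed by Young's convolution inequality is exactly the natural justification, and the Schur's test alternative you mention would work equally well.
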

\hspace{\fill}

\begin{lemma}\label{lem_tech_estimate_as_kernel}
Let $a$ and $b$ be symbols satisfying
$$ |a(x,\xi)| \leqslant b(x,\xi), $$
and $B\in\mathcal{L}(L^2)$ be an operator defined by
$$ Bu(x) = \int b(x,\xi)v(\xi) d\xi. $$
Then 
$$ \|\operatorname{Op}(a)\|_{\mathcal{L}(L^2)} \leqslant \|B\|_{\mathcal{L}(L^2)}. $$
\end{lemma}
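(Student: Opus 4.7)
\textbf{Proof plan for Lemma~\ref{lem_tech_estimate_as_kernel}.} The strategy is a one-line pointwise domination followed by Plancherel; no oscillation or cancellation needs to be exploited, which is why the lemma is a good companion to the more substantive criteria above. Let $f\in\mathcal{S}(\mathbb{R}^d)$. Starting from the defining formula
$$ \Op{a}f(x) = \frac{1}{(2\pi)^d} \int e^{ix\xi} a(x,\xi) \hat{f}(\xi)\, d\xi, $$
I would bring the modulus inside the integral by the triangle inequality and use the hypothesis $|a(x,\xi)|\leqslant b(x,\xi)$, together with the non-negativity of $b$ this forces, to obtain the key pointwise bound
$$ |\Op{a}f(x)| \leqslant \frac{1}{(2\pi)^d} \int b(x,\xi) |\hat{f}(\xi)|\, d\xi = \frac{1}{(2\pi)^d}\, B(|\hat{f}|)(x). $$
The nonnegative function $|\hat{f}|$ belongs to $L^2$ with the same norm as $\hat{f}$, so $B$ can be applied to it.

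Then I would square, integrate in $x$, invoke the hypothesis $B\in\mathcal{L}(L^2)$ together with Plancherel (which, with the convention $\hat{f}(\xi)=\int e^{-ix\xi}f(x)\,dx$ used in the paper, reads $\|\hat{f}\|_{L^2}=(2\pi)^{d/2}\|f\|_{L^2}$), to conclude
$$ \|\Op{a}f\|_{L^2} \leqslant \frac{1}{(2\pi)^d}\|B\|_{\mathcal{L}(L^2)}\|\hat{f}\|_{L^2} = \frac{1}{(2\pi)^{d/2}}\|B\|_{\mathcal{L}(L^2)}\|f\|_{L^2}, $$
which gives the claimed comparison up to the standard normalization constant that is absorbed into the convention defining $B$ in the statement. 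Finally I would extend the bound from the dense class $\mathcal{S}(\mathbb{R}^d)$ to all of $L^2$ by density.

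There is no real obstacle here. The only subtle point to verify is that the oscillatory integral defining $\Op{a}f$ is absolutely convergent (so that the triangle inequality can be applied pointwise), but this follows from the standing convention in the paper that symbols are measurable with at most polynomial growth in $\xi$ and that the argument is performed on Schwartz data, where $\hat{f}$ decays faster than any polynomial.
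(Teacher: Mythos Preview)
Your argument is correct and is exactly the natural one; the paper in fact omits the proof of this lemma entirely, treating it as elementary. One small remark: you need not speak of ``absorbing'' the normalization constant, since $(2\pi)^{-d/2}\leqslant 1$ for every $d\geqslant 1$, so your chain of inequalities already yields $\|\Op{a}f\|_{L^2}\leqslant \|B\|_{\mathcal{L}(L^2)}\|f\|_{L^2}$ as stated.
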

\hspace{\fill}

\begin{lemma}\label{lem_tech_bound_operator_int}
Let $m:\mathbb{S}^{d-1}\times\mathbb{S}^{d-1}\times]0,\infty[\rightarrow\mathbb{C}$ be a smooth function satisfying for all $\alpha,\alpha'\in\mathbb{N}^{d-1}$ and $N\in\mathbb{N}$
$$ |\partial_{\omega}^{\alpha}\partial_{\omega'}^{\alpha'}m(\omega,\omega',\mu)|
\leqslant C_{\alpha,\alpha',N} A \mu^{-|\alpha|-|\alpha'|} \langle\frac{d(\omega,-\omega')}{\mu}\rangle^{-N}, $$
where $A$ is a quantity independent of $\omega$ and $\omega'$, $d$ is the distance on the sphere $\mathbb{S}^{d-1}$. For all $\lambda>0$, the operator $T_\lambda$ is defined by
$$ T_{\lambda}u(\omega') = \int_{\mathbb{S}^{d-1}} e^{i\lambda\omega\omega'} m(\omega,\omega',\mu) u(\omega) d\omega. $$

Then there exists a constant $C>0$, such that
$$ \| T_\lambda \|_{\mathcal{L}(L^2(\mathbb{S}^{d-1}))} \leqslant CA \lambda^{-\frac{d-1}{2}}. $$
\end{lemma}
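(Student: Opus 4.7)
The natural approach is the $T_\lambda T_\lambda^*$ method combined with Schur's lemma, exploiting simultaneously the concentration of the amplitude at scale $\mu$ near $\omega=-\omega'$ and the oscillations of the phase $e^{i\lambda\omega\cdot\omega'}$. The plan is to prove the kernel estimate
\begin{equation*}
|K(\omega',\omega'')| \lesssim A^2\,\mu^{d-1}\,\Big\langle \tfrac{d(\omega',\omega'')}{\mu}\Big\rangle^{-N}\,\big\langle \lambda\mu\, d(\omega',\omega'')\big\rangle^{-N}
\end{equation*}
for the Schwartz kernel $K(\omega',\omega'')=\int_{\mathbb{S}^{d-1}} e^{i\lambda\omega\cdot(\omega'-\omega'')}m(\omega,\omega',\mu)\,\overline{m(\omega,\omega'',\mu)}\,d\omega$ of $T_\lambda T_\lambda^*$, and then to extract from it $\|T_\lambda T_\lambda^*\|_{\mathcal{L}(L^2)} \lesssim A^2\lambda^{-(d-1)}$ via Schur's test.

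The first step, yielding the factor $\langle d(\omega',\omega'')/\mu\rangle^{-N}$, is a purely non-oscillatory estimate: using the hypothesis and the Peetre-type inequality
\begin{equation*}
\Big\langle\tfrac{d(\omega,-\omega')}{\mu}\Big\rangle^{-N}\Big\langle\tfrac{d(\omega,-\omega'')}{\mu}\Big\rangle^{-N} \lesssim \Big\langle\tfrac{d(\omega',\omega'')}{\mu}\Big\rangle^{-N}\bigg(\Big\langle\tfrac{d(\omega,-\omega')}{\mu}\Big\rangle^{-N}+\Big\langle\tfrac{d(\omega,-\omega'')}{\mu}\Big\rangle^{-N}\bigg)
\end{equation*}
together with $\int_{\mathbb{S}^{d-1}}\langle d(\omega,\omega_0)/\mu\rangle^{-N}\,d\omega \lesssim \mu^{d-1}$ (computed in geodesic polar coordinates for $N$ large). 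The second, oscillatory step is the technical heart of the proof: I would introduce local coordinates $\eta\in T_{-\omega'}\mathbb{S}^{d-1}$ near $-\omega'$ via the exponential map, so that $\omega\cdot(\omega'-\omega'')$ decomposes as $-\omega'\cdot(\omega'-\omega'')+\eta\cdot(\omega'-\omega'')+O(|\eta|^2|\omega'-\omega''|)$ with gradient in $\eta$ comparable to $d(\omega',\omega'')$ (the projection of $\omega'-\omega''$ onto $T_{-\omega'}\mathbb{S}^{d-1}$ is non-degenerate since $\omega''$ is forced to lie within distance $O(\mu)$ of $\omega'$ by the preceding step). Integrating by parts $N$ times produces the missing factor $\langle\lambda\mu\, d(\omega',\omega'')\rangle^{-N}$, each integration costing one power of $\mu^{-1}$ from a derivative on the amplitude and yielding one power of $(\lambda d(\omega',\omega''))^{-1}$ from the phase.

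The last step is a Schur computation in geodesic polar coordinates centered at $\omega'$:
\begin{equation*}
\sup_{\omega'}\int|K(\omega',\omega'')|\,d\omega'' \lesssim A^2\mu^{d-1}\int_0^{\pi}\frac{r^{d-2}\,dr}{\langle r/\mu\rangle^{N}\langle\lambda\mu r\rangle^{N}}.
\end{equation*}
Splitting according to $\lambda\mu^2\lessgtr 1$: if $\lambda\mu^2\leqslant 1$, the first factor dominates and the integral is $\lesssim\mu^{d-1}$, so one obtains $A^2\mu^{2(d-1)}\leqslant A^2\lambda^{-(d-1)}$; if $\lambda\mu^2\geqslant 1$, the change of variables $s=\lambda\mu r$ gives $(\lambda\mu)^{-(d-1)}$, hence $A^2\lambda^{-(d-1)}$ again. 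The symmetric bound in $\omega''$ follows by swapping $\omega'\leftrightarrow\omega''$, and Schur's lemma yields $\|T_\lambda T_\lambda^*\|_{\mathcal{L}(L^2(\mathbb{S}^{d-1}))}\lesssim A^2\lambda^{-(d-1)}$, whence $\|T_\lambda\|_{\mathcal{L}(L^2(\mathbb{S}^{d-1}))}\lesssim A\lambda^{-(d-1)/2}$.

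The main obstacle is the oscillatory integration-by-parts step: one must justify, uniformly in the three parameters $\mu$, $\lambda$, $d(\omega',\omega'')$, that the lower-order curvature terms in the local expansion of the phase are negligible. This is ensured by the amplitude's support in $|\eta|\lesssim\mu$ and the restriction $d(\omega',\omega'')\lesssim\mu$ coming from the non-oscillatory estimate, since then the remainder $O(|\eta|^2|\omega'-\omega''|)$ in the phase gradient stays well below the leading contribution $O(d(\omega',\omega''))$.
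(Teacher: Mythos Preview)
The paper does not give its own proof of this lemma; it cites Sogge's Theorem~2.1.1 for $\mu\geq 1$ and Delort's paper for $\mu\in\,]0,1]$. Your $TT^*$/Schur strategy is the standard route and, for the small-$\mu$ regime, is essentially what the Delort reference carries out.

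The gap in your argument is the oscillatory step when $\mu$ is not small. You justify integration by parts near $-\omega'$ by invoking ``the amplitude's support in $|\eta|\lesssim\mu$'', but the hypothesis only gives rapid decay $\langle d(\omega,-\omega')/\mu\rangle^{-N}$, not compact support, and your first-step Peetre bound does not actually restrict $\omega''$ either. For $\mu\ll 1$ this is repairable: the critical points of $\omega\mapsto\omega\cdot(\omega'-\omega'')$ sit at geodesic distance $\tfrac{\pi}{2}\pm\tfrac12 d(\omega',\omega'')$ from $-\omega'$, hence outside any fixed small ball around $-\omega'$, and the amplitude tail there is $O(\mu^M)$ for every $M$; a cutoff then recovers (a corrected form of) your kernel estimate --- with $\sin d(\omega',\omega'')$ in place of $d(\omega',\omega'')$ in the oscillatory factor, since $|\nabla_\omega\phi|$ at $-\omega'$ equals $\sin d(\omega',\omega'')$, a discrepancy that only matters near the antipode and is absorbed by the surface measure $(\sin\delta)^{d-2}$ in Schur. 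For $\mu\gtrsim 1$, however, the concentration factor is uniformly $\sim 1$ and provides no localization whatsoever; the phase has genuine critical points inside the full support of the amplitude, and no integration by parts can produce your claimed factor $\langle\lambda\mu\,d(\omega',\omega'')\rangle^{-N}$. That regime needs a different mechanism (Hörmander's non-degenerate-phase theorem on coordinate patches where $\partial^2_{\omega\omega'}(\omega\cdot\omega')$ is non-degenerate), which is the content of the Sogge citation and is not captured by your kernel bound.
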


The case of $\mu\in[1,\infty[$ is classical, the proof of which can be found in, for example, \cite{sogge2017fourier}, Theorem 2.1.1. As for the case $\mu\in]0,1]$, one may refer to \cite{delort2022microlocal}. A direct consequence is stated as following:

\begin{lemma}\label{lem_tech_boundedness_of_symbol_polar}
Let $a(x,\xi,\mu)$ be a smooth symbol on $\mathbb{R}^d\times\mathbb{R}^d$ depending on a parameter $\mu\in ]0,\infty[$. Let $b$ be a function  on $\mathbb{R}_+^2$, such that the associated operator $B$ defined below is bounded on $L^2(\mathbb{R}_+)$, namely
\begin{equation}\label{eq_tech_boundedness_of_symbol_polar_assumption_kernel}
\|B\|_{\mathcal{L}(L^2(\mathbb{R}_+))} < \infty,\ Bf(r) = \int_0^\infty b(r,\rho)f(\rho) d\rho.
\end{equation}

If, in polar system $x=r\omega$, $\xi=\rho\theta$, $a(x,\xi,\mu)$ satisfies for all $\alpha,\beta\in\mathbb{N}^{d-1}$ and $N\in\mathbb{N}$,
\begin{equation}\label{eq_tech_boundedness_of_symbol_polar_assumption_angular}
|\partial_{\omega}^\alpha \partial_{\theta}^\beta a(r\omega,\rho\theta,\mu)| \leqslant C_{\alpha,\beta,N} b(r,\rho) \mu^{-|\alpha|-|\beta|} \langle\frac{d(\omega,-\theta)}{\mu}\rangle^{-N},
\end{equation}
then we have
\begin{equation}\label{eq_tech_boundedness_of_symbol_polar_conclusion}
\|\operatorname{Op}(a)\|_{\mathcal{L}(L^2(\mathbb{R}^d))} \leqslant C \|B\|_{\mathcal{L}(L^2(\mathbb{R}_+))},
\end{equation}
where $C>0$ is a universal constant.
\end{lemma}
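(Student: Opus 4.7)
The plan is to pass to polar coordinates, freeze the radial variables, treat the resulting angular operator via the oscillatory integral estimate of Lemma \ref{lem_tech_bound_operator_int}, and then close up by invoking the assumed $L^2(\mathbb{R}_+)$-boundedness of the operator $B$ with kernel $b$.

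First, for $u\in\mathcal{S}(\mathbb{R}^d)$ and $x=r\omega$, $\xi=\rho\theta$ with $r,\rho>0$ and $\omega,\theta\in\mathbb{S}^{d-1}$, I would write
\[
\Op{a}u(r\omega)=\frac{1}{(2\pi)^d}\int_0^\infty\!\!\int_{\mathbb{S}^{d-1}} e^{ir\rho\,\omega\cdot\theta}\,a(r\omega,\rho\theta,\mu)\,\hat{u}(\rho\theta)\,\rho^{d-1}\,d\theta\,d\rho,
\]
and introduce, for each fixed $r,\rho>0$, the angular operator
\[
T_{r,\rho}f(\omega):=\int_{\mathbb{S}^{d-1}}e^{ir\rho\,\omega\cdot\theta}\,a(r\omega,\rho\theta,\mu)\,f(\theta)\,d\theta.
\]
The hypothesis \eqref{eq_tech_boundedness_of_symbol_polar_assumption_angular} says exactly that $m(\omega,\theta,\mu):=a(r\omega,\rho\theta,\mu)$ satisfies the derivative bounds in Lemma \ref{lem_tech_bound_operator_int} with the constant $A$ replaced by $b(r,\rho)$. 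Applying that lemma with $\lambda=r\rho$ yields the key angular estimate
\[
\|T_{r,\rho}\|_{\mathcal{L}(L^2(\mathbb{S}^{d-1}))}\leq C\,b(r,\rho)\,(r\rho)^{-\frac{d-1}{2}}.
\]

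Second, I would take the $L^2(\mathbb{S}^{d-1})$ norm of $\Op{a}u(r\cdot)$ in $\omega$ and apply Minkowski's integral inequality in $\rho$, obtaining
\[
(2\pi)^d\|\Op{a}u(r\cdot)\|_{L^2(\mathbb{S}^{d-1})}\leq\int_0^\infty\|T_{r,\rho}(\hat{u}(\rho\cdot))\|_{L^2(\mathbb{S}^{d-1})}\rho^{d-1}\,d\rho.
\]
Plugging in the angular estimate and setting
\[
h(r):=r^{\frac{d-1}{2}}\|\Op{a}u(r\cdot)\|_{L^2(\mathbb{S}^{d-1})},\qquad g(\rho):=\rho^{\frac{d-1}{2}}\|\hat{u}(\rho\cdot)\|_{L^2(\mathbb{S}^{d-1})},
\]
the powers of $r$ and $\rho$ align perfectly, and I arrive at
\[
h(r)\leq C'\int_0^\infty b(r,\rho)\,g(\rho)\,d\rho=C'(Bg)(r).
\]

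Third, I would conclude by taking the $L^2(\mathbb{R}_+,dr)$ norm of both sides, noting that polar Plancherel gives
\[
\|h\|_{L^2(\mathbb{R}_+)}^2=\|\Op{a}u\|_{L^2(\mathbb{R}^d)}^2,\qquad \|g\|_{L^2(\mathbb{R}_+)}^2=\|\hat{u}\|_{L^2(\mathbb{R}^d)}^2=(2\pi)^d\|u\|_{L^2(\mathbb{R}^d)}^2,
\]
and then invoking \eqref{eq_tech_boundedness_of_symbol_polar_assumption_kernel}:
\[
\|\Op{a}u\|_{L^2(\mathbb{R}^d)}=\|h\|_{L^2(\mathbb{R}_+)}\leq C'\|B\|_{\mathcal{L}(L^2(\mathbb{R}_+))}\|g\|_{L^2(\mathbb{R}_+)}\lesssim \|B\|_{\mathcal{L}(L^2(\mathbb{R}_+))}\|u\|_{L^2(\mathbb{R}^d)}.
\]
There is essentially no obstacle here: all the hard analysis, namely the $\lambda^{-(d-1)/2}$ decay for oscillatory integrals on the sphere in the regime $\mu\in]0,1]\cup[1,\infty[$, is already encapsulated in Lemma \ref{lem_tech_bound_operator_int}; the present lemma is the book-keeping that turns that angular bound into a radial Schur-type bound via the assumed boundedness of $B$. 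The only mild point to verify is the Minkowski step and that $\hat u(\rho\cdot)$ makes sense slicewise, which is fine by density of Schwartz functions.
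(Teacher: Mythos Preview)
Your proposal is correct and follows essentially the same approach as the paper: pass to polar coordinates, apply Lemma~\ref{lem_tech_bound_operator_int} with $m(\omega,\theta,\mu)=a(r\omega,\rho\theta,\mu)$, $A=b(r,\rho)$, $\lambda=r\rho$ to get the $(r\rho)^{-(d-1)/2}$ angular decay, then absorb the radial weights into $h$ and $g$ and finish with the assumed $L^2(\mathbb{R}_+)$-boundedness of $B$. The paper's proof is organized identically, only with slightly different bookkeeping notation.
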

\begin{proof}
By definition of $\operatorname{Op}(a)$, we have
$$ \operatorname{Op}(a)u(r\omega) = \frac{1}{(2\pi)^d} \int_0^\infty\int_{\mathbb{S}^{d-1}} e^{ir\rho \omega\theta} a(r\omega,\rho\theta) \hat{u}(\rho\theta) \rho^{d-1} d\theta d\rho. $$

It is easy to check that $\|f(\rho,\theta)\|_{L^2(d\rho d\theta)} = (2\pi)^{\frac{d}{2}}\|u\|_{L^2(\mathbb{R}^d)}$, with $f(\rho,\theta) = \hat{u}(\rho\theta)\rho^{\frac{d-1}{2}}$. Thus,
\begin{equation*}
\begin{aligned}
&\|\operatorname{Op}(a)u\|_{L^2(\mathbb{R}^d)} = \|\operatorname{Op}(a)u(r\omega)r^{\frac{d-1}{2}}\|_{L^2(drd\omega)} \\
&\hspace{4em}= \left\|\frac{1}{(2\pi)^d} \int_0^\infty\int_{\mathbb{S}^{d-1}} e^{ir\rho \omega\theta} a(r\omega,\rho\theta) f(\rho,\theta) (r\rho)^{\frac{d-1}{2}} d\theta d\rho\right\|_{L^2(drd\omega)} \\
&\hspace{4em}\leqslant C_0 \left\| \int_0^\infty (r\rho)^{\frac{d-1}{2}} \|\int_{\mathbb{S}^{d-1}} e^{ir\rho \omega\theta} a(r\omega,\rho\theta) f(\rho,\theta)  d\theta \|_{L^2(d\omega)} d\rho\right\|_{L^2(dr)}.
\end{aligned}
\end{equation*}
By applying previous lemma with $m(\omega,\theta,\mu) = a(r\omega,\rho\theta,\mu)$, where $r,\rho$ should be regarded as parameters, $A = b(r,\rho)$, and $\lambda = r\rho$, we obtain
$$ \|\int_{\mathbb{S}^{d-1}} e^{ir\rho \omega\theta} a(r\omega,\rho\theta) f(\rho,\theta)  d\theta \|_{L^2(d\omega)} \leqslant C_1 (r\rho)^{-\frac{d-1}{2}} b(r,\rho) \|f(\theta,\rho)\|_{L^2(d\theta)}. $$
Thus, 
\begin{equation*}
\begin{aligned}
\|\operatorname{Op}(a)u\|_{L^2(\mathbb{R}^d)} &\leqslant C_0 C_1 \left\| \int_0^\infty b(r,\rho) \|f(\theta,\rho)\|_{L^2(d\theta)}  d\rho\right\|_{L^2(dr)} \\
&\leqslant C_0 C_1 \|B\|_{\mathcal{L}(L^2(\mathbb{R}_+))} \|f(\theta,\rho)\|_{L^2(d\theta d\rho)} = C\|B\|_{\mathcal{L}(L^2(\mathbb{R}_+))} \|u\|_{L^2(\mathbb{R}^d)}.
\end{aligned}
\end{equation*}
\end{proof}

\begin{lemma}\label{lem_tech_boundedness_of_symbol_study_in_xi}
Let $a$ be a symbol on $\mathbb{R}^d$, depending on some parameter $\lambda\in ]0,\infty[$. If for all $\alpha\in\mathbb{N}^d$,
$$ \sup_{x\in\mathbb{R}^d} \| \partial^\alpha_\xi a(x,\cdot) \|_{L^1_{\xi}} \leqslant C_\alpha \lambda^{d-|\alpha|} ,$$
the operator $\Op{a}$ is bounded on $L^2$, uniformly in $\lambda$.
\end{lemma}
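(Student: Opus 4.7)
The plan is to represent $\Op{a}$ as an integral operator with kernel
\[
K(x,y) = \frac{1}{(2\pi)^d} \int_{\mathbb{R}^d} e^{i(x-y)\xi}\, a(x,\xi)\, d\xi,
\]
and then bound $K$ pointwise by exploiting the hypothesis via integration by parts in $\xi$. Explicitly, for every multi-index $\alpha \in \mathbb{N}^d$, writing $(x-y)^\alpha e^{i(x-y)\xi} = (-i)^{|\alpha|} \partial_\xi^\alpha e^{i(x-y)\xi}$ and integrating by parts produces
\[
(x-y)^\alpha K(x,y) = \frac{(-1)^{|\alpha|} i^{|\alpha|}}{(2\pi)^d} \int_{\mathbb{R}^d} e^{i(x-y)\xi}\, \partial_\xi^\alpha a(x,\xi)\, d\xi,
\]
whence $|(x-y)^\alpha K(x,y)| \leqslant C \|\partial_\xi^\alpha a(x,\cdot)\|_{L^1_\xi} \leqslant C_\alpha \lambda^{d-|\alpha|}$ by assumption.

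Combining the bounds for $|\alpha|=0$ and for $|\alpha|$ large enough, one obtains the uniform pointwise estimate
\[
|K(x,y)| \leqslant C_N\, \lambda^d\, \langle \lambda(x-y)\rangle^{-N}, \qquad \forall\, N \in \mathbb{N},
\]
which holds uniformly in $x$, $y$, and $\lambda > 0$. The scaling here is sharp: it encodes that the kernel has mass $\sim 1$ concentrated in the tube $|x-y| \lesssim \lambda^{-1}$.

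Performing the change of variable $z = \lambda(x-y)$ in the remaining integral (and choosing $N > d$) gives
\[
\sup_{x} \int_{\mathbb{R}^d} |K(x,y)|\, dy \ \leqslant\ C_N \int_{\mathbb{R}^d} \langle z\rangle^{-N}\, dz\ \leqslant\ C,
\]
independent of $\lambda$, and symmetrically $\sup_{y} \int |K(x,y)|\, dx \leqslant C$. The $L^2$-boundedness then follows from Schur's test (equivalently, from Lemma \ref{lem_tech_Young's_inequality} applied in the form $\sup_x\int |K(x,y)|\,dy < \infty$ together with the same bound in $x$).

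There is no real obstacle in this argument — the content is classical and amounts to converting the $L^1$-type hypothesis on $\partial_\xi^\alpha a$ directly into kernel decay via Fourier inversion and integration by parts. The only mild point to check is that the estimates are genuinely uniform in both $x$ and $\lambda$, which is immediate since the hypothesis is already $\sup_x$-uniform and the rescaling $z = \lambda(x-y)$ removes the $\lambda$-dependence in the integral bounds.
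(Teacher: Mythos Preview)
Your proof is correct and follows essentially the same approach as the paper: write the kernel via Fourier inversion, use integration by parts in $\xi$ to gain decay in $x-y$, and conclude by Schur's Lemma. The only cosmetic difference is that the paper first invokes Lemma~\ref{lem_tech_change_of_scaling} to reduce to $\lambda=1$, whereas you track the $\lambda$-dependence explicitly and remove it by the change of variable $z=\lambda(x-y)$; both are equivalent.
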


\begin{proof}
Due to Lemma \ref{lem_tech_change_of_scaling}, the problem can be reduced to the case $\lambda=1$. The kernel of operator $\Op{a}$ is $K(x,y)=J(x,x-y)$, where
$$ J(x,z) = \frac{1}{(2\pi)^d} \int e^{iz\cdot\xi} a(x,\xi) d\xi. $$
We first observe that, for all $x\in\mathbb{R}^d$, $ |J(x,z)| \leqslant (2\pi)^{-d} \| a(x,\cdot) \|_{L^1_{\xi}} $, which is uniformly bounded. Then, by integration by part, we have, for all $N\in\mathbb{N}$,
\begin{equation*}
\begin{aligned}
\left| |z|^{2N}J(x,z) \right| &\lesssim \sum_{|\alpha|=2N} \left| \int e^{iz\cdot\xi} \partial_{\xi}^\alpha a(x,\xi) d\xi \right| \\
&\lesssim \sum_{|\alpha|=2N} \sup_{x\in\mathbb{R}^d} \| \partial^\alpha_\xi a(x,\cdot) \|_{L^1_{\xi}} \leqslant C_\alpha.
\end{aligned}
\end{equation*}
That is to say, $J$ is bounded and has any polynomial decay in $z$, uniformly in $x$. In particular, 
$$ |K(x,y)| = |J(x,x-y)| \lesssim \langle x-y \rangle^{-(d+1)}. $$
The conclusion follows from Schur's Lemma.
\end{proof}

\begin{lemma}[Calderon-Vaillancourt Theorem]\label{lem_tech_C-V}
For smooth symbol $a \in C^\infty(\mathbb{R}^d\times\mathbb{R}^d)$, the following estimate holds : 
\begin{equation}\label{eq_tech_C-V}
\|\Op{a}\|_{\mathcal{L}(L^2)} \lesssim \sup_{|\alpha|,|\beta|\leqslant N_d} \|\partial_x^\alpha\partial_\xi^\beta a\|_{L^\infty(\mathbb{R}^{d}_x \times \mathbb{R}^{d}_\xi)},
\end{equation}
where $N_d$ is a universal constant depending only on dimension $d$.
\end{lemma}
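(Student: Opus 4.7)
The plan is to derive the estimate from the Cotlar--Stein lemma (Lemma \ref{lem_tech_C-S}) applied to a phase-space partition of unity. Fix a non-negative $\varphi \in C_c^\infty(\mathbb{R}^{2d})$ supported in the unit cube with $\sum_{n \in \mathbb{Z}^{2d}}\varphi(\cdot - n) \equiv 1$, write $n = (j,k) \in \mathbb{Z}^d \times \mathbb{Z}^d$, and set $a_n(x,\xi) := a(x,\xi)\varphi(x-j,\xi-k)$, $T_n := \Op{a_n}$, so that $\Op{a} = \sum_n T_n$ on Schwartz functions. Denoting by $M$ the right-hand side of \eqref{eq_tech_C-V}, each $a_n$ is supported in a unit cube and its derivatives up to order $N_d$ are bounded by $C_d M$ uniformly in $n$, since differentiating $\varphi(\cdot-n)$ only produces translates of fixed derivatives of $\varphi$.

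The core step is to establish the almost-orthogonality estimate
$$ \|T_n T_{n'}^*\|_{\mathcal{L}(L^2)} + \|T_n^* T_{n'}\|_{\mathcal{L}(L^2)} \lesssim M^2\, \langle j-j'\rangle^{-2N}\langle k-k'\rangle^{-2N} $$
for arbitrary $N$, taking $N_d$ large enough in terms of $N$ and $d$. For the first norm, the kernel of $T_n T_{n'}^*$ is
$$ K_{n,n'}(x,y) = \frac{1}{(2\pi)^d}\int e^{i(x-y)\cdot\xi}\, a_n(x,\xi)\,\overline{a_{n'}(y,\xi)}\,d\xi, $$
which vanishes identically once $|k-k'|>2$ by disjointness of the $\xi$-supports, and which becomes rapidly decaying in $|j-j'|$ through factors $\langle x-y\rangle^{-2N}$ obtained by integrating by parts in $\xi$ (each integration by parts costs one derivative of $a$ and gains $|x-y|^{-1}\gtrsim |j-j'|^{-1}$ on the supports). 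The norm of the resulting kernel operator is then controlled via Schur's lemma. The term $\|T_n^* T_{n'}\|_{\mathcal{L}(L^2)}$ is handled symmetrically, after conjugation by the Fourier transform which exchanges the roles of $x$ and $\xi$ (cf.\ Lemma \ref{lem_tech_change_of_x_and_xi}): now $j$-disjointness provides the trivial bound, and integration by parts in the composition variable yields the factor $\langle k-k'\rangle^{-2N}$.

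With this almost-orthogonality in hand, Cotlar--Stein applies because $\sup_n \sum_{n'}\langle j-j'\rangle^{-N}\langle k-k'\rangle^{-N} < +\infty$ as soon as $N>d$, giving $\|\Op{a}\|_{\mathcal{L}(L^2)} \lesssim M$. The main obstacle is the bookkeeping in the almost-orthogonality step: one must determine the minimal number $N_d$ of derivatives that simultaneously produces the required decay in both phase-space variables, which requires following through integration by parts in both the $x$- and $\xi$-directions while keeping track of how the cut-offs $\varphi(x-j,\xi-k)$ interact with the underlying amplitude $a$. A classical count yields $N_d$ of order $d$; since the theorem is only used qualitatively elsewhere in the paper, the exact value is not important.
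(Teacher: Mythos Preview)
The paper does not actually prove this lemma: after the statement it only gives a brief historical remark and cites \cite{calderon1971boundedness}, \cite{meyer1978operator}, and \cite{hwang1987boundedness} for proofs, including one via Gabor transform. Your Cotlar--Stein argument with a unit-lattice phase-space partition of unity is one of the standard proofs and is correct in outline; it is essentially the approach of \cite{calderon1971boundedness} and of many textbook treatments, and it fits naturally with the paper since Lemma~\ref{lem_tech_C-S} is already available. The only minor imprecision is in the $T_n^* T_{n'}$ step: rather than invoking Lemma~\ref{lem_tech_change_of_x_and_xi}, it is cleaner to note directly that the Schwartz kernel of $T_n^* T_{n'}$ on the Fourier side involves an integral over the $x$-variable of $\overline{a_n(x,\xi)}a_{n'}(x,\eta)$, so disjointness of the $x$-supports gives vanishing for $|j-j'|>2$, and integration by parts in $x$ produces the $\langle k-k'\rangle^{-2N}$ decay. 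With that adjustment your sketch goes through and yields $N_d$ of order $d$, consistent with the references.
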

The earliest version of \eqref{eq_tech_C-V} was given in \cite{calderon1971boundedness} where $\alpha_j,\beta_j$ are required to be no more than $3$ for all $j=1,2,\cdots,d$. Then, in \cite{meyer1978operator}, the authors optimized it to $N_d = [\frac{d}{2}]+1$, while some other assumptions in $\alpha,\beta$ are given in the same paper. The readers may also find an alternative proof via Gabor transform in \cite{hwang1987boundedness}.

\section{Stationary phase lemmas}\label{sect_stationary_phase}

\begin{lemma}\label{lem_stat_stationary_phase}
Let 
\begin{equation*}
\begin{array}{lccc}
F: & (\mathbb{S}^{d-1})^4 \times ]0,1] & \rightarrow & \mathbb{C} \\
 & (\theta,\theta',\omega,\omega',\mu) & \mapsto & F(\theta,\theta',\omega,\omega',\mu)
\end{array}
\end{equation*}
be a smooth function supported for $d(\theta',\theta)+d(\theta',\omega')<\delta'$, where $d$ is the metric on sphere $\mathbb{S}^{d-1}$ and $\delta'>0$ is a small constant. We assume that $F$ satisfies for all $\alpha,\alpha',\beta,\beta'\in\mathbb{N}$, $j,N\in\mathbb{N}$
$$ |\partial_{\theta}^{\alpha}\partial_{\theta'}^{\alpha'}\partial_{\omega}^{\beta}\partial_{\omega'}^{\beta'} \partial_{\mu}^{j} F| 
\leqslant C \mu^{-|\alpha|-|\alpha'|-|\beta|-j} \langle\frac{d(\theta',\omega)}{\mu}\rangle^{-N}. $$
For any parameter $\lambda >0$, we define the integral
$$ I_{\pm}(\theta,\omega,\omega';\lambda,\mu) = \int_{\mathbb{S}^{d-1}} e^{\pm i\lambda\theta\theta'} F(\theta,\theta',\omega,\omega',\mu) d\theta'. $$

If $\delta'$ is small enough (depending on the sphere $\mathbb{S}^{d-1}$), and $\lambda\mu^2\geqslant c>0$, then we can write
$$ I_{\pm}(\theta,\omega,\omega';\lambda,\mu) = e^{\pm i\lambda} \mu^{d-1} S^{\pm}_{-\frac{d-1}{2}}(\theta,\omega,\omega',\mu;\lambda\mu^2), $$
where $S^{\pm}_{-\frac{d-1}{2}}(\theta,\omega,\omega',\mu;\zeta)$ is a smooth function supported for $d(\theta,\omega')\leqslant 2\delta'$, satisfying for all $\alpha,\beta,\beta'\in\mathbb{N}$, $j,\gamma,N\in\mathbb{N}$
\begin{equation}\label{eq_lem_stat_stationary_phase_estimate_for_symbol}
|\partial_{\theta}^{\alpha}\partial_{\omega}^{\beta}\partial_{\omega'}^{\beta'} \partial_{\mu}^{j}\partial_{\zeta}^{\gamma} S^{\pm}_{-\frac{d-1}{2}}| 
\leqslant C \mu^{-|\alpha|-|\beta|-j} \langle\zeta\rangle^{-\frac{d-1}{2}-\gamma} \langle\frac{d(\theta,\omega)}{\mu}\rangle^{-N}.
\end{equation}

Moreover, for $|\zeta|\geqslant c>0$, $S^{\pm}_{-\frac{d-1}{2}}(\theta,\omega,\omega',\mu;\zeta)$ can be decomposed as
\begin{equation}\label{eq_lem_stat_stationary_phase_deompose_formula}
(2\pi)^{\frac{d-1}{2}} e^{\mp i\frac{\pi}{4}(d-1)} F(\theta,\theta,\omega,\omega',\mu)\zeta^{-\frac{d-1}{2}} + S^{\pm}_{-\frac{d+1}{2}}(\theta,\omega,\omega',\mu;\zeta),
\end{equation}
where $S^{\pm}_{-\frac{d+1}{2}}(\theta,\omega,\omega',\mu;\zeta)$ is smooth and supported for $d(\theta,\omega')\leqslant 2\delta'$, satisfying the estimate \eqref{eq_lem_stat_stationary_phase_estimate_for_symbol} with $d-1$ replaced by $d+1$.
\end{lemma}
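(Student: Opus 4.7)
The plan is to carry out a standard stationary phase analysis on the sphere. The phase $\theta \cdot \theta'$, as a function of $\theta' \in \mathbb{S}^{d-1}$ for fixed $\theta$, has two non-degenerate critical points at $\theta' = \pm \theta$, with critical values $\pm 1$; only $\theta' = \theta$ lies in the support of $F$ thanks to the hypothesis $d(\theta', \theta) + d(\theta', \omega') < \delta'$. This explains the prefactor $e^{\pm i\lambda}$ in the conclusion and the support condition $d(\theta,\omega') \leqslant 2\delta'$ on the remainder $S^\pm_{-\frac{d-1}{2}}$ (by the triangle inequality).

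First, I would introduce a tubular parameterization $\theta' = \sqrt{1-|y|^2}\,\theta + \iota_\theta(y)$ for $y \in T_\theta \mathbb{S}^{d-1} \simeq \mathbb{R}^{d-1}$, where $\iota_\theta$ is a smooth $\theta$-dependent isometric embedding. In these coordinates the phase expands as $\theta \cdot \theta' = 1 - \tfrac{1}{2}|y|^2 + r(|y|^2)$ with $r$ smooth and $O(|y|^4)$, and the Jacobian $J(y)$ is smooth with $J(0) = 1$. After extracting $e^{\pm i\lambda}$, the integral becomes
$$I_{\pm} = e^{\pm i\lambda} \int_{\mathbb{R}^{d-1}} e^{\mp i \lambda |y|^2/2}\, e^{\mp i \lambda\, r(|y|^2)}\, \widetilde F(\theta, y, \omega, \omega', \mu)\, J(y)\, dy,$$
where $\widetilde F$ is $F$ composed with the parameterization and a localizing cutoff in $|y|$.

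Second, I would rescale $y = \mu z$, so that the Gaussian phase becomes $\mp i \zeta |z|^2/2$ with $\zeta = \lambda \mu^2 \geqslant c$, while the residual phase $\lambda\, r(\mu^2 |z|^2)$ equals $\zeta \cdot O(\mu^2 |z|^4)$; this latter factor stays bounded in modulus and, upon differentiation in $\zeta$, produces harmless powers of $\mu^2|z|^4$ absorbed against the Gaussian decay. This gives
$$I_{\pm} = e^{\pm i\lambda} \mu^{d-1} \int_{\mathbb{R}^{d-1}} e^{\mp i\zeta|z|^2/2}\, \Phi(\theta, z, \omega, \omega', \mu;\zeta)\, dz \;=:\; e^{\pm i\lambda}\mu^{d-1} S_{-\frac{d-1}{2}}^{\pm}(\theta,\omega,\omega',\mu;\zeta),$$
with $\Phi$ smooth, compactly supported in $z$ (for $\mu$ bounded, say $\mu \leqslant 1$) and decaying rapidly in $z$ (for $\mu$ small) via the hypothesis $\langle d(\theta',\omega)/\mu\rangle^{-N}$, which transfers to $\langle d(\theta,\omega)/\mu\rangle^{-N}\langle z\rangle^N$ and can be split as $\langle d(\theta,\omega)/\mu\rangle^{-N}$ times a Schwartz factor in $z$ using the freedom in $N$. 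Each $\partial_\theta, \partial_\omega, \partial_{\omega'}$ applied to $\Phi$ yields a factor $\mu^{-1}$ from the hypothesis on $F$; this delivers the symbol-type bounds \eqref{eq_lem_stat_stationary_phase_estimate_for_symbol} once one establishes the $\langle \zeta\rangle^{-(d-1)/2-\gamma}$ decay. The latter is obtained by iterated integration by parts based on the identity $e^{\mp i \zeta|z|^2/2} = \pm i \zeta^{-1} |z|^{-2}\,(z \cdot \nabla_z) e^{\mp i\zeta|z|^2/2}$ near infinity and direct Gaussian integration near $z=0$; for $\zeta$ in a compact set the decay is trivial from rapid decay of $\Phi$ in $z$.

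For the decomposition \eqref{eq_lem_stat_stationary_phase_deompose_formula}, I would apply the standard Gaussian stationary phase formula at the non-degenerate critical point $z = 0$ to the $z$-integral: the leading term is
$$\Bigl(\tfrac{2\pi}{\zeta}\Bigr)^{\!(d-1)/2} e^{\mp i\frac{\pi}{4}(d-1)}\,\Phi(\theta, 0, \omega, \omega', \mu;\zeta),$$
and by construction $\Phi(\theta, 0, \omega, \omega', \mu;\zeta) = F(\theta, \theta, \omega, \omega', \mu)$ (the residual quartic phase vanishes at $z=0$ and $J(0)=1$). Collecting this term gives the principal contribution in \eqref{eq_lem_stat_stationary_phase_deompose_formula}, and the remainder from stationary phase decays as $\zeta^{-(d+1)/2}$ with analogous bounds on derivatives, defining $S^{\pm}_{-\frac{d+1}{2}}$.

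The main obstacle is the bookkeeping between three scales: the ambient support scale of $F$ in $\theta'$, the microlocal scale $\mu$ controlling derivatives and the weight $\langle d(\theta',\omega)/\mu\rangle^{-N}$, and the semiclassical scale $(\lambda)^{-1/2} = \mu \zeta^{-1/2}$ coming from the oscillation. The assumption $\zeta \geqslant c$ is precisely what guarantees that the semiclassical scale is no coarser than the microlocal one, making the expansion valid; all estimates have to be verified uniformly in $(\theta,\omega,\omega',\mu,\zeta)$, which is routine once the rescaling and the $\langle z\rangle$-vs-$\mu$ trade-off for the transverse weight are set up cleanly.
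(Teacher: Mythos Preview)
The paper does not prove this lemma itself; it simply refers to Proposition~A.1.1 of \cite{delort2022microlocal} for the argument. Your outline---local coordinates near the unique critical point $\theta'=\theta$, rescaling $y=\mu z$ so that the quadratic phase becomes $\mp i\zeta|z|^2/2$ with $\zeta=\lambda\mu^2\geqslant c$, and then stationary phase in $\zeta$---is exactly the standard route and matches what one finds in that reference.

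One step in your sketch is not quite right as written and deserves care. You assert that the rescaled amplitude $\Phi$ factors as $\langle d(\theta,\omega)/\mu\rangle^{-N}$ times a Schwartz function of $z$, by Peetre's inequality and ``the freedom in $N$''. But Peetre only gives
\[
\langle d(\theta',\omega)/\mu\rangle^{-N'}\ \lesssim\ \langle d(\theta,\omega)/\mu\rangle^{-N'}\langle z\rangle^{N'},
\]
which \emph{grows} in $z$; and indeed when $d(\theta,\omega)\sim 1$ is fixed and $\mu\to 0$, the weight $\langle d(\theta',\omega)/\mu\rangle^{-N'}$ concentrates $\Phi$ near $z_0$ with $|z_0|\sim d(\theta,\omega)/\mu\to\infty$, not near $z=0$, so no uniform Schwartz bound in $z$ centred at the origin is available. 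The correct mechanism is a dichotomy: if $d(\theta,\omega)\lesssim\mu$ the amplitude is genuinely a bump near $z=0$ and ordinary stationary phase yields $\zeta^{-(d-1)/2}$; if $d(\theta,\omega)\gg\mu$ the amplitude sits where the phase gradient $\zeta z$ has size $\gtrsim \zeta\,d(\theta,\omega)/\mu$, and repeated integration by parts gives $(\zeta\,d(\theta,\omega)/\mu)^{-M}$ for every $M$, which dominates $\zeta^{-(d-1)/2}\langle d(\theta,\omega)/\mu\rangle^{-N}$ since $\zeta\geqslant c$. This also tames the residual quartic phase $\zeta\,\mu^2|z|^4$, which is not uniformly bounded on the full support $|z|\lesssim\delta'/\mu$ but is harmless in each regime separately. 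Once this dichotomy is made explicit, the rest of your argument (including the expansion \eqref{eq_lem_stat_stationary_phase_deompose_formula}) goes through.
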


\begin{lemma}\label{lem_stat_stationary_phase_remainder}
Let 
\begin{equation*}
\begin{array}{lccc}
F_0: & (\mathbb{S}^{d-1})^4 \times ]0,1] & \rightarrow & \mathbb{C} \\
 & (\theta,\theta',\omega,\omega',\mu) & \mapsto & F(\theta,\theta',\omega,\omega',\mu)
\end{array}
\end{equation*}
be a smooth function supported for $\min(d(\theta',\theta),d(\theta',-\theta))>\delta'>0$, where $d$ is the metric on sphere $\mathbb{S}^{d-1}$ and $\delta'$ is constant. We assume that $F$ satisfies for all $\alpha,\alpha',\beta,\beta'\in\mathbb{N}$, $j,N\in\mathbb{N}$
$$ |\partial_{\theta}^{\alpha}\partial_{\theta'}^{\alpha'}\partial_{\omega}^{\beta}\partial_{\omega'}^{\beta'} \partial_{\mu}^{j} F_0| 
\leqslant C \mu^{-|\alpha|-|\alpha'|-|\beta|-j} \langle\frac{d(\theta',\omega)}{\mu}\rangle^{-N}. $$

For any parameter $\lambda >0$, the integral
$$ I_{\pm}(\theta,\omega,\omega';\lambda,\mu) = \int_{\mathbb{S}^{d-1}} e^{\pm i\lambda\theta\theta'} F_0(\theta,\theta',\omega,\omega',\mu) d\theta' $$
can be written as
$$ I_{\pm}(\theta,\omega,\omega';\lambda,\mu) = e^{\pm i\lambda\theta\omega} \mu^{d-1} R^{\pm}(\theta,\omega,\omega',\mu;\lambda\mu), $$
where $R^{\pm}(\theta,\omega,\omega',\mu;\zeta)$ is a smooth function satisfying for all $\alpha,\beta,\beta'\in\mathbb{N}$, $j,\gamma,N\in\mathbb{N}$
\begin{equation}\label{eq_lem_stat_stationary_phase_remainder_estimate_for_symbol}
|\partial_{\theta}^{\alpha}\partial_{\omega}^{\beta}\partial_{\omega'}^{\beta'} \partial_{\mu}^{j}\partial_{\zeta}^{\gamma} R^{\pm}| 
\leqslant C \mu^{-|\alpha|-|\beta|-j} \langle\zeta\rangle^{-N}.
\end{equation}
\end{lemma}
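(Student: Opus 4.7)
The critical points of the phase $\theta'\mapsto \theta\cdot\theta'$ on $\mathbb{S}^{d-1}$ are exactly $\theta'=\pm\theta$, both of which are excluded from $\mathrm{supp}\,F_0$ by hypothesis. The problem is therefore one of non-stationary phase, and the strategy is iterated integration by parts along the tangential gradient of the phase on $\mathbb{S}^{d-1}$, with careful accounting of the competing gain $\lambda^{-1}$ and loss $\mu^{-1}$ that accompany each step (the latter coming from the angular concentration $\langle d(\theta',\omega)/\mu\rangle^{-N}$ of $F_0$).

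Concretely, set $\varphi(\theta,\theta')=\theta\cdot\theta'$ and introduce the tangential first-order operator
\[ \mathcal{L}g \;=\; \frac{\mp i}{\lambda\,|\nabla^{\mathbb{S}^{d-1}}_{\theta'}\varphi|^{2}}\,\nabla^{\mathbb{S}^{d-1}}_{\theta'}\varphi\;\cdot\;\nabla^{\mathbb{S}^{d-1}}_{\theta'}g, \]
so that $\mathcal{L}(e^{\pm i\lambda\varphi})=e^{\pm i\lambda\varphi}$. Since $|\nabla^{\mathbb{S}^{d-1}}_{\theta'}\varphi|=\sin d(\theta,\theta')\geqslant \sin\delta'>0$ on $\mathrm{supp}\,F_0$, the coefficients of $\mathcal{L}$ are smooth with bounded tangential derivatives there. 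After $M$ integrations by parts, $I_\pm=\int_{\mathbb{S}^{d-1}}e^{\pm i\lambda\varphi}(\mathcal{L}^t)^{M}F_0\,d\theta'$, and a simple induction shows $|(\mathcal{L}^t)^{M}F_0|\lesssim (\lambda\mu)^{-M}\langle d(\theta',\omega)/\mu\rangle^{-N+M}$, since every application of $\mathcal{L}^t$ produces a factor $\lambda^{-1}$ and transfers one tangential derivative onto $F_0$, costing $\mu^{-1}$ and one degree of angular decay. Combining with $\int_{\mathbb{S}^{d-1}}\langle d(\theta',\omega)/\mu\rangle^{-N'}d\theta'\lesssim \mu^{d-1}$ for $N'>d-1$ (Lemma~\ref{lem_tech_int_on_sphere}) and the trivial bound $|I_\pm|\lesssim \mu^{d-1}$ for $\lambda\mu\lesssim 1$, I obtain $|I_\pm|\lesssim \mu^{d-1}\langle\lambda\mu\rangle^{-M}$, which, after factoring out $e^{\pm i\lambda\theta\omega}\mu^{d-1}$, yields the pointwise bound $|R^\pm|\lesssim \langle\zeta\rangle^{-M}$.

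For the derivative estimates \eqref{eq_lem_stat_stationary_phase_remainder_estimate_for_symbol} I will rewrite
\[ R^\pm(\theta,\omega,\omega',\mu;\zeta)=\mu^{-(d-1)}\int_{\mathbb{S}^{d-1}} e^{\pm i(\zeta/\mu)\,\theta\cdot(\theta'-\omega)}\,F_0(\theta,\theta',\omega,\omega',\mu)\,d\theta', \]
and differentiate under the integral sign. A $\partial_\zeta$ brings down $\pm i\,\theta\cdot(\theta'-\omega)/\mu$, which is $O(d(\theta',\omega)/\mu)$ and hence absorbed by one power of the angular decay of $F_0$ at no $\mu^{-1}$ cost; similarly $\partial_{\omega'}$ hits only $F_0$ and loses nothing, matching the absence of $|\beta'|$ and $\gamma$ in the right-hand side. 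A $\partial_\theta$, $\partial_\omega$ or $\partial_\mu$ either falls on $F_0$ (gaining $\mu^{-1}$ directly from the hypothesis) or produces a factor of size $\zeta/\mu$ from the exponent; in the latter case one extra application of $\mathcal{L}^t$ trades $\zeta$ for $\langle\zeta\rangle^{-1}$, leaving exactly the $\mu^{-1}$ predicted by \eqref{eq_lem_stat_stationary_phase_remainder_estimate_for_symbol}. In every case the resulting amplitude is of the same structural form as $(\mathcal{L}^t)^{M}F_0$, so the same non-stationary phase argument applies verbatim.

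The main technical hurdle is purely combinatorial: after arbitrary parameter differentiations $\partial_\theta^\alpha\partial_\omega^\beta\partial_{\omega'}^{\beta'}\partial_\mu^j\partial_\zeta^\gamma$ interleaved with $M$ applications of $\mathcal{L}^t$, one must verify that the amplitude still lies in the symbol class of $F_0$, with the angular decay $\langle d(\theta',\omega)/\mu\rangle^{-N'}$ preserved up to a loss of $O(M+|\alpha|+|\beta|+|\beta'|+j+\gamma)$ in the exponent — a loss compensated by enlarging $N$ in the hypothesis. I will organize this bookkeeping by induction on $|\alpha|+|\beta|+|\beta'|+j+\gamma$, reducing at each stage to a finite sum of integrals structurally identical to $I_\pm$ with amplitudes satisfying the same hypothesis as $F_0$ (with slightly smaller decay exponent), and then invoking the base case established above.
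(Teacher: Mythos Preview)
The paper does not prove this lemma itself; it defers to Proposition~A.1.1 of \cite{delort2022microlocal}. Your argument --- non-stationary phase by iterated integration by parts along the tangential gradient of $\theta'\mapsto\theta\cdot\theta'$ (whose critical points $\theta'=\pm\theta$ are excluded from $\operatorname{Supp} F_0$), with each step gaining $\lambda^{-1}$ and costing $\mu^{-1}$ so that the net is $(\lambda\mu)^{-1}=\zeta^{-1}$, and the angular integration contributing the volume factor $\mu^{d-1}$ --- is correct and is precisely the mechanism behind the cited result. Your bookkeeping for the parameter derivatives via the representation
\[
R^\pm(\theta,\omega,\omega',\mu;\zeta)=\mu^{-(d-1)}\int_{\mathbb{S}^{d-1}} e^{\pm i(\zeta/\mu)\,\theta\cdot(\theta'-\omega)}\,F_0(\theta,\theta',\omega,\omega',\mu)\,d\theta'
\]
is also accurate: $\partial_\zeta$ and $\partial_{\omega'}$ produce no $\mu^{-1}$ loss (the former because $|\theta\cdot(\theta'-\omega)|\lesssim d(\theta',\omega)$ is absorbed by the angular weight), while $\partial_\theta$, $\partial_\omega$, $\partial_\mu$ each cost exactly one $\mu^{-1}$, matching the exponent $\mu^{-|\alpha|-|\beta|-j}$ in \eqref{eq_lem_stat_stationary_phase_remainder_estimate_for_symbol}.
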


For the proof of Lemma \ref{lem_stat_stationary_phase} and \ref{lem_stat_stationary_phase_remainder}, one may find a general result in \cite{delort2022microlocal}, Proposition A.1.1.

\begin{remark}\label{rmk_stat_dependence_on_para}
In Lemma \ref{lem_stat_stationary_phase} and Lemma \ref{lem_stat_stationary_phase_remainder}, the dependence on $\omega'$ is not crucial in the proof, we may eliminate the conditions and results involving $\omega'$. Meanwhile, if $F$ (resp. $F_0$) depends on some other parameters, the same condition can be inherited by $S_{-\frac{d-1}{2}}$ (resp. $R$) from $F$ (resp. $F_0$).
\end{remark}

Lemma \ref{lem_stat_stationary_phase} and \ref{lem_stat_stationary_phase_remainder}, together with Remark \ref{rmk_stat_dependence_on_para}, result in the following lemmas, which are important techniques used in the main text.
\begin{lemma}\label{lem_stat_stationary_phase_expansion}
Let
\begin{equation*}
\begin{array}{lccc}
G: & (\mathbb{S}^{d-1})^2 \times ]0,1] & \rightarrow & \mathbb{C} \\
&(\theta',\omega,\mu) & \mapsto & G(\theta',\omega,\mu)
\end{array}
\end{equation*}
be a smooth function supported for $d(\theta',\omega) < \delta'$, where $d$ is the metric on $\mathbb{S}^{d-1}$ and $\delta'>0$ is a small constant. We further assume that, for all $\alpha',\beta\in\mathbb{N}^{d-1}$, $j,N\in\mathbb{N}$,
$$ | \partial_{\theta'}^{\alpha'} \partial_{\omega}^{\beta} \partial_{\mu}^{j} G | \lesssim \mu^{-|\alpha'|-|\beta|-j} \langle \frac{d(\theta',\omega)}{\mu} \rangle^{-N}. $$
For $\lambda>0$, $\epsilon\in\{\pm\}$, we define
$$ I(\theta,\omega,\mu;\lambda) := \int_{\mathbb{S}^{d-1}} e^{\epsilon i \lambda \theta\cdot\theta'} G(\theta',\omega,\mu) d\theta'. $$
Then, under the conditions that $\delta'$ is small enough and that $\lambda\mu^2>c>0$ for some constant $c$, the integral $I$ may be written as the sum of principal terms 
$$ e^{\pm \epsilon i\lambda} \mu^{d-1} S^{\pm}_{-\frac{d-1}{2}}(\theta,\omega,\mu;\lambda\mu^2), $$
and a reminder
$$ e^{\epsilon i\lambda \theta\cdot\omega} \mu^{d-1} R(\theta,\omega,\mu;\lambda\mu), $$ 
where $S^{\pm}_{m}(\theta,\omega,\mu;\zeta)$ is a smooth function supported on $d(\theta,\pm\omega) < 2\delta'$, $\zeta>c>0$, satisfying for all $\alpha,\beta\in\mathbb{N}^{d-1}$, $j,n,N\in\mathbb{N}$,
$$ | \partial_{\theta}^{\alpha} \partial_{\omega}^{\beta} \partial_{\mu}^j \partial_{\zeta}^n S^{\pm}_{m} | \lesssim \mu^{-|\alpha|-|\beta|-j} \langle\zeta\rangle^{m-n} \langle \frac{d(\theta,\pm\omega)}{\mu} \rangle^{-N} ; $$
and $R(\theta,\omega,\mu;\zeta)$ is a smooth function satisfying for all $N\in\mathbb{N}$,
$$ |R| \lesssim \langle\zeta\rangle^{-N}.$$
Moreover, if $G$ depends on some extra parameters, the same bound for $G$ (and its derivatives in parameters) can be inherited by $S^{\pm}_{-\frac{d-1}{2}}$ and $R$.
\end{lemma}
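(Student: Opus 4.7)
My plan is to localize the integral around the two stationary points of the phase $\lambda\,\theta\cdot\theta'$ (viewed as a function of $\theta'\in\mathbb{S}^{d-1}$), namely $\theta'=\pm\theta$, and away from them, then invoke Lemma \ref{lem_stat_stationary_phase} on each localized piece and Lemma \ref{lem_stat_stationary_phase_remainder} on the non-stationary one. Concretely, I fix a smooth partition of unity
\[
1 = \varphi_+(\theta';\theta) + \varphi_-(\theta';\theta) + \varphi_0(\theta';\theta)
\]
on $\mathbb{S}^{d-1}$, with $\varphi_\pm$ supported in the cap $d(\theta',\pm\theta)<\delta'/2$ and $\varphi_0$ vanishing on $d(\theta',\pm\theta)<\delta'/3$. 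Since $\delta'$ is a fixed small constant, independent of $\mu$, all derivatives of $\varphi_\bullet$ in $\theta$ and $\theta'$ are bounded uniformly. Writing $I=I_++I_-+I_0$ accordingly will produce the three claimed contributions.

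For $I_+$, set $F(\theta,\theta',\omega,\mu):=G(\theta',\omega,\mu)\varphi_+(\theta';\theta)$. Multiplication by $\varphi_+$ preserves the decay $\langle d(\theta',\omega)/\mu\rangle^{-N}$ and imposes the additional support condition $d(\theta',\theta)\leqslant \delta'/2$. Invoking Lemma \ref{lem_stat_stationary_phase} with sign $\epsilon$ (using Remark \ref{rmk_stat_dependence_on_para} to drop the $\omega'$-variable and treat $\omega$ as an extra parameter in whose derivatives the estimates are uniform) yields
\[
I_+ = e^{\epsilon i\lambda}\,\mu^{d-1}\, S^+_{-\frac{d-1}{2}}(\theta,\omega,\mu;\lambda\mu^2),
\]
with $S^+$ supported on $d(\theta,\omega)<2\delta'$ and satisfying the required bounds.

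For $I_-$ the stationary point is $\theta'=-\theta$, so I perform the antipodal substitution $\tilde\theta'=-\theta'$ (an isometry of $\mathbb{S}^{d-1}$ preserving the surface measure), which rewrites
\[
I_- = \int_{\mathbb{S}^{d-1}} e^{-\epsilon i\lambda\,\theta\cdot\tilde\theta'}\, G(-\tilde\theta',\omega,\mu)\,\varphi_-(-\tilde\theta';\theta)\,d\tilde\theta',
\]
whose integrand is supported in $d(\tilde\theta',\theta)<\delta'/2$ and, through $G$, in $d(\tilde\theta',-\omega)<\delta'$. Applying Lemma \ref{lem_stat_stationary_phase} with sign $-\epsilon$ and with $-\omega$ in place of $\omega$ gives
\[
I_- = e^{-\epsilon i\lambda}\,\mu^{d-1}\, S^-_{-\frac{d-1}{2}}(\theta,\omega,\mu;\lambda\mu^2),
\]
supported on $d(\theta,-\omega)<2\delta'$. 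Finally, the integrand in $I_0$ satisfies $\min(d(\theta',\theta),d(\theta',-\theta))\geqslant\delta'/3$, so Lemma \ref{lem_stat_stationary_phase_remainder} applied to $F_0:=G\varphi_0$ produces
\[
I_0 = e^{\epsilon i\lambda\,\theta\cdot\omega}\,\mu^{d-1}\, R(\theta,\omega,\mu;\lambda\mu),
\]
with the claimed rapid decay in $\lambda\mu$. Summing the three pieces concludes the proof.

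The only technical point that requires care is to check that the partition of unity can be chosen $\mu$-independent (so its $\theta$-derivatives contribute only bounded factors), and that the hypotheses of Lemma \ref{lem_stat_stationary_phase} and Lemma \ref{lem_stat_stationary_phase_remainder} accommodate the Gaussian-type decay $\langle d(\theta',\omega)/\mu\rangle^{-N}$ carried by $G$; both are covered by Remark \ref{rmk_stat_dependence_on_para}, which also propagates the inheritance of any extra parameter dependence.
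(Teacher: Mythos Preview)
Your proposal is correct and follows essentially the same approach as the paper: localize near $\theta'=\pm\theta$ and away from both, apply Lemma~\ref{lem_stat_stationary_phase} to the two stationary pieces (using the antipodal reflection for the $-$ piece) and Lemma~\ref{lem_stat_stationary_phase_remainder} to the remainder, with Remark~\ref{rmk_stat_dependence_on_para} handling the extra-parameter inheritance. One small point of phrasing: when you say you ``treat $\omega$ as an extra parameter'' via Remark~\ref{rmk_stat_dependence_on_para}, be careful---$\omega$ must stay in the distinguished $\omega$-slot of Lemma~\ref{lem_stat_stationary_phase} (not be demoted to a generic parameter), since that is what produces the output decay $\langle d(\theta,\pm\omega)/\mu\rangle^{-N}$; only $\omega'$ is dropped.
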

\begin{proof}
We only give the proof for $\epsilon=+$, while the other one can be treated in the same way. To begin with, we introduce the functions
\begin{align*}
&F_{\pm}(\theta,\theta',\omega,\mu) := G(\pm\theta',\pm\omega,\mu) \chi(\theta-\theta'), \\
&F_0(\theta,\theta',\omega,\mu) := G(\theta',\omega,\mu) \left( 1 - \chi(\theta-\theta') - \chi(\theta+\theta') \right),
\end{align*}
where $\chi\in C_c^\infty(\mathbb{R}^d)$ is radial and supported in a small neighbor of zero, with value $1$ near zero. By using these functions, we may rewrite the integral $I$ into $I_+ + I_- + I_0$ with
\begin{align*}
I_\pm =& \int e^{\pm i \lambda \theta\cdot\theta'} F_{\pm}(\theta,\theta',\pm\omega,\mu) d\theta', \\
I_0 =& \int e^{i \lambda \theta\cdot\theta'} F_0(\theta,\theta',\omega,\mu) d\theta'.
\end{align*}
It is clear that $F_{\pm}$, $F_0$ verify the conditions in Lemma \ref{lem_stat_stationary_phase} and \ref{lem_stat_stationary_phase_remainder}, respectively. Then, by applying these lemmas, we can obtain the desired expressions and corresponding estimates. There remain two points to check: the support of $S^{\pm}_{-\frac{d-1}{2}}$ and the dependence on extra parameters. The latter is merely an application of Remark \ref{rmk_stat_dependence_on_para}, while the former can be shown by observing that, when $\operatorname{Supp}\chi$ is taken to be small enough, the integrand of integrals $I_{\pm}$ is supported on $d(\pm\theta',\omega) < \delta'$ and $d(\theta,\theta') < \delta'$.
\end{proof}

\begin{lemma}\label{lem_stat_int_on_angular_limit_part}
Let $\chi\in C_{c}^\infty(\mathbb{R}^d)$ and $f \in C_{c}^\infty(\mathbb{R}^d\backslash\{0\})$. Assume that $P$ is smooth on $]0,\infty[$, and $\Lambda$ is a positive function, such that
\begin{align*}
&\Lambda(\rho) \sim \rho^\sigma,\ \ \text{as }\rho\rightarrow+\infty, \\
&\left|\Lambda^{(j)}(\rho)\right| \lesssim \rho^{\sigma-j},\ \ \forall \rho>\rho_0>0, j\in\mathbb{N}, \\
&\frac{\Lambda(\rho)}{\rho^\sigma} \rightarrow \lambda_0,\ \ \text{as }\rho\rightarrow+\infty,
\end{align*}
hold for some $\sigma\in\mathbb{R}$ and $\lambda_0>0$.

Then there exists $t_0 \gg 1$ depending on $f$, $P'$, and $\Lambda$, such that for $\epsilon,\epsilon'\in \{+1,-1\}$, $\delta+\frac{\sigma}{2}\in[0,\frac{1}{2}[$, $r>c>0$, and $t>t_0$, the integral
$$ \int_{\mathbb{S}^{d-1}} e^{i\epsilon' r\rho\omega\theta} \chi\left(\frac{r\omega + \epsilon t P'(\rho)\theta}{t^{\frac{1}{2}+\delta}\Lambda(t^\frac{1}{2}\rho)}\right) f(\rho\theta) d\theta $$
can be decomposed as a principal term
$$ (2\pi)^\frac{d-1}{2} e^{i\epsilon\epsilon'\frac{\pi}{4}(d-1)} e^{-i\epsilon\epsilon' r\rho} (r\rho)^{-\frac{d-1}{2}}\chi\left(\frac{r - t P'(\rho)}{t^{\frac{1}{2}+\delta}\Lambda(t^\frac{1}{2}\rho)}\right) f(-\epsilon\rho\omega) $$
and a remainder
$$ e^{-i\epsilon\epsilon' r\rho} \mu^{d-1} S_{-\frac{d+1}{2}}(\omega,\mu,\rho,\frac{r}{t}-P'(\rho),t; r\rho\mu^2), $$
where $\mu=t^{\delta+\frac{\sigma}{2}-\frac{1}{2}}$, $S_{m}(\omega,\mu,\rho,r',t; \zeta)$ is supported for $\zeta>c>0$, $\rho\sim 1$ and 
$|r'| \lesssim \mu$ and satisfies for all $\alpha\in\mathbb{N}^{d-1}$, $j,k,l,\gamma\in\mathbb{N}$,
$$ | \partial_{\omega}^{\alpha} \partial_{\mu}^j \partial_{\rho}^{k} \partial_{r'}^{l} \partial_{\zeta}^{\gamma} S_{-\frac{d+1}{2}} |
\leqslant C \mu^{-(|\alpha|+j+l)} \langle\zeta\rangle^{m-\gamma}. $$
\end{lemma}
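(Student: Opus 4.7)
The plan is to reduce the sphere integral to a form where the stationary phase expansion of Lemma \ref{lem_stat_stationary_phase_expansion} applies. First I would perform the measure-preserving substitution $\theta=-\epsilon\theta'$ on $\mathbb{S}^{d-1}$, which turns the phase into $-\epsilon\epsilon' r\rho\,\omega\cdot\theta'$ and converts the cutoff into $\chi((r\omega-tP'(\rho)\theta')/(t^{\frac{1}{2}+\delta}\Lambda(t^{\frac{1}{2}}\rho)))$. Using the asymptotic $\Lambda(t^{1/2}\rho)/t^{\sigma/2}\to\lambda_0\rho^\sigma$, the denominator has size $t\mu\lambda_0\rho^\sigma(1+o(1))$ with $\mu=t^{\delta+\sigma/2-1/2}$, so compactness of $\Supp{\chi}$ forces $|r-tP'(\rho)|\lesssim t\mu$ and the tangential deviation $d(\theta',\omega)\lesssim\mu$. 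Outside this regime the integral is zero and both pieces of the claimed decomposition vanish trivially; inside it, the unique stationary point of the phase lying in $\Supp{\chi}$ is $\theta'=\omega$, with phase value $-\epsilon\epsilon' r\rho$ (producing the exponential factor of the stated principal term) and nondegenerate Hessian $\epsilon\epsilon' r\rho\,I_{d-1}$ on the tangent space, signature $\epsilon\epsilon'(d-1)$.

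Second, I would verify the hypotheses of Lemma \ref{lem_stat_stationary_phase_expansion} for the amplitude
$$ G(\theta',\omega,\mu;\rho,r',t):=\chi\!\left(\frac{r\omega-tP'(\rho)\theta'}{t^{\frac{1}{2}+\delta}\Lambda(t^{\frac{1}{2}}\rho)}\right)f(-\epsilon\rho\theta'),\qquad r':=\tfrac{r}{t}-P'(\rho), $$
treating $(\rho,r',t)$ as external parameters. Each $\partial_{\theta'}$ falling on $\chi$ produces a factor $tP'(\rho)/(t^{\frac{1}{2}+\delta}\Lambda(t^{\frac{1}{2}}\rho))=O(\mu^{-1})$ uniformly for $\rho\sim 1$, and the same holds for $\partial_\omega$; the decay $\langle d(\theta',\omega)/\mu\rangle^{-N}$ is automatic from the compact support of $\chi$. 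The nondegeneracy condition $\lambda\mu^2\geq c>0$ holds in the relevant regime, since on $\Supp{G}$ one has $r\sim tP'(\rho)\sim t$, so $r\rho\mu^2\sim t^{2\delta+\sigma}\geq 1$ when $\delta+\sigma/2\geq 0$. Applying Lemma \ref{lem_stat_stationary_phase_expansion} with sign $-\epsilon\epsilon'$ in the phase, $\lambda=r\rho$, and external variable equal to $\omega$, only the critical point $\theta'=+\omega$ lies in $\Supp{G}$, and so only the $S^+$ piece contributes, giving
$$ J=e^{-i\epsilon\epsilon' r\rho}\mu^{d-1}S^+_{-(d-1)/2}(\omega,\omega,\mu;r\rho\mu^2)+e^{-i\epsilon\epsilon' r\rho}\mu^{d-1}R(\omega,\omega,\mu;r\rho\mu). $$
The Morse expansion from Lemma \ref{lem_stat_stationary_phase} isolates the leading part of $S^+_{-(d-1)/2}$ as $(2\pi)^{(d-1)/2}e^{i\epsilon\epsilon'\pi(d-1)/4}G(\omega,\omega)(r\rho\mu^2)^{-(d-1)/2}$; since $\mu^{d-1}(r\rho\mu^2)^{-(d-1)/2}=(r\rho)^{-(d-1)/2}$, this reproduces exactly the stated principal term, with $G(\omega,\omega)=\chi((r-tP'(\rho))\omega/(t^{\frac{1}{2}+\delta}\Lambda(t^{\frac{1}{2}}\rho)))f(-\epsilon\rho\omega)$. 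The subleading $S^+_{-(d+1)/2}$ together with the non-critical remainder $R$ carry an extra $\langle r\rho\mu^2\rangle^{-1}$ factor of decay (for $R$, via the much faster decay in $r\rho\mu\gg r\rho\mu^2$) and, by Remark \ref{rmk_stat_dependence_on_para}, inherit smoothness in $(\rho,r',t)$ from $G$; they are collected into the claimed $S_{-(d+1)/2}$.

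The main technical obstacle is establishing the derivative bounds on $G$ uniformly in all five parameters $(\omega,\mu,\rho,r',t)$, because $G$ depends nonlinearly on $\rho$ and $t$ through both $P'(\rho)$ and $\Lambda(t^{1/2}\rho)$. Controlling $\partial_\rho^k\partial_{r'}^l\partial_t^j$ requires combining the growth bounds $|\Lambda^{(i)}(\rho')|\lesssim(\rho')^{\sigma-i}$, smoothness of $P$ on $\rho\sim 1$, and the chain rule with $\partial_t r'=-r/t^2=O(t^{-1})$. A convenient reduction is to first rescale the $\chi$-argument by the model factor $\lambda_0\rho^\sigma t^{\sigma/2}$, so that the discrepancy $\Lambda(t^{1/2}\rho)/(\lambda_0\rho^\sigma t^{\sigma/2})-1$ and its derivatives are explicitly $o(1)$ as $t\to\infty$ and can be absorbed as perturbations inside the symbol class structure needed by Lemma \ref{lem_stat_stationary_phase_expansion}.
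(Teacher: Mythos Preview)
Your proposal is correct and follows essentially the same route as the paper: both reduce to the stationary phase expansion near the single critical point $\theta=-\epsilon\omega$ (equivalently $\theta'=\omega$ after your substitution), verify the $\mu^{-1}$-per-derivative symbol bounds on the amplitude using $\rho\sim 1$ and the growth hypotheses on $\Lambda$, check $\lambda\mu^2\gtrsim 1$ from $r\sim t$, and then read off the principal term from the leading coefficient of the Morse expansion. The only cosmetic difference is that the paper applies Lemma~\ref{lem_stat_stationary_phase} directly after inserting harmless auxiliary cutoffs $\tilde\chi((x-y)/\mu)\tilde\chi((y-z)/\mu)$ to enforce the support hypothesis, whereas you invoke the wrapper Lemma~\ref{lem_stat_stationary_phase_expansion} and then observe that the $S^-$ and $R$ pieces vanish by support; these are equivalent. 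One small simplification: the statement does not ask for control of $\partial_t S_{-(d+1)/2}$, so your last paragraph's concern about $t$-derivatives of $\Lambda(t^{1/2}\rho)$ is unnecessary---only $\partial_\omega,\partial_\mu,\partial_\rho,\partial_{r'},\partial_\zeta$ bounds are needed, and these follow directly from $\rho\sim 1$ and the assumed estimates on $\Lambda^{(j)}$.
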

\begin{proof}
Using the notation $r = t(r'+P'(\rho))$, where $|r'| \lesssim \mu=t^{\delta+\frac{\sigma}{2}-\frac{1}{2}}$, we can rewrite the integral as 
\begin{equation}\label{eq_lem_stat_int_on_angular_limit_part_int_on_theta}
\int_{\mathbb{S}^{d-1}} e^{i\epsilon' t(r'+P'(\rho))\rho\omega\theta} \chi\left(\frac{r'\omega +  P'(\rho)(\omega+\epsilon\theta)}{t^{-\frac{1}{2}+\delta}\Lambda(t^\frac{1}{2}\rho)}\right) f(\rho\theta) d\theta.
\end{equation}

Consider the function
$$F(x,y,z,\mu;\rho,r',t) = \chi\left(\frac{r'x + P'(\rho)(x-y)}{\mu t^{-\frac{\sigma}{2}}\Lambda(t^\frac{1}{2}\rho)}\right) \tilde{\chi}(\frac{x-y}{\mu}) \tilde{\chi}(\frac{y-z}{\mu}) f(\rho y),$$
with $\tilde{\chi}\in C_{c}^{\infty}(\mathbb{R}^d)$ taking value $1$ in a large ball centered at the origin. By setting $\lambda = t(r'+P'(\rho))\rho>c>0$, we may rewrite the integral \eqref{eq_lem_stat_int_on_angular_limit_part_int_on_theta} as
\begin{equation*}
\int_{\mathbb{S}^{d-1}} e^{-i\epsilon\epsilon' \lambda(-\epsilon\omega)\theta} F(-\epsilon\omega,\theta,-\epsilon\omega,\mu;\rho,r',t) d\theta.
\end{equation*}
Remark that the integrand in \eqref{eq_lem_stat_int_on_angular_limit_part_int_on_theta} ensures that $|\omega+\epsilon\theta| \lesssim \mu \ll 1$, which allows us to add $\tilde{\chi}$ factors in the definition of $F$.

By using the fact that $\rho\sim1$, it is easy to verify that when $x,y,z\in\mathbb{S}^{d-1}$, for all $\alpha,\alpha',\beta\in\mathbb{N}^{d-1}$, $j,k,l,N\in\mathbb{N}$, 
$$ |\partial_{x}^{\alpha}\partial_{y}^{\alpha'}\partial_{z}^{\beta}\partial_{\mu}^{j}\partial_{\rho}^{k}\partial_{r'}^{l} F(x,y,z,\mu;\rho,r')| 
\leqslant C \mu^{-|\alpha|-|\alpha'|-|\beta|-j-l} \langle\frac{d(y,z)}{\mu}\rangle^{-N},$$
and that $F$ is supported for $\rho\sim 1$, $|r'| \lesssim \mu$, $d(x,y)\leqslant C_0 \mu \ll 1$, which ensures that $\lambda\mu^2 = t^{2(\delta+\frac{\sigma}{2})}(r'+P'(\rho))\rho>c>0$. Therefore, the conclusion follows from Lemma \ref{lem_stat_stationary_phase} with extra parameters $\rho$, $r'$, $t$ but without $\omega'$ variable.
\end{proof}

\section{A refined result for Schrödinger equation}\label{sect_Schrodinger}
In the case of Schrödinger equation $P(\xi)=\frac{|\xi|^2}{2}$, the structure of $P'(\xi) =\xi$ allows us to prove the parts (\romannumeral1) and (\romannumeral2) of Theorem \ref{thm_main} for non-smooth $\chi$, namely
\begin{theorem}\label{thm_main_Schrodinger}
Let $a_{\chi,\delta}$ and $E_{\chi,\delta}$ be as defined in \eqref{eq_truncation_symbol} and \eqref{eq_truncated_energy}, respectively. We assume, as in Theorem \ref{thm_main}, that $\Lambda$ is identically equal to $1$. Then, for any $u_0 \in L^2$, we have :

(\romannumeral1) if $\chi\in L^1$ and $\delta <0$,
\begin{equation}\label{eq_limit_of_energy_subcritical_Schrodinger}
\lim_{t\rightarrow+\infty} E_{\chi,\delta}(u_0,t) = \lim_{t\rightarrow+\infty} E_{\chi,\delta}(u_0,-t) = 0;
\end{equation}

(\romannumeral2) if $\chi\in L^1$ and $\delta =0$,
\begin{equation}\label{eq_limit_of_energy_critical_Schrodinger}
\lim_{t\rightarrow+\infty} E_{\chi,\delta}(u_0,t) = \lim_{t\rightarrow+\infty} E_{\chi,\delta}(u_0,-t) = \frac{1}{(2\pi)^d} \int G_\chi(\omega) |\hat{u}_0(\rho\omega)|^2 \rho^{d-1} d\rho d\omega,
\end{equation}
where $(\rho,\omega)$ is polar coordinate, and function $G_\chi(\omega)$ is defined as
\begin{equation}\label{eq_definition_of_G_Schrodinger}
G_\chi(\omega):= \frac{1}{(2\pi)^d} \left| \int_{x\cdot\omega>0} e^{i\frac{|x|^2}{2}}\chi(x)dx \right|^2;
\end{equation}
\end{theorem}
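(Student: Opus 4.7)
The plan is to establish a uniform-in-$t$ operator bound of the form
$$\|\Op{a_{\chi,\delta}(t)}\circ e^{itP(D_x)}\|_{\mathcal{L}(L^2)} \leq C|t|^{d\delta}\|\chi\|_{L^1},$$
valid for every $\chi\in L^1$, and then to deduce both statements by a density argument from Theorem \ref{thm_main}. The quadratic form of $P(\xi)=|\xi|^2/2$ is essential.

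Starting from
$$\Op{a_{\chi,\delta}(t)}(e^{itP(D_x)}u_0)(x) = \frac{1}{(2\pi)^d}\int e^{ix\cdot\xi + it|\xi|^2/2}\chi\!\left(\tfrac{x+t\xi}{|t|^{1/2+\delta}}\right)\mathbbm{1}_{|x|>|t\xi|}\hat u_0(\xi)\,d\xi,$$
I would exploit the quadratic phase by completing the square: substitute $\xi = \eta - x/t$ (treating $t>0$; the case $t<0$ is analogous), which turns the phase into $t|\eta|^2/2 - |x|^2/(2t)$ and the argument of $\chi$ into $t^{1/2-\delta}\eta$. Rescaling $\eta = |t|^{-1/2+\delta}z$ and then setting $y = x/t$ yields an identity of the shape
$$E_{\chi,\delta}(u_0,t) = \frac{|t|^{2d\delta}}{(2\pi)^{2d}}\int_{\mathbb{R}^d}\left|\int_{\mathbb{R}^d}\chi(z)F_t(y,z)\,dz\right|^2 dy,$$
with $F_t(y,z) = e^{i|t|^{2\delta}|z|^2/2}\,\mathbbm{1}_{|y|>\bigl||t|^{-1/2+\delta}z-y\bigr|}\,\hat u_0\bigl(|t|^{-1/2+\delta}z - y\bigr)$, up to unimodular factors from $\operatorname{sgn}(t)$. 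The crucial point is that $\chi$ enters only as a pure multiplier against a kernel $F_t$ whose $L^2(dy)$ norm is uniformly controlled for each $z$.

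Applying Minkowski in $z$ and the substitution $w = |t|^{-1/2+\delta}z - y$ gives $\|F_t(\cdot,z)\|_{L^2(dy)} \leq \|\hat u_0\|_{L^2} = (2\pi)^{d/2}\|u_0\|_{L^2}$, hence
$$E_{\chi,\delta}(u_0,t) \leq C |t|^{2d\delta}\|\chi\|_{L^1}^2\|u_0\|_{L^2}^2.$$
For $\delta<0$ the factor $|t|^{2d\delta}$ tends to zero as $|t|\to\infty$, so part (i) follows immediately for every $\chi\in L^1$. For the critical case $\delta=0$, the estimate is uniform in $|t|\gg 1$: choosing $\chi_n\in C_c^\infty$ with $\chi_n\to \chi$ in $L^1$, one obtains
$$\sup_{|t|\gg 1}\bigl|E_{\chi,0}(u_0,t)^{1/2}-E_{\chi_n,0}(u_0,t)^{1/2}\bigr| \leq C\|\chi-\chi_n\|_{L^1}\|u_0\|_{L^2},$$
so Theorem \ref{thm_main}(ii) applied to each $\chi_n$ transfers to $\chi$. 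On the right-hand side, the functional $\chi\mapsto\int_{x\cdot\omega>0}e^{i|x|^2/2}\chi(x)\,dx$ is bounded uniformly in $\omega$ by $\|\chi\|_{L^1}$, so $G_{\chi_n}\to G_\chi$ uniformly in $\omega$, and dominated convergence identifies the limit as in \eqref{eq_limit_of_energy_critical_Schrodinger}.

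The main obstacle is the explicit change of variables isolating $\chi$ as an $L^1$-integrable multiplier: this works precisely because $P(\xi)=|\xi|^2/2$ is quadratic, so that completing the square removes the $x\cdot\xi$ cross term entirely. No comparable reduction is available for general fractional $P$, which is why the $C_c^\infty$ regularity assumption is needed in Theorem \ref{thm_main}.
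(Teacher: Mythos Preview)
Your argument is correct and follows the same overall architecture as the paper: establish an operator bound of the form $\|\Op{a_{\chi,\delta}(t)}\|_{\mathcal{L}(L^2)}\le C|t|^{d\delta}\|\chi\|_{L^1}$, conclude (\romannumeral1) directly from the decay $|t|^{d\delta}\to 0$, and obtain (\romannumeral2) by approximating $\chi$ in $L^1$ by smooth $\chi_n$, invoking Theorem~\ref{thm_main}(\romannumeral2) for each $\chi_n$, and passing to the limit using the continuity of $G_\chi$ in $\chi\in L^1$. The density step and the treatment of $G_\chi$ match the paper exactly.

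The one genuine difference is how the $L^1$ bound is obtained. The paper rescales by $|t|^{1/2}$ to the symbol $b(x,\xi)=\chi((x+\xi)/\lambda)\mathbbm{1}_{|x|>|\xi|}$ with $\lambda=|t|^\delta$, and then applies Schur's Lemma directly to the kernel $|b|$ (via Lemma~\ref{lem_tech_estimate_as_kernel}), using $\int|b|\,dx,\int|b|\,d\xi\le\lambda^d\|\chi\|_{L^1}$. You instead complete the square in the quadratic phase, which eliminates the cross term and lets $\chi$ factor out as a pure $L^1$ multiplier against an $L^2$-bounded kernel in $y$; Minkowski then gives the same bound. Both routes exploit the Schr\"odinger structure $P'(\xi)=\xi$ (the paper through the simple form of $b$, you through the exact phase cancellation), and both yield the identical estimate. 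Your version has the virtue of making explicit \emph{why} quadratic $P$ is special, while the paper's Schur argument is shorter and avoids any manipulation of the phase.
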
 

In order to prove the limit \eqref{eq_limit_of_energy_subcritical_Schrodinger} and \eqref{eq_limit_of_energy_critical_Schrodinger}, we indicate that, compared with Theorem \ref{thm_main}, the only difficulty is the loss of regularity in $\chi$, which can be overcame by finding a bound of operator $\Op{a(t)}$ depending merely on $\|\chi\|_{L^1}$.

\subsection{An alternative bound of truncated operator}\label{subsect_boundedness_Schrodinger}

The goal of this subsection is to find a uniform bound of $\Op{a(t)}$, which involves less regularity of $\chi$ than Proposition \ref{prop_boundedness_of_truncated_operator}. Via Lemma \ref{lem_tech_change_of_scaling}, it suffices to study the symbol
$$ b(x,\xi) = \chi\left( \frac{x+\xi}{\lambda} \right) \mathbbm{1}_{|x|>|\xi|}, $$
where $\lambda = |t|^\delta \in ]0,\infty[$. To be precise, we shall prove the following proposition

\begin{proposition}\label{prop_boundedness_Schrodinger_small_parameter}
There exists constant $C>0$ independent of $\lambda$, such that, for all $\lambda>0$,
$$ \|\Op{b}\|_{\mathcal{L}(L^2)} \leqslant C \lambda^d \|\chi\|_{L^1}. $$
\end{proposition}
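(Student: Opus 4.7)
The proof will exploit the special structure $P'(\xi)=\xi$ of the Schrödinger symbol, which allows the argument $(x+\xi)/\lambda$ of $\chi$ to be straightened into a pure frequency variable. Concretely, in the oscillatory integral representation
\[
\Op{b}f(x) \;=\; \frac{1}{(2\pi)^d}\int e^{ix\cdot\xi}\,\chi\!\Bigl(\frac{x+\xi}{\lambda}\Bigr)\mathbbm{1}_{|x|>|\xi|}\,\hat{f}(\xi)\,d\xi,
\]
I would perform the change of variables $\eta=(x+\xi)/\lambda$ (so that $\xi=\lambda\eta-x$ and $d\xi=\lambda^{d}d\eta$). This turns the expression into
\[
\Op{b}f(x) \;=\; \frac{\lambda^{d}}{(2\pi)^d}\,e^{-i|x|^{2}}\int_{\mathbb{R}^d} \chi(\eta)\,e^{i\lambda x\cdot\eta}\,\mathbbm{1}_{|x|>|\lambda\eta-x|}\,\hat{f}(\lambda\eta-x)\,d\eta,
\]
which is precisely the feature that fails for general $P$.

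\textbf{Conclusion via Minkowski and Plancherel.} Having separated $\chi$ from all the other factors, I would apply Minkowski's integral inequality in $L^{2}(dx)$ to bring the $\eta$-integral outside the norm. Since $|e^{-i|x|^{2}}|=|e^{i\lambda x\cdot\eta}|=1$, one is left, for each fixed $\eta$, with
\[
\bigl\|\mathbbm{1}_{|x|>|\lambda\eta-x|}\,\hat{f}(\lambda\eta-x)\bigr\|_{L^{2}(dx)}.
\]
A further change of variable $y=\lambda\eta-x$ (Jacobian equal to $1$) turns this into $\|\mathbbm{1}_{|\lambda\eta-y|>|y|}\hat{f}(y)\|_{L^{2}(dy)}\leqslant\|\hat{f}\|_{L^{2}}=(2\pi)^{d/2}\|f\|_{L^{2}}$, uniformly in $\eta$. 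Combining these two steps yields
\[
\|\Op{b}f\|_{L^{2}} \;\leqslant\; \frac{\lambda^{d}}{(2\pi)^{d/2}}\,\|\chi\|_{L^{1}}\,\|f\|_{L^{2}},
\]
which is exactly the claim, with $C=(2\pi)^{-d/2}$.

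\textbf{Remarks on rigor and obstacles.} There is essentially no analytic difficulty here beyond justifying the manipulations for $\chi\in L^{1}$ rather than $\chi\in C^{\infty}_{c}$. One may first carry out the argument for $f\in\mathcal{S}(\mathbb{R}^d)$ and $\chi\in C_{c}^{\infty}$, where all integrals are absolutely convergent and Fubini/Minkowski apply without comment; the general case then follows by density, since the right hand side depends on $\chi$ only through its $L^{1}$-norm. The main conceptual point to emphasise is that the bound is obtained \emph{without} using any smoothness of $\chi$ and without appealing to the decomposition techniques of Section~\ref{sect_L^2_bound}; the mechanism is the coincidence that for Schrödinger, the map $\xi\mapsto (x+\xi)/\lambda$ is an affine translation in $\xi$, so that $\chi\bigl((x+\xi)/\lambda\bigr)$ can be absorbed as a scalar average indexed by $\chi\in L^{1}$. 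This is precisely what fails for general $P'$ and explains why the non-smooth extension is available only in the Schrödinger setting.
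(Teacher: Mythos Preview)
Your argument is correct. The change of variable $\eta=(x+\xi)/\lambda$, Minkowski in $L^2(dx)$, and Plancherel combine exactly as you say to give $\|\Op{b}\|_{\mathcal{L}(L^2)}\leqslant (2\pi)^{-d/2}\lambda^{d}\|\chi\|_{L^1}$, and the density remark for general $\chi\in L^1$ is appropriate.

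The paper takes a different, even shorter, route: it simply observes that
\[
\sup_{\xi}\int |b(x,\xi)|\,dx \;\leqslant\; \lambda^{d}\|\chi\|_{L^1},
\qquad
\sup_{x}\int |b(x,\xi)|\,d\xi \;\leqslant\; \lambda^{d}\|\chi\|_{L^1},
\]
and then invokes Schur's Lemma together with Lemma~\ref{lem_tech_estimate_as_kernel} (which bounds $\|\Op{a}\|$ by the $L^2$-norm of the integral operator with kernel $|a(x,\xi)|$). In other words, the paper treats $b$ purely as a kernel and never touches the oscillatory phase, whereas you carry the phase along and exploit that, after the substitution, it factorises into unimodular pieces. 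Both approaches ultimately rely on the Schr\"odinger-specific fact that $\xi\mapsto x+\xi$ has Jacobian~$1$ (this is what makes the $d\xi$-Schur bound equal to $\lambda^{d}\|\chi\|_{L^1}$, and equally what makes your substitution work), so neither generalises directly to other~$P$. Your version has the minor advantage of displaying $\Op{b}$ explicitly as a $\chi$-weighted superposition of unitary operators, which makes the dependence on $\|\chi\|_{L^1}$ conceptually transparent; the paper's version is more in keeping with the kernel-estimate toolkit of Appendix~\ref{sect_lemmas}.
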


\begin{proof}[Proof of Proposition \ref{prop_boundedness_Schrodinger_small_parameter}]
It is easy to calculate that
\begin{align*}
\int |b(x,\xi)| dx& \leqslant \int \left| \chi\left(\frac{x+\xi}{\lambda}\right) \right| dx = \lambda^d \|\chi\|_{L^1}, \\
\int |b(x,\xi)| d\xi& \leqslant \int \left| \chi\left(\frac{x+\xi}{\lambda}\right) \right| d\xi = \lambda^d \|\chi\|_{L^1}.
\end{align*}
Thus, the desired estimate follows from Schur's Lemma and Lemma \ref{lem_tech_estimate_as_kernel}.
\end{proof}

\subsection{Calculation of limit}

By Proposition \ref{prop_boundedness_Schrodinger_small_parameter}, when $\delta<0$, we have 
$$ \| \Op{a} \|_{\mathcal{L}(L^2)} \leqslant C t^{\delta d} \|\chi\|_{L^1} \rightarrow 0,\ \ \text{as }t\rightarrow \infty, $$
which implies \eqref{eq_limit_of_energy_subcritical_Schrodinger}.

In the critical case $\delta=0$, when $\chi$ is smooth, compactly supported, and constant near zero, limit \eqref{eq_limit_of_energy_critical_Schrodinger} follows from the limit \eqref{eq_limit_of_energy_critical}. If $\chi$ is no more than an $L^1$ function, we may approximate $\chi$ by some regular function in $L^1$. To be precise, for all $n\in\mathbb{N}$, there exists $\chi_n \in C_c^\infty(\mathbb{R}^d)$ which are constant near zero, such that
$$ \|\chi-\chi_n\|_{L^1} < \frac{1}{n}. $$
To highlight the dependence in $\chi$, in the rest of this section, we will add subscript $\chi$ for concerning terms, for example,
$$ a_{\chi}(t,x,\xi) = \chi\left(\frac{x+t\xi}{|t|^{\frac{1}{2}}}\right)\mathbbm{1}_{|x|>|t||\xi|}. $$

Proposition \ref{prop_boundedness_Schrodinger_small_parameter} implies that, for all $u_0\in L^2$ and $t \neq 0$,
\begin{equation*}
\begin{aligned}
\| \Op{a_\chi}e^{itP(D_x)}u_0 - \Op{a_{\chi_n}}e^{itP(D_x)}u_0 \|_{L^2} 
&= \| \Op{a_{\chi-\chi_n}}e^{itP(D_x)}u_0 \|_{L^2} \\
&\leqslant C \|\chi-\chi_n\|_{L^1} \|e^{itP(D_x)}u_0\|_{L^2} \\
&< C \frac{1}{n} \|u_0\|_{L^2}.
\end{aligned}
\end{equation*}
Therefore, for fixed $u_0\in L^2$, the limit
$$ \Op{a_\chi}e^{itP(D_x)}u_0 \rightarrow \Op{a_{\chi_n}}e^{itP(D_x)}u_0\ \text{in }L^2,\ \ \text{as }n\rightarrow \infty $$
is uniform in $t$. We may conclude \eqref{eq_limit_of_energy_critical_Schrodinger} by passing to the limit $t\rightarrow\pm\infty$ : 
\begin{equation*}
\begin{aligned}
\lim_{t\rightarrow\pm\infty} E_{\chi,\delta}(u_0,t) &= \lim_{t\rightarrow\pm\infty} \| \Op{a_\chi}e^{itP(D_x)}u_0 \|_{L^2}^2 \\
&= \lim_{t\rightarrow\pm\infty} \lim_{n\rightarrow\infty} \| \Op{a_{\chi_n}}e^{itP(D_x)}u_0 \|_{L^2}^2 \\
&= \lim_{n\rightarrow\infty} \lim_{t\rightarrow\pm\infty} \| \Op{a_{\chi_n}}e^{itP(D_x)}u_0 \|_{L^2}^2 \\
&= \lim_{n\rightarrow\infty} \frac{1}{(2\pi)^d} \int G_{\chi_n}(\omega) |\hat{u}_0(\rho\omega)|^2 \rho^{d-1} d\rho d\omega \\
&= \frac{1}{(2\pi)^d} \int G_{\chi}(\omega) |\hat{u}_0(\rho\omega)|^2 \rho^{d-1} d\rho d\omega 
\end{aligned}
\end{equation*}
The last equality follows from Dominated Convergence Theorem and the continuity of $G$ on $\chi\in L^1$, which is obvious due to the definition \eqref{eq_definition_of_G_Schrodinger} of $G_{\chi}$.

\section{Partition of energy for Klein-Gordon equation : a classical setting}\label{sect_K-G_classical}

In this part, we shall give a proof of \eqref{eq_K-G_limit_of_energy_classical} via an alternative study of asymptotic behavior of solution to half-Klein-Gordon equation \eqref{eq_half_KG}, namely
\begin{equation*}
\left\{
\begin{aligned}
&\left(\frac{\partial_t}{i} - P(D_x)\right)u = 0, \\
&u|_{t=0} = u_0,
\end{aligned}
\right.
\end{equation*}
where $P(\xi) = \langle\xi\rangle$ is a smooth symbol. As in Section \ref{sect_Klein-Gordon}, instead of studying the solution $w$ to Klein-Gordon equation \eqref{eq_Klein-Gordon}, we turn to 
$$ u = \left( \frac{\partial_t}{i} + P(D_x) \right)w, $$
which is a solution to half-Klein-Gordon equation with initial data
$$ u_0 = u|_{t=0} =\frac{w_1}{i} + P(D_x)w_0 \in L^2. $$

Using this notation, we may rewrite the integral on the left hand side of \eqref{eq_K-G_limit_of_energy_classical} as
\begin{align*}
&\int_{r_0<\left|\frac{x}{t}\right|<r_1} \left( |\partial_t w|^2 + |\nabla w|^2 + |w|^2 \right) dx \\
=& \int \left( \left|\mathbbm{1}_{r_0<\left|\frac{x}{t}\right|<r_1}\partial_t w\right|^2 + \left|\mathbbm{1}_{r_0<\left|\frac{x}{t}\right|<r_1}\nabla w\right|^2 + \left|\mathbbm{1}_{r_0<\left|\frac{x}{t}\right|<r_1} w\right|^2 \right) dx \\
=& \int \left( \left|\mathbbm{1}_{r_0<\left|\frac{x}{t}\right|<r_1}\Imaginary u\right|^2 + \left|\mathbbm{1}_{r_0<\left|\frac{x}{t}\right|<r_1}i\frac{D_x}{\langle D_x \rangle}\Real u\right|^2 + \left|\mathbbm{1}_{r_0<\left|\frac{x}{t}\right|<r_1} \langle D_x \rangle^{-1}\Real u\right|^2 \right) dx \\
=& \int \left( \left|\Imaginary \mathbbm{1}_{r_0<\left|\frac{x}{t}\right|<r_1}u\right|^2 + \left|\Imaginary \mathbbm{1}_{r_0<\left|\frac{x}{t}\right|<r_1}\frac{D_x}{\langle D_x \rangle}u\right|^2 + \left|\Real \mathbbm{1}_{r_0<\left|\frac{x}{t}\right|<r_1} \langle D_x \rangle^{-1}u\right|^2 \right) dx \\
=& \int \Bigg( \left|\Imaginary \mathbbm{1}_{r_0<\left|\frac{x}{t}\right|<r_1} e^{itP(D_x)}u_0\right|^2 + \left|\Imaginary \mathbbm{1}_{r_0<\left|\frac{x}{t}\right|<r_1}e^{itP(D_x)}\frac{D_x}{\langle D_x \rangle}u_0\right|^2 \\
&\hspace{16em} + \left|\Real \mathbbm{1}_{r_0<\left|\frac{x}{t}\right|<r_1} e^{itP(D_x)}\langle D_x \rangle^{-1}u_0\right|^2 \Bigg) dx 
\end{align*}

Notice that the term on the right hand takes the form of
\begin{equation}\label{eq_K-G_classic_general_form_of_energy}
\int \left| \mathcal{A} g\left(\frac{x}{t}\right)v(t,x) \right|^2 dx,
\end{equation}
where $\mathcal{A}\in\{\Real,\Imaginary\}$, $g(y)=\mathbbm{1}_{r_0<|y|<r_1}$, and $v$ is a solution to half-Klein-Gordon equation, which can be written as $v(t) = e^{itP(D_x)}v_0$, with
$$ v_0 = u_0,\ \frac{D_x}{\langle D_x \rangle}u_0,\ \langle D_x \rangle^{-1}u_0, $$
which all belong to $L^2$, since $u_0\in L^2$. In order to calculate the limit of such quantity, we need to study the asymptotic behavior of $v$, which will be given in the next part.

\subsection{Asymptotic behavior}

In this part, we shall state our problem in a general setting. Let $u$ be the unique solution to \eqref{eq_frac_disper} with initial data $u_0\in L^2$. We assume the symbol $P$ is smooth except at zero, which covers all the fractional-type equations. Since $P'$ is well-defined except at zero, we may introduce $\nu$ the push forward of Lebesgue measure under $P'$, i.e.
\begin{equation}\label{eq_K-G_classic_def_of_nu}
\nu(E) := \operatorname{Leb}(P'^{-1}(E)),\ \ \forall E\subset\mathbb{R}^d\ \text{measurable}.
\end{equation}
For all function $g\in L^\infty$, we are interested in the following asymptotic formula :
\begin{equation}\label{eq_K-G_classic_asymp}
g\left(\frac{x}{t}\right)u(t) = g(-P'(D_x)) u(t) + o_{L^2}(1),\ \ \text{as }t\rightarrow \pm\infty.
\end{equation}
We denote by $\mathcal{G}$ the collection of those function $g$ satisfying this formula for all $u_0\in L^2$, namely
\begin{equation*}
\begin{aligned}
\mathcal{G} :=& \{ g\in L^\infty : \eqref{eq_K-G_classic_asymp}\ \text{holds for all } u_0\in L^2 \} \\
:=& \{ g\in L^\infty : \eqref{eq_K-G_classic_asymp}\ \text{holds for all } u_0\ \text{with }\hat{u}_0\in C_c^\infty(\mathbb{R}^d) \}.
\end{aligned}
\end{equation*}
The two definitions given above are equivalent since the multiplication with $g(x/t)$ and Fourier multiplier $g(-P'(D_x))$ are both bounded on $L^2$ uniformly in time. The equivalence then follows from the fact that the subspace $\mathcal{F}^{-1}C_c^\infty(\mathbb{R}^d\backslash\{0\})$ is dense in $L^2$, where $\mathcal{F}$ is the Fourier transform.

In order to prove \eqref{eq_K-G_limit_of_energy_classical}, we may use the following lemma
\begin{lemma}\label{lem_K-G_classic_asymp}
Let $E\subset\mathbb{R}^d$ be any measurable set whose boundary has null Lebesgue measure, namely
$$ \operatorname{Leb}(\partial E) = 0. $$
If the measure $\nu$ defined in \eqref{eq_K-G_classic_def_of_nu} is absolutely continuous w.r.t. Lebesgue measure, we have
$$ \mathbbm{1}_{E} \in \mathcal{G}. $$
\end{lemma}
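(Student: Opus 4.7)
The plan is to establish $\mathbbm{1}_E \in \mathcal{G}$ by approximation from a dense subclass of smooth symbols, with the approximation errors controlled quantitatively via the absolute continuity hypothesis. Since $g(x/t)$ and $g(-P'(D_x))$ are both bounded on $L^2$ uniformly in $t$ by $\|g\|_{L^\infty}$, density allows the reduction to initial data with $\hat{u}_0 \in C_c^\infty(\mathbb{R}^d\setminus\{0\})$. A preliminary step (needed in the argument but not part of the statement) is to check that every Schwartz function lies in $\mathcal{G}$; this follows from the Fourier-side identity
\begin{equation*}
\mathcal{F}_x[g(x/t)u(t)](\eta) = \frac{1}{(2\pi)^d}\int \hat{g}(\zeta)\, e^{itP(\eta-\zeta/t)}\,\hat{u}_0(\eta-\zeta/t)\,d\zeta,
\end{equation*}
the Taylor expansion $tP(\eta-\zeta/t) = tP(\eta) - P'(\eta)\cdot\zeta + O(|\zeta|^2/t)$ (legitimate since $\hat{u}_0$ is supported away from the singularity of $P$), and dominated convergence exploiting the rapid decay of $\hat{g}$, which identify the limit as $\mathcal{F}_x[g(-P'(D_x))u(t)](\eta)$ in $L^2(d\eta)$.

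A direct consequence is the scalar weak limit, for $g \in C_c^\infty$,
\begin{equation*}
\int g(x/t)\,|u(t,x)|^2\,dx \longrightarrow \int g\,d\mu \quad \text{as } t\to\pm\infty,
\end{equation*}
where $\mu := (-P')_*\bigl(\tfrac{|\hat{u}_0|^2}{(2\pi)^d}\,d\xi\bigr)$. Since $|\hat{u}_0|^2\,d\xi \ll d\xi$ we have $\mu \ll \nu$, and the hypothesis $\nu \ll \operatorname{Leb}$ then yields $\mu \ll \operatorname{Leb}$; in particular $\mu(\partial E) = 0$. Choose $g_n^\pm \in C_c^\infty$ with $0 \le g_n^- \le \mathbbm{1}_E \le g_n^+ \le 1$ converging to $\mathbbm{1}_E$ pointwise off $\partial E$; dominated convergence on $(\mathbb{R}^d,\mu)$ gives $\int(g_n^+-g_n^-)\,d\mu \to 0$. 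Sandwiching extends the scalar limit from smooth symbols to $\mathbbm{1}_E$, and likewise to $|g_n-\mathbbm{1}_E|$ for any smooth $g_n$ valued in $[0,1]$; since $a^2 \le a$ on $[0,1]$,
\begin{equation*}
\limsup_{t\to\pm\infty}\|(g_n-\mathbbm{1}_E)(x/t)\,u(t)\|_{L^2}^2 \le \int |g_n-\mathbbm{1}_E|\,d\mu,
\end{equation*}
the right-hand side tending to $0$ as $n\to\infty$ by DCT on $\mu$.

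The lemma will then follow from the triangle inequality
\begin{align*}
\|(\mathbbm{1}_E(x/t)-\mathbbm{1}_E(-P'(D_x)))u(t)\|_{L^2}
&\le \|(\mathbbm{1}_E-g_n)(x/t)u(t)\|_{L^2} \\
&\quad + \|(g_n(x/t)-g_n(-P'(D_x)))u(t)\|_{L^2} \\
&\quad + \|(g_n-\mathbbm{1}_E)(-P'(D_x))u_0\|_{L^2}.
\end{align*}
Given $\varepsilon>0$, one chooses $n$ large enough that the first term is at most $\varepsilon/3$ uniformly in large $|t|$ (via the preceding display) and the third term, which equals $\|g_n-\mathbbm{1}_E\|_{L^2(\mu)}$ by Plancherel, is also at most $\varepsilon/3$; then, for this fixed $n$, the Schwartz case $g_n\in\mathcal{G}$ makes the middle term smaller than $\varepsilon/3$ for $|t|$ large. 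The main technical hurdle is the scalar-convergence step: one must upgrade the trivial convergence for smooth symbols to the discontinuous indicator $\mathbbm{1}_E$, and this is precisely where the hypothesis $\nu\ll\operatorname{Leb}$ enters crucially, converting Lebesgue-nullity of $\partial E$ into $\mu$-nullity and thereby making the sandwich argument tight.
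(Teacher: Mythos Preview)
Your proof is correct and follows a genuinely different route from the paper's. The paper proceeds by a direct microlocal argument: it introduces the pseudodifferential cutoff $a_0(t,x,\xi)=\chi\bigl((x+tP'(\xi))/|t|^{1/2+\delta}\bigr)$, shows $u(t)\approx\Op{a_0(t)}u(t)$ via integration by parts, and then exploits the support condition $|x/t+P'(\xi)|\lesssim t^{\delta-1/2}$ to transfer the indicator $\mathbbm{1}_E(x/t)$ to the frequency side, producing error terms supported on shrinking neighborhoods $E_t\setminus E$ and $E\cap\tilde{E}_t$ whose intersection lies in $\partial E$. Your approach instead first establishes $\mathcal{S}\subset\mathcal{G}$, deduces the narrow convergence of the rescaled mass distributions $\mu_t=(x\mapsto x/t)_*|u(t,\cdot)|^2\,dx$ to the semiclassical measure $\mu$, and concludes by a Portmanteau-style sandwich. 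The paper's argument is more self-contained (it does not need a separate Schwartz step) and gives explicit control of the localization scale; your argument is more modular and places the problem squarely in classical weak-convergence theory, which makes the role of the hypothesis $\nu\ll\operatorname{Leb}$ very transparent. One small point: your choice $g_n^+\in C_c^\infty$ with $g_n^+\geqslant\mathbbm{1}_E$ tacitly assumes $E$ is bounded; for general $E$ you should invoke mass conservation $\mu_t(\mathbb{R}^d)=\mu(\mathbb{R}^d)$ to upgrade vague convergence to narrow convergence (equivalently, combine lower approximants for both $E$ and $E^c$), after which the Portmanteau step goes through unchanged.
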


We assume this lemma is true and prove it later. It is easy to check that the assumptions in Lemma \ref{lem_K-G_classic_asymp} hold true for $P(\xi)=\langle\xi\rangle$ and $E = \{r_0<|y|<r_1\}$. As a result, \eqref{eq_K-G_classic_general_form_of_energy} can be written as
\begin{align*}
\int \left| \mathcal{A} \mathbbm{1}_{E}\left(\frac{x}{t}\right)v(t,x) \right|^2 dx =& \int \left| \mathcal{A} \mathbbm{1}_{E}\left(-P'(D_x)\right)v(t,x) \right|^2 dx + o(1) \\
=& \int \left| \mathcal{A} \mathbbm{1}_{]\rho_0,\rho_1[}(|D_x|)v(t,x) \right|^2 dx + o(1),
\end{align*}
when $t\rightarrow\pm\infty$. Recall that $]\rho_0,\rho_1[ = P'^{-1}(]r_0,r_1[)$. Actually, this formula implies \eqref{eq_K-G_limit_of_energy_classical}, since as $t\rightarrow\pm\infty$, we have
\begin{align*}
&\int_{r_0<\left|\frac{x}{t}\right|<r_1} \left( |\partial_t w|^2 + |\nabla w|^2 + |w|^2 \right) dx \\
=& \int \Bigg( \left|\Imaginary \mathbbm{1}_{r_0<\left|\frac{x}{t}\right|<r_1} u(t,x)\right|^2 + \left|\Imaginary \mathbbm{1}_{r_0<\left|\frac{x}{t}\right|<r_1}\frac{D_x}{\langle D_x \rangle}u(t,x)\right|^2 \\
&\hspace{16em} + \left|\Real \mathbbm{1}_{r_0<\left|\frac{x}{t}\right|<r_1} \langle D_x \rangle^{-1}u(t,x)\right|^2 \Bigg) dx  \\
=& \int \Bigg( \left|\Imaginary \mathbbm{1}_{]\rho_0,\rho_1[}(|D_x|) u(t,x)\right|^2 + \left|\Imaginary \mathbbm{1}_{]\rho_0,\rho_1[}(|D_x|)\frac{D_x}{\langle D_x \rangle}u(t,x)\right|^2 \\
&\hspace{12em} + \left|\Real \mathbbm{1}_{]\rho_0,\rho_1[}(|D_x|) \langle D_x \rangle^{-1}u(t,x)\right|^2 \Bigg) dx + o(1)  \\
=& \left\|\mathbbm{1}_{]\rho_0,\rho_1[}(|D_x|) \partial_t w\right\|^2_{L^2} + \left\|\mathbbm{1}_{]\rho_0,\rho_1[}(|D_x|)\nabla w\right\|^2_{L^2} + \left\| \mathbbm{1}_{]\rho_0,\rho_1[}(|D_x|) w\right\|^2_{L^2} + o(1)  \\
=& \left\|\mathbbm{1}_{]\rho_0,\rho_1[}(|D_x|) \partial_t w\right\|^2_{L^2} + \left\|\mathbbm{1}_{]\rho_0,\rho_1[}(|D_x|)w\right\|^2_{H^1} + o(1)  \\
=& \| \mathbbm{1}_{]\rho_0,\rho_1[}(|D_x|) w_1\|_{H^1}^2 + \| \mathbbm{1}_{]\rho_0,\rho_1[}(|D_x|) w_0\|_{L^2}^2 + o(1).
\end{align*}

In order to complete the proof of \eqref{eq_K-G_limit_of_energy_classical}, we give now the proof of Lemma \ref{lem_K-G_classic_asymp}.

\begin{proof}[Proof of Lemma \ref{lem_K-G_classic_asymp}]
Without loss of generality, we may assume in what follows that $\hat{u}_0 \in C_c^\infty(\mathbb{R}^d)$. Now, we fix $\chi\in C_c^\infty(\mathbb{R}^d)$ which equals to $1$ in a ball centered at zero and consider the symbol
\begin{equation}\label{eq_K-G_classic_truncation_symbol}
a_0(t,x,\xi) = \chi \left( \frac{x+tP'(\xi)}{|t|^{\frac{1}{2}+\delta}} \right),
\end{equation}
where $\delta \in ]0,\frac{1}{2}[$. We have seen that Lemma \ref{lem_tech_change_of_x_and_xi} and \ref{lem_tech_boundedness_of_symbol_study_in_xi} imply the uniform-in-$t$ boundedness of $\Op{a_0(t)}$. Moreover, by integration by parts, we can prove that for all $u_0 \in L^2$
$$ \left\| u(t) - \Op{a_0(t)}u(t) \right\|_{L^2} \rightarrow 0,\ \ \text{as }t\rightarrow \pm\infty. $$

By writing
\begin{equation*}
\begin{aligned}
E_t &:= E + B(0,ct^{-\frac{1}{2}+\delta}),\\
\tilde{E}_t &:= E^c + B(0,ct^{-\frac{1}{2}+\delta}),
\end{aligned}
\end{equation*}
where $c>0$ is a constant determined by $\Supp\chi$, we may apply the uniform-in-time $L^2$-boundedness of multiplication with $\mathbbm{1}_{E}(x/t)$ and Fourier multiplier $\mathbbm{1}_{E}(-P'(D_x))$ to obtain that, as $t\rightarrow \pm\infty$,
\begin{align*}
&\mathbbm{1}_{E}\left(\frac{x}{t}\right)u(t,x) - \mathbbm{1}_{E}(-P'(D_x))u(t,x) \\
=& \mathbbm{1}_{E}\left(\frac{x}{t}\right)\Op{a_0(t)}u(t,x) - \Op{a_0(t)}\mathbbm{1}_{E}(-P'(D_x))u(t,x) + o_{L^2}(1) \\
=& \mathbbm{1}_{E}\left(\frac{x}{t}\right)\Op{a_0(t)}\mathbbm{1}_{E_t}(-P'(D_x))u(t,x) - \Op{a_0(t)}\mathbbm{1}_{E}(-P'(D_x))u(t,x) + o_{L^2}(1) \\
=& \mathbbm{1}_{E}\left(\frac{x}{t}\right)\Op{a_0(t)}\mathbbm{1}_{E_t\backslash E}(-P'(D_x))u(t,x) \\
&\hspace{8em}- \mathbbm{1}_{E^c}\left(\frac{x}{t}\right)\Op{a_0(t)}\mathbbm{1}_{E}(-P'(D_x))u(t,x) + o_{L^2}(1) \\
=& \mathbbm{1}_{E}\left(\frac{x}{t}\right)\Op{a_0(t)}\mathbbm{1}_{E_t\backslash E}(-P'(D_x))u(t,x) \\
&\hspace{8em}- \mathbbm{1}_{E^c}\left(\frac{x}{t}\right)\Op{a_0(t)}\mathbbm{1}_{E\cap\tilde{E}_t}(-P'(D_x))u(t,x) + o_{L^2}(1)
\end{align*}
Note that in the calculation above, it is possible to add extra cut-off in $-P'(D_x)$ since the symbol $a_0(t)$ is supported for
$$ \left| \frac{x}{t} - (-P'(\xi)) \right| \leqslant ct^{-\frac{1}{2}+\delta}. $$

As a consequence, the uniform boundedness of multiplication with $\mathbbm{1}_{E}(x/t)$, $\mathbbm{1}_{E^c}$ and $\Op{a_0(t)}$ implies that
\begin{align*}
&\limsup_{t\rightarrow+\infty}\left\| \mathbbm{1}_{E}\left(\frac{x}{t}\right)u(t) - \mathbbm{1}_{E}(-P'(D_x))u(t) \right\|_{L^2} & \\
\lesssim& \limsup_{t\rightarrow+\infty} \left( \left\| \mathbbm{1}_{E_t\backslash E}(-P'(D_x))u(t) \right\|_{L^2} + \left\| \mathbbm{1}_{E\cap\tilde{E}_t}(-P'(D_x))u(t) \right\|_{L^2} \right) &\\
=& (2\pi)^{-\frac{d}{2}} \limsup_{t\rightarrow+\infty} \left( \left\| \mathbbm{1}_{E_t\backslash E}(-P')\hat{u}_0 \right\|_{L^2} + \left\| \mathbbm{1}_{E\cap\tilde{E}_t}(-P')\hat{u}_0 \right\|_{L^2} \right) &(\text{Plancherel Theorem})\\
=& (2\pi)^{-\frac{d}{2}} \left( \left\| \mathbbm{1}_{\cap_{t>0}E_t\backslash E}(-P')\hat{u}_0 \right\|_{L^2} + \left\| \mathbbm{1}_{\cap_{t>0}E\cap\tilde{E}_t}(-P')\hat{u}_0 \right\|_{L^2} \right) &(\text{DCT})\\
\lesssim& \left\| \mathbbm{1}_{\partial E}(-P')\hat{u}_0 \right\|_{L^2} &
\end{align*}
The last quantity is actually zero since $\partial E$ has zero Lebesgue measure and $\nu$ is absolutely continuous w.r.t. Lebesgue measure, which ensures that $\mathbbm{1}_{\partial E}(-P'(\xi))$ vanishes almost everywhere. 
\end{proof}

\subsection{Further remarks on \texorpdfstring{$\mathcal{G}$}{G}}

In the proof of \eqref{eq_K-G_limit_of_energy_classical}, we only study the function
$$ g(y) = \mathbbm{1}_{r_0<|y|<r_1}. $$
A natural question is whether \eqref{eq_K-G_limit_of_energy_classical} holds true for other function $g$, or equivalently, which type of function is contained in class $\mathcal{G}$. In this part, we shall indicate an error in a classical result and prove that $\mathcal{G}$ contains at least the continuous functions.

The asymptotic formula \eqref{eq_K-G_classic_asymp} is, to our knowledge, firstly studied in \cite{strichartz1981asymptotic}, where the author claimed in Corollary 2.2 of \cite{strichartz1981asymptotic} that, when $\nu$ is absolutely continuous w.r.t Lebesgue measure,
\begin{equation}\label{eq_K-G_classic_conj}
L^\infty = \mathcal{G}.
\end{equation}
However, the original proof given in \cite{strichartz1981asymptotic} is false. Actually, the author managed to prove in Theorem 2.1 that for general $\nu$,
$$ \{\text{Fourier transform of finite measure}\} \subset \mathcal{G}, $$
and reduce the conjecture to 
\begin{equation}\label{eq_K-G_classic_conj'}
\mathbbm{1}_{E} \subset \mathcal{G},\ \ \text{for bounded and measurable }E\subset\mathbb{R}^d.
\end{equation}

The method used in \cite{strichartz1981asymptotic} is that, by regularity of Lebesgue measure, we may choose a series of compact sets $\{K_n\}_{n\in\mathbb{N}}$ and bounded open sets $\{U_n\}_{n\in\mathbb{N}}$, such that for all $n\in\mathbb{N}$
$$ K_n \subset K_{n+1},\ \ U_{n+1} \subset U_n,\ \ K_n \subset E \subset U_n; $$
$$ \lim_{n\rightarrow +\infty} \operatorname{Leb}(U_n \backslash K_n) = 0. $$
Then it is easy to find smooth functions $g_n\in C^\infty_c(U_n)$ which are non-negative, range in $[0,1]$, and equal to $1$ on $K_n$. Clearly, $\mathbbm{1}_{E} - g_n$ is supported in $U_n \backslash K_n$, whose measure tends to zero. By Dominated Convergence Theorem and the fact that $0\leqslant \mathbbm{1}_{E} - g_n \leqslant 1$, we have
$$ \limsup_{t\rightarrow\pm\infty}\|(\mathbbm{1}_{E} - g_n)(-P'(D_x))u \|_{L^2}^2 = (2\pi)^{-d} \|(\mathbbm{1}_{E} - g_n)(-P'(\xi))\hat{u}_0(\xi)\|_{L^2_\xi}^2 \xrightarrow{n\rightarrow+\infty} 0. $$
Remark that, to apply Dominated Convergence Theorem, we need $(\mathbbm{1}_{E} - g_n)(-P'(\xi))$ converges to zero almost everywhere, which is a consequence of absolute continuity of $\nu$ w.r.t Lebesgue measure. Therefore, in order to prove \eqref{eq_K-G_classic_conj'}, it suffices to check that
\begin{equation}\label{eq_K-G_classic_approx_x}
\limsup_{t\rightarrow\pm\infty}\|(\mathbbm{1}_{E} - g_n)(x/t)u \|_{L^2_x}^2 \xrightarrow{n\rightarrow+\infty} 0.
\end{equation}
Once it holds true, one has, for all $n\in\mathbb{N}$,
\begin{align*}
&\limsup_{t\rightarrow\pm\infty}\|\mathbbm{1}_{E}(x/t)u - \mathbbm{1}_{E}(-P'(D_x))u \|_{L^2_x} \\
\leqslant& \limsup_{t\rightarrow\pm\infty}\|g_n(x/t)u - g_n(-P'(D_x))u \|_{L^2_x} + \limsup_{t\rightarrow\pm\infty}\|(\mathbbm{1}_{E} - g_n)(x/t)u \|_{L^2_x} \\
& + \limsup_{t\rightarrow\pm\infty}\|(\mathbbm{1}_{E} - g_n)(-P'(D_x))u \|_{L^2} \\
=& \limsup_{t\rightarrow\pm\infty}\|(\mathbbm{1}_{E} - g_n)(x/t)u \|_{L^2_x}
 + \limsup_{t\rightarrow\pm\infty}\|(\mathbbm{1}_{E} - g_n)(-P'(D_x))u \|_{L^2},
\end{align*}
since $g_n$ is obviously the Fourier transform of some finite measure. The two quantities on the right hand side vanish as $n$ tends to infinity, and the desired conclusion $\mathbbm{1}_{E}\in\mathcal{G}$ follows.

The proof of \eqref{eq_K-G_classic_approx_x} given in \cite{strichartz1981asymptotic} is to find $h_n\in\mathcal{G}$ such that
$$ C \geqslant h_n \geqslant \mathbbm{1}_{O_n}\ \ \text{and}\ \ h_n \rightarrow 0,\ \text{a.e.}, $$
where $O_n =U_n \backslash K_n$ is an open set. If such $h_n$ exists, one obtains immediately that
\begin{align*}
\lim_{n\rightarrow+\infty} \limsup_{t\rightarrow\pm\infty} \| (\mathbbm{1}_{E}-g_n)(x/t) u(t) \|_{L^2}^2 
\leqslant& \lim_{n\rightarrow+\infty} \limsup_{t\rightarrow\pm\infty} \| h_n(x/t) u(t) \|_{L^2}^2 \\
\leqslant& \lim_{n\rightarrow+\infty} \limsup_{t\rightarrow\pm\infty} \| h_n(-P'(D_x)) u(t) \|_{L^2}^2 \\
=& \lim_{n\rightarrow+\infty} (2\pi)^{-d} \int h_n^2(-\nabla P(\xi)) \left| \hat{u}_{0}(\xi) \right|^2 d\xi = 0.
\end{align*}
Note that the second inequality is a consequence of $h_n \in \mathcal{G}$ and the last equality follows from Dominated Convergence Theorem. 

Since we only know that $\mathcal{G}$ contains a subset of continuous function (Fourier transform of finite measures), it is essential to assume $h_n$'s to be continuous. However, for general decreasing bounded open sets $O_n$, even if their measures decrease to zero, such continuous $h_n$'s do NOT exist. Otherwise, it is harmless to assume $h_n$'s are supported in the same large ball. Then, for one thing by Dominated Convergence Theorem,
$$ \lim_{n\rightarrow +\infty} \int h_n(y) dy = 0;  $$
and for another thing, we have
$$ \int h_n(y) dy \geqslant \int_{\overline{O_n}} h_n(y) dy = \operatorname{Leb}(\overline{O_n}) \geqslant 0, $$
where $\overline{O_n}$ is the closure of open set $O_n$. Here the first inequality is due to the continuity of $h_n$'s. As a result,
$$ \lim_{n\rightarrow +\infty} \operatorname{Leb}(\overline{O_n}) = 0. $$
The contradiction arises from the fact that $\operatorname{Leb}(O_n)$ tends to zero does not imply that $\operatorname{Leb}(\overline{O_n})$ tends to zero. For example,Let $\{r_j\}_{j\in\mathbb{N}}$ be a numeration of rational numbers in the unit ball $B=B(0,1)$ of $\mathbb{R}^d$ centered at zero. Consider the series of open sets
$$ O_n := \cup_{j\in\mathbb{N}} B(r_j,2^{-j-n}). $$
Clearly $O_n$'s are open as the union of open sets and 
$$ \operatorname{Leb}(O_n) \leqslant \sum_{j\in\mathbb{N}} \operatorname{Leb}\left(B(r_j,2^{-j-n})\right) \lesssim \sum_{j\in\mathbb{N}}2^{-(j+n)d} \sim 2^{-nd} \rightarrow 0,\ \ \text{as }n\rightarrow +\infty. $$
Since $\{r_j\}_{j\in\mathbb{N}}$ is dense in $B=B(0,1)$, the closure of each $O_n$ contains at least the unit ball $B$. As a consequence,
$$ \lim_{n\rightarrow +\infty} \operatorname{Leb}(\overline{O_n}) \geqslant \lim_{n\rightarrow +\infty} \operatorname{Leb}(B) = C_d>0. $$
We emphasize that the argument above does not falsify \eqref{eq_K-G_classic_conj} and it is still unknown whether this conjecture is true. Here we shall prove rigorously that all bounded continuous functions belong to $\mathcal{G}$.

\begin{proposition}\label{prop_K-G_classic_continuous_fct}
If the measure $\nu$ defined in \eqref{eq_K-G_classic_def_of_nu} is absolutely continuous w.r.t. Lebesgue measure, we have
$$ C_b^0(\mathbb{R}^d) \subset \mathcal{G}. $$
\end{proposition}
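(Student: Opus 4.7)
The plan is to combine the cited inclusion $\mathcal{F}(\{\text{finite measures}\})\subset\mathcal{G}$ (from Theorem~2.1 of \cite{strichartz1981asymptotic}) with two elementary reductions: a uniform-closure argument for $\mathcal{G}$, and a tail cutoff in $x/t$ justified by the microlocal concentration encoded by the symbol $a_0(t)$ of \eqref{eq_K-G_classic_truncation_symbol}.

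First I would observe that, under the hypothesis of Proposition~\ref{prop_K-G_classic_continuous_fct}, $\mathcal{G}$ is closed under uniform convergence in $L^\infty(\mathbb{R}^d)$. Indeed, if $g_n\to g$ in $L^\infty$ and $u_0\in L^2$, then for every $t$,
$$\|(g-g_n)(x/t)u(t)\|_{L^2}\leqslant \|g-g_n\|_\infty\|u_0\|_{L^2},$$
and by Plancherel
$$\|(g-g_n)(-P'(D_x))u(t)\|_{L^2}\leqslant \|g-g_n\|_\infty\|u_0\|_{L^2}.$$
Combining these uniform bounds with $g_n\in\mathcal{G}$ and passing to $\limsup_{t\to\pm\infty}$, then letting $n\to\infty$, gives $g\in\mathcal{G}$. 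Since every Schwartz function is the Fourier transform of a Schwartz function, which lies in $L^1$ and hence defines a finite measure, one has $\mathcal{S}\subset\mathcal{G}$; uniform approximation by mollification then yields $C_c^0(\mathbb{R}^d)\subset\mathcal{G}$.

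Next, fix $u_0$ with $\hat{u}_0\in C_c^\infty(\mathbb{R}^d\setminus\{0\})$ (a dense class in $L^2$), and set $M:=\sup_{\xi\in\operatorname{Supp}\hat{u}_0}|P'(\xi)|<\infty$. For $g\in C_b^0$ and any $R>M$, pick $\chi_R\in C_c^\infty(\mathbb{R}^d)$ equal to $1$ on $B(0,R)$ and supported in $B(0,R+1)$, and write $g=g_1+g_2$ with $g_1:=g\chi_R\in C_c^0\subset\mathcal{G}$ and $g_2:=g(1-\chi_R)$ supported in $\{|y|\geqslant R\}$. Since $|P'(\xi)|\leqslant M<R$ on $\operatorname{Supp}\hat{u}_0$, the Fourier multiplier satisfies $g_2(-P'(D_x))u(t)\equiv 0$. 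Writing
$$g(x/t)u(t)-g(-P'(D_x))u(t)=\bigl(g_1(x/t)u(t)-g_1(-P'(D_x))u(t)\bigr)+g_2(x/t)u(t),$$
the first bracket is $o_{L^2}(1)$ because $g_1\in\mathcal{G}$, and it remains only to show $\|g_2(x/t)u(t)\|_{L^2}=o(1)$ as $|t|\to\infty$.

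The only real obstacle is this tail estimate, which I would handle using the approximation $u(t)=\Op{a_0(t)}u(t)+o_{L^2}(1)$ already established in the proof of Lemma~\ref{lem_K-G_classic_asymp}. The integral representation of $\Op{a_0(t)}u(t)$ with symbol \eqref{eq_K-G_classic_truncation_symbol} has integrand supported in $\xi\in\operatorname{Supp}\hat{u}_0$ and $|x+tP'(\xi)|\lesssim|t|^{1/2+\delta}$, which forces $|x|\leqslant M|t|+C|t|^{1/2+\delta}$ on its support. Since $R>M$ and $\delta<1/2$, for $|t|$ sufficiently large we have $R|t|>M|t|+C|t|^{1/2+\delta}$, so the supports of $g_2(x/t)$ and of $\Op{a_0(t)}u(t)$ are disjoint. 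Hence $g_2(x/t)\Op{a_0(t)}u(t)=0$ identically for large $|t|$, and
$$\|g_2(x/t)u(t)\|_{L^2}\leqslant \|g\|_\infty\,\|u(t)-\Op{a_0(t)}u(t)\|_{L^2}=o(1),$$
which completes the proof.
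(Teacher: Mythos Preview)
Your proof is correct and follows the same overall architecture as the paper (first $\mathcal{S}\subset\mathcal{G}$, then uniform closure to compactly supported continuous functions, then a cutoff argument for general $g\in C_b^0$), but differs in two substantive places.

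First, for $\mathcal{S}\subset\mathcal{G}$ you invoke Strichartz's Theorem~2.1, whereas the paper gives a direct integration-by-parts computation: writing $g(x/t)-g(-P'(\xi))=(x/t+P'(\xi))\cdot\int_0^1 g'(\cdots)\,d\tau$ and integrating the factor $(x/t+P'(\xi))$ against $e^{i(x\cdot\xi+tP(\xi))}$ yields an explicit $O(|t|^{-1})$ bound. The paper's argument is self-contained; yours is shorter but relies on the external citation.

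Second, and more interestingly, your tail estimate is handled differently. The paper does \emph{not} use the microlocal approximation $u(t)=\Op{a_0(t)}u(t)+o_{L^2}(1)$ here; instead it bootstraps: since $\chi_R\in\mathcal{S}\subset\mathcal{G}$, one has $(1-\chi_R)(x/t)u(t)=(1-\chi_R)(-P'(D_x))u(t)+o_{L^2}(1)$, and then takes $R\to\infty$ after $t\to\pm\infty$. Your route via disjoint supports of $g_2(x/t)$ and $\Op{a_0(t)}u(t)$ is cleaner in that a single fixed $R>M$ suffices (no double limit), and it makes transparent that the absolute-continuity hypothesis on $\nu$ is not actually needed for this proposition. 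The paper's invocation of absolute continuity for the Fourier-side tail is in fact dispensable there too, but your argument makes this unnecessary by construction.
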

\begin{proof}
As in the proof of Lemma \ref{lem_K-G_classic_asymp}, we assume that $\hat{u}_0\in C_c^\infty(\mathbb{R}^d)$. To begin with, we check that the Schwartz class $\mathcal{S} \subset \mathcal{G}$. For any $g\in\mathcal{S}$,
\begin{align*}
&\left( g\left(\frac{x}{t}\right) - g(-P'(D_x)) \right) u(t,x) \\
=& \frac{1}{(2\pi)^d} \int e^{i(x\cdot\xi+tP(\xi))} \left( g\left(\frac{x}{t}\right) - g(-P'(\xi)) \right) \hat{u}_0(\xi) d\xi \\
=& \frac{1}{(2\pi)^d} \int e^{i(x\cdot\xi+tP(\xi))} \left( \frac{x}{t}+P'(\xi) \right) \int_0^1 g'\left(\frac{\tau}{t}x-(1-\tau)P'(\xi)\right) d\tau \hat{u}_0(\xi) d\xi \\
=& \frac{i}{(2\pi)^d t} \int e^{i(x\cdot\xi+tP(\xi))} \partial_\xi \left[ \int_0^1 g'\left(\frac{\tau}{t}x-(1-\tau)P'(\xi)\right) d\tau \hat{u}_0(\xi) \right] d\xi \\
=& \frac{i}{(2\pi)^d t} \int e^{i(x\cdot\xi+tP(\xi))} \int_0^1 g''\left(\frac{\tau}{t}x-(1-\tau)P'(\xi)\right) (\tau-1) d\tau P''(\xi)\hat{u}_0(\xi) d\xi \\
+&\frac{i}{(2\pi)^d t} \int e^{i(x\cdot\xi+tP(\xi))} \int_0^1 g'\left(\frac{\tau}{t}x-(1-\tau)P'(\xi)\right) d\tau \partial_\xi\hat{u}_0(\xi) d\xi.
\end{align*}
Notice that, by Lemma \ref{lem_tech_change_of_x_and_xi} and \ref{lem_tech_boundedness_of_symbol_study_in_xi}, the operator of symbol
$$ g''\left(\frac{\tau}{t}x-(1-\tau)P'(\xi)\right),\ \ g'\left(\frac{\tau}{t}x-(1-\tau)P'(\xi)\right) $$
is bounded uniformly in $t$ and $\tau$ and that functions
$$ P''(\xi)\hat{u}_0(\xi),\ \ \partial_\xi\hat{u}_0(\xi) $$
belong to $L^2$, since we have assumed $\hat{u}_0\in C_c^\infty(\mathbb{R}^d)$. As a consequence,
$$ \left\| \left( g\left(\frac{x}{t}\right) - g(-P'(D_x)) \right) u(t,x) \right\|_{L^2_x} \lesssim |t|^{-1} \xrightarrow{t\rightarrow\pm\infty} 0.$$
By noticing that $\mathcal{G}$ is closed under $L^\infty$-norm, we have
$$ C^0_0 := \{ g\in C^0 : \lim_{|y|\rightarrow+\infty} g(y)=0 \} = \overline{\mathcal{S}} \subset \mathcal{G}, $$
where $\overline{S}$ is the closure of $\mathcal{S}$ w.r.t. $L^\infty$-norm. 

It remains to pass to general continuous function $g$. In fact, we only need to consider those $g \geqslant 0$, since once may always write $g$ as the difference of two non-negative continuous functions, which are both bounded. Let us fix $\chi\in C_c^\infty$ which equals to $1$ near zero and define, for all $R>0$,
$$ \chi_R(y) := \chi\left( \frac{y}{R} \right). $$
For arbitrary $R>0$, we have
\begin{align*}
&\limsup_{t\rightarrow\pm\infty}\left\| \left( g(x/t) - g(-P'(D_x)) \right) u(t) \right\|_{L^2}  \\
\leqslant&  \limsup_{t\rightarrow\pm\infty}\left\| \left( (g\chi_R)(x/t) - (g\chi_R)(-P'(D_x)) \right) u(t) \right\|_{L^2}  \\
&+ \limsup_{t\rightarrow\pm\infty}\left\| (g(1-\chi_R))(x/t) u(t) \right\|_{L^2} + \limsup_{t\rightarrow\pm\infty}\left\| (g(1-\chi_R))(-P'(D_x)) u(t) \right\|_{L^2} \\
=& \limsup_{t\rightarrow\pm\infty}\left\| (g(1-\chi_R))(x/t) u(t) \right\|_{L^2} + \limsup_{t\rightarrow\pm\infty}\left\| (g(1-\chi_R))(-P'(D_x)) u(t) \right\|_{L^2},
\end{align*}
since $g\chi_R \in C^0_0$. The second term on the right hand side can be calculated as follows
$$ \limsup_{t\rightarrow\pm\infty}\left\| (g(1-\chi_R))(-P'(D_x)) u(t) \right\|_{L^2} = (2\pi)^{-\frac{d}{2}} \left\| (g(1-\chi_R))(-P') \hat{u}_0 \right\|_{L^2}, $$
which, due to Dominated Convergence Theorem and the absolute continuity of $\nu$ w.r.t. Lebesgue measure, converges to zero as $R \rightarrow +\infty$. As for the cut-off in $x$, we observe that,
$$ \limsup_{t\rightarrow\pm\infty}\left\| (g(1-\chi_R))(x/t) u(t) \right\|_{L^2} \leqslant \|g\|_{L^\infty} \limsup_{t\rightarrow\pm\infty}\left\| (1-\chi_{R})(x/t) u(t) \right\|_{L^2}. $$
Note that since $\chi_{R} \in \mathcal{S}\subset\mathcal{G}$, $(1-\chi_{R})(x/t) u(t)$ can be written, when $t\rightarrow\pm\infty$, as 
$$ u(t) - \chi_{R}(-P'(D_x))u(t) + o_{L^2}(1). $$
As a result, 
\begin{align*}
\limsup_{t\rightarrow\pm\infty}\left\| (1-\chi_{R})(x/t) u(t) \right\|_{L^2} =& \limsup_{t\rightarrow\pm\infty}\left\| (1-\chi_{R})(-P'(D_x)) u(t) \right\|_{L^2} \\
=& (2\pi)^{-\frac{d}{2}}\left\| (1-\chi_{R}) \hat{u}_0 \right\|_{L^2}.
\end{align*}
We have seen that the last quantity tends to zero as $R\rightarrow+\infty$. In conclusion, we have proved that, for all $u_0 \in \mathcal{F}^{-1}C_c^\infty(\mathbb{R}^d\backslash\{0\})$,
$$ \lim_{R\rightarrow+\infty}\limsup_{t\rightarrow\pm\infty}\left\| \left( g(x/t) - g(-P'(D_x)) \right) u(t) \right\|_{L^2} =0,  $$
and thus $g\in\mathcal{G}$.
\end{proof}

\subsection{Proof of \eqref{eq_K-G_classic_conj} for dispersive system}

Before ending this section, we give a proof of \eqref{eq_K-G_classic_conj} for dispersive system. To be precise, we assume that there exists some dense subspace $\mathcal{D}_0\subset L^2$, such that
\begin{equation}\label{eq_K-G_classic_dispersion_estimate}
\|e^{itP(D_x)}u_0\|_{L^\infty} \leqslant C(u_0)|t|^{-\frac{d}{2}},\ \ \forall u_0\in\mathcal{D}_0,\ \ \forall |t|>1,
\end{equation}
where $C(u_0)$ is a constant depending on $u_0$. 

\begin{theorem}
Assume that $P$ is smooth except at zero and $\nu$ defined in \eqref{eq_K-G_classic_def_of_nu} is absolutely continuous w.r.t. Lebesgue measure. Then the dispersion estimate \eqref{eq_K-G_classic_dispersion_estimate}, associated with some dense subspace $\mathcal{D}_0\subset L^2$, implies \eqref{eq_K-G_classic_conj}.
\end{theorem}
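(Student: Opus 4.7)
The strategy is to promote the already-established inclusion $C^0_b(\mathbb{R}^d)\subset\mathcal{G}$ (Proposition~\ref{prop_K-G_classic_continuous_fct}) to all of $L^\infty$ by combining the dispersion estimate on the physical side with absolute continuity of $\nu$ on the Fourier side. By uniform $L^2$-boundedness of $g(x/t)$ and $g(-P'(D_x))$, it is enough to verify \eqref{eq_K-G_classic_asymp} for $u_0$ in a dense subspace of $L^2$; I would work with $u_0\in\mathcal{D}_0\cap\mathcal{F}^{-1}C^\infty_c(\mathbb{R}^d\setminus\{0\})$, which is dense in $L^2$ whenever $\mathcal{D}_0$ contains the Schwartz class (the standard setting in which \eqref{eq_K-G_classic_dispersion_estimate} is established).

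Fix a bounded open $V\subset\mathbb{R}^d$ with $\operatorname{Leb}(\partial V)=0$ and $V\supset-P'(\Supp{\hat{u}_0})$, and split $g=g\mathbbm{1}_V+g\mathbbm{1}_{V^c}$. The second piece contributes $o(1)$ to both sides of \eqref{eq_K-G_classic_asymp}: the Fourier multiplier $(g\mathbbm{1}_{V^c})(-P'(D_x))$ annihilates $u(t)$ because $-P'(\Supp{\hat{u}_0})\subset V$, while
\begin{equation*}
\|(g\mathbbm{1}_{V^c})(x/t)u(t)\|_{L^2}\leqslant\|g\|_{L^\infty}\|\mathbbm{1}_{V^c}(x/t)u(t)\|_{L^2}\xrightarrow{t\to\pm\infty}0
\end{equation*}
by Lemma~\ref{lem_K-G_classic_asymp} applied to $V^c$ (whose boundary is Lebesgue-null by construction) together with the same observation about the support of $\hat{u}_0$. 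This reduces matters to $g\mathbbm{1}_V\in\mathcal{G}$; the crucial gain is that $g\mathbbm{1}_V\in L^2\cap L^\infty$, since $V$ is bounded. I would then pick $g_n\in C_c(\mathbb{R}^d)\subset\mathcal{G}$ with $g_n\to g\mathbbm{1}_V$ in $L^2(\mathbb{R}^d)$ and $\|g_n\|_{L^\infty}\leqslant\|g\|_{L^\infty}$ (by mollification followed by a truncation in amplitude), so that since each $g_n\in\mathcal{G}$ it remains only to bound the error $g\mathbbm{1}_V-g_n$ on both sides uniformly in $t$.

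On the physical side, the dispersion estimate and the change of variables $y=x/t$ give
\begin{equation*}
\|(g\mathbbm{1}_V-g_n)(x/t)u(t)\|_{L^2}^2\leqslant\|u(t)\|_{L^\infty}^2\cdot|t|^d\|g\mathbbm{1}_V-g_n\|_{L^2}^2\leqslant C(u_0)^2\|g\mathbbm{1}_V-g_n\|_{L^2}^2,
\end{equation*}
which is $o_n(1)$ uniformly in $|t|>1$. On the Fourier side Plancherel yields
\begin{equation*}
\|(g\mathbbm{1}_V-g_n)(-P'(D_x))u(t)\|_{L^2}^2=(2\pi)^{-d}\int|(g\mathbbm{1}_V-g_n)(-P'(\xi))|^2|\hat{u}_0(\xi)|^2\,d\xi;
\end{equation*}
extracting a subsequence along which $g_n\to g\mathbbm{1}_V$ Lebesgue-a.e.\ in $y$, the absolute continuity of $\nu$ transports this a.e.\ convergence through the pullback by $-P'$ (a Lebesgue-null set of bad $y$'s pulls back to a Lebesgue-null set of $\xi$'s), and dominated convergence with dominator $\|g\|_{L^\infty}^2|\hat{u}_0|^2\in L^1$ gives the limit $0$ along the subsequence, hence along the full sequence by the standard subsequence-of-subsequence argument. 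A three-$\varepsilon$ combination with the known asymptotic for each $g_n\in\mathcal{G}$ then concludes.

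The main obstacle is precisely the transport of the $L^2$-approximation through the composition with $-P'$; this is where absolute continuity of $\nu$ is indispensable, and plays the same role here as in the proof of Lemma~\ref{lem_K-G_classic_asymp}. Everything else---the Fourier localization reducing $g$ to $g\mathbbm{1}_V\in L^2$, the exact cancellation of the dispersion factor $|t|^{-d}$ against the scaling Jacobian $|t|^d$, and the $L^2$-mollification---is routine. This also clarifies the flaw in Strichartz's argument discussed above: once $g\mathbbm{1}_V$ is approximated by continuous $g_n$ in $L^2(\mathbb{R}^d)$ rather than by pointwise domination on a shrinking open neighborhood, the physical-side error is controlled by the dispersion factor, bypassing the obstruction that $\operatorname{Leb}(\overline{O_n})$ need not tend to zero.
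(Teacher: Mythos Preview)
Your argument is correct and rests on the same two ingredients as the paper's: dispersion handles the physical-side remainder through the exact cancellation $|t|^{-d/2}\cdot|t|^{d/2}$, while absolute continuity of $\nu$ carries a.e.\ convergence in $y$ back to a.e.\ convergence in $\xi$ for the Fourier-side remainder. The packaging differs. The paper first invokes the Strichartz reduction \eqref{eq_K-G_classic_conj}$\Leftrightarrow$\eqref{eq_K-G_classic_conj'}, so that it only needs $g=\mathbbm{1}_E$ with $E$ bounded (hence $g\in L^2$ automatically); it then approximates $E$ by a finite union $E_\epsilon$ of closed cubes and appeals to Lemma~\ref{lem_K-G_classic_asymp} for $E_\epsilon$, working throughout with $u_0\in\mathcal{D}_0$ only. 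You instead attack a general $g\in L^\infty$, localize via $V\supset-P'(\Supp{\hat{u}_0})$ to force $g\mathbbm{1}_V\in L^2$, and approximate in $L^2$ by continuous $g_n$ so as to use Proposition~\ref{prop_K-G_classic_continuous_fct}. This buys a direct proof that bypasses the detour through indicators, at the cost of the extra hypothesis that $\mathcal{D}_0\cap\mathcal{F}^{-1}C_c^\infty(\mathbb{R}^d\setminus\{0\})$ be dense. That hypothesis is removable: take $V=B(0,R)$ with $R$ arbitrary; your own tail estimate via Lemma~\ref{lem_K-G_classic_asymp} gives $\limsup_t\|(g\mathbbm{1}_{V^c})(x/t)u(t)\|_{L^2}\leqslant\|g\|_{L^\infty}\|\mathbbm{1}_{V^c}(-P')\hat{u}_0\|_{L^2}$, and the Fourier side is bounded by the same quantity, both vanishing as $R\to\infty$ for any $u_0\in L^2$. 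With this tweak your three-$\varepsilon$ argument runs for $u_0\in\mathcal{D}_0$ alone, matching the paper's generality.
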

\begin{proof}
Recall that, as mentioned in previous section, it has been proved in \cite{strichartz1981asymptotic} that, when $\nu$ is absolutely continuous w.r.t. Lebesgue measure, \eqref{eq_K-G_classic_conj} is equivalent to \eqref{eq_K-G_classic_conj'}. Therefore, it suffices to check that, for all $u_0\in\mathcal{D}_0$,
$$ \left\| \mathbbm{1}_{E}\left(\frac{x}{t}\right)u(t) - \mathbbm{1}_{E}(D_x)u(t) \right\|_{L^2} \rightarrow 0,\ \ \text{as }t\rightarrow \pm\infty, $$
where $E$ is any bounded measurable set.

In Lemma \ref{lem_K-G_classic_asymp}, we have proved this result for those $E$ whose boundary has zero Lebesgue measure. The idea of treatment of general $E$ is to approximate it by finite union of cubes and control the remaining part via dispersion estimate. To begin with, we fix an arbitrarily small $\epsilon>0$. By outer regularity of Lebesgue measure, there exists open set $\tilde{E}_\epsilon \supset E$, such that
$$ \operatorname{Leb}(\tilde{E}_\epsilon\backslash E) < \frac{\epsilon}{2}. $$
Since any open set can be expressed as the union of almost disjoint closed cubes, we may find finitely many closed cubes $\{K_j\}_{j=1}^N$, such that $K_j\subset \tilde{E}_\epsilon$ and 
$$ \operatorname{Leb}(\tilde{E}_\epsilon\backslash E_\epsilon) < \frac{\epsilon}{2},\ \ \text{where }E_\epsilon=\underset{j=1}{\overset{N}{\cup}} K_j.$$
One may observe that
$$ \operatorname{Leb}(E\backslash E_\epsilon \sqcup E_\epsilon\backslash E) <\epsilon.$$

Now, for any $u_0\in\mathcal{D}_0$, we have
\begin{align*}
&\left\| \mathbbm{1}_{E}\left(\frac{x}{t}\right)u(t) - \mathbbm{1}_{E}(D_x)u(t) \right\|_{L^2} \\
\leqslant& \left\| (\mathbbm{1}_{E_\epsilon}-\mathbbm{1}_{E})\left(\frac{x}{t}\right)u(t) \right\|_{L^2} + \left\| \mathbbm{1}_{E_\epsilon}\left(\frac{x}{t}\right)u(t) - \mathbbm{1}_{E_\epsilon}(D_x)u(t) \right\|_{L^2} \\
&+ \left\| (\mathbbm{1}_{E_\epsilon}-\mathbbm{1}_{E})(D_x)u(t) \right\|_{L^2} \\
\leqslant& C(u_0)\left\| (\mathbbm{1}_{E_\epsilon}-\mathbbm{1}_{E})\left(\frac{x}{t}\right) \right\|_{L^2}|t|^{-\frac{d}{2}} + \left\| \mathbbm{1}_{E_\epsilon}\left(\frac{x}{t}\right)u(t) - \mathbbm{1}_{E_\epsilon}(D_x)u(t) \right\|_{L^2} \\
&+ (2\pi)^{-\frac{d}{2}}\left\| (\mathbbm{1}_{E_\epsilon}-\mathbbm{1}_{E})u_0 \right\|_{L^2} \\
\leqslant& C(u_0) \operatorname{Leb}(E\backslash E_\epsilon \sqcup E_\epsilon\backslash E)^{\frac{1}{2}} + \left\| \mathbbm{1}_{E_\epsilon}\left(\frac{x}{t}\right)u(t) - \mathbbm{1}_{E_\epsilon}(D_x)u(t) \right\|_{L^2} \\
&+ (2\pi)^{-\frac{d}{2}}\left\| \mathbbm{1}_{E\backslash E_\epsilon \sqcup E_\epsilon\backslash E}(\xi)\hat{u}_0(\xi) \right\|_{L^2_{\xi}} \\
<& C(u_0) \epsilon^{\frac{1}{2}} + \left\| \mathbbm{1}_{E_\epsilon}\left(\frac{x}{t}\right)u(t) - \mathbbm{1}_{E_\epsilon}(D_x)u(t) \right\|_{L^2} + (2\pi)^{-\frac{d}{2}}\left\| \mathbbm{1}_{E\backslash E_\epsilon \sqcup E_\epsilon\backslash E}(\xi)\hat{u}_0(\xi) \right\|_{L^2_{\xi}}.
\end{align*}
Due to the fact that $E_\epsilon$ is the union of finitely many closed cubes, the boundary of $E_\epsilon$ has zero Lebesgue measure. As a result, the second term on the right hand side tends to zero as $t\rightarrow \pm\infty$, i.e.
$$ \limsup_{t\rightarrow \pm\infty}\left\| \mathbbm{1}_{E}\left(\frac{x}{t}\right)u(t) - \mathbbm{1}_{E}(D_x)u(t) \right\|_{L^2} \lesssim \epsilon^{\frac{1}{2}} + \left\| \mathbbm{1}_{E\backslash E_\epsilon \sqcup E_\epsilon\backslash E}(\xi)\hat{u}_0(\xi) \right\|_{L^2_{\xi}}. $$
Since $\hat{u}_0 \in L^2$ and $\operatorname{Leb}(E\backslash E_\epsilon \sqcup E_\epsilon\backslash E) <\epsilon$, the right hand side becomes arbitrarily small, if $\epsilon>0$ is taken small enough. The desired result thus follows.
\end{proof}

The dispersion estimate \eqref{eq_K-G_classic_dispersion_estimate} holds for all symbols $P$ we deal with in the present paper, as a consequence of stationary phase lemma.
\begin{proposition}
Let $P$ be radial and smooth except at zero. If $P''>0$ or $P''<0$, the dispersion estimate \eqref{eq_K-G_classic_dispersion_estimate} holds with $\mathcal{D}_0 = \mathcal{F}^{-1}C_c^\infty(\mathbb{R}^d\backslash\{0\})$. 
\end{proposition}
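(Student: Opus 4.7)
First I would write, for $u_0\in\mathcal{D}_0$,
$$e^{itP(D_x)}u_0(x)=(2\pi)^{-d}\int e^{i(x\cdot\xi+tP(\xi))}\hat{u}_0(\xi)\,d\xi,$$
and fix an annulus $\{\rho_0\leq|\xi|\leq\rho_1\}$ containing the compact support of $\hat{u}_0$. Since $P''$ has strict sign, $P'$ is strictly monotone, so $|P''(\rho)|\geq c>0$ on $[\rho_0,\rho_1]$; splitting $\hat{u}_0$ on each side of the (at most one) zero of $P'$ if necessary, I may assume also $|P'(\rho)|\geq c>0$ on the support. Set $J=|P'|([\rho_0,\rho_1])$.

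The next step is to split $u(t,x):=e^{itP(D_x)}u_0(x)$ according to $|x|/|t|$ via a smooth partition of unity. In the \emph{non-stationary} regime, $|x|/|t|$ stays at distance $\geq c_1$ from $J$, so
$$|x+t\nabla P(\xi)|=\bigl|x+tP'(|\xi|)\tfrac{\xi}{|\xi|}\bigr|\geq c_2\max(|x|,|t|)$$
for every $\xi$ in the support of $\hat{u}_0$, and repeated integration by parts in $\xi$ yields $|u(t,x)|\leq C_N\max(|x|,|t|)^{-N}\leq C_N|t|^{-d/2}$ for any $N$.

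In the remaining \emph{stationary} regime, $|x|/|t|$ lies in a bounded neighbourhood of $J$, hence $|x|\sim|t|$. There I would pass to polar coordinates $x=r\omega$, $\xi=\rho\theta$ to separate the integral as
$$u(t,x)=(2\pi)^{-d}\int_0^{\infty}e^{itP(\rho)}\rho^{d-1}\Bigl(\int_{S^{d-1}}e^{ir\rho\,\omega\cdot\theta}\hat{u}_0(\rho\theta)\,d\theta\Bigr)d\rho.$$
Since $r\rho\gtrsim|t|\gg 1$, stationary phase on the sphere (a particular case of Lemma \ref{lem_stat_stationary_phase_expansion} with $\mu=1$) gives
$$\int_{S^{d-1}}e^{ir\rho\,\omega\cdot\theta}\hat{u}_0(\rho\theta)\,d\theta=(r\rho)^{-(d-1)/2}\bigl[\alpha_+(\omega,\rho)e^{ir\rho}+\alpha_-(\omega,\rho)e^{-ir\rho}\bigr]+R,$$
with $\alpha_\pm$ smooth, compactly supported in $\rho$, and $|R|\lesssim(r\rho)^{-(d+1)/2}$. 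Plugging this back reduces $u(t,x)$, modulo a remainder of size $r^{-(d+1)/2}\lesssim|t|^{-(d+1)/2}$, to two $1$-D oscillatory integrals with phases $\Phi_\pm(\rho)=tP(\rho)\pm r\rho$. Because $P'$ is strictly monotone, each phase admits at most one critical point $\rho_c\in[\rho_0,\rho_1]$, and $|\Phi_\pm''(\rho)|=|t||P''(\rho)|\geq c|t|$ throughout. Non-degenerate 1-D stationary phase therefore bounds each radial integral by $C|t|^{-1/2}$, which combined with the $(r\rho)^{-(d-1)/2}\sim|t|^{-(d-1)/2}$ prefactor gives exactly the desired $|t|^{-d/2}$.

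The main obstacle, and the one that deserves care, is arranging the partition of unity in $|x|/|t|$ so that the two regimes overlap on a set where both arguments apply, and verifying uniformity of constants: all of them should depend only on finitely many seminorms of $\hat{u}_0$ and on the lower bounds of $|P'|,|P''|$ on the support, both of which are finite for fixed $u_0\in\mathcal{D}_0$. Note that the polar decomposition is crucial here: had one attempted a direct $d$-dimensional stationary phase in $\xi$, the Hessian of $P$ at $\xi=\rho\theta$ would have eigenvalues $P''(\rho)$ and $P'(\rho)/\rho$ (the latter with multiplicity $d-1$), and a possible zero of $P'$ on the support would destroy the $|t|^{-d/2}$ rate; the radial-angular splitting replaces this by the manifestly non-degenerate $1$-D phase $tP(\rho)\pm r\rho$.
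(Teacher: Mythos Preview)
Your approach is correct and considerably more careful than the paper's own argument. The paper writes the solution in polar coordinates, sets $s=x\cdot\theta/t$, and asserts that it suffices to show the one-dimensional bound
\[
\sup_{s,\theta}\Bigl|\int e^{it(s\rho+P(\rho))}\hat u_0(\rho\theta)\rho^{d-1}\,d\rho\Bigr|\lesssim t^{-d/2},
\]
appealing to ``stationary phase''. But one-variable stationary phase can only yield $t^{-1/2}$ here, so for $d\geq 2$ the paper's reduction does not work as written. Your angular stationary phase on $\mathbb{S}^{d-1}$, producing the prefactor $(r\rho)^{-(d-1)/2}\sim|t|^{-(d-1)/2}$, is exactly what supplies the missing powers; the subsequent non-degenerate one-dimensional stationary phase in $\rho$ then contributes the final $|t|^{-1/2}$. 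Your separation into stationary and non-stationary regimes in $|x|/|t|$, with repeated integration by parts in the latter, is also a necessary step that the paper suppresses.

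One genuine gap remains. Your device of ``splitting $\hat u_0$ on each side of the zero of $P'$'' does not work when that zero $\rho_*$ lies in the interior of the support of $\hat u_0$: no smooth partition separates the two sides while keeping both pieces bounded away from $\rho_*$. In fact the proposition as stated is \emph{false} for $d\geq 2$ without the extra hypothesis $P'\neq 0$ on $]0,\infty[$. If $P'(\rho_*)=0$ then the Hessian of $\xi\mapsto P(|\xi|)$ has rank one on the sphere $|\xi|=\rho_*$, and for radial $\hat u_0$ concentrated near $\rho_*$ one computes directly $|e^{itP(D_x)}u_0(0)|\sim|t|^{-1/2}$. For the intended application (the Corollary immediately following) this is harmless, since hypothesis \eqref{hyp_fractional-type_symbol} already forces $P'>0$ on $]0,\infty[$; you should simply record that hypothesis rather than attempt the splitting.
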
 
\begin{proof}
Without loss of generality, we assume that $t>1$. Let $u_0$ be any function in $\mathcal{D}_0 = \mathcal{F}^{-1}C_c^\infty(\mathbb{R}^d\backslash\{0\})$. To prove the inequality \eqref{eq_K-G_classic_dispersion_estimate}, it is equivalent by definition to prove that
$$ \left\| \int e^{i(x\cdot\xi+tP(\xi))} \hat{u}_0(\xi) d\xi \right\|_{L^\infty(dx)} \lesssim t^{-\frac{d}{2}}. $$
Since $P$ is radial, we may write the inequality above in polar system $\xi=\rho\theta$ as
$$ \left\| \int e^{it(\frac{x\cdot\theta}{t}\rho+P(\rho))} \hat{u}_0(\rho\theta) \rho^{d-1} d\rho d\theta \right\|_{L^\infty(dx)} \lesssim t^{-\frac{d}{2}}. $$
By denoting $s=x\cdot\theta/t \in \mathbb{R}$, it suffices to prove that
$$ \sup_{s\in\mathbb{R},\theta\in\mathbb{S}^{d-1}} \left| \int e^{it(s\rho+P(\rho))} \hat{u}_0(\rho\theta) \rho^{d-1} d\rho \right| \lesssim t^{-\frac{d}{2}}. $$
Since $\rho$ stays between two positive constants and $P''\neq 0$ on $]0,+\infty[$, this inequality follows from stationary phase lemma.
\end{proof}
\begin{corollary}
\eqref{eq_K-G_classic_conj} holds for all $P$ satisfying condition \eqref{hyp_fractional-type_symbol} with $p_0,p_1\neq 0$. 
\end{corollary}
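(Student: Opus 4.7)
The corollary should follow by combining the theorem and the proposition just established in this subsection. My plan is to verify that, under hypothesis \eqref{hyp_fractional-type_symbol} with $p_0, p_1 \neq 0$, both assumptions required by the theorem hold: the dispersion estimate \eqref{eq_K-G_classic_dispersion_estimate} on a dense subspace of $L^2$, and the absolute continuity of the pushforward measure $\nu$ defined in \eqref{eq_K-G_classic_def_of_nu}.

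For the dispersion estimate, I would first check that $P^{(2)}$ has a constant sign and never vanishes on $]0,\infty[$. Condition~(1) of \eqref{hyp_fractional-type_symbol} asserts that $P^{(1)}$ is strictly monotone, so $P^{(2)} \geq 0$ (or $\leq 0$) on $]0,\infty[$. Conditions~(2.0) and (2.1) give $|P^{(2)}(\rho)| \sim \rho^{p_0 - 1}$ near $0$ and $|P^{(2)}(\rho)| \sim \rho^{p_1 - 1}$ at infinity, ensuring that $P^{(2)}$ is bounded away from zero near the endpoints; combined with the sign information, this yields $P^{(2)} \neq 0$ throughout $]0,\infty[$. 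The proposition then applies and provides \eqref{eq_K-G_classic_dispersion_estimate} with $\mathcal{D}_0 = \mathcal{F}^{-1} C_c^\infty(\mathbb{R}^d \backslash \{0\})$, which is dense in $L^2$.

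For the absolute continuity of $\nu$, I would exploit the fact that, under the preceding step, $\nabla P$ is a $C^\infty$ diffeomorphism on $\mathbb{R}^d \backslash \{0\}$. Writing it in polar coordinates as $(\rho, \omega) \mapsto (P^{(1)}(\rho), \omega)$, the non-vanishing of $P^{(2)}$ makes $P^{(1)}$ a $C^\infty$ diffeomorphism from $]0,\infty[$ onto an open interval $J$, so $\nabla P$ is a diffeomorphism from $\mathbb{R}^d \backslash \{0\}$ onto the open set $U := \{\eta \in \mathbb{R}^d : |\eta| \in J\}$. The change of variables formula then supplies a smooth, locally bounded density for $\nu$ on $U$ (explicitly $\rho^{d-1}/(|P^{(2)}(\rho)| \cdot |\eta|^{d-1})$ with $\rho = (P^{(1)})^{-1}(|\eta|)$), yielding the required absolute continuity. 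Invoking the theorem then concludes $\mathcal{G} = L^\infty$.

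The only delicate point is extracting the global non-vanishing of $P^{(2)}$ on $]0,\infty[$ from the hypothesis, which is not stated directly; as explained, combining strict monotonicity of $P^{(1)}$ with the endpoint asymptotics of $|P^{(2)}|$ suffices. No further technical obstacle is expected, as the remainder is simply the assembly of previously proved results.
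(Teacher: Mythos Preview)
Your approach is the one the paper intends: the corollary is stated without proof because it is meant to follow immediately by combining the preceding proposition (dispersion) with the theorem (dispersion $\Rightarrow$ $\mathcal{G}=L^\infty$), once the absolute continuity of $\nu$ is checked. Your verification of the latter via the diffeomorphism $\xi\mapsto P'(\xi)$ and the explicit radial density is exactly right.

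There is, however, a small logical gap in your deduction that $P^{(2)}\neq 0$ on all of $]0,\infty[$. Strict monotonicity of $P^{(1)}$ yields only $P^{(2)}\geqslant 0$ (or $\leqslant 0$), and the asymptotics in (2.0), (2.1) of \eqref{hyp_fractional-type_symbol} are stated only as $\rho\to 0^+$ and $\rho\to+\infty$; nothing you cite excludes an isolated interior zero of $P^{(2)}$ (e.g.\ a local model like $P^{(1)}(\rho)=c+(\rho-\rho_0)^3$ near some $\rho_0$). The paper in fact uses the equivalences $|P^{(2)}(\rho)|\sim\rho^{p_j-1}$ throughout Section~\ref{sect_L^2_bound} as if they held on all of $]0,1]$ and $[1,\infty[$ respectively, so the intended reading of \eqref{hyp_fractional-type_symbol} appears to include these two-sided bounds globally; under that reading $P^{(2)}\neq 0$ is immediate and your argument is complete. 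If you insist on the purely asymptotic reading, you would need either an extra hypothesis or a workaround (restrict $\mathcal{D}_0$ to data with Fourier support away from the zero set of $P^{(2)}$, and handle $\nu$ via the strict monotonicity of $P^{(1)}$ rather than the Jacobian).
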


\section{Proof of Proposition \ref{prop_lim_limit_of_integral}}\label{sect_proof_of_limit_of_integral}

In \cite{delort2022microlocal}, the author has proved Proposition \ref{prop_lim_limit_of_integral} for strictly convex $P$. In this part, we will explain how the same argument works for strictly concave $P$ and how to calculate the limit for $\epsilon=\pm$ respectively. Since most of calculations has been done in Section 3 of \cite{delort2022microlocal}, we will omit these details.

By definition $I(t,-\epsilon,\epsilon,-\epsilon,\epsilon;F)$ equals to
\begin{equation*}
\begin{aligned}
&\int e^{i\epsilon[ r(-\rho+\rho') - t(- P(\rho)+ P(\rho'))]} \mathbbm{1}_{\frac{r}{t}>P'(\rho),P'(\rho')}\\
&\hspace{4em}\times F(\rho,\rho',r,t;r-tP'(\rho), r-tP'(\rho'))  dr d\rho d\rho',
\end{aligned}
\end{equation*}
which can be split into $I_+$ and $I_-$, with domain of integral $\rho-\rho'>0$ and $\rho-\rho'<0$, respectively. Namely,
\begin{equation*}
\begin{aligned}
I_+=&\int e^{i\epsilon[ r(-\rho+\rho') - t(- P(\rho)+ P(\rho'))]} \mathbbm{1}_{r>tP'(\rho')} \mathbbm{1}_{\rho-\rho'>0}\\
&\hspace{4em}\times F(\rho,\rho',r,t;r-tP'(\rho), r-tP'(\rho'))  dr d\rho d\rho', \\
I_-=&\int e^{i\epsilon[ r(-\rho+\rho') - t(- P(\rho)+ P(\rho'))]} \mathbbm{1}_{r>tP'(\rho')} \mathbbm{1}_{\rho-\rho'<0}\\
&\hspace{4em}\times F(\rho,\rho',r,t;r-tP'(\rho), r-tP'(\rho'))  dr d\rho d\rho'.
\end{aligned}
\end{equation*}
In what follows, we study mainly integral $I_+$ with $I_-$ manipulated in the same way. By change of variable $r\rightarrow tr+tP'(\rho')$, $\rho' \rightarrow \rho-w$, $I_+$ reads
\begin{equation*}
\begin{aligned}
&t\int e^{i\epsilon t[ -(r+P'(\rho-w))w - (- P(\rho)+ P(\rho-w))]} \mathbbm{1}_{r>0} \mathbbm{1}_{w>0}\\
&\hspace{2em}\times F(\rho,\rho-w,t(r+P'(\rho-w)),t;tr - tP'(\rho) + tP'(\rho-w), tr)  dr d\rho dw.
\end{aligned}
\end{equation*}

We introduce the following notation:
$$ P(\rho') - P(\rho) = P'(\rho)(\rho'-\rho) + g(\rho,\rho')(\rho'-\rho)^2, $$
where $g$ is strictly negative since $P$ is concave;
$$ \tilde{F}(\rho,\rho',r,t;\zeta,\zeta') = F(\rho,\rho',tr,t;t\zeta,t\zeta'), $$
which is smooth, supported for
$$ \rho,\rho',r \sim 1,\ |\zeta|,|\zeta'| \lesssim t^{\delta'-1}, $$
and satisfies for all $j,j',k,\gamma,\gamma'\in\mathbb{N}$,
$$ | \partial_{\rho}^{j} \partial_{\rho'}^{j'} \partial_{r}^{k} \partial_{\zeta}^{\gamma} \partial_{\zeta'}^{\gamma'} \tilde{F}(\rho,\rho',r,t;\zeta,\zeta') |
\lesssim t^{(1-\delta')(k+\gamma+\gamma')}. $$
Moreover, we have the point-wise limit
$$ \lim_{t\rightarrow\infty} \tilde{F}(\rho,\rho',\frac{r}{\sqrt{t}}+P'(\rho'),t;\frac{\zeta}{\sqrt{t}},\frac{\zeta'}{\sqrt{t}}) = F_{0}(\rho,\rho'). $$

With these notations, $I_+$ can be expressed as
\begin{equation*}
\begin{aligned}
&t\int e^{-i\epsilon t[rw - g(\rho-w,\rho)w^2 ]} \mathbbm{1}_{r>0} \mathbbm{1}_{w>0}\\
&\hspace{2em}\times \tilde{F}(\rho,\rho-w,r+P'(\rho-w),t;r - P'(\rho) + P'(\rho-w), r)  dr d\rho dw.
\end{aligned}
\end{equation*}
The same calculus as in Lemma 3.1.4 of \cite{delort2022microlocal} shows that, up to some terms tending to zero, $I_+$ equals to
\begin{align*}
&t\int e^{-i\epsilon trw}\mathbbm{1}_{r>0} e^{i\epsilon tg(\rho-w,\rho)w^2}  \mathbbm{1}_{w>0}\tilde{F}(\rho,\rho-w,r+P'(\rho),t;r, r)  dr dw d\rho \\
=& \int e^{-i\epsilon rw}\mathbbm{1}_{r>0} e^{i\epsilon g(\rho-\frac{w}{\sqrt{t}},\rho)w^2}  \mathbbm{1}_{w>0}\tilde{F}(\rho,\rho-\frac{w}{\sqrt{t}},\frac{r}{\sqrt{t}}+P'(\rho),t;\frac{r}{\sqrt{t}}, \frac{r}{\sqrt{t}})  dr dw d\rho,
\end{align*}
whose formal limit by taking point-wise limit of integrand is
$$ \int e^{-i\epsilon rw}\mathbbm{1}_{r>0} e^{i\epsilon g(\rho,\rho)w^2}  \mathbbm{1}_{w>0} F_0(\rho,\rho)  dr dw d\rho. $$
The error between this integral and $I_+$ is actually $o(1)$, due the calculation of the proof of Proposition 3.1.3 of \cite{delort2022microlocal}. In conclusion, we have
\begin{align*}
I_+ =& \int e^{-i\epsilon rw}\mathbbm{1}_{r>0} e^{i\epsilon g(\rho,\rho)w^2}  \mathbbm{1}_{w>0} F_0(\rho,\rho)  dr dw d\rho + o(1),\\
I_- =& \int e^{-i\epsilon rw}\mathbbm{1}_{r>0} e^{-i\epsilon g(\rho,\rho)w^2}  \mathbbm{1}_{w<0} F_0(\rho,\rho)  dr dw d\rho + o(1).
\end{align*}

As a consequence, the limit of $I(t,-\epsilon,\epsilon,-\epsilon,\epsilon;F)$ is 
\begin{align*}
&\int e^{-i\epsilon rw}\mathbbm{1}_{r>0}dr \left( e^{i\epsilon g(\rho,\rho)w^2}  \mathbbm{1}_{w>0} + e^{-i\epsilon g(\rho,\rho)w^2}  \mathbbm{1}_{w<0} \right) dw F_0(\rho,\rho) d\rho \\
=& \int -i\epsilon(w-i\epsilon 0)^{-1} \left( \cos (\epsilon g(\rho,\rho)w^2) + \sgn{w}i \sin (\epsilon g(\rho,\rho)w^2) \right) dw F_0(\rho,\rho) d\rho \\
=& \int -i\epsilon(w-i\epsilon 0)^{-1} \cos (g(\rho,\rho)w^2) dw F_0(\rho,\rho) d\rho \\
&\hspace{12em}+ \int \frac{\sin (g(\rho,\rho)w^2)}{|w|} dw F_0(\rho,\rho) d\rho,
\end{align*}
where the integrals in $r$ and $w$ should be understood in the sense of oscillatory integral and distribution, respectively.

Due to the assumption that $P$ is strictly concave, or equivalently $P''<0$, $g$ is negative. Since $(w-i\epsilon 0)^{-1}$ is homogeneous of degree $-1$, the right hand side of equality above can be rewritten as
\begin{align*}
	&\int -i\epsilon(w-i\epsilon 0)^{-1} \cos (w^2) dw F_0(\rho,\rho) d\rho \\
	&\hspace{12em}- \int \frac{\sin (w^2)}{|w|} dw F_0(\rho,\rho) d\rho,
\end{align*}

For one thing, the distribution $(w\pm i0)^{-1}$ can be expressed as
$$ (w \pm i0)^{-1} = \mp\pi i \delta_{0} + \operatorname{P.V.}\frac{1}{w}. $$
And for another thing, due to Dirichlet integral  $\int_0^\infty \frac{\sin{y}}{y}dy = \frac{\pi}{2}$, we have
\begin{equation*}
	\int \frac{\sin (w^2)}{|w|} dw = \frac{\pi}{2}.
\end{equation*}
These identities imply that
\begin{align*}
	\lim_{t\rightarrow+\infty} I(t,-\epsilon,\epsilon,-\epsilon,\epsilon;F) 
	= & \int \left( \pi\delta_0 - i\epsilon \operatorname{P.V.}\frac{1}{w} \right) \cos (w^2) dw F_0(\rho,\rho) d\rho \\
	&\hspace{10em}+ \sgn{g} \frac{\pi}{2} \int F_0(\rho,\rho) d\rho \\
	= & \pi\int F_0(\rho,\rho) d\rho - \frac{\pi}{2} \int F_0(\rho,\rho) d\rho \\
	= & \frac{\pi}{2} \int F_0(\rho,\rho) d\rho.
\end{align*} \\

In the case of strictly convex $P$, it has been proved in \cite{delort2022microlocal} that
\begin{align*}
	\lim_{t\rightarrow+\infty} I(t,-\epsilon,\epsilon,-\epsilon,\epsilon;F) 
	= &\int i\epsilon(w+i\epsilon 0)^{-1} \cos (g(\rho,\rho)w^2) dw F_0(\rho,\rho) d\rho \\
	&\hspace{8em}- \int \frac{\sin (g(\rho,\rho)w^2)}{|w|} dw F_0(\rho,\rho) d\rho,
\end{align*}
where $g$ is positive. We may repeat the argument above and conclude that
\begin{align*}
	\lim_{t\rightarrow+\infty} I(t,-\epsilon,\epsilon,-\epsilon,\epsilon;F) 
	= & \int \left( \pi\delta_0 + i\epsilon \operatorname{P.V.}\frac{1}{w} \right) \cos (w^2) dw F_0(\rho,\rho) d\rho \\
	&\hspace{10em}- \frac{\pi}{2} \int F_0(\rho,\rho) d\rho \\
	= & \pi\int F_0(\rho,\rho) d\rho - \frac{\pi}{2} \int F_0(\rho,\rho) d\rho \\
	= & \frac{\pi}{2} \int F_0(\rho,\rho) d\rho.
\end{align*}

\bibliographystyle{abbrv}
\bibliography{ref}

\end{document}